\documentclass[11pt,twoside]{amsart}

\usepackage{latexsym}
\usepackage{amssymb}
\usepackage{amsfonts}
\usepackage{amstext}
\usepackage{multicol}
\usepackage{mathtools}
\usepackage{amscd,amssymb,amsfonts,amsmath,amstext,amsthm,verbatim,mathdots}
\usepackage{comment,array,blkarray,multirow,tikz,graphicx,ifthen}
\usepackage [margin = 1.3in] {geometry}
\usepackage{youngtab}
\usepackage{ytableau}
\usepackage{cancel}
\usepackage{hyperref}
\pagestyle{plain}

\setlength{\topmargin}{-.1in}        
\setlength{\oddsidemargin}{0in}    
\setlength{\evensidemargin}{0in}   
\setlength{\textheight}{8.5in}     
\setlength{\textwidth}{6.25in}     
\setlength{\headsep}{0in}          
\setlength{\headheight}{0in}       

\setcounter{tocdepth}{1}
\numberwithin{equation}{section}

\newtheorem{thm}{Theorem}[section]
\newtheorem{lem}[thm]{Lemma}
\newtheorem{cor}[thm]{Corollary}
\newtheorem{prop}[thm]{Proposition}

\theoremstyle{definition}
\newtheorem{defn}[thm]{Definition}
\newtheorem{conv}[thm]{Convention}
\newtheorem{ex}[thm]{Example}
\newtheorem{remark}[thm]{Remark}
\newtheorem{defn_prop}[thm]{Definition/Proposition}

\newenvironment{sproof}{\proof}{\endproof}

\newcommand{\bb}[1]{\mathbb{#1}}
\newcommand{\bbZ}{\bb{Z}}
\newcommand{\bbC}{\bb{C}}

\newcommand{\bbR}{\bb{R}}
\newcommand{\bbX}{\bb{X}}
\newcommand{\tw}[1]{\widetilde{#1}}
\newcommand{\wh}[1]{\widehat{#1}}

\newcommand{\SL}{{\rm SL}}
\newcommand{\GL}{{\rm GL}}
\newcommand{\Gr}{{\rm Gr}}
\newcommand{\Id}{{\rm Id}}

\DeclareMathOperator{\sh}{sh}
\DeclareMathOperator{\fl}{fl}
\DeclareMathOperator{\wt}{wt}
\DeclareMathOperator{\sgn}{sgn}
\DeclareMathOperator{\Trop}{Trop}

\DeclareMathOperator{\PR}{PR}

\DeclareMathOperator{\inv}{inv}

\newcommand{\ov}[1]{\overline{#1}}
\newcommand{\mb}[1]{\mathbf{#1}}
\newcommand{\ds}{\displaystyle}

\newcommand{\Cx}{\mathbb{C}^\times}
\newcommand{\Y}[2]{\Gr(#1,#2) \times \Cx}
\newcommand{\X}[1]{\bbX_{#1}}

\newcommand{\Zn}{\mathbb{Z}/n\mathbb{Z}}

\newcommand{\lp}{\lambda} 

\newcommand{\bT}[1]{\mathbb{T}_{#1}}
\newcommand{\tT}[1]{\tw{\mathbb{T}}_{#1}}

\newcommand{\ve}{\varepsilon}
\newcommand{\vp}{\varphi}
\newcommand{\tep}{\tw{\varepsilon}}
\newcommand{\tph}{\tw{\varphi}}
\newcommand{\te}{\tw{e}}
\newcommand{\tg}{\tw{\gamma}}

\title{The geometric $R$-matrix for affine crystals of type $A$}
\author{Gabriel Frieden}
\date{\today}
\thanks{The author was supported in part by NSF grants DMS-1464693 and DMS-0943832.}
\address{Department of Mathematics, University of Michigan, Ann Arbor, MI 48109-1043, USA}
\email{gfrieden@umich.edu}

\begin{document}
\maketitle

\begin{abstract}
In [Frieden, \texttt{arXiv:1706.02844}], we constructed a geometric crystal on the variety $\X{k} := \Y{k}{n}$ which tropicalizes to the affine crystal structure on rectangular tableaux with $n-k$ rows. In this sequel, we define and study the {\em geometric $R$-matrix}, a birational map $R : \X{k_1} \times \X{k_2} \rightarrow \X{k_2} \times \X{k_1}$ which tropicalizes to the combinatorial $R$-matrix on pairs of rectangular tableaux. We show that $R$ is an isomorphism of geometric crystals, and that it satisfies the Yang--Baxter relation. In the case where both tableaux have one row, we recover a birational action of the symmetric group that has appeared in the literature in a number of contexts. We also define a rational function $E : \X{k_1} \times \X{k_2} \rightarrow \bbC$ which tropicalizes to the coenergy function from affine crystal theory.

Most of the properties of the geometric $R$-matrix follow from the fact that it gives the unique solution to a certain equation of matrices in the loop group $\GL_n(\bbC(\lp))$.
\end{abstract}

\tableofcontents

\ytableausetup{centertableaux}

\section{Introduction}
\label{sec_intro}

In the early 1990s, Kashiwara introduced the theory of crystal bases \cite{Kash90, Kash91}. This groundbreaking work provides a combinatorial model for the representation theory of semisimple (and more generally, Kac--Moody) Lie algebras, allowing many aspects of the representation theory to be studied from a purely combinatorial point of view. In type $A$, crystal bases can be realized as a collection of combinatorial maps on semistandard Young tableaux, and many previously studied combinatorial tableau algorithms turned out to be special cases of crystal theory. For example, the Robinson--Schensted--Knuth correspondence is the crystal version of the decomposition of the $\GL_n(\bbC)$-representation $(\bbC^n)^{\otimes d}$ into its irreducible components \cite{ShimDummies}; Lascoux and Sch\"utzenberger's symmetric group action on tableaux is a special case of the Weyl group action on any crystal \cite{BumSch}; Sch\"utzenberger's promotion map, restricted to rectangular tableaux, is the crystal-theoretic manifestation of the rotation of the affine type $A$ Dynkin diagram \cite{Shim}.

Tableau algorithms are traditionally described as a sequence of local modifications to a tableau, such as bumping an entry from one row to the next, or sliding an entry into an adjacent box. These combinatorial descriptions are quite beautiful, but for some purposes, one might want a formula that describes the result of the algorithm in terms of a natural set of coordinates on tableaux, such as the number of $j$'s in the $i^{th}$ row, or the closely related Gelfand--Tsetlin patterns. Kirillov and Berenstein discovered that the Bender--Knuth involutions, which are the building blocks for algorithms such as promotion and evacuation, act on a Gelfand--Tsetlin pattern by simple piecewise-linear transformations \cite{KirBer}. This discovery sparked a search for piecewise-linear formulas for other combinatorial algorithms.

The goal of this paper is to find a piecewise-linear formula for the affine type $A$ combinatorial $R$-matrix, a map on pairs of rectangular tableaux which arises from affine crystal theory.

\subsection{Affine crystals and the combinatorial $R$-matrix}
\label{sec_intro_KR}

Quantum affine algebras admit a class of finite-dimensional, non-highest weight representations called Kirillov--Reshetikhin (KR) modules. The crystal bases of these representations, which we call KR crystals, have received a lot of attention for several reasons. Kang et al. showed that the crystal bases of highest-weight modules for quantum affine algebras can be built out of infinite tensor products of KR crystals, and they used this construction to compute the one-point functions of certain solvable lattice models coming from statistical mechanics \cite{KKMMNN1}. KR crystals have also played a central role in the study of a cellular automaton called the box-ball system and its generalizations \cite{TakSat, HHIKTT}.

Unlike the tensor product of representations of Lie algebras and finite groups, the tensor product of representations of quantum algebras---and thus of crystals, which arise as the ``$q \rightarrow 0$ limit'' of such representations---is not commutative. In the case of KR crystals, however, there is a unique crystal isomorphism
\[
\tw{R} : B_1 \otimes B_2 \rightarrow B_2 \otimes B_1.
\]
This isomorphism is the combinatorial $R$-matrix, and it plays an essential role in both of the applications mentioned in the preceding paragraph. For example, the states of the box-ball system can be represented as elements of a tensor product of KR crystals, and the time evolution is given by applying a sequence of combinatorial $R$-matrices.

In (untwisted) affine type $A$, Kirillov--Reshetikhin modules correspond to partitions of rectangular shape $(L^k)$, and their crystal bases, which we denote by $B^{k,L}$, are modeled by semistandard Young tableaux of shape $(L^k)$. If one ignores the affine crystal operators $\tw{e}_0, \tw{f}_0$, then $B^{k,L}$ is the crystal associated to the irreducible $\mathfrak{sl}_n$-module of highest weight $(L^k)$. Shimozono showed that the affine crystal operators are obtained by conjugating the crystal operators $\tw{e}_1, \tw{f}_1$ by Sch\"utzenberger's promotion map \cite{Shim}. He also gave a combinatorial description of the action of the combinatorial $R$-matrix on pairs of rectangular tableaux, which we now explain.

Let $*$ denote the associative product on the set of semistandard Young tableaux introduced by Lascoux and Sch\"utzenberger (see \S \ref{sec_combinatorial_R} for a definition). If $T \in B^{k_1, L_1}$ and $U \in B^{k_2, L_2}$, then there are unique tableaux $U' \in B^{k_2,L_2}$ and $T' \in B^{k_1,L_1}$ such that $T*U = U'*T'$, and the combinatorial $R$-matrix acts as the map $\tw{R} : T \otimes U \mapsto U' \otimes T'$.

\begin{ex}
\label{ex_comb_R}
Let
\[
T = \ytableaushort{1133334} \in B^{1,7} \quad\quad \text{ and } \quad\quad U = \ytableaushort{11123, 22444} \in B^{2,5}.
\]
The product $T * U$ can be computed by using Schensted's row bumping algorithm to insert the entries of $U$ into $T$, starting with the bottom row, and moving from left to right; the result is
\[
T*U = \ytableaushort{1111123444, 22334, 33} \, .
\]
The reader may verify that the tableaux
\[
U' = \ytableaushort{11122, 33334} \quad\quad \text{and} \quad\quad T' = \ytableaushort{1123444}
\]
satisfy $U'*T' = T*U$, so $\tw{R}(T \otimes U) = U' \otimes T'$.
\end{ex}

There is a combinatorial procedure for pulling $T*U$ apart into $U'$ and $T'$, so the whole process is algorithmic. It is nevertheless natural to ask if the map $\tw{R}$ can be computed in one step, without first passing through the product $T * U$. In the case where $T$ and $U$ are both one-row tableaux, there is an elegant piecewise-linear formula for $\tw{R}$.

\begin{prop}[Hatayama et al. {\cite[Prop. 4.1]{HHIKTT}}]
\label{prop_comb_one_row}
Suppose $T$ and $U$ are one-row tableaux, with entries at most $n$, and suppose $\tw{R}(T \otimes U) = U' \otimes T'$. Let $a_j, b_j$ be the numbers of $j$'s in $T$ and $U$, respectively. Define
\[
b'_j = b_j + \tw{\kappa}_{j+1} - \tw{\kappa}_j, \quad\quad a'_j = a_j + \tw{\kappa}_j - \tw{\kappa}_{j+1},
\]
\[
\text{ where } \quad\quad \tw{\kappa}_j = \min_{0 \leq r \leq n-1} (b_j + b_{j+1} + \cdots + b_{j+r-1} + a_{j+r+1} + a_{j+r+2} + \cdots + a_{j+n-1}),
\]
and all subscripts are interpreted modulo $n$. Then $b'_j, a'_j$ are the numbers of $j$'s in $U'$ and $T'$, respectively.
\end{prop}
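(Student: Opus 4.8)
The plan is to compute the product $T*U$ explicitly, to rewrite the defining property of the combinatorial $R$-matrix as a system of equations for the contents of $U'$ and $T'$, and then to verify that the contents produced by the stated formula solve that system. Recall (Shimozono's description, quoted above) that $\tw{R}(T\otimes U)=U'\otimes T'$ is characterized by: $U',T'$ are one-row tableaux with $|U'|=|U|$, $|T'|=|T|$, and $T*U=U'*T'$; moreover such a pair exists and is unique. Write $a_j,b_j$ as in the statement and $A_k=a_1+\cdots+a_k$, $B_k=b_1+\cdots+b_k$.

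First I would compute $V:=T*U$. Since $T$ and $U$ are single rows, $V$ is the insertion tableau of the word $1^{a_1}\cdots n^{a_n}1^{b_1}\cdots n^{b_n}$ and has at most two rows; moreover the sub-tableau of cells with entry $\le k$ is the insertion tableau of the truncated word $1^{a_1}\cdots k^{a_k}1^{b_1}\cdots k^{b_k}$, because larger letters are only ever appended. The top row of that tableau has length equal to the longest weakly increasing subword of the truncated word --- a maximum, over a ``switching point'' $v$, of $A_v+(B_k-B_{v-1})$ --- and the remaining cells form the second row. Hence the number of cells with entry $\le k$ in the second row of $V$ is
\[
\Sigma_k\;=\;\min_{1\le v\le k}\big((a_{v+1}+\cdots+a_k)+(b_1+\cdots+b_{v-1})\big),
\]
and in the first row it is $A_k+B_k-\Sigma_k$. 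Since a two-row semistandard tableau is determined by its content together with the numbers $\Sigma_1,\dots,\Sigma_n$, and since the same formula computes the analogous quantities for $U'*T'$, the identity $T*U=U'*T'$ is equivalent to: $a'_j+b'_j=a_j+b_j$ for all $j$, and
\[
\min_{1\le v\le k}\big((b'_{v+1}+\cdots+b'_k)+(a'_1+\cdots+a'_{v-1})\big)=\Sigma_k\qquad(1\le k\le n),
\]
where $(b'_j),(a'_j)$ denote the contents of $U',T'$. (The row lengths are then forced to be $\sum_j b_j=|U|$ and $\sum_j a_j=|T|$.)

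It remains to verify that $b'_j:=b_j+\tw{\kappa}_{j+1}-\tw{\kappa}_j$ and $a'_j:=a_j+\tw{\kappa}_j-\tw{\kappa}_{j+1}$ satisfy these two conditions. The first is immediate, and telescoping gives $\sum_j b'_j=\sum_j b_j$, $\sum_j a'_j=\sum_j a_j$. Writing $S(j,r)$ for the expression minimized in the definition of $\tw{\kappa}_j$, the identity $S(j,r{+}1)=b_j-a_j+S(j{+}1,r)$ (valid for $0\le r\le n-2$) together with $S(j,0)=A-a_j$ and $S(j,n{-}1)=B-b_{j-1}$ (where $A=\sum a_i$, $B=\sum b_i$) quickly yields $\tw{\kappa}_j\le b_j+\tw{\kappa}_{j+1}$ and $\tw{\kappa}_{j+1}\le a_j+\tw{\kappa}_j$, so $b'_j,a'_j\ge 0$ and $U',T'$ are genuine one-row tableaux. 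For the $\Sigma_k$-identity, substitute $\sum_{i=v+1}^k b'_i=\sum_{i=v+1}^k b_i+\tw{\kappa}_{k+1}-\tw{\kappa}_{v+1}$ and $\sum_{i=1}^{v-1}a'_i=\sum_{i=1}^{v-1}a_i+\tw{\kappa}_1-\tw{\kappa}_v$ into the left-hand side; after cancelling $\tw{\kappa}_1+\tw{\kappa}_{k+1}$ it reduces to the min-plus identity
\[
\min_{1\le v\le k}\Big(\textstyle\sum_{i=v+1}^k b_i+\sum_{i=1}^{v-1}a_i-\tw{\kappa}_v-\tw{\kappa}_{v+1}\Big)\;=\;\min_{1\le v\le k}\Big(\textstyle\sum_{i=v+1}^k a_i+\sum_{i=1}^{v-1}b_i\Big)-\tw{\kappa}_1-\tw{\kappa}_{k+1},
\]
which I would prove by expanding each $\tw{\kappa}$ as a minimum over $r$ and tracking which index attains it. This last step is the crux: it amounts to reconciling the ``linear'' minima produced by the $*$-product with the ``cyclic'' minima (indices mod $n$) defining $\tw{\kappa}_j$, and spotting the telescoping combination $\tw{\kappa}_{j+1}-\tw{\kappa}_j$; everything else is bookkeeping.

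A variant that localizes the difficulty elsewhere is to check directly that $(a,b)\mapsto(b',a')$ is an isomorphism of affine crystals $B^{1,|T|}\otimes B^{1,|U|}\to B^{1,|U|}\otimes B^{1,|T|}$: it preserves weight because $a'_j+b'_j=a_j+b_j$, and since promotion on $B^{1,L}$ is the cyclic rotation of content --- with respect to which the formula is visibly equivariant --- and $\tw{R}$ commutes with promotion, it suffices to check commutation with $\tw{f}_1$, a short case analysis on the sign of $b_1-a_2$ and on which $r$ attains $\tw{\kappa}_j$. In either approach, the uniqueness of the combinatorial $R$-matrix then gives $\tw{R}(T\otimes U)=U'\otimes T'$, which is the assertion.
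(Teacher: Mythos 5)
First, a point of orientation: the paper does not prove this proposition at all --- it is quoted from Hatayama et al.\ --- and the machinery of the paper re-derives it by a completely different route (construct the geometric $R$-matrix as the solution of the matrix equation, prove positivity, invoke uniqueness of the combinatorial $R$-matrix via Theorem \ref{thm_recover_comb_R}, then specialize the explicit formula of Proposition \ref{prop_one_one_case} and tropicalize). Your direct combinatorial attack via the tableau-product characterization is therefore a genuinely different, and in principle more elementary, approach. Your setup is correct: the Greene-invariant computation giving $\Sigma_k=\min_{1\le v\le k}\bigl((a_{v+1}+\cdots+a_k)+(b_1+\cdots+b_{v-1})\bigr)$, the observation that a two-row tableau is determined by its content and the $\Sigma_k$, the resulting reformulation of $T*U=U'*T'$, and the verification that $a'_j+b'_j=a_j+b_j$ and $a'_j,b'_j\ge 0$ are all sound.

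The problem is that everything you have actually verified is, as you say yourself, bookkeeping, and the step carrying all of the content is left as an intention. The min-plus identity you reduce to has the form
\[
\min_{1\le v\le k}\Bigl(\ell_v-\tw{\kappa}_v-\tw{\kappa}_{v+1}\Bigr)=\min_{1\le v\le k}\bigl(\ell'_v\bigr)-\tw{\kappa}_1-\tw{\kappa}_{k+1},
\]
with $\ell_v,\ell'_v$ linear; since $-\tw{\kappa}_v$ is a \emph{maximum} of linear forms, the left side is a min of maxes, and equating it with a pure min requires producing explicit witnesses in both directions (for each minimizing $v$ on the right, a choice of $v$ and of the indices $r,r'$ attaining $\tw{\kappa}_v,\tw{\kappa}_{v+1}$ on the left achieving the same value, together with a matching lower bound). ``Expanding each $\tw{\kappa}$ and tracking which index attains it'' names the task but does not do it, and there is a real interaction here between the cyclic (mod $n$) minima defining $\tw{\kappa}_j$ and the linear minima defining $\Sigma_k$ that has to be resolved. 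Your proposed variant does not escape this: checking that the formula commutes with $\tw{f}_1$ is a case analysis of comparable delicacy (one must determine which tensor factor $\tw{f}_1$ acts on before and after, and how each $\tw{\kappa}_j$ shifts), and one must also justify connectedness of $B^{1,L_1}\otimes B^{1,L_2}$ as an affine crystal before uniqueness applies. So as written this is a correct and promising plan with the decisive computation missing; to make it a proof you must either carry out the witness construction for the min-plus identity, or pass to its rational lift $g(x)g(y)=g(y')g(x')$ (Proposition \ref{prop_geom_one_row}), where the identity becomes a polynomial identity verifiable entry by entry, and then tropicalize --- which is essentially the paper's route.
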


We will generalize Proposition \ref{prop_comb_one_row} to a piecewise-linear description of the action of the combinatorial $R$-matrix on pairs of arbitrary rectangular tableaux.

\subsection{Geometric lifting}
\label{sec_intro_lifting}

How does one find---and work with---piecewise-linear formulas for complicated combinatorial operations? A very useful method is to use tropicalization and geometric lifting. Tropicalization is the procedure which turns a positive rational function (i.e., a function consisting of the operations $+, \cdot, \div$, but not $-$; such functions are often called ``subtraction-free'' in the literature) into a piecewise-linear function by making the substitutions
\[
(+, \cdot, \div) \mapsto (\min, +, -).
\]
A geometric (or rational) lift of a piecewise-linear function $\tw{h}$ is any positive rational function $h$ which tropicalizes to $\tw{h}$. Rational functions are often easier to work with than piecewise-linear functions, since one may bring to bear algebraic and geometric techniques. Furthermore, identities proved in the lifted setting can be ``pushed down,'' via tropicalization, to results about the piecewise-linear functions and the corresponding combinatorial maps.

For example, the formula for $\tw{R}$ in Proposition \ref{prop_comb_one_row} turns out to be the tropicalization of a rational map which solves a certain matrix equation. Given $x = (x_1, \ldots, x_n) \in (\Cx)^n$, define
\begin{equation}
\label{eq_g_one_row}
g(x) = \left(
\begin{array}{ccccccc}
x_1 & &&&& \lp \\
1 & x_2 & \\
 & 1 & x_3 & \\
 & & & \ddots \\
 & & & & x_{n-1} \\
 & & & & 1 & x_n
\end{array}
\right).
\end{equation}
Here $\lp$ is an indeterminate, and we view $g(x)$ as an element of the loop group $\GL_n(\bbC(\lp))$.

\begin{prop}[Yamada {\cite{Yam}, Lam--Pylyavskyy \cite[Thm. 6.2]{LPwhirl}}]
\label{prop_geom_one_row}
If $x,y \in (\Cx)^n$ are sufficiently generic, then the matrix equation
\begin{equation}
\label{eq_g_eq_one_row}
g(x)g(y) = g(y')g(x')
\end{equation}
has two solutions: the trivial solution $y'_j = x_j, x'_j = y_j$, and the solution
\begin{equation}
\label{eq_geom_one_row}
y'_j = y_j\dfrac{\kappa_{j+1}}{\kappa_j}, \quad x'_j = x_j\dfrac{\kappa_j}{\kappa_{j+1}}, \quad \text{ where } \quad\quad
\kappa_j = \sum_{r=0}^{n-1} y_j \cdots y_{j+r-1} x_{j+r+1} \cdots x_{j+n-1},
\end{equation}
and subscripts are interpreted modulo $n$. The solution given by \eqref{eq_geom_one_row} is the unique solution to \eqref{eq_g_eq_one_row} which satisfies the additional constraint
\begin{equation}
\label{eq_prod_x_y}
\prod x_j = \prod x_j' \quad\quad \text{ and } \quad\quad \prod y_j = \prod y_j'.
\end{equation}
\end{prop}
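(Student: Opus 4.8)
The plan is to analyze the matrix equation $g(x)g(y) = g(y')g(x')$ directly, exploiting the structure of $g$ as a loop-group element. First I would compute the product $g(x)g(y)$ explicitly. Each $g(x)$ is bidiagonal except for the entry $\lp$ in the upper-right corner, so $g(x)g(y)$ has entries that are polynomials in $\lp$ of degree at most $1$; the only places $\lp$ appears are those entries receiving a contribution from the corner, and the determinant of $g(x)$ is a simple alternating expression $\prod x_j + (-1)^{n-1}\lp$ (up to sign conventions). Matching coefficients of $\lp^0$ and $\lp^1$ in the nine or so nonzero bands of the product matrix yields a system of equations relating $(x,y)$ to $(y',x')$. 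The $\lp^1$-coefficient equations, together with the determinant identity $\det g(x)\det g(y) = \det g(y')\det g(x')$, will force $\prod x_j \prod y_j = \prod x'_j \prod y'_j$; together with constraint \eqref{eq_prod_x_y} this pins down $\prod x'_j = \prod x_j$ and $\prod y'_j = \prod y_j$ separately, which is what makes the nontrivial solution unique.

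Next I would solve the system. The subdiagonal entries of $g(x)g(y)$ are all $1$, which is automatically consistent; the main-diagonal and superdiagonal entries give equations like $x_j + y_j = x'_{\sigma(j)} + y'_{\sigma(j)}$-type relations and $x_j y_{j+1}$-type products that, after the index bookkeeping, become a linear recursion for the ratios $x'_j/x_j = \kappa_j/\kappa_{j+1}$. The cleanest way to produce the closed form for $\kappa_j$ is to observe that the equations say precisely that $\kappa_j := x'_j x'_{j+1} \cdots$ (a suitable partial product, or equivalently the quantity forcing $g(x')$ to absorb the data) satisfies a first-order linear difference equation $\kappa_j = y_j \kappa_{j+1} + (\text{boundary term involving } x)$, whose unique periodic solution is the geometric series in \eqref{eq_geom_one_row}. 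I would verify by direct substitution that \eqref{eq_geom_one_row} indeed satisfies $g(x)g(y) = g(y')g(x')$ — this is a finite check once the ansatz is in hand — and separately check that it satisfies \eqref{eq_prod_x_y}, using that $\kappa_j$ is $n$-periodic so the telescoping product $\prod_j (\kappa_j/\kappa_{j+1}) = 1$.

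To see there are exactly two solutions and no others, I would argue that the equation, for generic $x,y$, is equivalent (after eliminating) to a polynomial equation in a single auxiliary variable of degree $2$ — for instance, $\kappa_1$ is determined up to the quadratic coming from imposing both the recursion and the periodicity $\kappa_{n+1} = \kappa_1$. One root corresponds to the trivial solution, the other to \eqref{eq_geom_one_row}. The additional constraint \eqref{eq_prod_x_y} then selects the nontrivial root, since for generic $x,y$ the trivial solution has $\prod x'_j = \prod y_j \neq \prod x_j$.

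The main obstacle I anticipate is the index bookkeeping in the cyclic setting: writing down the product $g(x)g(y)$ carefully with all subscripts modulo $n$, correctly identifying which band carries the $\lp$-corner contribution as $n$ wraps around, and then disentangling the resulting coupled equations into the clean recursion for $\kappa_j$. A secondary subtlety is making precise the genericity hypothesis — one needs $x,y$ to avoid the vanishing locus of certain polynomials (so that $\kappa_j \neq 0$ and the two roots of the quadratic are distinct) — which I would state as ``sufficiently generic'' and not belabor. Once the recursion is extracted, everything else is either a short verification or a degree count.
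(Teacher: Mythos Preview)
Your approach is correct and is essentially the direct argument of the original references (Yamada, Lam--Pylyavskyy). The product $g(x)g(y)$ has entries $x_jy_j$ on the diagonal, $x_{j+1}+y_j$ on the subdiagonal (cyclically, including the $\lp$-corner), and $1$ on the sub-subdiagonal; matching with $g(y')g(x')$ gives exactly the system $x'_jy'_j = x_jy_j$ and $x'_j + y'_{j+1} = y_j + x_{j+1}$. Eliminating $y'_j$ yields a M\"obius recursion $x'_{j+1} = x_{j+1}y_{j+1}/(y_j + x_{j+1} - x'_j)$, and the periodicity constraint $x'_{n+1}=x'_1$ is then the fixed-point equation of a composite M\"obius map, hence quadratic --- so your degree-$2$ count is right. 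The telescoping argument for \eqref{eq_prod_x_y} is exactly as you say. (Minor imprecision: the sum-type relations are $x_{j+1}+y_j = y'_{j+1}+x'_j$, not same-index on both sides, but this does not affect your outline.)

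The paper takes a different route. Proposition~\ref{prop_geom_one_row} is cited as known and not proved directly; instead, \S\ref{sec_one} recovers the formula \eqref{eq_geom_one_row} as the specialization $\ell=k=1$ of the general geometric $R$-matrix, relying on Theorem~\ref{thm_hard} ($g\circ R = g$), whose proof occupies \S\ref{sec_pf1} and uses the full Grassmannian machinery. That route establishes much more --- the $R$-matrix for arbitrary rectangular shapes --- but is far heavier for this statement alone, and does not separately re-prove the ``exactly two solutions'' claim. Your direct argument is what one would write to prove Proposition~\ref{prop_geom_one_row} in isolation; the paper's contribution is to exhibit it as the simplest case of a uniform construction.
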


Note that the piecewise-linear map $\tw{R}$ in Proposition \ref{prop_comb_one_row} is the tropicalization of the rational map $R : (x,y) \mapsto (y',x')$, where $y',x'$ are defined by \eqref{eq_geom_one_row}\footnote{In the tropicalization, we replace the ``rational variables'' $x_j$ and $y_j$, which can be thought of as generic nonzero complex numbers, or indeterminates, with the ``combinatorial variables'' $a_j$ and $b_j$, which take on integer values.} (note also that \eqref{eq_prod_x_y} tropicalizes to the condition $\sum a_j = \sum a'_j, \sum b_j = \sum b_j'$, which says that the tableaux $T$ and $T'$ (resp., $U$ and $U'$) have the same length). Thus, the map $R$ is a geometric lift of the combinatorial $R$-matrix on pairs of one-row tableaux.

It is striking that the solution to a matrix equation also describes a combinatorial procedure for swapping pairs of tableaux. In fact, this example is just one instance of a larger phenomenon. Since the appearance of Kirillov--Berenstein's work on the Bender--Knuth involutions, a number of other combinatorial algorithms have been lifted to rational maps, including the Robinson--Schensted--Knuth correspondence \cite{Kir, NouYam, DanKosh} and rowmotion on posets \cite{EinPropp}.

One of the crowning achievements of the geometric lifting program is Berenstein and Kazhdan's theory of geometric crystals, which provides a framework for lifting the entire combinatorial structure of crystal bases \cite{BKI, BKII, BKshort}. Roughly speaking, a geometric crystal is a complex algebraic variety $X$, together with rational actions $e_i : \Cx \times X \rightarrow X$, which are called geometric crystal operators. The geometric crystal operators are required to satisfy rational lifts of the piecewise-linear relations satisfied by (combinatorial) crystal operators. In many cases, the geometric crystal operators are positive, and they tropicalize to piecewise-linear formulas for the combinatorial crystal operators $\tw{e}_i$ on a corresponding combinatorial crystal $B_X$; when this happens, we say that $X$ tropicalizes to $B_X$. For each reductive group $G$, Berenstein and Kazhdan \cite{BKII} constructed a geometric crystal on the flag variety\footnote{The geometric crystal is actually constructed on $G/B \times T$, where $T$ is a maximal torus.} of $G$ which lifts the crystals associated to all the irreducible representations of $G^\vee$, the Langlands dual group. These geometric crystals provide a new method for constructing and studying crystals; in addition, they have proved useful beyond combinatorics, with applications to quantum cohomology and mirror symmetry, Brownian motion on Lie groups, and the local Langlands conjectures \cite{LamTemp, Chh, BravKazh, BKshort}.

Nakashima \cite{Nak} extended the definition of geometric crystals to the setting of Kac--Moody (and in particular, affine) Lie algebras. There has been a concerted effort to construct geometric lifts of Kirillov--Reshetikhin crystals, and to find compatible lifts of the associated combinatorial $R$-matrices. In the case of the one-row affine type $A$ crystals mentioned above, it is straightforward to define a corresponding geometric crystal, and the rational map from Proposition \ref{prop_geom_one_row} turns out to be an isomorphism of geometric crystals (see the Introduction of \cite{KOTY}). Kuniba--Okado--Takagi--Yamada and Kashiwara--Nakashima--Okado have constructed a geometric crystal and compatible geometric $R$-matrix for the analogue of one-row KR crystals in types $D_n^{(1)}, B_n^{(1)}, D_{n+1}^{(2)}, A_{2n-1}^{(2)}, A_{2n}^{(2)}$ \cite{KOTY, KNO2}.

Since geometric crystals are the natural setting in which to lift the combinatorial $R$-matrix, the first step in our project was to construct a geometric crystal which tropicalizes to the disjoint union of the KR crystals $B^{k,L}, L \geq 0$. This was carried out in \cite{F1}. The main ideas used in the construction of this geometric crystal are also central to the construction of the geometric $R$-matrix; we give an overview of these ideas in \S \ref{sec_intro_cyclic} and \S \ref{sec_intro_unip}.

\subsection{Cyclic symmetry and the Grassmannian}
\label{sec_intro_cyclic}

We saw above that the geometric $R$-matrix in the one-row case is the solution to a matrix equation. The same is true in the general case, and in fact, the full geometric crystal structure is determined from the appropriate generalization of the matrix $g(x)$ in \eqref{eq_g_one_row}. Before describing this matrix, we introduce coordinates on semistandard rectangular tableaux with $k$ rows (and entries at most $n$). The entries in the $i^{th}$ row of such a tableau must lie in the interval $\{i, i+1, \ldots, i+n-k\}$. If we fix the row length $L$, then the $i^{th}$ row is determined by the $n-k$ integers $b_{ii}, b_{i,i+1}, \ldots, b_{i,i+n-k-1}$, where $b_{ij}$ is the number of $j$'s in the $i^{th}$ row. Thus, a $k$-row rectangular tableau is determined by $k(n-k)$ integers $b_{ij}$, plus the row length $L$. (These integers must satisfy certain inequalities, such as non-negativity, but we ignore the inequalities in this discussion; see \S \ref{sec_k_rect} for full details.)

To lift the combinatorial $R$-matrix in the one-row case, we replaced the integer coordinates $b_1, \ldots, b_n$ with rational coordinates $x_1, \ldots, x_n$. In the $k$-row case, we replace $b_{ij}$ with $x_{ij}$, and the row length $L$ with the rational coordinate $t$. It turns out that the coordinates $(x_{ij},t)$ are not well-suited to defining the generalization of the matrix \eqref{eq_g_one_row}. In the $k = 1$ case, the coordinates $x_1, \ldots, x_n$ have a simple cyclic symmetry of order $n$, which is reflected in the matrix \eqref{eq_g_one_row}.\footnote{To see this symmetry in the matrix, one must ``unfold'' $g(x)$ into an infinite periodic matrix which repeats the sequence $x_1, \ldots, x_n$ along the main diagonal, and has an infinite diagonal of 1's just below the main diagonal. See \S \ref{sec_unfold} for the precise definition of ``unfolding.''} For $k > 1$, the coordinates $(x_{ij}, t)$ do not have any obvious cyclic symmetry. There is, however, a ``hidden'' cyclic symmetry coming from Sch\"utzenberger promotion, which has order $n$ on rectangular tableaux with entries at most $n$. The key to defining the analogue of \eqref{eq_g_one_row} is to use an alternative set of coordinates which makes the action of promotion transparent. This alternative set of coordinates comes from the Grassmannian.

Let $\Gr(n-k,n)$ denote the Grassmannian of $(n-k)$-dimensional subspaces in $\bbC^n$. In \cite{F1}, borrowing a construction from the work of Lusztig and Berenstein--Fomin--Zelevinsky on total positivity \cite{Lusz, BFZ}, we defined a birational isomorphism from the $k(n-k)$ rational coordinates $x_{ij}$ to a subspace $N \in \Gr(n-k,n)$. Write $\X{n-k}$ for the variety $\Gr(n-k,n) \times \Cx$, and let $\Theta_{n-k} : \bbC^{k(n-k)+1} \rightarrow \X{n-k}$ denote the birational map given by
\[
(x_{ij}, t) \mapsto (N,t) =: N|t
\]
(see \S \ref{sec_GT_param} for details). The Grassmannian has a natural cyclic symmetry induced by rotating a basis of the underlying $n$-dimensional vector space. By ``twisting'' this symmetry by the parameter $t$, we defined a map $\PR : N|t \mapsto N'|t$, and we showed that the composition $\Theta_{n-k}^{-1} \circ \PR \circ \, \Theta_{n-k}$ tropicalizes to a piecewise-linear formula for promotion on $k$-row rectangular tableaux. Since $\Theta_{n-k}$ is a birational isomorphism (and there is a simple formula for its inverse), one may do computations in terms of Pl\"{u}cker coordinates on the Grassmannian, and then translate back to the coordinates $(x_{ij}, t)$ at the end.

The analogue of \eqref{eq_g_one_row} in the general $k$-row case is a matrix in $\GL_n(\bbC(\lp))$ filled with ratios of Pl\"{u}cker coordinates of an $(n-k)$-dimensional subspace. When $n = 5$ and $k = 3$, the matrix looks like this:
\begin{equation}
\label{eq_ex_2_5}
g(N|t) = \left(
\begin{array}{ccccc}
\dfrac{P_{15}}{P_{45}} & 0 & \lp & \lp \dfrac{P_{13}}{P_{23}} & \lp \dfrac{P_{14}}{P_{34}} \smallskip \\
\dfrac{P_{25}}{P_{45}} & \dfrac{P_{12}}{P_{15}} & 0 & \lp & \lp \dfrac{P_{24}}{P_{34}} \smallskip \\
\dfrac{P_{35}}{P_{45}} & \dfrac{P_{13}}{P_{15}} & t \dfrac{P_{23}}{P_{12}} & 0 & \lp \smallskip \\
1 & \dfrac{P_{14}}{P_{15}} & t \dfrac{P_{24}}{P_{12}} & t \dfrac{P_{34}}{P_{23}} & 0 \smallskip \\
0 & 1 & t \dfrac{P_{25}}{P_{12}} & t \dfrac{P_{35}}{P_{23}} & t \dfrac{P_{45}}{P_{34}}
\end{array}
\right).
\end{equation}
Here $P_I$ is the $I^{th}$ Pl\"{u}cker coordinate of the two-dimensional subspace $N$. See Definition \ref{defn_g} for the general definition of $g(N|t)$; note that the one-row case corresponds to $\X{n-1,n}$.

Suppose $M|s \in \X{k_1}$ and $N|t \in \X{k_2}$. As in the one-row case, we seek a solution to the matrix equation
\begin{equation}
\label{eq_g_k_row}
g(M|s)g(N|t) = g(N'|t)g(M'|s),
\end{equation}
where $N' \in \Gr(k_2,n)$ and $M' \in \Gr(k_1,n)$. Using properties of the Grassmannian and linear algebra, we show that for sufficiently generic $M, N, s, t$, there is a unique candidate for the solution to \eqref{eq_g_k_row} (Lemma \ref{lem_recover_N_1}, Corollary \ref{cor_N_i_unique}). We define the geometric $R$-matrix to be the map
\[
R : (M|s, N|t) \mapsto (N'|t, M'|s)
\]
given by this unique candidate. The main technical results of this paper are
\begin{itemize}
\item Theorem \ref{thm_hard}, which states that $R$ does in fact give a solution to \eqref{eq_g_k_row};
\item Theorem \ref{thm_R_posit}, which states that $R$ is positive, in the sense that the map
\begin{equation*}
(\Theta_{k_2}^{-1} \times \Theta_{k_1}^{-1}) \circ R \circ (\Theta_{k_1} \times \Theta_{k_2}) : ((x_{ij}, s), (y_{ij}, t)) \mapsto ((y'_{ij}, t), (x'_{ij}, s))
\end{equation*}
is given by positive rational functions in $x_{ij}, y_{ij}, s,$ and $t$.
\end{itemize}
The latter result shows that the geometric $R$-matrix can be tropicalized, and the former result is the key to showing that $R$ commutes with the geometric crystal operators.

\subsection{Unipotent crystals and the loop group}
\label{sec_intro_unip}

Why should the geometric $R$-matrix satisfy a matrix equation? One explanation comes from the notion of unipotent crystals (for an alternative explanation, based on the combinatorial description of $\tw{R}$, see Remark \ref{rmk_tab_mat_tab_prod}). Let $G$ be a reductive group, $B^-$ a fixed Borel subgroup, and $U$ the unipotent radical of the opposite Borel. In the case $G = \GL_n(\bbC)$, one can take $B^-$ to be the lower triangular matrices and $U$ the upper uni-triangular matrices. Berenstein and Kazhdan \cite{BKI} gave $B^-$ a geometric crystal structure in which the geometric crystal operator $e_i$ is given by simultaneous left and right multiplication by certain elements of the one parameter subgroup in $U$ corresponding to the $i^{th}$ simple root. They defined a unipotent crystal to be a pair $(X,g)$, where $X$ is a variety which carries a rational action of $U$, and $g : X \rightarrow B^-$ is a rational map which is ``compatible'' with the $U$-action (see \S \ref{sec_unip} for details). A unipotent crystal $(X,g)$ induces a geometric crystal on $X$, in such a way that $g$ intertwines the geometric crystal operators on $X$ and $B^-$ (i.e., $g e_i = e_i g$). Furthermore, if $(X,g)$ and $(Y,g)$ are unipotent crystals, then $(X \times Y, g)$ is a unipotent crystal, where
\begin{equation}
\label{eq_g_prod_defn}
g(x,y) := g(x)g(y).
\end{equation}
This unipotent crystal induces a geometric crystal on the product $X \times Y$, and if $X$ and $Y$ tropicalize to crystals $B_X, B_Y$, then $X \times Y$ tropicalizes to the tensor product $B_X \otimes B_Y$.

In our affine type $A$ setting, the appropriate analogue of the reductive group $G$ is the loop group $\GL_n(\bbC(\lp))$, which consists of invertible $n \times n$ matrices over the field of rational functions in an indeterminate $\lp$. We take $B^-$ to be a certain ``lower triangular'' submonoid of the loop group, and $U$ an ``upper uni-triangular'' subgroup (this triangularity refers to the ``unfolded'' version of the matrices; see \S \ref{sec_defn_unip}). Berenstein and Kazhdan's theory of unipotent crystals extends essentially unchanged to this setting.

We showed in \cite{F1} that the map $g : \X{k} \rightarrow B^-$ discussed above makes $\X{k}$ into an affine type $A$ unipotent crystal. This explains why the geometric $R$-matrix ought to provide a solution to the matrix equation \eqref{eq_g_k_row}. Indeed, the geometric $R$-matrix is supposed to be a map $R : \X{k_1} \times \X{k_2} \rightarrow \X{k_2} \times \X{k_1}$ which commutes with the geometric crystal operators. Equation \eqref{eq_g_k_row} says that $g \circ R = g$; if this is satisfied, then since $g$ commutes with the geometric crystal operators, we have
\begin{equation}
\label{eq_g_e_R}
g \circ e_iR = g \circ Re_i.
\end{equation}
By the uniqueness of the solution to \eqref{eq_g_k_row}, \eqref{eq_g_e_R} implies that $R$ commutes with $e_i$.

We remark that our approach to the geometric $R$-matrix is similar in some important respects to that of \cite{KOTY, KNO2}. In particular, the matrix $g(N|t)$ is the analogue of the ``matrix realization'' (or ``$M$-matrix'') in those works, and our use of the uniqueness of the solution to \eqref{eq_g_k_row} to prove properties of the geometric $R$-matrix closely resembles several proofs in \cite[\S 4]{KOTY}.

\subsection{Applications and future directions}
\label{sec_intro_apps}

The deepest property of the combinatorial $R$-matrix is that it satisfies the Yang--Baxter (or braid) relation. In \S \ref{sec_R_props}, we show that the Yang--Baxter relation for the geometric $R$-matrix follows easily from the fact that the geometric $R$-matrix gives the unique solution to the matrix equation \eqref{eq_g_k_row}. By tropicalizing, we obtain a new proof of the Yang--Baxter relation for the combinatorial $R$-matrix.

The one-row geometric $R$-matrix of Proposition \ref{prop_geom_one_row} has appeared in a number of different contexts \cite{BravKazh, Et, Yam, LPwhirl, LPnet}. Because it is an involutive solution to the Yang--Baxter equation, it induces a birational action of the symmetric group $S_m$ on the field of rational functions in $mn$ variables. Lam and Pylyavskyy called the polynomial invariants of this action loop symmetric functions, and they showed that these invariants have many properties analogous to those of symmetric functions \cite{LPwhirl, LamICCM}. We expect that the more general geometric $R$-matrix constructed here will have applications to loop symmetric functions.

In fact, our original motivation for lifting the combinatorial $R$-matrix comes from a conjectural connection between loop symmetric functions and the above-mentioned box-ball system. Lam--Pylyavskyy--Sakamoto \cite{LPS} conjectured that the tropicalization of certain loop symmetric functions gives a formula for the scattering of an initial configuration of balls into solitons. They were able to prove the first case of their conjecture using the one-row geometric $R$-matrix. To extend their method to prove the full conjecture, one needs a lift of the combinatorial $R$-matrix in the case where one of the tableaux has more than one row. We are optimistic that our general geometric $R$-matrix can be used to prove the conjecture in full generality.

Another interesting feature of affine crystal theory is the (co)energy function on tensor products of Kirillov--Reshetikhin crystals. Lam and Pylyavskyy showed that a certain ``stretched staircase'' loop Schur function tropicalizes to the coenergy function on tensor products of arbitrarily many one-row tableaux \cite{LPenergy}. As an application of our setup, we show in \S \ref{sec_coenergy} that a minor of the matrix $g(M|s)g(N|t)$ tropicalizes to the coenergy function on tensor products of two arbitrary rectangular tableaux. It would be interesting to find a lift of the coenergy function on tensor products of more than two rectangular tableaux which simultaneously generalizes this minor and the ``stretched staircase'' loop Schur function.

We hope that our methods can be extended to lift Kirillov--Reshetikhin crystals and their combinatorial $R$-matrices in other affine types, beyond the analogue of the one-row case. One potential difficulty is that most KR crystals outside of type $A_{n-1}^{(1)}$ are reducible as classical crystals. We speculate that this will make it necessary to use ``isotropic partial flag varieties,'' rather than just ``isotropic Grassmannians,'' in the other types.

\subsection{Organization}
\label{sec_intro_org}

In \S \ref{sec_comb_R}, we provide background on the tensor product of crystals, the combinatorial $R$-matrix, the coenergy function, and Gelfand--Tsetlin patterns. In \S \ref{sec_geom_unip}, we review the definitions of geometric and unipotent crystals and their products, and we recall from \cite{F1} the geometric and unipotent crystal structure on the variety $\X{k}$. We also recall three symmetries of $\X{k}$ ($\PR$, $S$, and $D$) which play a crucial role in this paper. In \S \ref{sec_trop}, we review the parametrization of $\X{k}$ by the rational coordinates $(x_{ij}, t)$ mentioned above. We define notions of positive varieties and positive rational maps (following \cite{BKII}), and we show that for the variety $\X{k}$, the general definition of positivity is equivalent to a more intuitive notion of ``Pl\"{u}cker positivity.'' We give a formal definition of tropicalization, and we state the precise sense in which products of the geometric crystals $\X{k}$ tropicalize to products of the combinatorial crystals $\bigsqcup_{L \geq 0} B^{n-k,L}$ (Theorem \ref{thm_recover_crystals}).

In \S \ref{sec_geom_R}, we define the geometric $R$-matrix. We state Theorem \ref{thm_hard}, which says that $R$ gives a solution to the matrix equation \eqref{eq_g_k_row}, and Theorem \ref{thm_R_posit}, which says that $R$ is positive. We use these theorems to prove that $R$ is an isomorphism of geometric crystals, that $R$ commutes with the symmetries $\PR$, $S$, and $D$, that $R$ satisfies the Yang--Baxter relation, and that $R$ tropicalizes to $\tw{R}$ (Theorems \ref{thm_master}, \ref{thm_recover_comb_R}). In \S \ref{sec_coenergy}, we define a notion of geometric coenergy function, and we exhibit such a function on $\X{k_1} \times \X{k_2}$. In \S \ref{sec_one}, we consider the case where the first tableau has one row. We obtain an explicit formula for the geometric $R$-matrix in this case, and we show that the formula reduces to that of Proposition \ref{prop_geom_one_row} when the second tableau also has one row. We also verify our results in a small example. We prove Theorems \ref{thm_R_posit} and \ref{thm_hard} in \S 8 and \S 9, respectively.

We include an Appendix which reviews the representation of matrices by planar networks and the Lindstr\"{o}m Lemma.

\subsection{Notation}
\label{sec_intro_notation}

Throughout this paper, we fix an integer $n \geq 2$. For integers $i$ and $j$, we write
\[
[i,j] = \{m \in \mathbb{Z} \, | \, i \leq m \leq j\}.
\]
We often abbreviate $[1,j]$ to $[j]$. We write ${[n] \choose k}$ for the set of $k$-element subsets (or {\em $k$-subsets}) of $[n]$, and $|J|$ for the cardinality of a set $J$.

Given a matrix $X$ and two subsets $I,J$, we write $X_{I,J}$ to denote the submatrix using the rows in $I$ and the columns in $J$. If $|I| = |J|$, we write
\[
\Delta_{I,J}(X) = \det(X_{I,J}).
\]

We use the term \emph{upper} (resp., \emph{lower}) \emph{uni-triangular} to refer to matrices with zeroes below (resp., above) the main diagonal, and 1's on the main diagonal. We write $\Cx$ for the multiplicative group of nonzero complex numbers. Almost all the maps between algebraic varieties appearing in this paper are rational, so we write them with solid arrows (e.g., $h : X \rightarrow Y$), rather than dotted arrows.

\subsection*{Acknowledgments}

I owe a great deal of thanks to my advisor, Thomas Lam, for introducing me to total positivity, the combinatorial $R$-matrix, and the idea of geometric lifting, and for many helpful conversations and insightful suggestions, without which this project would not have been possible. My thanks also to Thomas Lam and Sergey Fomin for helpful comments on an earlier version of this paper.

\section{Combinatorial background}
\label{sec_comb_R}

\subsection{Affine crystal structure on rectangular tableaux}
\label{sec_affine_cryst}

A {\em type $A_{n-1}^{(1)}$ crystal} consists of an underlying set $B$, together with
\begin{itemize}
\item a \emph{weight map} $\tw{\gamma} : B \rightarrow (\bbZ_{\geq 0})^n$;
\item for each $i \in \Zn$, functions $\tep_i, \tph_i : B \rightarrow \bbZ_{\geq 0}$;
\item for each $i \in \Zn$, a \emph{crystal operator} $\te_i : B \rightarrow B \sqcup \{0\}$.
\end{itemize}
We say that $\te_i(b)$ is \emph{defined} (resp., \emph{undefined}) if $\te_i(b) \in B$ (resp., $\te_i(b) = 0$). The maps $\tg, \tep_i, \tph_i, \te_i$ must satisfy various axioms (see, e.g., \cite{BumSch}), but we will not use the axioms in this paper. An {\em isomorphism} of crystals $A$ and $B$ is a bijection $\tw{\phi} : A \rightarrow B$ such that $\tw{\rho} \circ \tw{\phi} = \tw{\rho}$ for $\tw{\rho} = \tg, \tep_i, \tph_i$, and $\te_i \circ \tw{\phi} = \tw{\phi} \circ \te_i$, where $\tw{\phi}(0) := 0$.

Let $\lambda$ be a partition with at most $n$ parts. A {\em semistandard Young tableau (SSYT) of shape $\lambda$} is a filling of the Young diagram of $\lambda$ with entries in $[n]$, such that the rows are weakly increasing, and the columns are strictly increasing. We will often refer to these objects simply as \emph{tableaux}. We write $B(\lp)$ to denote the set of SSYTs of shape $\lp$.

For $k \in [n-1]$ and $L \geq 0$, define $B^{k,L} := B(L^k)$, the set of SSYTs (with entries in $[n]$) whose shape is the $k \times L$ rectangle. By convention, $B^{k,0}$ consists of a single ``empty tableau.'' Each $B^{k,L}$ admits a type $A_{n-1}^{(1)}$ crystal structure, corresponding to the crystal basis of a {\em Kirillov--Reshetikhin module}, a finite-dimensional representation of $U_q'(\wh{\mathfrak{sl}}_n)$. We refer to the crystals $B^{k,L}$ as {\em Kirillov--Reshetikhin (KR) crystals}. The map $\tw{\gamma}$ assigns to each tableau its {\em weight} (or {\em content}), i.e., the vector $(a_1, \ldots, a_n)$, where $a_i$ is the number of $i$'s in the tableau. Combinatorial descriptions of the maps $\tep_i, \tph_i, \te_i$ on rectangular tableaux can be found in \cite{Shim,F1}.

\subsection{The combinatorial $R$-matrix}
\label{sec_combinatorial_R}

\begin{defn}
Given two type $A_{n-1}^{(1)}$ crystals $A,B$, their tensor product $A \otimes B$ is defined as follows. The underlying set is the Cartesian product $A \times B$, whose elements we denote by $a \otimes b$. The crystal structure is defined by\footnote{We use the tensor product convention that is compatible with tableau combinatorics and with the product of geometric crystals, as in \cite{Shim, BKII}; Kashiwara's original convention interchanges the roles of $a$ and $b$.}
\begin{itemize}
\item $\tg(a \otimes b) = \tg(a) + \tg(b)$;
\item $\tep_i(a \otimes b) = \tep_i(b) + \max(0, \tep_i(a) - \tph_i(b))$;
\item $\tph_i(a \otimes b) = \tph_i(a) + \max(0, \tph_i(b) - \tep_i(a))$;
\item $\te_i(a \otimes b) = \begin{cases}
\te_i(a) \otimes b & \text{ if } \tep_i(a) > \tph_i(b) \\
a \otimes \te_i(b) & \text{ if } \tep_i(a) \leq \tph_i(b).
\end{cases}
$
\end{itemize}
In the definition of $\te_i$, we take $0 \otimes b = a \otimes 0 = 0$.
\end{defn}

The tensor product of crystals is associative, but not commutative. In the case of the Kirillov--Reshetikhin crystals $B^{k,L}$, however, there is a unique crystal isomorphism $\tw{R} : B^{k_1, L_1} \otimes B^{k_2, L_2} \rightarrow B^{k_2,L_2} \otimes B^{k_1,L_1}$, called the {\em combinatorial $R$-matrix}. The existence and uniqueness of this isomorphism is proved using quantum groups (see \cite[Thm 3.19]{Shim}). We now describe how this map acts on tableaux, following Shimozono \cite{Shim}.

Lascoux and Sch\"utzenberger introduced an associative {\em tableau product} on the set of semistandard tableaux \cite{LasSch}. Given two tableaux $T$ and $U$, the product $T * U$ may be defined as the rectification of the skew-tableau obtained by placing $U$ to the northeast of $T$, as shown here:
\begin{center}
\begin{tikzpicture}[scale=0.8]
\draw (2,-1) -- (2,-1.5) -- (1,-1.5) -- (1,-2) -- (0,-2) -- (0,-1) -- (2,-1);
\draw (2,-1) -- (2,0) -- (3,0) -- (3,-.5) -- (2.6,-.5) -- (2.6,-1) -- (2,-1);
\draw (0.6,-1.5) node {$T$};
\draw (2.3,-.4) node {$U$};
\end{tikzpicture}
\end{center}
The rectification can be computed using Sch\"{u}tzenberger's jeu-de-taquin slides or Schensted's row insertion (we refer the reader to \cite{Ful} for details).

\begin{prop}
\label{prop_comb_R}
Suppose $(T,U) \in B^{k_1,L_1} \otimes B^{k_2,L_2}$.
\begin{enumerate}
\item There is a unique pair $(U',T') \in B^{k_2,L_2} \times B^{k_1,L_1}$ such that $T*U = U'*T'$.
\item The combinatorial $R$-matrix is given by $\tw{R}(T,U) = (U',T')$.
\end{enumerate}
\end{prop}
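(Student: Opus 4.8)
The plan is to reduce both parts to standard properties of the plactic monoid together with the multiplicity-freeness of tensor products of rectangular crystals. Write $\lambda = (L_1^{k_1})$ and $\mu = (L_2^{k_2})$. Recall that $T*U$ is the rectification appearing in the definition of the tableau product, so that the map
\[
\pi\colon B^{k_1,L_1}\otimes B^{k_2,L_2}\longrightarrow \bigsqcup_\nu B(\nu), \qquad T\otimes U\longmapsto T*U,
\]
is ``concatenate reading words, then rectify.'' The starting point would be the standard fact (see e.g.\ \cite{Ful,BumSch}) that $\pi$ is a morphism of $\mathfrak{sl}_n$-crystals: it preserves $\tw{\gamma}$ and intertwines $\tw{e}_i,\tw{f}_i$ for $i\in[1,n-1]$, including the cases in which these operators produce $0$. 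Since $\pi$ preserves weights, a connected component $C$ of the source --- which, being an $\mathfrak{sl}_n$-crystal of this type, is isomorphic to $B(\nu)$ for its highest weight $\nu$ --- is carried into the single component $B(\nu)$ of the target; and since a weight-preserving endomorphism of the connected crystal $B(\nu)$ fixing its highest-weight element is the identity, $\pi$ restricts to an \emph{isomorphism} $C\xrightarrow{\,\sim\,}B(\nu)$. Therefore $\pi$ is injective if and only if $B^{k_1,L_1}\otimes B^{k_2,L_2}$ is multiplicity-free as an $\mathfrak{sl}_n$-crystal, and here I would invoke the classical fact that the product of two rectangular Schur functions is multiplicity-free. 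This yields injectivity of $\pi$ and, at the same time, $\operatorname{im}(\pi)=\bigsqcup_{\nu\,:\,c^\nu_{\lambda\mu}\geq1}B(\nu)$.

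For part (1), apply the same discussion to $\pi'\colon B^{k_2,L_2}\otimes B^{k_1,L_1}\to\bigsqcup_\nu B(\nu)$, $U'\otimes T'\mapsto U'*T'$: it is injective with image $\bigsqcup_{\nu\,:\,c^\nu_{\mu\lambda}\geq1}B(\nu)$. By the symmetry $c^\nu_{\lambda\mu}=c^\nu_{\mu\lambda}$ of Littlewood--Richardson coefficients, $\operatorname{im}(\pi)=\operatorname{im}(\pi')$. Hence $T*U=\pi(T\otimes U)$ lies in $\operatorname{im}(\pi')$, so a pair $(U',T')\in B^{k_2,L_2}\times B^{k_1,L_1}$ with $U'*T'=T*U$ exists, and it is unique because $\pi'$ is injective.

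For part (2), the map $R_0:=(\pi')^{-1}\circ\pi\colon(T,U)\mapsto(U',T')$ is a composite of isomorphisms of $\mathfrak{sl}_n$-crystals onto the common set $\operatorname{im}(\pi)=\operatorname{im}(\pi')$, hence is an isomorphism of $\mathfrak{sl}_n$-crystals $B^{k_1,L_1}\otimes B^{k_2,L_2}\to B^{k_2,L_2}\otimes B^{k_1,L_1}$. On the other hand, $\tw{R}$ is by definition an isomorphism of $A_{n-1}^{(1)}$-crystals, hence a fortiori of $\mathfrak{sl}_n$-crystals. Finally, multiplicity-freeness forces such an $\mathfrak{sl}_n$-crystal isomorphism to be unique: it must carry each component of the source to the unique component of the target with the same highest weight, and an isomorphism between two copies of a connected crystal $B(\nu)$ is unique. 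Therefore $\tw{R}=R_0$, which is exactly the assertion of part (2).

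I expect the main obstacle to be the foundational input that $\pi$ is a crystal morphism intertwining $\tw{e}_i,\tw{f}_i$ on the nose --- that is, the compatibility of the plactic (Knuth) relations with the crystal operators, which is standard but carries the real content --- together with the multiplicity-freeness of rectangle $\times$ rectangle, which is precisely what lets one identify $R_0$ with $\tw{R}$ without ever checking directly that $R_0$ commutes with the affine operators $\tw{e}_0,\tw{f}_0$. If one instead wanted a proof that exhibits the affine structure, the alternative and considerably more laborious route would be to verify with jeu de taquin that $R_0$ commutes with Sch\"utzenberger promotion, and hence with $\tw{e}_0$, since $\tw{e}_0$ is the conjugate of $\tw{e}_1$ by promotion.
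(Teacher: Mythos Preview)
Your proof is correct and takes essentially the same approach as the paper: both arguments rest on the multiplicity-freeness of the product of two rectangular Schur functions together with the compatibility of the tableau product with the classical crystal operators. The only cosmetic difference is that for part (2) the paper invokes \cite[Lem.~3.8]{Shim} (any bijection commuting with $\tw{e}_1,\ldots,\tw{e}_{n-1}$ must preserve the tableau product) and then appeals to the uniqueness in part (1), whereas you package the same content as ``multiplicity-freeness forces the $\mathfrak{sl}_n$-crystal isomorphism to be unique''; these are two phrasings of the same fact.
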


\begin{sproof}
The Littlewood-Richardson coefficient $c_{\lambda \mu}^{\nu}$ is equal to the number of pairs $(T,U) \in B(\lambda) \times B(\mu)$ such that $T*U = V$, where $V$ is a fixed element of $B(\nu)$ (see \cite[\S 5.1, Cor. 2]{Ful}). If $\lambda$ and $\mu$ are rectangles, then the product of Schur functions $s_\lambda s_\mu$ is multiplicity-free (see \cite{StemSchur}). Thus, there is exactly one pair $(U',T') \in B^{k_2,L_2} \times B^{k_1,L_1}$ such that $U' * T' = T * U$. This proves (1).

By \cite[Lem. 3.8]{Shim}, if $\lp$ and $\mu$ are arbitrary partitions and $\tw{h} : B(\lp) \times B(\mu) \rightarrow B(\mu) \times B(\lp)$ is a bijection which commutes with the classical crystal operators $\tw{e}_1, \ldots, \tw{e}_{n-1}$, then $\tw{h}(T,U) = (U',T')$ implies $T * U = U' * T'$. Thus, (2) follows from (1) and the existence of $\tw{R}$.
\end{sproof}

A sample computation of the map $\tw{R}$ appeared in Example \ref{ex_comb_R}.

\begin{prop}
\label{prop_comb_YB}
Let $A,B,C$ be three Kirillov--Reshetikhin crystals of type $A_{n-1}^{(1)}$.
\begin{enumerate}
\item The map $\tw{R}^2 : A \otimes B \rightarrow A \otimes B$ is the identity.
\item The combinatorial $R$-matrix satisfies the Yang--Baxter relation. That is, if $\tw{R}_1 : A \otimes B \otimes C \rightarrow B \otimes A \otimes C$ is the map which applies $\tw{R}$ to the first two factors and does nothing to the third factor, and $\tw{R}_2 : A \otimes B \otimes C \rightarrow A \otimes C \otimes B$ is the map which applies $\tw{R}$ to the last two factors and does nothing to the first factor, then
\[
\tw{R}_1 \circ \tw{R}_2 \circ \tw{R}_1 = \tw{R}_2 \circ \tw{R}_1 \circ \tw{R}_2
\]
as maps from $A \otimes B \otimes C \rightarrow C \otimes B \otimes A$.
\end{enumerate}
\end{prop}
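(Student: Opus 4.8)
The plan is to derive both parts from the characterization of $\tw{R}$ in Proposition \ref{prop_comb_R} --- that $\tw{R}(T,U)=(U',T')$ is the \emph{unique} pair with $T*U = U'*T'$ --- together with the associativity of the tableau product $*$. Part (1) is then immediate: given $T\in B^{k_1,L_1}$ and $U\in B^{k_2,L_2}$, write $\tw{R}(T,U)=(U',T')$, so $T*U = U'*T'$ with $U'\in B^{k_2,L_2}$ and $T'\in B^{k_1,L_1}$. Applying Proposition \ref{prop_comb_R} to the pair $(U',T')\in B^{k_2,L_2}\otimes B^{k_1,L_1}$, the image $\tw{R}(U',T')$ is the unique pair $(T'',U'')\in B^{k_1,L_1}\times B^{k_2,L_2}$ with $U'*T'=T''*U''$; since $(T,U)$ has exactly this property, uniqueness forces $(T'',U'')=(T,U)$, i.e. $\tw{R}^2(T,U)=(T,U)$. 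I expect this to be entirely routine.

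For part (2), set $A=B^{k_1,L_1}$, $B=B^{k_2,L_2}$, $C=B^{k_3,L_3}$ and fix $(T,U,V)\in A\otimes B\otimes C$. Chasing each of the three maps through, and repeatedly using the characterization of $\tw{R}$ and the associativity of $*$, one checks that
\[
\tw{R}_1\tw{R}_2\tw{R}_1(T,U,V)=(V_2,U_1',T_1'),\qquad \tw{R}_2\tw{R}_1\tw{R}_2(T,U,V)=(V_4,U_3,T_2'),
\]
where both triples lie in $C\times B\times A$ and satisfy $V_2*U_1'*T_1' = T*U*V = V_4*U_3*T_2'$. Thus both composites are bijections $A\otimes B\otimes C\to C\otimes B\otimes A$ that preserve the $*$-product; moreover each is a composition of crystal isomorphisms (applying the crystal isomorphism $\tw{R}$ to two tensor factors and the identity to the third is a crystal isomorphism of tensor products), hence is itself a crystal isomorphism $A\otimes B\otimes C\to C\otimes B\otimes A$. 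The Yang--Baxter relation will then follow once we know there is \emph{at most one} crystal isomorphism $A\otimes B\otimes C\to C\otimes B\otimes A$.

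The main obstacle is precisely this uniqueness. It is worth noting that preservation of the $*$-product and of the classical crystal structure alone is \emph{not} enough: for example, in $B^{1,1}\otimes B^{1,1}\otimes B^{1,1}$ the two classical highest-weight vectors of weight $(2,1)$, namely $\ytableaushort{1}\otimes\ytableaushort{2}\otimes\ytableaushort{1}$ and $\ytableaushort{2}\otimes\ytableaushort{1}\otimes\ytableaushort{1}$, have the same image under $*$. The affine crystal operator $\te_0$ is the essential ingredient. Since tensor products of Kirillov--Reshetikhin crystals are connected as type $A_{n-1}^{(1)}$ crystals --- indeed they are simple crystals in the sense of Akasaka and Kashiwara, being tensor products of simple crystals --- they are generated (under the crystal operators) by the unique element of their extremal weight, so any crystal automorphism is the identity. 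Hence $(\tw{R}_2\tw{R}_1\tw{R}_2)^{-1}\circ(\tw{R}_1\tw{R}_2\tw{R}_1)$ is an automorphism of the connected crystal $A\otimes B\otimes C$ and must be the identity, which is the Yang--Baxter relation. (Alternatively, part (2) is a standard consequence of the theory of quantum affine algebras, where it is the hexagon axiom for the braiding, and may simply be cited, e.g. from \cite{Shim}; and part (1) can be obtained the same way, since $\tw{R}^2$ is then an automorphism of the simple crystal $A\otimes B$ --- but the direct argument above is cleaner.)
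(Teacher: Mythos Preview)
Your proof is correct and aligns with the paper's treatment. For part (1), you use exactly the argument the paper indicates (``follows immediately from the description of $\tw{R}$ in Proposition~\ref{prop_comb_R}''). For part (2), the paper does not give a self-contained proof but points to the same key input you invoke: Akasaka--Kashiwara's result that tensor products of KR crystals are connected as affine crystals, which forces any crystal automorphism of $A\otimes B\otimes C$ to be the identity. Your detour through the $*$-product preservation, while correct and instructive (your counterexample nicely shows why classical connectedness is insufficient), is ultimately unused in the final argument; the paper skips this and cites connectedness directly, also mentioning Shimozono's cyclage argument and its own geometric proof in \S\ref{sec_trop_R} as alternatives.
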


The first statement follows immediately from the description of $\tw{R}$ in Proposition \ref{prop_comb_R}. There are several proofs of the Yang--Baxter relation. For instance, this relation is a consequence of Akasaka--Kashiwara's result that every tensor product $B^{k_1,L_1} \otimes \cdots \otimes B^{k_d,L_d}$ is connected (as an affine crystal), which is proved by analyzing the poles of the quantum $R$-matrix \cite{AkaKash}. Shimozono gave a combinatorial proof of the Yang--Baxter relation using a generalization of Lascoux and Sch\"utzenberger's cyclage poset \cite[Thm. 8(A3)]{ShimYB}. In \S \ref{sec_trop_R}, we give a new proof using the geometric $R$-matrix.

\subsection{The coenergy function}
\label{sec_comb_coenergy}

Another important element of affine crystal theory is the coenergy function.

\begin{defn}
\label{defn_comb_coenergy}
Let $A$ and $B$ be Kirillov--Reshetikhin crystals. A function $\tw{H} : A \otimes B \rightarrow \mathbb{Z}$ is a {\em coenergy function} if $\tw{H} \circ \tw{e}_i = \tw{e}_i$ for $i = 1, \ldots, n-1$, and $\tw{H}$ interacts with $\tw{e}_0$ as follows: if $a \otimes b \in A \otimes B$ and $\tw{R}(a \otimes b) = b' \otimes a'$, then
\begin{equation}
\label{eq_coenergy}
\tw{H}(\tw{e}_0(a \otimes b)) = \tw{H}(a \otimes b) + \begin{cases}
1 & \text{ if } \tep_0(a) > \tph_0(b) \text{ and } \tep_0(b') > \tph_0(a') \\
-1 & \text{ if } \tep_0(a) \leq \tph_0(b) \text{ and } \tep_0(b') \leq \tph_0(a')\\
0 & \text{ otherwise.}
\end{cases}
\end{equation}
\end{defn}

\begin{remark}
A function $\tw{H}$ is a coenergy function if and only if $-\tw{H}$ is an {\em energy function}, in the sense of \cite{KKMMNN1,Shim}. We have chosen to work with coenergy instead of energy because the coenergy function $\tw{E}$ defined below naturally arises as the tropicalization of a certain rational function on our geometric crystals.
\end{remark}

Given $T \in B^{k_1,L_1}$ and $U \in B^{k_2,L_2}$, define $\tw{E}(T \otimes U)$ to be the number of boxes in the tableau $T*U$ which are not in the first $\max(k_1, k_2)$ rows. It is clear from the nature of Schensted insertion that if $T_0$ is the classical highest weight element of $B^{k_1,L_1}$ (that is, the tableau whose $i^{th}$ row is filled with the number $i$), then
\begin{equation}
\label{eq_E_highest_wt}
\tw{E}(T_0 \otimes U) = 0 \text{ for all } U \in B^{k_2,L_2}.
\end{equation}

\begin{ex}
\label{ex_comb_coenergy}
Let $T$ and $U$ be the tableaux in Example \ref{ex_comb_R}. There are two boxes outside the first $\max(1,2)$ rows of $T * U$, so $\tw{E}(T \otimes U) = 2$.
\end{ex}

\begin{prop}
\label{prop_coenergy}
\
\begin{enumerate}
\item Up to a global additive constant, there is a unique (co)energy function on $B^{k_1,L_1} \otimes B^{k_2,L_2}$.
\item $\tw{E}$ is a coenergy function on $B^{k_1,L_1} \otimes B^{k_2,L_2}$.
\end{enumerate}
\end{prop}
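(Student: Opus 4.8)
The plan is to prove (1) and (2) separately, with (1) being essentially formal and (2) requiring a direct combinatorial verification of the defining properties from Definition \ref{defn_comb_coenergy}.

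For part (1): I would argue that $B^{k_1,L_1} \otimes B^{k_2,L_2}$ is a connected affine crystal. Connectedness follows from the Akasaka--Kashiwara result cited after Proposition \ref{prop_comb_YB} (every tensor product of KR crystals is connected as an affine crystal). Once connectedness is known, uniqueness up to an additive constant is immediate: if $\tw{H}_1$ and $\tw{H}_2$ are both (co)energy functions, then their difference is constant on $\tw{e}_i$-strings for all $i \in \Zn$ (the local change in $\tw{H}_1 - \tw{H}_2$ under each $\tw{e}_i$ is zero by the defining relations, using \eqref{eq_coenergy} for $i = 0$ and $\tw{H} \circ \tw{e}_i = \tw{e}_i$ for $i \neq 0$), hence constant on the whole crystal. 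The equivalence of coenergy and energy functions noted in the Remark reduces everything to the energy-function statement, which is standard in the literature; I could alternatively cite \cite{KKMMNN1} or \cite{Shim} directly for part (1).

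For part (2): I need to check the two conditions defining a coenergy function, namely $\tw{E} \circ \tw{e}_i = \tw{E}$ for $i \in [1,n-1]$, and the relation \eqref{eq_coenergy} governing the interaction with $\tw{e}_0$. For the classical operators $\tw{e}_i$ ($1 \leq i \leq n-1$): the key point is that the tableau product $T * U$ depends only on the Knuth (plactic) class of the row word of the skew tableau $U$-northeast-of-$T$, and the classical crystal operators on a tensor product act compatibly with concatenation of reading words and with Schensted insertion---this is precisely the content of the fact that $\tw{R}$ and $*$ commute with $\tw{e}_1, \dots, \tw{e}_{n-1}$ (used in the proof of Proposition \ref{prop_comb_R}, via \cite[Lem. 3.8]{Shim}). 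Concretely, applying $\tw{e}_i$ to $T \otimes U$ changes $T * U$ by applying $\tw{e}_i$ to the tableau $T*U$ itself; since $\tw{e}_i$ preserves the shape of a tableau, it preserves the number of boxes outside the first $\max(k_1,k_2)$ rows, so $\tw{E}$ is unchanged. For the $\tw{e}_0$ relation: recall from \S\ref{sec_affine_cryst} that $\tw{e}_0$ is conjugation of $\tw{e}_1$ by promotion (Shimozono), and that on a tensor product $\tw{e}_0(a\otimes b)$ acts on $a$ or on $b$ according to whether $\tep_0(a) > \tph_0(b)$ or $\tep_0(a) \leq \tph_0(b)$. I would track how the shape of $T*U$ changes. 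The cleanest route is: $\tw{E}(a \otimes b)$ can be rewritten using $\tw{R}$ as $\tw{E}(a\otimes b) = \tw{E}(b' \otimes a')$ where $\tw{R}(a\otimes b) = b'\otimes a'$ (since $T*U = U'*T'$ has the same shape), and then analyze the four cases of \eqref{eq_coenergy} by combining the tensor product rule for $\tw{e}_0$ on $a\otimes b$ with the tensor product rule applied to $b' \otimes a'$, using that $\tw{R}$ intertwines $\tw{e}_0$. In each case one checks the claimed change by $+1$, $-1$, or $0$ in the number of boxes below row $\max(k_1,k_2)$ in the product tableau; this amounts to understanding how promotion-conjugated $\tw{e}_1$ moves a box across the $\max(k_1,k_2)$-th row boundary, which it can do only when the ``active'' tensor factor is the one with the larger number of rows.

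The main obstacle I anticipate is the case analysis in the $\tw{e}_0$ relation: correctly bookkeeping, in terms of promotion and the combinatorial description of $\tep_0, \tph_0$ on rectangular tableaux, exactly when applying $\tw{e}_0$ adds or removes a box from the region below row $\max(k_1, k_2)$ of $T * U$. One should reduce this to a statement about the single tableau $T * U$: applying $\tw{e}_0$ to $T \otimes U$ has the effect of applying $\tw{e}_0$ (the promotion-conjugate of $\tw{e}_1$) to $T*U$, and the question becomes whether this operation increases, decreases, or preserves the count of boxes strictly below row $\max(k_1,k_2)$. Since promotion on a rectangle permutes the shape trivially, the real content is in the non-rectangular tableau $T*U$, and here I would use the explicit description of promotion via jeu-de-taquin to see that the box count below the boundary row changes by exactly the sign dictated by whether $\tw{e}_0$ acts on the ``first'' or ``second'' factor before and after $\tw{R}$---which is exactly the dichotomy recorded in \eqref{eq_coenergy}. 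Alternatively, if this direct approach becomes unwieldy, I would fall back on the characterization via highest-weight elements: by \eqref{eq_E_highest_wt}, $\tw{E}$ vanishes on $T_0 \otimes U$ for the classical highest weight $T_0$, and any two functions satisfying the coenergy relations and agreeing at one point of each classical component must agree, so it would suffice to verify \eqref{eq_coenergy} only along a path connecting classical highest weight elements, considerably shrinking the case analysis.
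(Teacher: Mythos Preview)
Your treatment of part (1) is essentially the paper's: uniqueness follows from connectedness of the tensor product as an affine crystal, and the paper simply cites \cite{KKMMNN1} and \cite{Shim} for this.

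For part (2), your approach diverges from the paper's. The paper does \emph{not} verify the $\tw{e}_0$ relation directly. Instead it introduces the companion function $\tw{F}(T\otimes U)$, the number of boxes of $T*U$ outside the first $\max(L_1,L_2)$ columns, cites \cite[Prop.~4.5 and (2.4)]{Shim} to conclude that $\tw{F}$ is an energy function, and then observes the elementary identity $\tw{E}+\tw{F}=\min(k_1,k_2)\min(L_1,L_2)$, whence $\tw{E}=-\tw{F}+\text{const}$ is a coenergy function. This bypasses the $\tw{e}_0$ case analysis entirely.

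Your direct route is viable in principle, but one step is wrong as stated: the assertion that ``applying $\tw{e}_0$ to $T\otimes U$ has the effect of applying $\tw{e}_0$ (the promotion-conjugate of $\tw{e}_1$) to $T*U$.'' The affine operator $\tw{e}_0$ on $B^{k,L}$ is defined via promotion on \emph{rectangular} tableaux; there is no intrinsic $\tw{e}_0$ on the (generally non-rectangular) tableau $T*U$ that makes this identification hold. What \emph{is} true is that $\tw{e}_0$ acts on one tensor factor (say $T\mapsto \tw{e}_0(T)$), and you must then compare the shapes of $T*U$ and $(\tw{e}_0 T)*U$ directly. So the case analysis you set up using $\tw{R}$ is the right framework, but you cannot short-circuit it via a single-tableau promotion argument. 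Carrying out that case analysis honestly is exactly the work the paper avoids by passing through $\tw{F}$.
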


\begin{sproof}
For part (1), see \cite[\S 4]{KKMMNN1} and \cite[\S 3.6]{Shim}. For (2), define $\tw{F}(T \otimes U)$ to be the number of boxes in $T*U$ that are not in the first $\max(L_1, L_2)$ columns. By \cite[Prop. 4.5 and (2.4)]{Shim}, $\tw{F}$ is an energy function. It is straightforward to show, using basic properties of jeu-de-taquin and Schensted insertion, that
\[
\tw{E}(T \otimes U) + \tw{F}(T \otimes U) = \min(k_1,k_2)\min(L_1,L_2),
\]
so $\tw{E}$ is a coenergy function.
\end{sproof}

\subsection{Gelfand--Tsetlin patterns and $k$-rectangles}
\label{sec_k_rect}

A {\em Gelfand--Tsetlin (GT) pattern} is a triangular array of nonnegative integers $(A_{ij})_{1 \leq i \leq j \leq n}$ satisfying the inequalities
\begin{equation}
\label{eq_GT_def}
A_{i,j+1} \geq A_{ij} \geq A_{i+1,j+1}
\end{equation}
for $1 \leq i \leq j \leq n-1$. There is a simple bijection between Gelfand--Tsetlin patterns and semistandard tableaux. Given a Gelfand--Tsetlin pattern $(A_{ij})$, the associated tableau $T$ is described as follows: the number of $j$'s in the $i^{th}$ row of $T$ is $A_{ij} - A_{i,j-1}$ (we set $A_{i,i-1} = 0$). Equivalently, the $j^{th}$ row $(A_{1j}, \ldots, A_{jj})$ of the pattern is the shape of $T_{\leq j}$, the part of $T$ obtained by removing numbers larger than $j$. Here is an example of the bijection, with $n=5$:

\begin{equation}
\label{eq_GT_ex}
\begin{array}{ccccccccccc}
&&&& 2 & \\
&&& 4 && 2\\
&&6 && 3 && 1 \\
&6 && 6 && 1 && 0 \\
6 && 6 && 6 && 0 && 0
\end{array}
\quad\quad
\longleftrightarrow \quad\quad
\ytableaushort{{1}{1}{2}{2}{3}{3}, {2}{2}{3}{4}{4}{4}, {3}{5}{5}{5}{5}{5}} \; .
\end{equation}

For $k \in [n-1]$, set
\begin{equation}
\label{eq_R_k}
R_k = \{(i,j) \: | \: 1 \leq i \leq k, \: i \leq j \leq i+n-k-1\},
\end{equation}
and define $\tT{k} = \mathbb{Z}^{R_k} \times \mathbb{Z} \cong \mathbb{Z}^{k(n-k)+1}$. We denote a point of $\tT{k}$ by $(B_{ij},L)$, where $(i,j)$ runs over $R_k$. Given $(B_{ij}, L) \in \tT{k}$, define a triangular array $(A_{ij})_{1 \leq i \leq j \leq n}$ by
\[
A_{ij} = \begin{cases}
B_{ij} & \text{ if } (i,j) \in R_k \\
L & \text{ if } j > i+n-k-1 \\
0 & \text{ if } j < i.
\end{cases}
\]

\begin{defn}
\label{defn_k_rect}
Define $B^k$ to be the set of $(B_{ij},L) \in \tT{k}$ such that $(A_{ij})$ is a Gelfand--Tsetlin pattern. We call an element of $B^k$ a {\em $k$-rectangle}, and we say that $(A_{ij})$ is the {\em associated Gelfand--Tsetlin pattern}.
\end{defn}

It is easy to see that the bijection between GT patterns and SSYTs restricts to a bijection between $k$-rectangles and rectangular tableaux with $k$ rows, with the coordinate $L$ giving the number of columns in the tableau. Thus, we identify
\[
B^k = \bigsqcup_{L=0}^\infty B^{k,L}.
\]
We view the integers $(B_{ij},L)$ as coordinates on the set of $k$-row rectangular SSYTs. Sometimes it will be more convenient to work with an alternative set of coordinates. For $1 \leq i \leq k$ and $i \leq j \leq i+n-k$, define
\begin{equation}
\label{eq_row_coords}
b_{ij} = B_{ij} - B_{i,j-1},
\end{equation}
where we use the convention that $B_{i,i-1} = 0$ and $B_{i,i+n-k} = L$ for all $i$. Note that if $(B_{ij}, L) \in B^k$, then $b_{ij}$ is the number of $j$'s in the $i^{th}$ row of the corresponding rectangular tableau; hence, these coordinates satisfy $\sum_j b_{ij} = L$ for $i = 1, \ldots, k$.

\section{Geometric and unipotent crystals}
\label{sec_geom_unip}

\subsection{Geometric crystals}
\label{sec_geom}

We recall the notions of type $A_{n-1}^{(1)}$ decorated geometric crystals and their product, following \cite{BKII, Nak}.

\begin{defn}
\label{defn_geom_precryst}
A {\em geometric pre-crystal (of type $A_{n-1}^{(1)}$)} consists of an irreducible complex algebraic (ind-)variety $X$, together with
\begin{itemize}
\item a rational map $\gamma : X \rightarrow (\Cx)^n$
\item for each $i \in \Zn$, rational functions $\vp_i, \ve_i : X \rightarrow \Cx$ which are not identically zero.\footnote{In \cite{BKII}, some of the $\vp_i$ and $\ve_i$ are allowed to be zero, but we will not need this more general setting.}
\item for each $i \in \Zn$, a rational unital\footnote{This means that $e_i(1,x)$ is defined (and thus equal to $x$) for all $x \in X$.} action $e_i: \Cx \times X \rightarrow X$. We will usually denote the image $e_i(c,x)$ by $e_i^c(x)$.
\end{itemize}
We call $e_i$ a \emph{geometric crystal operator}, and we usually denote its action by $e_i^c(x)$ instead of $e_i(c,x)$. These rational maps must satisfy the following identities (whenever both sides are defined):
\begin{itemize}
\item For $x \in X$ and $c \in \Cx$,
\[
\gamma(e_i^c(x)) = \alpha_i^\vee(c)\gamma(x), \quad \vp_i(e_i^c(x)) = c^{-1}\vp_i(x), \quad \ve_i(e_i^c(x)) = c\ve_i(x),
\]
where $\alpha_i^\vee(c) = (1, \ldots, c, c^{-1}, \ldots, 1)$, with $c$ in the $i^{th}$ component and $c^{-1}$ in the $(i+1)^{th}$ component (mod $n$).
\item For $x \in X$,
\[
\dfrac{\ve_i(x)}{\vp_i(x)} = \alpha_i(\gamma(x)),
\]
where $\alpha_i(z_1, \ldots, z_n) = \dfrac{z_i}{z_{i+1}}$, with subscripts interpreted mod $n$.
\end{itemize}
\end{defn}

\begin{defn}
\label{defn_geom_cryst}
A {\em geometric crystal} is a geometric pre-crystal which satisfies the following \emph{geometric Serre relations}:

If $n \geq 3$, then for each pair $i,j \in \mathbb{Z}/n\mathbb{Z}$, and $c_1, c_2 \in \Cx$, the actions $e_i, e_j$ satisfy
\begin{equation*}
\begin{array}{ll}
e_i^{c_1}e_j^{c_2} = e_j^{c_2}e_i^{c_1} & \text{if } | i - j | > 1 \\
e_i^{c_1}e_j^{c_1c_2}e_i^{c_2} = e_j^{c_2}e_i^{c_1c_2}e_j^{c_1} & \text{if } | i - j | = 1.
\end{array}
\end{equation*}
If $n = 2$, there is no Serre relation for $e_0$ and $e_1$, so a geometric pre-crystal of type $A_1^{(1)}$ is automatically a geometric crystal.
\end{defn}

\begin{defn}
\label{defn_morphism}
A {\em morphism} of geometric (pre-)crystals $X$ and $Y$ is a rational map $h : X \rightarrow Y$ such that $e_i h = h e_i$, and $\rho h = \rho$ for $\rho = \gamma, \vp_i, \ve_i$.
\end{defn}

\begin{defn}
A {\em decorated geometric (pre-)crystal} is a geometric (pre-)crystal $X$ equipped with a rational function $f : X \rightarrow \bbC$ such that
\begin{equation}
\label{eq_f_prop}
f(e_i^c(x)) = f(x) + \frac{c-1}{\vp_i(x)} + \frac{c^{-1} - 1}{\ve_i(x)}
\end{equation}
for $x \in X$ and $i \in \Zn$.
The function $f$ is called a {\em decoration}.
\end{defn}

The purpose of the decoration is to ``cut out'' a combinatorial crystal from the lattice $\bb{Z}^{\dim X}$ after tropicalizing (see Theorem \ref{thm_recover_crystals} for an example).

\begin{defn_prop}[Berenstein--Kazhdan {\cite[Lem. 2.34]{BKII}}]
\label{defn_prop_prod_geom}
Suppose $X$ and $Y$ are decorated geometric pre-crystals. Define the following rational maps on $(x,y) \in X \times Y$:
\[
\gamma(x,y) = \gamma(x) \gamma(y)
\]
\[
\vp_i(x,y) = \dfrac{\vp_i(x)(\ve_i(x) + \vp_i(y))}{\ve_i(x)} \quad\quad \ve_i(x,y) = \dfrac{\ve_i(y)(\ve_i(x) + \vp_i(y))}{\vp_i(y)}
\]
\[
e_i^c(x,y) = (e_i^{c_1}(x), e_i^{c_2}(y)) \quad \text{ where } \quad c_1 = \dfrac{c\ve_i(x) + \vp_i(y)}{\ve_i(x) + \vp_i(y)}, \quad c_2 = \dfrac{\ve_i(x) + \vp_i(y)}{\ve_i(x) + c^{-1}\vp_i(y)}
\]
\[
f(x,y) = f(x) + f(y).
\]
These maps make $X \times Y$ into a decorated geometric pre-crystal, which we call the {\em product} of $X$ and $Y$. This product is associative.
\end{defn_prop}

If $X$ and $Y$ are geometric crystals, their product is not necessarily a geometric crystal (\cite[Rem. 2.21]{BKII}). To get around this problem, Berenstein and Kazhdan introduced unipotent crystals, and showed that if $X$ and $Y$ are induced from unipotent crystals, then their product is a geometric crystal.


\subsection{Unipotent crystals}
\label{sec_unip}

The definition of geometric pre-crystal given in the previous section makes sense for any reductive group $G$; one simply replaces the torus $(\Cx)^n$ by a maximal torus in $G$, and $\alpha_i, \alpha_i^\vee$ with the corresponding simple characters and cocharacters. Given a geometric pre-crystal, it is in general quite difficult to verify the geometric Serre relations, and, as mentioned above, the fact that the Serre relations hold for $X$ and $Y$ does not guarantee that they hold for $X \times Y$. Berenstein and Kazhdan invented unipotent crystals to get around these difficulties \cite{BKI}. The intuitive idea behind a unipotent crystal is that a geometric pre-crystal which ``comes from'' $G$ itself will automatically satisfy the Serre relations, and will automatically behave nicely under products.

Nakashima extended the notions of geometric and unipotent crystals to the Kac--Moody setting \cite{Nak}. The (minimal) Kac-Moody group which corresponds to the affine Lie algebra $\wh{\mathfrak{sl}}_n$ is closely related to $\SL_n(\bbC[\lp, \lp^{-1}])$, the group of $n \times n$ matrices of determinant 1 with entries in the Laurent polynomial ring $\bbC[\lp, \lp^{-1}]$. For our purposes, however, we have found it necessary to allow determinants other than 1, so we work with the bigger group $\GL_n(\bbC(\lp))$, which consists of $n \times n$ matrices with entries in the field of rational functions in the indeterminate $\lp$, and nonzero determinant. We call $\GL_n(\bbC(\lp))$ the \emph{loop group}, and $\lp$ the \emph{loop parameter}.

Before giving the definition of unipotent crystals, we pause to discuss a correspondence between $n \times n$ matrices with entries in $\bbC((\lp))$ and ``infinite periodic'' matrices with entries in $\bbC$. This construction generalizes the correspondence between formal Laurent series and Toeplitz matrices (which is the $n=1$ case), and plays a central role in Lam and Pylyavskyy's study of total positivity in the loop group \cite{LPwhirl}.

\subsubsection{Unfolding}
\label{sec_unfold}

Let $\mathbb{C}((\lp))$ be the field of formal Laurent series in the indeterminate $\lp$, that is, expressions of the form
\[
\sum_{m = m_0}^{\infty} a_m \lp^m
\]
where $m_0$ is an integer, and each $a_m$ is in $\mathbb{C}$. Let $M_n[\mathbb{C}((\lp))]$ denote the ring of $n \times n$ matrices with entries in this field.

An {\em $n$-periodic matrix} (over $\mathbb{C}$) is a $\mathbb{Z} \times \mathbb{Z}$ array of complex numbers $(X_{ij})_{(i,j) \in \mathbb{Z}}$ such that $X_{ij} = 0$ if $j-i$ is sufficiently large, and $X_{ij} = X_{i+n,j+n}$ for all $i,j$. Say that the entries $X_{ij}$ with $i-j = k$ lie on the {\em $k^{th}$ diagonal} of $X$, or that $k$ indexes this diagonal. Thus, the main diagonal of $X$ is indexed by 0, and higher numbers index lower diagonals. We add these matrices entry-wise, and multiply them using the usual matrix product: if $X = (X_{ij})$ and $Y = (Y_{ij})$, then
\[
(XY)_{ij} = \sum_{k \in \mathbb{Z}} X_{ik}Y_{kj}.
\]
The hypothesis that $X_{ij} = 0$ for $j-i$ sufficiently large ensures that each of these sums is finite, and it is clear that the product of two $n$-periodic matrices is $n$-periodic. Denote the ring of $n$-periodic matrices by $M_n^{\infty}(\mathbb{C})$.

Given a matrix $A = (A_{ij}) \in M_n[\mathbb{C}((\lp))]$, where $A_{ij} = \sum a_m^{i,j} \lp^m$, define an $n$-periodic matrix $X = (X_{ij})$ by
\[
X_{r n + i, s n + j} = a^{i,j}_{r - s}
\]
for $r,s \in \bbZ$ and $i,j \in [n]$. For example, if $n=2$ and
\[
A = \left(
\begin{array}{cc}
2\lp^{-1} + 3 + 4\lp + 5\lp^2 & \lp^{-1} + 7 + 8\lp \\
-3\lp^{-1} + 1 + \lp^2 & -2\lp^{-1} + 5 + 6\lp
\end{array}
\right)
\]
then
\[
X = \left(
\begin{array}{c|cc|cc|cc|c}
\ddots &&&&&&& \iddots \\ \hline
&3 & 7 & 2 & 1 & 0 & 0 \\
&1 & 5 & -3 & -2 & 0 & 0 \\ \hline
&4 & 8 & 3 & 7 & 2 & 1 \\
&0 & 6 & 1 & 5 & -3 & -2 \\ \hline
&5 & 0 & 4 & 8 & 3 & 7 \\
&1 & 0 & 0 & 6 & 1 & 5 \\ \hline
\iddots &&&&&&& \ddots
\end{array}
\right)
\]
where the row (resp., column) indexed by 1 is the upper-most row (resp., left-most column) whose entries are shown. The vertical and horizontal lines partition the matrix into $2 \times 2$ blocks whose entries are the $m^{th}$ coefficients of the entries of $A$, for some $m$.

It is straightforward to check that the map $A \mapsto X$ is an isomorphism of rings. We will refer to the $n \times n$ matrix $A$ as a {\em folded matrix}, and the $n$-periodic matrix $X$ as an {\em unfolded matrix}. We call $X$ the {\em unfolding} of $A$, and $A$ the {\em folding} of $X$. When it is important to distinguish between folded and unfolded matrices, we will try to use letters near the beginning of the alphabet for folded matrices, and letters near the end of the alphabet for unfolded matrices.

\subsubsection{Definition of unipotent crystals}
\label{sec_defn_unip}

Every rational function in $\lp$ has a Laurent series expansion, so $\GL_n(\bbC(\lp))$ is a subset of $M_n[\mathbb{C}((\lp))]$, and we may talk about the unfoldings of its elements.

In what follows, we will work with the submonoid $G \subset \GL_n(\bbC(\lp))$ consisting of matrices whose entries are Laurent polynomials in $\lp$, and whose determinant is a nonzero Laurent polynomial in $\lp$. The purpose of restricting to this monoid is that it is an ind-variety, so we may talk about rational maps to and from this space. (For our purposes, an ind-variety is simply an infinite-dimensional object that admits rational maps. We refer the reader to \cite{Kumar} for more information about ind-varieties.)

Let $B^- \subset G$ be the submonoid of matrices whose unfolding is lower triangular with nonzero entries on the main diagonal. In terms of folded matrices, this means that all entries are (ordinary) polynomials in $\lp$, with the entries on the main diagonal having nonzero constant term, and the entries above the main diagonal having no constant term. $B^-$ is naturally an ind-variety, where the $m^{th}$ piece consists of unfolded matrices which are supported on diagonals $0, \ldots, m$.

For $a \in \bbC$, define the folded matrices
\[
\wh{x}_i(a) = Id + aE_{i,i+1} \quad \text{ for } i \in [n-1], \quad\quad\quad \text{ and } \quad \wh{x}_0(a) = Id + a\lp^{-1}E_{n1},
\]
where $Id$ is the $n \times n$ identity matrix, and $E_{ij}$ is an $n \times n$ matrix unit. For $i \in \bbZ$, set $\wh{x}_i(a) = \wh{x}_{\ov{i}}(a)$, where $\ov{i}$ is the residue of $i$ mod $n$ (in $\{0, \ldots, n-1\}$). Let $U \subset G$ be the subgroup generated by the elements $\wh{x}_i(a)$. Note that the unfolding of each element of $U$ is upper uni-triangular.

The usual definition of unipotent crystals (\cite{BKI, Nak}) is based on rational actions of $U$. We work here with a slightly weaker notion, which was used in \cite{F1}.

\begin{defn}
\label{defn_pseudo}
Let $V$ be a complex algebraic (ind-)variety, and let $\alpha : U \times V \rightarrow V$ be a partially-defined map. Let $u.v := \alpha(u,v)$. We say that $\alpha$ is a {\em pseudo-rational $U$-action} if it satisfies the following properties:
\begin{enumerate}
\item $1.v = v$ for all $v \in V$;
\item If $u.v$ and $u'.(u.v)$ are defined, then $(u'u).v =u'.(u.v)$;
\item For each $i \in \Zn$, the partially defined map $\bb{C} \times V \rightarrow V$ given by $(a,v) \mapsto \wh{x}_i(a).v$ is rational.
\end{enumerate}
\end{defn}

\begin{defn}
\label{defn_B-_action}
Define $\alpha_{B^-} : U \times B^- \rightarrow B^-$ by $u.b = b'$ if $ub = b'u'$, with $b' \in B^-, u' \in U$. If $ub$ does not have such a factorization, then $u.b$ is undefined.
\end{defn}

Note that if $b_1u_1 = b_2u_2$, then $b_2^{-1}b_1 = u_2u_1^{-1}$ is both lower triangular and upper uni-triangular (as an unfolded matrix), so it must be the identity matrix, and thus $b_1 = b_2$ and $u_1 = u_2$. This shows that $\alpha_{B^-}$ is well-defined (as a partial map). Observe that if $X \in B^-$ is an unfolded matrix and $i \in \bbZ$, then
\begin{equation*}
\wh{x}_i(a) \cdot X \cdot \wh{x}_i\left(\frac{-aX_{i+1,i+1}}{X_{ii} + aX_{i+1,i}}\right) \in B^-,
\end{equation*}
so we have
\begin{equation}
\label{eq_pseudo_action}
\wh{x}_i(a).X = \wh{x}_i(a) \cdot X \cdot \wh{x}_i(\tau_i(a,X)) \quad\quad \text{ where } \quad\quad \tau_i(a,X) = \frac{-aX_{i+1,i+1}}{X_{ii} + aX_{i+1,i}}.
\end{equation}
This shows that $\alpha_{B^-}$ satisfies property (3) of Definition \ref{defn_pseudo}. It is clear that the first two properties are satisfied as well, so $\alpha_{B^-}$ is a pseudo-rational $U$-action.

\begin{defn}
\label{defn_U_variety}
A {\em $U$-variety} is an irreducible complex algebraic (ind-)variety $X$ together with a pseudo-rational $U$-action $\alpha : U \times X \rightarrow X$. A \emph{morphism of $U$-varieties} is a rational map which commutes with the $U$-actions (when they are defined).
\end{defn}

For example, the ind-variety $B^-$ with the pseudo-rational $U$-action $\alpha_{B^-}$ is a $U$-variety.

\begin{defn}
\label{defn_unip_cryst}
A {\em unipotent crystal (of type $A_{n-1}^{(1)}$)} is a pair $(V,g)$, where $V$ is a $U$-variety, and $g : V \rightarrow B^-$ is a morphism of $U$-varieties, such that for each $i \in [n]$ (equivalently, each $i \in \bbZ$), the rational function $v \mapsto g(v)_{i+1,i}$ is not identically zero (here $g(v)$ is viewed as an unfolded matrix).
\end{defn}

Note that the pair $(B^-,\Id)$ is a unipotent crystal.

The following result, which is essentially due to Berenstein and Kazhdan \cite[Thm. 3.8]{BKI} shows how to obtain a geometric crystal from a unipotent crystal. A sketch of the proof appears in \cite[\S 6.2]{F1}.

\begin{thm}
\label{thm_induces}
Let $(V,g)$ be a unipotent crystal. Suppose $v \in V$, and let $X = g(v)$ be an unfolded matrix. Define
\[
\gamma(v) = (X_{11}, \ldots, X_{nn}), \quad\quad \vp_i(v) = \dfrac{X_{i+1,i}}{X_{ii}}, \quad\quad \ve_i(v) = \dfrac{X_{i+1,i}}{X_{i+1,i+1}}, \quad\quad e_i^c(v) = \wh{x}_i\left(\dfrac{c-1}{\vp_i(v)}\right).v
\]
where $.$ is the pseudo-rational action of $U$ on $V$. These maps make $V$ into a type $A_{n-1}^{(1)}$ geometric crystal.
\end{thm}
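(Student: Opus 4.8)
The plan is to reduce everything to the case $(V,g) = (B^-, \Id)$, and then verify the geometric crystal axioms by direct computation using the explicit formula \eqref{eq_pseudo_action} for the $U$-action on $B^-$. The point is that $g : V \to B^-$ is a morphism of $U$-varieties, so the formulas for $\gamma, \vp_i, \ve_i, e_i$ on $V$ are all pulled back from $B^-$ via $g$; once the axioms of Definition \ref{defn_geom_precryst} and Definition \ref{defn_geom_cryst} are checked on $B^-$, they follow on $V$ because $g$ intertwines the relevant maps. Explicitly, I would first observe that by definition $\gamma(v) = \gamma(g(v))$, $\vp_i(v) = \vp_i(g(v))$, $\ve_i(v) = \ve_i(g(v))$, and that since $g$ commutes with the $U$-actions, $g(e_i^c(v)) = g(\wh{x}_i(\tfrac{c-1}{\vp_i(v)}).v) = \wh{x}_i(\tfrac{c-1}{\vp_i(g(v))}).g(v) = e_i^c(g(v))$; here I need to know that $\vp_i(v) \neq 0$ generically, which is exactly the nonvanishing condition $g(v)_{i+1,i} \not\equiv 0$ built into Definition \ref{defn_unip_cryst}. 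Thus it suffices to prove the theorem for $V = B^-$.

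For $V = B^-$, let $X$ be an unfolded matrix in $B^-$ and let $Y = e_i^c(X) = \wh{x}_i(a) \cdot X \cdot \wh{x}_i(\tau_i(a,X))$ where $a = \tfrac{c-1}{\vp_i(X)} = \tfrac{(c-1)X_{ii}}{X_{i+1,i}}$. The first block of computations is to extract the entries $Y_{ii}, Y_{i+1,i+1}, Y_{i+1,i}$ (and, for the weight, all diagonal entries $Y_{jj}$) in terms of those of $X$. Since $\wh{x}_i(a)$ only modifies row $i$ by adding $a$ times row $i+1$ (and on the right, column $i+1$ by adding $\tau_i(a,X)$ times column $i$), this is a short $2\times 2$ bordered calculation: one finds $Y_{jj} = X_{jj}$ for $j \neq i, i+1$, while $Y_{ii}$ and $Y_{i+1,i+1}$ get multiplied by $c$ and $c^{-1}$ respectively, and $Y_{i+1,i} = X_{i+1,i}$ is unchanged. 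From these, $\gamma(Y) = \alpha_i^\vee(c)\gamma(X)$, $\vp_i(Y) = Y_{i+1,i}/Y_{ii} = c^{-1}\vp_i(X)$, and $\ve_i(Y) = Y_{i+1,i}/Y_{i+1,i+1} = c\,\ve_i(X)$ all follow immediately, as does the compatibility $\ve_i(X)/\vp_i(X) = X_{ii}/X_{i+1,i+1} = \alpha_i(\gamma(X))$. The unitality $e_i^1 = \mathrm{id}$ and the rational-action property $e_i^{c_1}e_i^{c_2} = e_i^{c_1 c_2}$ (needed for $e_i$ to be a genuine $\Cx$-action) follow from $\tau_i$ being additive in the right way, or more cleanly from the observation that $e_i^c(X)$ is characterized by conjugating $X$ into $B^-$ by an element of the one-parameter subgroup $\{\wh{x}_i(a)\}$ and tracking the diagonal scaling.

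The main obstacle is the geometric Serre relations in Definition \ref{defn_geom_cryst}: the commuting relation $e_i^{c_1}e_j^{c_2} = e_j^{c_2}e_i^{c_1}$ for $|i-j|>1$ is easy since $\wh{x}_i(a)$ and $\wh{x}_j(b)$ commute and affect disjoint pairs of rows/columns, but the braid relation $e_i^{c_1}e_j^{c_1c_2}e_i^{c_2} = e_j^{c_2}e_i^{c_1c_2}e_j^{c_1}$ for $|i-j|=1$ is the genuinely nontrivial identity. The clean way to handle this is not brute force but to use the structure of the ambient group: the elements $\wh{x}_i(a)$ and $\wh{x}_j(b)$ for adjacent $i,j$ generate (a copy of) the unipotent radical in an $\SL_3$-type subgroup, and the braid relation for the $e_i$ on $B^-$ is exactly the Berenstein--Kazhdan Serre relation, which holds because the $e_i$-action is built from left/right multiplication by one-parameter subgroups inside $G$ itself — this is the content of \cite[Thm. 3.8]{BKI}, and I would either cite it directly or reproduce its short proof, which reduces the affine $A_{n-1}^{(1)}$ case to the rank-one and rank-two finite-type computations by restricting to the relevant $\SL_2, \SL_3$ subgroups of the loop group (using that $\wh{x}_0$ behaves just like the other $\wh{x}_i$ at the level of unfolded matrices). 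The only affine-specific point to check is that when $n = 2$ there is no braid relation to verify, which is automatic. I would also remark that all identities are between rational maps and hold "whenever both sides are defined," so one works on the dense open locus where $\vp_i(X) \neq 0$ and the relevant triangular factorizations exist.
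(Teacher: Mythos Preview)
Your proposal is correct and matches the standard approach. The paper itself does not give a proof of this theorem: it states that the result is essentially due to Berenstein--Kazhdan \cite[Thm.~3.8]{BKI} and that a sketch appears in \cite[\S 6.2]{F1}. Your plan---reduce to $(B^-,\Id)$ via the $U$-equivariance of $g$, verify the pre-crystal axioms by the explicit $2\times 2$ computation using \eqref{eq_pseudo_action}, and handle the Serre relations by passing to the rank-two (type $A_2$) subgroups generated by adjacent $\wh{x}_i,\wh{x}_j$---is exactly the Berenstein--Kazhdan argument, carried over to the loop group setting. Your verification that $e_i^{c_1}e_i^{c_2}=e_i^{c_1c_2}$ via the additivity of the one-parameter subgroup $\wh{x}_i$ together with the pseudo-action axiom and the already-established transformation $\vp_i(e_i^c X)=c^{-1}\vp_i(X)$ is also the standard one.
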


We say that the geometric crystal on $V$ is {\em induced} from the unipotent crystal $(V,g)$. For example, the unipotent crystal $(B^-, \Id)$ induces a geometric crystal on $B^-$. A short computation using \eqref{eq_pseudo_action} shows that for $X \in B^-$,
\begin{equation}
\label{eq_U_action_B-}
e_i^c(X) = \wh{x}_i\left( \dfrac{c-1}{\vp_i(X)} \right) \cdot X \cdot \wh{x}_i\left( \dfrac{c^{-1} - 1}{\ve_i(X)} \right),
\end{equation}
where $e_i, \vp_i,$ and $\ve_i$ are the induced geometric crystal maps on $B^-$. Note that for any unipotent crystal $(V,g)$, we have by definition the formal identities
\begin{equation}
\label{eq_commutes_with_g}
\gamma = \gamma g, \quad\quad \vp_i = \vp_i g, \quad\quad \ve_i = \ve_i g, \quad\quad g e_i = e_i g,
\end{equation}
where the geometric crystal maps on the left-hand side come from the induced geometric crystal on $B^-$, and those on the right-hand side come from the induced geometric crystal on $V$.

\subsubsection{Product of unipotent crystals}
\label{sec_unip_prod}

We now define the product of unipotent crystals, following \cite{BKI}. Given $u \in U$ and $b \in B^-$, define $\beta(u,b) = u'$ if $ub = b'u'$, with $b' \in B^-$ and $u' \in U$. If $ub$ does not have such a factorization, then $\beta(u,b)$ is undefined (cf. Definition \ref{defn_B-_action}).

The following result is essentially the combination of Thm. 3.3 and Lem. 3.9 in \cite{BKI}. Although Berenstein and Kazhdan work with rational actions of the unipotent subgroup of a reductive group and we work with pseudo-rational actions of an infinite-dimensional group, the proof is identical.

\begin{thm}
\label{thm_unip_prod}
Suppose $(V,g)$ and $(W,g)$ are unipotent crystals. Define $g : V \times W \rightarrow B^-$ by $g(v,w) = g(v)g(w)$, and equip $V \times W$ with the pseudo-rational $U$-action
\[
u.(v,w) = (u.v, \beta(u,g(v)).w).
\]
Then $(V \times W, g)$ is a unipotent crystal. Furthermore, the geometric crystal induced from $(V \times W, g)$ is the product of the geometric crystals induced from $(V,g)$ and $(W,g)$.
\end{thm}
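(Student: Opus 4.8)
The statement to prove is Theorem~\ref{thm_unip_prod}: given unipotent crystals $(V,g)$ and $(W,g)$, the pair $(V \times W, g)$ with $g(v,w) = g(v)g(w)$ and the prescribed $U$-action is a unipotent crystal, and the induced geometric crystal is the product of the two induced geometric crystals. Since the excerpt tells us the proof is ``identical'' to the combination of Thm.~3.3 and Lem.~3.9 in \cite{BKI}, the task is to transcribe that argument into the loop-group / pseudo-rational setting and check that no step uses finite-dimensionality or the genuine (as opposed to pseudo-) rationality of the $U$-action. I would organize it in three stages: (i) verify $u.(v,w)$ is a pseudo-rational $U$-action, so $V \times W$ is a $U$-variety; (ii) verify $g(v,w) := g(v)g(w)$ is a morphism of $U$-varieties $V \times W \to B^-$, and that the nonvanishing condition on the subdiagonal entries holds, so $(V \times W, g)$ is a unipotent crystal; (iii) identify the induced geometric crystal with the product of Definition/Proposition~\ref{defn_prop_prod_geom}.

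\textbf{Stage (i): the $U$-action.} First I would check property (1) of Definition~\ref{defn_pseudo}: for $u = 1$ we have $\beta(1,g(v)) = 1$ since $1 \cdot g(v) = g(v) \cdot 1$, so $1.(v,w) = (1.v, 1.w) = (v,w)$. For property (2), suppose $u.(v,w)$ and $u'.(u.(v,w))$ are defined; writing out both sides and using that $g$ is a $U$-variety morphism on $V$ (so $g(u.v)$ is the $B^-$-part of $u \, g(v)$), one gets that $\beta(u'u, g(v)) = \beta(u', g(u.v)) \cdot \beta(u, g(v))$ follows from the cocycle-type identity for $\beta$: if $u g(v) = g(u.v) \, \beta(u,g(v))$ and then $u' g(u.v) = g(u'.u.v)\,\beta(u', g(u.v))$, multiply to get $(u'u) g(v) = g(u'.u.v)\,\beta(u',g(u.v))\,\beta(u,g(v))$, and uniqueness of the $B^- \cdot U$ factorization (noted after Definition~\ref{defn_B-_action}) gives the claim. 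Hence $(u'u).(v,w) = u'.(u.(v,w))$ whenever both sides are defined. For property (3), fixing $i$, the map $(a, (v,w)) \mapsto \wh{x}_i(a).(v,w) = (\wh{x}_i(a).v, \beta(\wh{x}_i(a), g(v)).w)$ is a composite of rational maps: $\wh{x}_i(a).v$ is rational in $(a,v)$ since $V$ is a $U$-variety; $\beta(\wh{x}_i(a), g(v))$ is $\wh{x}_i(\tau_i(a, g(v)))$ by the computation \eqref{eq_pseudo_action} applied to the unfolded matrix $g(v)$, which is rational in $(a,v)$; and then acting by that element of the one-parameter subgroup on $w$ is rational in $(a,v,w)$ again by the $U$-variety axiom for $W$. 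So $V \times W$ is a $U$-variety.

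\textbf{Stages (ii) and (iii).} That $g(v,w) = g(v)g(w)$ lands in $B^-$ is immediate since $B^-$ is a submonoid. To see it is a morphism of $U$-varieties, compute $u \cdot g(v)g(w)$: write $u g(v) = g(u.v) \beta(u,g(v))$, then $\beta(u,g(v)) g(w) = g(\beta(u,g(v)).w)\,\beta(\beta(u,g(v)), g(w))$, so $u \, g(v,w) = g(u.v)\, g(\beta(u,g(v)).w) \cdot \beta(\beta(u,g(v)),g(w))$, and the $B^-$-factor is exactly $g(u.v, \beta(u,g(v)).w) = g(u.(v,w))$; by uniqueness this says $u.g(v,w) = g(u.(v,w))$, i.e.\ $g$ intertwines the $U$-actions. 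The nonvanishing of $(v,w) \mapsto g(v,w)_{i+1,i}$ (unfolded) follows because, in the unfolded picture, $g(v)g(w)$ is a product of two lower-triangular matrices with nonzero diagonal, so its $(i+1,i)$ entry is $g(v)_{i+1,i} g(w)_{ii} + g(v)_{i+1,i+1} g(w)_{i+1,i}$, which is not identically zero since neither $g(v)_{i+1,i}$ nor $g(w)_{i+1,i}$ is. Thus $(V \times W, g)$ is a unipotent crystal. For stage (iii), I would feed $g(v,w) = g(v)g(w)$ through the formulas of Theorem~\ref{thm_induces} — read off $\gamma, \vp_i, \ve_i, e_i^c$ from the diagonal and subdiagonal entries of the product of two unfolded lower-triangular matrices — and check term by term that the resulting expressions coincide with the product formulas in Definition/Proposition~\ref{defn_prop_prod_geom}. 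The $\gamma$ and $\vp_i, \ve_i$ identities are short matrix-entry computations. For $e_i^c$, one uses \eqref{eq_U_action_B-}-style bookkeeping: $e_i^c(v,w) = \wh{x}_i\big(\tfrac{c-1}{\vp_i(v,w)}\big).(v,w)$ unwinds, via $\beta$ and the scalar $\tau_i$, into a pair $(e_i^{c_1}(v), e_i^{c_2}(w))$ with $c_1, c_2$ matching the stated formulas; this is precisely the content that makes Theorem~\ref{thm_unip_prod} (together with Theorem~\ref{thm_induces}) recover Definition/Proposition~\ref{defn_prop_prod_geom}, and it is the one genuinely computational point.

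\textbf{Main obstacle.} The conceptual steps are all formal manipulations with the $B^- \cdot U$ factorization and its uniqueness, and none of them cares whether $U$ is finite-dimensional or the action is rational versus pseudo-rational — that is exactly why the excerpt asserts the proof is identical to \cite{BKI}. The one place demanding care is the $e_i^c$ computation in stage (iii): matching the constants $c_1, c_2$ requires correctly tracking how the auxiliary $\wh{x}_i$-factor produced by $\beta(\wh{x}_i(\cdot), g(v))$ interacts with the crystal operator on $W$, and confirming that the scalar identities $\vp_i(\beta(u,g(v)).w) = \vp_i(g(\beta(u,g(v)).w))$ etc.\ from \eqref{eq_commutes_with_g} feed through so that the net effect on $w$ is $e_i^{c_2}(w)$ with the advertised $c_2$. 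I expect this bookkeeping — rather than any of the structural verifications — to be where the real work lies, and it is precisely the step one would relegate to ``the proof is identical to that in \cite{BKI}.''
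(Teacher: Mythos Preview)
Your proposal is correct and follows exactly the approach the paper indicates: the paper gives no proof of its own but defers to \cite{BKI} (Thm.~3.3 and Lem.~3.9), asserting that the argument carries over verbatim to the pseudo-rational loop-group setting, and your three-stage outline (verify the pseudo-rational $U$-action axioms via the cocycle identity for $\beta$, check $g$ is a $U$-variety morphism using uniqueness of the $B^- \cdot U$ factorization, then match the induced crystal maps against Definition/Proposition~\ref{defn_prop_prod_geom}) is precisely that argument. Your identification of the $e_i^c$ bookkeeping as the main computational point is also accurate.
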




\subsection{Geometric and unipotent crystals on the Grassmannian}

\subsubsection{The Grassmannian}

Let $\Gr(k,n)$ be the Grassmannian of $k$-dimensional subspaces in $\mathbb{C}^n$. We view the Grassmannian as a projective algebraic variety in its Pl\"{u}cker embedding, and for $J \in {[n] \choose k}$, we write $P_J(N)$ for the $J^{th}$ Pl\"{u}cker coordinate of the subspace $N$. Pl\"ucker coordinates are projective---that is, they are only defined up to a common nonzero scalar multiple. We represent a point $N \in \Gr(k,n)$ as the column span of a (full-rank) $n \times k$ matrix $N'$, so that $P_J(N)$ is the maximal minor of $N'$ using the rows in $J$. When there is no danger of confusion, we treat a subspace and its matrix representatives interchangeably. For example, we may speak of the Pl\"{u}cker coordinates of a full-rank $n \times k$ matrix.

There is a natural (left) action of $\GL_n(\bbC)$ on $\Gr(k,n)$ given by matrix multiplication. We denote the action of $A \in \GL_n(\bbC)$ on $N \in \Gr(k,n)$ by $(A,N) \mapsto A \cdot N$; this is the subspace spanned by the columns of $A \cdot N'$, where $N'$ is an $n \times k$ matrix representative of $N$.

To simplify notation, we make the following convention.

\begin{conv}
\label{conv_pluc}
Let $N'$ be a full-rank $n \times k$ matrix representing a point $N \in \Gr(k,n)$.
\begin{enumerate}
\item
We label Pl\"{u}cker coordinates of $N$ by \underline{sets}, not by ordered lists. That is, if $I \in {[n] \choose k}$, then $P_I(N)$ means the determinant of the $k \times k$ submatrix of $N'$ using the rows indexed by the elements of $I$, taken in the order in which they appear in $N'$. Thus, $P_{\{1,2\}}(N) = P_{\{2,1\}}(N)$. We will often write $P_{12}(N)$ or $P_{1,2}(N)$ instead of $P_{\{1,2\}}(N)$.
\item
If $I \subset [n]$ does not contain exactly $k$ elements, then we set $P_I(N) = 0$.
\item
If $I$ is any set of integers, we set $P_I(N) = P_{I'}(N)$, where $I'$ is the set consisting of the residues of the elements of $I$ modulo $n$, where we take the residues to lie in $[n]$.
\end{enumerate}
\end{conv}

\subsubsection{Main definitions}
\label{sec_geom_unip_Gr}

For $k \in [n-1]$, let $\X{k}$ denote the variety $\Y{k}{n}$.\footnote{Since $n$ is fixed throughout the paper, we suppress the dependence on $n$ in the notation $\X{k}$.} We denote a point of $\X{k}$ by $N|t$, where $N \in \Gr(k,n)$ and $t \in \Cx$. In \cite{F1}, we defined a decorated geometric crystal on $\X{k}$, and a unipotent crystal which induces the geometric crystal. We recall those definitions here, starting with the unipotent crystal.

For $A \in \GL_n(\mathbb{C}(\lp))$ and $z \in \bbC$, let $A|_{\lp = z}$ denote the $n \times n$ matrix obtained by evaluating the loop parameter $\lp$ at $z$. This is defined as long as $z$ is not a pole of any entry of $A$; the resulting matrix is invertible if $z$ is not a root of the determinant of $A$. Define a $U$-action $U \times \X{k} \rightarrow \X{k}$ by
\begin{equation}
\label{eq_U_action_Gr}
u.(N|t) = (u|_{\lp = (-1)^{k-1} t} \cdot N)|t.
\end{equation}
Note that $u.(N|t)$ is always defined, since every element of $U$ has Laurent polynomial entries and determinant 1. This action makes $\X{k}$ into a $U$-variety.

\begin{defn}
\label{defn_g}
Define a rational map $g : \X{k} \rightarrow B^-$ by $g(N|t) = A$, where $A$ is the folded matrix defined by
\[
A_{ij} = c_{ij} \dfrac{P_{[j-k+1,j-1] \cup \{i\}}(N)}{P_{[j-k,j-1]}(N)}, \quad\quad\quad
c_{ij}  = 
\begin{cases} 1 & \text{ if } j \leq k \\
t & \text{ if } j > k \text{ and } i \geq j \\
\lp  & \text{ if } j > k \text { and } i < j.
\end{cases}
\]
This map is defined if and only if each of the \emph{cyclic Pl\"ucker coordinates} $P_{[j-k,j-1]}(N)$ is nonzero.
\end{defn}

See \eqref{eq_ex_2_5} for an example of the matrix $g(N|t)$.

\begin{prop}[{\cite[Thm. 6.10]{F1}}]
\label{prop_is_unip_cryst}
The pair $(\X{k}, g)$ is a unipotent crystal.
\end{prop}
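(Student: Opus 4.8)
The plan is to verify the three requirements in the definition of a unipotent crystal (Definition \ref{defn_unip_cryst}): that $\X{k}$ is a $U$-variety (already established via the action \eqref{eq_U_action_Gr}), that $g : \X{k} \rightarrow B^-$ is a morphism of $U$-varieties, and that the off-diagonal entries $N|t \mapsto g(N|t)_{i+1,i}$ are not identically zero. The last point is the easiest: from Definition \ref{defn_g}, the subdiagonal entry $g(N|t)_{i+1,i}$ (for the folded matrix, and likewise for the unfolded one after accounting for periodicity and the cyclic convention) is a ratio of cyclic Pl\"ucker coordinates times a factor $c_{i+1,i} \in \{1, t, \lp\}$, which is manifestly a nonzero rational function on $\X{k}$. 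The genuine content is the morphism condition.

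First I would check that $g$ actually lands in $B^-$, i.e. that the unfolding of $g(N|t)$ is lower triangular with nonzero diagonal entries. From the formula, $A_{ij}$ carries the factor $\lp$ exactly when $j > k$ and $i < j$ — precisely the entries that lie strictly above the main diagonal in the folded picture and hence, after unfolding, get shifted down into the lower-triangular region; the diagonal entries $A_{ii}$ are the ratios $P_{[i-k+1,i-1]\cup\{i\}}/P_{[i-k,i-1]} = P_{[i-k+1,i]}/P_{[i-k,i-1]}$ times $c_{ii} \in \{1,t\}$, which are nonzero constants in $\lp$. This shows $g(N|t) \in B^-$ on the locus where the cyclic Pl\"ucker coordinates are nonzero, which is exactly the domain of definition asserted in Definition \ref{defn_g}.

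The main step — and the main obstacle — is the intertwining property $g(u.(N|t)) = u.g(N|t)$, where the left action of $u$ on $\X{k}$ is \eqref{eq_U_action_Gr} and the action on $B^-$ is $\alpha_{B^-}$ of Definition \ref{defn_B-_action}. By the definition of $\alpha_{B^-}$, this amounts to showing that there exists $u' \in U$ with
\[
u \cdot g(N|t) = g(u.(N|t)) \cdot u'
\]
in the loop group. Since $U$ is generated by the $\wh{x}_i(a)$, and by the multiplicativity built into Definition \ref{defn_pseudo}(2) and \eqref{eq_pseudo_action}, it suffices to treat the generators: for each $i \in \Zn$ and $a \in \bbC$, I must produce a scalar $a'$ (a rational function of $a$, $N$, $t$) such that
\[
\wh{x}_i(a) \cdot g(N|t) = g\bigl(\wh{x}_i(a).(N|t)\bigr) \cdot \wh{x}_i(a').
\]
The strategy is direct computation of both sides. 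On the left, $\wh{x}_i(a) = Id + aE_{i,i+1}$ (or $Id + a\lp^{-1}E_{n1}$ for $i=0$) adds $a$ times row $i+1$ of $g(N|t)$ to row $i$. On the right, $\wh{x}_i(a).(N|t) = (\wh{x}_i(a)|_{\lp=(-1)^{k-1}t} \cdot N)|t$ replaces $N$ by a subspace whose Pl\"ucker coordinates differ from those of $N$ by a single elementary row operation on the $n\times k$ matrix representative, so the $P_J$ transform by $P_J \mapsto P_J + (\pm t)\, P_{J \text{ with } i\leftarrow i+1}$ (and vanishing unless the index set is valid); then I substitute these into the formula for $g$. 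Matching the two sides column by column should force the value of $a'$ and verify equality of all remaining entries; the computation is essentially the one already implicit in \eqref{eq_pseudo_action}, now carried out in Pl\"ucker coordinates, and I expect the key algebraic input to be the three-term Pl\"ucker (short Grassmann–Pl\"ucker) relations, which are exactly what makes the ratios in Definition \ref{defn_g} transform consistently under the elementary row operation. The case $i=0$ requires separate bookkeeping because of the $\lp^{-1}$ and the cyclic wrap-around in the index sets, but the mechanism is the same. Since all of this was carried out in \cite{F1} (the statement is quoted from \cite[Thm.~6.10]{F1}), in the paper itself I would simply cite that reference; the sketch above indicates the route one would follow to reprove it.
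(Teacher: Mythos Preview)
The paper itself offers no proof of this proposition: it is stated with a citation to \cite[Thm.~6.10]{F1} and nothing more. You correctly recognize this at the end of your proposal, and your sketch of the underlying argument (reduce to generators $\wh{x}_i(a)$, compute both sides of $\wh{x}_i(a)\cdot g(N|t) = g(\wh{x}_i(a).(N|t))\cdot \wh{x}_i(a')$ in Pl\"ucker coordinates, and invoke short Pl\"ucker relations) is a reasonable outline of the standard approach one would expect in \cite{F1}. One small imprecision: your description of why $g(N|t)\in B^-$ overlooks that some above-diagonal entries with $j\le k$ are not multiples of $\lp$ but are instead identically zero (because the index $i$ lies in the cyclic interval $[j-k+1,j-1]$, forcing $P_{[j-k+1,j-1]\cup\{i\}}=0$); this does not affect the conclusion.
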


By Theorem \ref{thm_induces}, the unipotent crystal $(\X{k}, g)$ induces a geometric crystal on $\X{k}$. Unraveling the definitions, we obtain the following formulas for the geometric crystal structure on $\X{k}$.

\begin{itemize}
\item The map $\gamma : \X{k} \rightarrow (\Cx)^n$ is given by $\gamma(N|t) = (\gamma_1, \ldots, \gamma_n)$, where
\begin{equation*}
\gamma_i =
\begin{cases}
\dfrac{P_{[i-k+1,i]}(N)}{P_{[i-k,i-1]}(N)} \text{ if } 1 \leq i \leq k \bigskip \\
t \dfrac{P_{[i-k+1,i]}(N)}{P_{[i-k,i-1]}(N)} \text{ if } k+1 \leq i \leq n.
\end{cases}
\end{equation*}

\item For $i \in \Zn$, the functions $\vp_i, \ve_i : \X{k} \rightarrow \Cx$ are given by
\[
\vp_i(N|t) =
t^{-\delta_{i,0}} \dfrac{P_{[i-k+1,i-1] \cup \{i+1\}}(N)}{P_{[i-k+1,i]}(N)},
\]
\[
\ve_i(N|t) = t^{-\delta_{i,k}} \dfrac{P_{[i-k+1,i-1] \cup \{i+1\}}(N) P_{[i-k+1,i]}(N)}{P_{[i-k,i-1]}(N) P_{[i-k+2,i+1]}(N)}.
\]

\item For $i \in \Zn$, the rational action $e_i : \Cx \times \X{k} \rightarrow \X{k}$ is given by $e_i^c(N|t) = N'|t$, where
\[
N' = \begin{cases}
x_i\left(\dfrac{c-1}{\vp_i(N|t)}\right) \cdot N & \text{ if } i \neq 0 \smallskip \\
x_0\left(\dfrac{(-1)^{k-1}}{t} \cdot \dfrac{c-1}{\vp_0(N|t)}\right) \cdot N & \text{ if } i = 0.
\end{cases}
\]
Here $x_i(a) = Id + aE_{i,i+1}$ for $i \in [n-1]$, and $x_0(a) = Id + aE_{n1}$, where $Id$ is the $n \times n$ identity matrix, and $E_{ij}$ is an $n \times n$ matrix unit.
\end{itemize}

We now recall the definition of the decoration on $\X{k}$. Say that an $n$-periodic matrix $X$ is {\em $m$-shifted unipotent} if $X_{ij} = 0$ when $i-j > m$, and $X_{ij} = 1$ when $i-j = m$. If $X$ is $m$-shifted unipotent, define
\[
\chi(X) = \sum_{j = 1}^n X_{j+m-1,j}.
\]
It is easy to see that if $X$ is $m$-shifted unipotent and $Y$ is $m'$-shifted unipotent, then $XY$ is $(m+m')$-shifted unipotent, and
\begin{equation}
\label{eq_chi_additive}
\chi(XY) = \chi(X) + \chi(Y).
\end{equation}

If $N|t \in \X{k}$, then $g(N|t)$ is $(n-k)$-shifted unipotent. For example, the matrix $g(N|t)$ for $N \in \Gr(2,5)$ is shown above in \eqref{eq_ex_2_5}. This matrix is 3-shifted unipotent, and
\[
\chi(g(N|t)) = \frac{P_{35}(N)}{P_{45}(N)} + \frac{P_{14}(N)}{P_{15}(N)} + t \frac{P_{25}(N)}{P_{12}(N)} + \frac{P_{13}(N)}{P_{23}(N)} + \frac{P_{24}(N)}{P_{34}(N)}.
\]

\begin{defn}
\label{defn_dec}
Define $f : \X{k} \rightarrow \bbC$ by
\begin{equation*}
f(N|t) = \chi(g(N|t)) = \sum_{i \neq k} \frac{P_{\{i-k\} \cup [i-k+2,i]}(N)}{P_{[i-k+1,i]}(N)} + t \frac{P_{[2,k] \cup \{n\}}(N)}{P_{[1,k]}(N)}.
\end{equation*}
\end{defn}

\begin{lem}[{\cite[\S 6.3]{F1}}]
\label{lem_is_dec}
The function $f$ satisfies \eqref{eq_f_prop}, so it makes $\X{k}$ into a decorated geometric crystal.
\end{lem}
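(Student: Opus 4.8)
The plan is to exploit the fact that $g$ is a morphism of unipotent crystals, which reduces the claim to a computation on $B^-$, where the geometric crystal operators act by the explicit formula \eqref{eq_U_action_B-}. Since $f = \chi \circ g$ and, by \eqref{eq_commutes_with_g}, $g \circ e_i = e_i \circ g$ (with the $e_i$ on the right denoting the induced operator on $B^-$), we have
\[
f(e_i^c(N|t)) = \chi\bigl(g(e_i^c(N|t))\bigr) = \chi\bigl(e_i^c(g(N|t))\bigr).
\]
Writing $X = g(N|t) \in B^-$ and using \eqref{eq_commutes_with_g} once more to identify $\vp_i(X) = \vp_i(N|t)$ and $\ve_i(X) = \ve_i(N|t)$, formula \eqref{eq_U_action_B-} expresses $e_i^c(X)$ as the triple product $\wh{x}_i\bigl(\tfrac{c-1}{\vp_i(N|t)}\bigr) \cdot X \cdot \wh{x}_i\bigl(\tfrac{c^{-1}-1}{\ve_i(N|t)}\bigr)$.

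Next I would note that each unfolded matrix $\wh{x}_i(a)$ is $0$-shifted unipotent, while $X = g(N|t)$ is $(n-k)$-shifted unipotent; hence the triple product is again $(n-k)$-shifted unipotent, and by the additivity \eqref{eq_chi_additive} of $\chi$,
\[
\chi(e_i^c(X)) = \chi\!\left(\wh{x}_i\!\left(\tfrac{c-1}{\vp_i(N|t)}\right)\right) + \chi(X) + \chi\!\left(\wh{x}_i\!\left(\tfrac{c^{-1}-1}{\ve_i(N|t)}\right)\right).
\]
It then remains to verify that $\chi(\wh{x}_i(a)) = a$ for every $i \in \Zn$. This is an elementary inspection of the unfolding: for $i \in [n-1]$, the unfolded matrix has $1$'s on the main diagonal and a single extra entry $a$ per fundamental block on the $(-1)$st diagonal, exactly one of which is picked up by the sum $\chi(Y) = \sum_{j=1}^n Y_{j-1,j}$; for $i = 0$ one tracks how $a\lp^{-1}E_{n1}$ unfolds (the $\lp^{-1}$ shifts the block by one, placing $a$ at the unfolded positions $(mn,\,mn+1)$), and again exactly one entry, namely $Y_{0,1} = a$, contributes. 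Substituting $\chi(X) = f(N|t)$ and $\chi(\wh{x}_i(a)) = a$ gives
\[
f(e_i^c(N|t)) = f(N|t) + \frac{c-1}{\vp_i(N|t)} + \frac{c^{-1}-1}{\ve_i(N|t)},
\]
which is exactly \eqref{eq_f_prop}.

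The only step requiring genuine care is the computation $\chi(\wh{x}_i(a)) = a$, and within that the case $i = 0$: because of the factor $\lp^{-1}$, the off-diagonal entries of the unfolding of $\wh{x}_0(a)$ do not sit in the same block as the diagonal entries, so one must check directly that the relevant entry still lands in a position counted by $\chi$. Everything else is formal, flowing from the unipotent-crystal machinery already recorded in the excerpt: the intertwining identities \eqref{eq_commutes_with_g}, the explicit action \eqref{eq_U_action_B-} on $B^-$, and the additivity \eqref{eq_chi_additive} of $\chi$ under multiplication of shifted-unipotent matrices.
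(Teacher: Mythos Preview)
Your argument is correct and is exactly the intended one: the paper itself does not prove this lemma but cites \cite[\S 6.3]{F1}, and the proof there proceeds precisely as you do, using $f = \chi \circ g$, the intertwining identity $g e_i = e_i g$, the explicit action \eqref{eq_U_action_B-} on $B^-$, the additivity \eqref{eq_chi_additive}, and the elementary check $\chi(\wh{x}_i(a)) = a$. Your handling of the $i=0$ case is also correct.
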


\subsubsection{Products}
\label{sec_product_notation}

For $k_1, \ldots, k_d \in [n-1]$, we use the notation $\X{k_1, \ldots, k_d}$ (or $\X{\mb{k}}$, where $\mb{k} = (k_1, \ldots, k_d)$) for the product $\X{k_1} \times \cdots \times \X{k_d}$. By Theorem \ref{thm_unip_prod}, the pair $(\X{k_1, \ldots, k_d},g)$ is a unipotent crystal, where $g : \X{k_1, \ldots, k_d} \rightarrow B^-$ is given by
\begin{equation}
\label{eq_g_prod}
g(x_1, \ldots, x_d) = g(x_1) \cdots g(x_d).
\end{equation}
This unipotent crystal induces a geometric crystal on $\X{k_1, \ldots, k_d}$ by Theorem \ref{thm_induces}. Furthermore, by Definition/Proposition \ref{defn_prop_prod_geom}, the function $f : \X{k_1, \ldots, k_d} \rightarrow \bbC$ defined by $f(x_1, \ldots, x_d) = f(x_1) + \ldots + f(x_d)$ is a decoration on $\X{k_1, \ldots, k_d}$. Note that Definition \ref{defn_dec} and \eqref{eq_chi_additive}, \eqref{eq_g_prod} imply that
\begin{equation}
\label{eq_prod_dec}
f(x_1, \ldots, x_d) = \chi(g(x_1, \ldots, x_d)).
\end{equation}

\subsubsection{Properties of the matrix $g(N|t)$}
\label{sec_g_props}

\begin{prop}[{\cite[Prop. 6.11]{F1}}]
\label{prop_g_props}
Suppose $N|t \in \X{k}$. Let $A = g(N|t)$, viewed as a folded matrix.
\begin{enumerate}
\item \label{itm:pi_g}
The first $k$ columns of $A$ span the subspace $N$.
\item \label{itm:rank_k}
The matrix $A|_{\lp = (-1)^{k-1}t}$ has rank $k$.
\item \label{itm:det_g}
The determinant of $A$ is $(t + (-1)^k \lp)^{n-k}$.
\end{enumerate}
\end{prop}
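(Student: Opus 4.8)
Fix a full-rank $n \times k$ matrix $N'$ representing $N$, and regard $A = g(N|t)$ as an $n \times n$ matrix over $\bbC(\lp)$. I would prove the three parts in the order (1), (2), (3), since (2) uses (1) and (3) uses both. \emph{Part (1).} For $1 \le j \le k$ we have $A_{ij} = P_{[j-k+1,j-1]\cup\{i\}}(N)/P_{[j-k,j-1]}(N)$, and the denominator is independent of $i$, so it suffices to show that the vector $v^{(j)} \in \bbC^n$ with $v^{(j)}_i = P_{[j-k+1,j-1]\cup\{i\}}(N)$ lies in $N$. I would form the $n \times (k+1)$ matrix $[\,v^{(j)} \mid N'\,]$ and Laplace-expand each of its $(k+1)\times(k+1)$ minors along the first column: up to an overall sign governed by Convention~\ref{conv_pluc}, the expansion of the minor on a row set $T$ of size $k+1$ is the Grassmann--Pl\"ucker relation for $N$ in the $(k-1)$-set $[j-k+1,j-1]$ and the set $T$, hence vanishes. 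Thus every maximal minor of $[\,v^{(j)} \mid N'\,]$ vanishes, so $v^{(j)}$ lies in the column span of $N'$, i.e.\ in $N$. To see that these $k$ columns span $N$ (and not merely lie in it), restrict $A$ to the rows $[n-k+1,n]$ and columns $[1,k]$: using that $P_I(N) = 0$ as soon as $I$ has two elements with the same residue mod $n$, one checks $A_{j-k,\,j} = 1$ for $j \in [k]$ and $A_{i,j} = 0$ for $i \in [j-k+1,j-1]$, so this $k\times k$ submatrix is triangular with $1$'s on its diagonal; hence the first $k$ columns of $A$ are linearly independent, and being $k$ independent vectors in the $k$-plane $N$ they span it.

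\emph{Part (2).} I would show that every column of $A|_{\lp=(-1)^{k-1}t}$ lies in $N$. The first $k$ columns do not involve $\lp$ and lie in $N$ by Part (1). For $j > k$, the substituted $j$-th column equals a nonzero scalar times the vector with $i$-th entry $\ve_{ij}\,P_{[j-k+1,j-1]\cup\{i\}}(N)$, where $\ve_{ij} = 1$ if $i \ge j$ and $\ve_{ij} = (-1)^{k-1}$ if $i < j$; comparing these signs with the position-dependent signs of the ``Pl\"ucker vector'' attached to the set $[j-k+1,j-1]$ (which lies in $N$ by the same Laplace-expansion argument as in Part (1)), one finds that this column is $(-1)^{k-1}$ times that Pl\"ucker vector, hence lies in $N$. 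Therefore $\operatorname{rank}\!\bigl(A|_{\lp=(-1)^{k-1}t}\bigr) \le k$; and it is $\ge k$ since the first $k$ columns already span $N$ by Part (1). Hence the rank is exactly $k$.

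\emph{Part (3).} The only occurrences of $\lp$ in $A$ are in the above-diagonal entries of columns $k+1,\dots,n$, and there only to degree one, so $\det A$ is a polynomial in $\lp$ of degree at most $n-k$; it is not identically zero since $A$ is invertible over $\bbC(\lp)$. By Part (2), $A|_{\lp=(-1)^{k-1}t}$ has rank $k$, so over the discrete valuation ring $\bbC[\lp]_{(\lp-(-1)^{k-1}t)}$ the Smith normal form of $A$ has exactly $k$ units on its diagonal, whence $\det A$ vanishes to order at least $n-k$ at $\lp = (-1)^{k-1}t$. Combined with the degree bound this forces $\det A = c(t)\,(t+(-1)^k\lp)^{n-k}$ for a scalar $c(t)$. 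To pin down $c(t)$ I would evaluate at $\lp=0$: the above-diagonal entries of columns $k+1,\dots,n$ then vanish, so those columns are supported on rows $k+1,\dots,n$ and form there a lower-triangular matrix with diagonal entries $t\,P_{[j-k+1,j]}(N)/P_{[j-k,j-1]}(N)$; expanding $\det A|_{\lp=0}$ along this block and telescoping the product of Pl\"ucker ratios against the determinant of the top $k\times k$ block of the representative from Part (1) leaves exactly $t^{n-k}$, so $c(t)=1$.

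The difficulty throughout is bookkeeping rather than ideas: the delicate points are tracking the signs introduced by the cyclic labeling Convention~\ref{conv_pluc} in the Grassmann--Pl\"ucker expansions of Parts (1) and (2), and verifying that the telescoping in Part (3) collapses precisely to $t^{n-k}$ (equivalently, computing the determinant of the $k\times k$ ``Pl\"ucker-window'' matrix). A slicker but less elementary route to Part (3) would be to note that $g$ is a morphism of $U$-varieties (Proposition~\ref{prop_is_unip_cryst}) and that $\det$ is invariant under the $U$-action, that the $U$-action on $\Gr(k,n)$ is through a transitive copy of $\SL_n(\bbC)$, and hence that $\det A$ is independent of $N$ and can be computed on any convenient subspace.
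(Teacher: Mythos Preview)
Your argument is correct. The paper does not supply its own proof of this proposition---it is cited from the companion paper---so there is nothing in-paper to compare against directly.

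A brief confirmation of the sign bookkeeping you flag as delicate: in Part~(1), for $j\le k$ the $(k-1)$-set $S=[j-k+1,j-1]$ reduces modulo $n$ to $\{1,\dots,j-1\}\cup\{n-k+j+1,\dots,n\}$, and every $i$ with $P_{S\cup\{i\}}\neq 0$ lies in the gap $[j,\,n-k+j]$. All such $i$ occupy the \emph{same} position (the $j$th) in the sorted order of $S\cup\{i\}$, so the sign relating $P_{S\cup\{i\}}$ (set convention) to the ``row $i$ first'' determinant is the uniform factor $(-1)^{j-1}$; your ``overall sign'' claim is therefore justified. In Part~(2), for $j>k$ the set $S=[j-k+1,j-1]$ is an honest interval of $[n]$, and the nonzero rows split into $i\le j-k$ (sign $+1$) and $i\ge j$ (sign $(-1)^{k-1}$), matching the $c_{ij}$ split exactly; so the substituted column is $(-1)^{k-1}$ times a vector in $N$, as you claim. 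In Part~(3), the Smith-normal-form step is valid; alternatively, since columns $1,\dots,k$ are constant in $\lp$ and span $N$, one may subtract suitable $\bbC$-combinations of them from columns $k+1,\dots,n$ to make each of the latter divisible by $\lp-(-1)^{k-1}t$, exhibiting the factor directly without invoking Smith form. Your telescoping at $\lp=0$ goes through because $\Delta_{[k],[k]}(A)=P_{[k]}(N)/P_{[n-k+1,n]}(N)$, which follows from Part~(1) together with the computation $\Delta_{[n-k+1,n],[k]}(A)=1$ you already made.
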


Combining Proposition \ref{prop_g_props} with some simple linear algebra, we obtain two statements that play an important role in the study of the geometric $R$-matrix in \S \ref{sec_geom_R}.

\begin{cor}
\label{cor_linear_alg}
Suppose $N|t \in \X{k}$ and $B \in M_n(\bbC[\lp,\lp^{-1}])$.
\begin{enumerate}
\item
\label{itm:swallows}
The first $k$ columns of $(g(N|t) \cdot B)|_{\lp = (-1)^{k-1}t}$ are contained in the subspace $N$.
\item
\label{itm:preserves_dim}
If $B|_{\lp = (-1)^{k-1}t}$ is invertible, then the matrix $(B \cdot g(N|t))|_{\lp = (-1)^{k-1}t}$ has rank $k$. Furthermore, the first $k$ columns of this matrix have full rank, and they span the subspace $B|_{\lp = (-1)^{k-1}t} \cdot N$.
\end{enumerate}
\end{cor}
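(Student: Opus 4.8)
The plan is to reduce both parts to Proposition~\ref{prop_g_props}, using as the one genuinely substantive step the observation that, after evaluating the loop parameter at $z := (-1)^{k-1}t$, the \emph{entire} column span of $g(N|t)$ collapses onto $N$. Everything else is bookkeeping about when evaluation of $\lp$ commutes with matrix multiplication. Throughout, write $A = g(N|t)$ (a folded matrix), and note that $z \neq 0$ because $t \in \Cx$.

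First I would establish the key claim: the column span of $A|_{\lp = z}$ equals $N$. Inspecting Definition~\ref{defn_g}, the first $k$ columns of $A$ carry no $\lp$ (the scalars $c_{ij}$ equal $1$ for $j \le k$), so they are literally vectors in $\bbC^n$, and by Proposition~\ref{prop_g_props}\eqref{itm:pi_g} they span the $k$-dimensional subspace $N$; hence they are linearly independent, and they remain so in $A|_{\lp=z}$. On the other hand, Proposition~\ref{prop_g_props}\eqref{itm:rank_k} says $A|_{\lp=z}$ has rank exactly $k$. A rank-$k$ matrix whose first $k$ columns are independent and span $N$ must have \emph{all} of its columns inside $N$; this gives the claim. (This is consistent with the fact that $z$ is a root of $\det A = (t+(-1)^k\lp)^{n-k}$, so $A|_{\lp=z}$ is singular.)

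For part~\eqref{itm:swallows}: since $A$ has polynomial entries, $B \in M_n(\bbC[\lp,\lp^{-1}])$, and $z \neq 0$, the substitution $p(\lp)\mapsto p(z)$ is a ring homomorphism $\bbC[\lp,\lp^{-1}]\to\bbC$, so $(g(N|t)\cdot B)|_{\lp=z} = A|_{\lp=z}\cdot B|_{\lp=z}$. Every column of this product is a $\bbC$-linear combination of columns of $A|_{\lp=z}$, hence lies in $N$ by the key claim; in particular the first $k$ columns do. For part~\eqref{itm:preserves_dim}: likewise $(B\cdot g(N|t))|_{\lp=z} = B|_{\lp=z}\cdot A|_{\lp=z}$, and by hypothesis $B|_{\lp=z}\in\GL_n(\bbC)$. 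Left multiplication by an invertible matrix preserves rank, so the product has rank $k$. Its first $k$ columns are the images under the isomorphism $B|_{\lp=z}$ of the first $k$ columns of $A|_{\lp=z}$, which are independent; their images are therefore independent (full rank) and span $B|_{\lp=z}\cdot N$, since those first $k$ columns of $A|_{\lp=z}$ span $N$.

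I do not anticipate a real obstacle here: the deep content is entirely packaged in Proposition~\ref{prop_g_props}, and the only place demanding a moment's care is the rank count in the second paragraph (combining ``rank $k$'' with ``first $k$ columns span $N$'' to conclude the column span \emph{is} $N$), together with the harmless but necessary remark that $z\neq 0$ so that Laurent monomials in $B$ can be evaluated and evaluation commutes with the matrix product.
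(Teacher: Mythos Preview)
Your proof is correct and follows essentially the same route as the paper: both arguments first combine Proposition~\ref{prop_g_props}\eqref{itm:pi_g} and \eqref{itm:rank_k} to see that the full column span of $g(N|t)|_{\lp=(-1)^{k-1}t}$ is $N$, then deduce \eqref{itm:swallows} by viewing right multiplication as column operations and \eqref{itm:preserves_dim} from the fact that invertible linear maps preserve rank. Your version is slightly more explicit about the bookkeeping (evaluation commuting with the product, $z\neq 0$), but the substance is identical.
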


\begin{proof}
By parts \eqref{itm:pi_g} and \eqref{itm:rank_k} of Proposition \ref{prop_g_props}, the column span of the matrix $g(N|t)|_{\lp = (-1)^{k-1}t}$ is the subspace $N$. Multiplication of this matrix by $B|_{\lp = (-1)^{k-1}t}$ on the right is equivalent to performing a sequence of (possibly degenerate) column operations, so all columns of the resulting matrix are contained in $N$, proving \eqref{itm:swallows}.

Part \eqref{itm:preserves_dim} follows from parts \eqref{itm:pi_g} and \eqref{itm:rank_k} of Proposition \ref{prop_g_props}, and the fact that invertible linear transformations preserve dimension.
\end{proof}


\subsection{Symmetries}
\label{sec_symm}

\subsubsection{$\Zn$ symmetry}
\label{sec_cyclic_symm}

\begin{defn}
\label{defn_PR}
Define the {\em cyclic shift map} $\PR : \X{k} \rightarrow \X{k}$ by $\PR(N|t) = N'|t$, where $N'$ is obtained from $N$ by shifting the rows down by 1 (mod $n$), and multiplying the new first row by $(-1)^{k-1} t$. We write $\PR_t$ to denote the map $N \mapsto N'$.
\end{defn}

For example, when $n=4$ and $k=2$, we have
\[
\left(\begin{array}{cc}
z_{11} & z_{12} \\
z_{21} & z_{22} \\
z_{31} & z_{32} \\
z_{41} & z_{42}
\end{array}\right)
\overset{\begin{array}{l}
\PR_t
\end{array}}
{\mapsto}
\left(\begin{array}{cc}
-t \cdot z_{41} & -t \cdot z_{42} \\
z_{11} & z_{12} \\
z_{21} & z_{22} \\
z_{31} & z_{32}
\end{array}\right).
\]
It is easy to see that $\PR$ is well-defined (that is, the definition does not depend on the choice of matrix representative for the subspace $N$), and that it has order $n$. Note that the Pl\"{u}cker coordinates of $N' = \PR_t(N)$ are given by
\begin{equation}
\label{eq_PR_pluc}
P_J(N') =
\begin{cases}
P_{J - 1}(N) & \text{ if } 1 \not \in J \\
t \cdot P_{J - 1}(N) & \text{ if } 1 \in J
\end{cases}
\end{equation}
where $J - 1$ is the subset obtained from $J$ by subtracting 1 from each element (mod $n$). Extend $\PR$ to a map $\X{k_1, \ldots, k_d} \rightarrow \X{k_1, \ldots, k_d}$ by
\begin{equation*}
\PR(x_1, \ldots, x_d) = (\PR(x_1), \ldots, \PR(x_d)).
\end{equation*}

Define the {\em shift map} $\sh$ on an $n$-periodic matrix $X$ by
\begin{equation*}
\sh(X)_{ij} = X_{i-1,j-1}.
\end{equation*}
This map is easily seen to be an automorphism of order $n$. It is the ``unipotent analogue'' of $\PR$ by the following result.

\begin{lem}
\label{lem_PR}
We have the identity $g \circ \PR = \sh \circ \, g$ of rational maps $\X{k_1, \ldots, k_d} \rightarrow B^-$.
\end{lem}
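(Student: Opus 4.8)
The plan is to prove the identity factor-by-factor, reducing immediately to the case $d=1$, since both $g$ and $\PR$ on products are defined factorwise (via \eqref{eq_g_prod} and the extension of $\PR$) and the shift map $\sh$ is a ring automorphism, hence distributes over matrix products: $\sh(g(x_1)\cdots g(x_d)) = \sh(g(x_1))\cdots\sh(g(x_d))$. So it suffices to show $g(\PR(N|t)) = \sh(g(N|t))$ for a single point $N|t \in \X{k}$. Both sides are elements of $B^-$, so I would compare them entry-by-entry as folded matrices — or, what is cleaner here, directly as unfolded $n$-periodic matrices, since $\sh$ is defined on unfolded matrices by $\sh(X)_{ij} = X_{i-1,j-1}$.

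First I would write $N' = \PR_t(N)$ and recall from \eqref{eq_PR_pluc} that $P_J(N') = P_{J-1}(N)$ if $1 \notin J$, and $P_J(N') = t\cdot P_{J-1}(N)$ if $1 \in J$, where indices are taken mod $n$ with residues in $[n]$ per Convention \ref{conv_pluc}. Then I would plug $N'$ into the formula of Definition \ref{defn_g}:
\[
g(N'|t)_{ij} = c_{ij}\,\frac{P_{[j-k+1,j-1]\cup\{i\}}(N')}{P_{[j-k,j-1]}(N')}.
\]
Next I would substitute the $\PR_t$-formulas for the Plücker coordinates of $N'$ in both numerator and denominator. The key bookkeeping point is tracking the factors of $t$: a factor of $t$ appears in the numerator exactly when $1$ lies in $[j-k+1,j-1]\cup\{i\}$ (as residues), and in the denominator exactly when $1 \in [j-k,j-1]$. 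After shifting all index sets down by $1$, the ratio becomes $c_{ij}\cdot t^{\pm}\cdot P_{[j-k,j-2]\cup\{i-1\}}(N)/P_{[j-k-1,j-2]}(N)$, and one checks this equals $c_{i-1,j-1}\,P_{[(j-1)-k+1,(j-1)-1]\cup\{i-1\}}(N)/P_{[(j-1)-k,(j-1)-1]}(N) = g(N|t)_{i-1,j-1}$, i.e. $\sh(g(N|t))_{ij}$, after absorbing the $t$-powers into the coefficient $c_{ij}$. This last absorption is exactly where the ``multiply the new first row by $(-1)^{k-1}t$'' in the definition of $\PR$ — together with the case structure of $c_{ij}$ (which is $1$, $t$, or $\lp$ depending on whether $j\le k$ and whether $i\ge j$) — has to match up precisely.

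The main obstacle I anticipate is the careful modular index-chasing at the ``wrap-around,'' i.e. handling the rows/columns near index $1$ (or $n$) where the residue conventions of Convention \ref{conv_pluc} kick in and where the $t$-factor from \eqref{eq_PR_pluc} is introduced. The three cases in the definition of $c_{ij}$ ($j \le k$; $j > k$ with $i \ge j$; $j > k$ with $i < j$) interact nontrivially with whether $1$ is swept into the numerator or denominator index set, and one must verify that the net power of $t$ produced by the $\PR_t$ substitution, combined with the change from $c_{ij}$ to $c_{i-1,j-1}$, works out in every case. I expect this to be a finite but somewhat delicate verification; once the index arithmetic is set up carefully (perhaps passing to unfolded matrices to make the periodicity automatic and eliminate the residue bookkeeping), each case should close by inspection. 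An alternative, possibly slicker route worth mentioning: since the first $k$ columns of $g(N|t)$ span $N$ (Proposition \ref{prop_g_props}\eqref{itm:pi_g}) and $\sh$ implements the cyclic row shift on the column span, one can try to characterize $g(N|t)$ by the properties in Proposition \ref{prop_g_props} plus its membership in $B^-$ and derive the identity more structurally; but the direct Plücker computation is the most straightforward and is what I would carry out.
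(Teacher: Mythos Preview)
Your reduction to the $d=1$ case via the fact that $\sh$ is a ring automorphism is exactly the paper's argument; the paper then simply cites \cite[Lem.~6.9(3)]{F1} for the base case rather than carrying out the Pl\"ucker computation you outline. Your proposed direct verification of the $d=1$ case is correct in outline and is presumably what is done in \cite{F1}, so the approaches coincide.
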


\begin{proof}
The $d=1$ case is \cite[Lem. 6.9(3)]{F1}. The general case follows from the $d=1$ case and the fact that $\sh$ is an automorphism.
\end{proof}

\begin{cor}
\label{cor_PR_minors}
Suppose $N|t \in \X{k}$. Let $A = g(N|t)$ and $A' = g(\PR(N|t))$, and view these as folded matrices. Then for $I,J \in {[n] \choose r}$, we have
\begin{equation}
\label{eq_PR_minors}
\Delta_{I,J}(A') = \begin{cases}
\Delta_{I-1,J-1}(A) & \text{ if } 1 \in I \cap J \text{ or } 1 \not \in I \cup J \\
(-1)^{r-1} \lp \cdot \Delta_{I-1,J-1}(A) & \text{ if } 1 \in I \setminus J \\
(-1)^{r-1} \lp^{-1} \cdot \Delta_{I-1,J-1}(A) & \text{ if } 1 \in J \setminus I.
\end{cases}
\end{equation}
\end{cor}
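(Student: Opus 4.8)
The plan is to deduce Corollary~\ref{cor_PR_minors} from Lemma~\ref{lem_PR}, which says $g(\PR(N|t)) = \sh(g(N|t))$ at the level of unfolded matrices, by translating the unfolded shift $\sh$ into a transparent operation on folded matrices and then reading off \eqref{eq_PR_minors} after bookkeeping powers of $\lp$ and signs.

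First I would observe that $\sh$, as an automorphism of the ring $M_n^{\infty}(\bbC)$ of $n$-periodic matrices, is conjugation by the $\bbZ$-periodic permutation matrix $S$ with $S_{ij} = \delta_{i,j+1}$, since $(SXS^{-1})_{ij} = X_{i-1,j-1}$. Because folding and unfolding are mutually inverse ring isomorphisms, $\sh$ corresponds on folded matrices to conjugation by the folding of $S$, namely $C = \sum_{i=2}^{n} E_{i,i-1} + \lp\, E_{1,n} \in \GL_n(\bbC(\lp))$, with $C^{-1} = \sum_{j=2}^{n} E_{j-1,j} + \lp^{-1} E_{n,1}$. Hence $A' = C A C^{-1}$. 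A one-line computation shows that left multiplication by $C$ cyclically shifts the rows of $A$ down by one, multiplying the wrap-around row by $\lp$, and right multiplication by $C^{-1}$ cyclically shifts the columns, multiplying the wrap-around column by $\lp^{-1}$; together these give the entrywise formula $A'_{ij} = \lp^{\delta_{i,1} - \delta_{j,1}}\, A_{\overline{i-1},\,\overline{j-1}}$, where $\overline{\phantom{i}}$ denotes the residue in $[n]$ (so $\overline{0} = n$), consistent with Convention~\ref{conv_pluc}.

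Next I would compute the minor directly from this entrywise formula. For $I,J \in {[n] \choose r}$, the factor $\lp^{\delta_{i,1}}$ depends only on the row and $\lp^{-\delta_{j,1}}$ only on the column, so both factor out of $\det\big((A')_{I,J}\big)$, contributing $\lp^{[1\in I] - [1 \in J]}$. What remains is $\det\big(A_{\overline{i-1},\,\overline{j-1}}\big)_{i \in I,\, j \in J}$, and the only subtlety is that this determinant lists the rows $\overline{i-1}$ in the order inherited from the increasing order of $i \in I$, which agrees with the increasing order on $I-1$ unless $1 \in I$, in which case the row $\overline{1-1} = n$ appears first but should appear last; moving it into place is a cyclic permutation of $r$ rows, contributing $(-1)^{r-1}$, and likewise a factor $(-1)^{r-1}$ from the columns when $1 \in J$. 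Assembling everything yields $\Delta_{I,J}(A') = \lp^{[1\in I] - [1\in J]} (-1)^{(r-1)([1\in I] + [1\in J])}\, \Delta_{I-1,J-1}(A)$, and checking the four cases $1\in I\cap J$, $1\notin I\cup J$, $1\in I\setminus J$, $1\in J\setminus I$ recovers exactly the four lines of \eqref{eq_PR_minors}.

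The only genuine hazard is this sign arising from reordering rows and columns against the cyclic-residue convention, so I would state the entrywise formula for $A'$ carefully, with the $\overline{\phantom{i}}$ notation, before passing to determinants. As an alternative to introducing $C$, one could derive $A'_{ij} = \lp^{\delta_{i,1}-\delta_{j,1}} A_{\overline{i-1},\,\overline{j-1}}$ directly from the unfolding definition applied to $\sh$; but conjugation by $C$ makes the ``shift the rows and columns down, with $\lp^{\pm 1}$ on the wrap-around'' picture cleanest.
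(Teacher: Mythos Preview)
Your proof is correct and takes essentially the same approach as the paper: both start from Lemma~\ref{lem_PR} to get $A' = \sh(A)$, then track the effect of the shift on folded-matrix entries and handle the sign and $\lp$-power arising from the wrap-around row and column. The paper is slightly more terse---it directly describes the two row/column operations relating $A'_{I,J}$ to $A_{I-1,J-1}$ without introducing the conjugating matrix $C$ or the entrywise formula---but your explicit derivation via $A' = CAC^{-1}$ arrives at the same conclusion by the same mechanism.
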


\begin{proof}
By Lemma \ref{lem_PR}, we have $A' = \sh(A)$. Observe that the submatrix $\sh(A)_{I,J}$ is obtained from the submatrix $A_{I-1,J-1}$ by the following two steps:
\begin{itemize}
\item If $1 \in I$, multiply the last row by $\lp$ and interchange it with the other $r-1$ rows.
\item If $1 \in J$, multiply the last column by $\lp^{-1}$ and interchange it with the other $r-1$ columns.
\end{itemize}
This implies \eqref{eq_PR_minors}.
\end{proof}

\subsubsection{Geometric Sch\"{u}tzenberger involution}
\label{sec_schutz}

For $z \in \Cx$, define $\pi^k_z : M_n(\bbC[\lp, \lp^{-1}]) \rightarrow \X{k}$ by
\begin{equation}
\label{eq_pi_k_z}
\pi^k_z(A) = N|z
\end{equation}
where $N$ is the subspace spanned by the first $k$ columns of the $n \times n$ matrix $A_z = A|_{\lp = (-1)^{k-1} z}$. This is a rational map: it is undefined if the first $k$ columns of $A_z$ do not have full rank.

Define the {\em flip map} $\fl$ on an $n \times n$ matrix $A$ by
\begin{equation*}
\fl(A)_{ij} = A_{n-j+1,n-i+1}.
\end{equation*}
In words, $\fl$ reflects the matrix over the anti-diagonal. It is easy to see that $\fl$ is an anti-automorphism.

\begin{defn}
\label{defn_S}
Define the {\em geometric Sch\"{u}tzenberger involution} $S : \X{k} \rightarrow \X{k}$ by
\[
S(N|t) = \pi^k_t \circ \fl \circ \, g(N|t).
\]
Continuing the notation used for $\PR$, we write $S_t$ to denote the map $N \mapsto N'$, where $N'|t = S(N|t)$. The map $S$ is an involution by \cite[Cor. 7.4(1)]{F1}.
\end{defn}

The Pl\"{u}cker coordinates of $S_t(N)$ will appear frequently enough in later sections that we introduce the following notation for them:
\begin{equation}
\label{eq_Q_def}
Q^J_t(N) := P_{w_0(J)}(S_t(N)),
\end{equation}
where $w_0(J)$ is the subset obtained by replacing each $j \in J$ with $n-j+1$. By Proposition \ref{prop_g_props}\eqref{itm:pi_g} (resp., the definition of $S$), the Pl\"{u}cker coordinates $P_J(N)$ (resp., $Q^J_t(N)$) are the maximal minors of the first $k$ columns (resp., last $k$ rows) of $g(N|t)$. Since the bottom left $k \times k$ submatrix of $g(N|t)$ is upper uni-triangular, we have
\begin{equation}
\label{eq_P_Q}
\dfrac{P_J(N)}{P_{[n-k+1,n]}(N)} = \Delta_{J,[k]}(g(N|t)) \quad \text{ and } \quad \dfrac{Q^J_t(N)}{Q^{[k]}_t(N)} = \Delta_{[n-k+1,n],J}(g(N|t)).
\end{equation}

Extend $S$ to a map $\X{k_1, \ldots, k_d} \rightarrow \X{k_d, \ldots, k_1}$ by
\begin{equation}
\label{eq_S_prod}
S(x_1, \ldots, x_d) = (S(x_d), \ldots, S(x_1)).
\end{equation}
Note that the order of the factors is reversed.

\begin{lem}
\label{lem_S}
We have the identity $g \circ S = \fl \circ \, g$ of rational maps $\X{k_1, \ldots, k_d} \rightarrow B^-$.
\end{lem}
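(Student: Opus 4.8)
The goal is to prove $g \circ S = \fl \circ g$ as rational maps $\X{k_1, \ldots, k_d} \to B^-$. The plan is to first handle the $d = 1$ case, then bootstrap to general $d$ using the fact that $\fl$ is an anti-automorphism together with the factorization $g(x_1, \ldots, x_d) = g(x_1) \cdots g(x_d)$ from \eqref{eq_g_prod}.

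For the $d = 1$ case, fix $N|t \in \X{k}$ and write $A = g(N|t)$. By definition $S(N|t) = \pi^k_t(\fl(A))$, so $S(N|t) = N'|t$ where $N'$ is the span of the first $k$ columns of $\fl(A)|_{\lp = (-1)^{k-1}t}$. I need to show $g(N'|t) = \fl(A)$. Both sides live in $B^-$: the right side is $\fl(A)$, and since $\fl$ reflects over the anti-diagonal, it sends an $(n-k)$-shifted unipotent matrix to an $(n-k)$-shifted unipotent matrix, so $\fl(A) \in B^-$ (one should check the shifted-unipotent structure is preserved, which is immediate from $\fl(A)_{ij} = A_{n-j+1, n-i+1}$). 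The first step is to identify the first $k$ columns of $\fl(A)$ — by the anti-diagonal reflection, these are the reverse-ordered last $k$ rows of $A$, and their maximal minors are (up to the $w_0$ relabeling) the $Q^J_t(N)$ of \eqref{eq_Q_def}. So $N' = S_t(N)$ has Plücker coordinates read off from the last $k$ rows of $A$. Then I would verify that $g(N'|t)$, computed via Definition \ref{defn_g} from the Plücker coordinates of $N'$, agrees entry-by-entry with $\fl(A)$. This is essentially the content of \cite[Cor. 7.4]{F1} (which is already cited for the involutivity of $S$), so I expect this case to either be quotable directly from \cite{F1} or to reduce to a short computation matching the formula for $g$ against the anti-diagonal reflection of $g(N|t)$, using that $\det \fl(A) = \det A = (t + (-1)^k \lp)^{n-k}$ by Proposition \ref{prop_g_props}\eqref{itm:det_g} and that the relevant minors transform correctly under $\fl$.

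For general $d$, given $(x_1, \ldots, x_d) \in \X{k_1, \ldots, k_d}$, I compute
\[
g(S(x_1, \ldots, x_d)) = g(S(x_d), \ldots, S(x_1)) = g(S(x_d)) \cdots g(S(x_1))
\]
using \eqref{eq_S_prod} and \eqref{eq_g_prod}. Applying the $d = 1$ case to each factor gives
\[
g(S(x_d)) \cdots g(S(x_1)) = \fl(g(x_d)) \cdots \fl(g(x_1)).
\]
Since $\fl$ is an anti-automorphism, $\fl(g(x_d)) \cdots \fl(g(x_1)) = \fl(g(x_1) \cdots g(x_d)) = \fl(g(x_1, \ldots, x_d))$, which is exactly $\fl(g(x_1, \ldots, x_d))$ as desired. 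The reversal of factors in the definition of $S$ on products, \eqref{eq_S_prod}, is precisely what makes this compatible with the order reversal coming from the anti-automorphism property of $\fl$.

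**Main obstacle.** The only genuine work is the $d = 1$ case, i.e., checking $g(S_t(N) \,|\, t) = \fl(g(N|t))$. The subtlety is bookkeeping: one must track how the maximal minors of the first $k$ columns and of arbitrary rows of $g(N|t)$ transform under the anti-diagonal flip, confirm the relabeling by $w_0$ matches the cyclic/reflection conventions in Convention \ref{conv_pluc} and \eqref{eq_Q_def}, and verify the scalar prefactors $c_{ij}$ (the powers of $t$ and $\lp$) come out correctly after reflection. Given that \cite[Cor. 7.4]{F1} already establishes that $S$ is a well-defined involution on $\X{k}$ via exactly this construction, I anticipate that the identity $g \circ S = \fl \circ g$ for $d = 1$ is either explicitly in \cite{F1} or follows from its proof, so in the write-up I would cite \cite{F1} for the base case and give only the short anti-automorphism argument for the inductive step.
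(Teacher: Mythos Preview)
Your approach is essentially identical to the paper's: the $d=1$ case is quoted from \cite{F1} (it is \cite[Prop.~7.3]{F1}, not Cor.~7.4, which concerns involutivity), and the general case is exactly the anti-automorphism argument you wrote out. The paper's proof is just those two sentences.
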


\begin{proof}
The $d = 1$ case is \cite[Prop. 7.3]{F1}. The general case follows from the $d=1$ case and the fact that $\fl$ is an anti-automorphism.
\end{proof}

Note that as an immediate consequence of Lemma \ref{lem_S}, we have
\begin{equation}
\label{eq_S_minors}
\Delta_{I,J}(g(S(N|t))) = \Delta_{w_0(J), w_0(I)}(g(N|t)).
\end{equation}
Lemma \ref{lem_S} also allows us to express the entries of the matrix $g(N|t)$ in terms of the Pl\"{u}cker coordinates $Q^J_t(N)$.

\begin{lem}
\label{lem_g_in_Q}
We have
\[
g(N|t)_{ij} = c'_{ij} \dfrac{Q^{[i+1,i+k-1] \cup \{j\}}_t}{Q^{[i+1,i+k]}_t}, \quad\quad\quad
c'_{ij} = 
\begin{cases} 1 & \text{ if } i > n-k \\
t & \text{ if } i \leq n-k \text{ and } i \geq j \\
\lp  & \text{ if } i \leq n-k \text { and } i < j.
\end{cases}
\]
\end{lem}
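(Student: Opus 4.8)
The goal is to show that $g(N|t)_{ij} = c'_{ij}\, Q^{[i+1,i+k-1]\cup\{j\}}_t / Q^{[i+1,i+k]}_t$, with $c'_{ij}$ as specified. The natural strategy is to apply Lemma~\ref{lem_S}, which says $g \circ S = \fl \circ g$, so that the entries of $g(N|t)$ can be read off from the entries of $g(S(N|t))$ via the flip $\fl(A)_{ij} = A_{n-j+1,n-i+1}$. Explicitly, writing $M|t = S(N|t)$, we have $g(N|t)_{ij} = g(M|t)_{n-j+1,\,n-i+1}$, and Definition~\ref{defn_g} gives a formula for the right-hand side in terms of Pl\"ucker coordinates $P_I(M)$.

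Concretely, I would proceed as follows. First, apply Definition~\ref{defn_g} to $g(M|t)_{n-j+1,\,n-i+1}$: this is $c_{n-j+1,\,n-i+1} \cdot P_{[(n-i+1)-k+1,\,(n-i+1)-1]\cup\{n-j+1\}}(M) \big/ P_{[(n-i+1)-k,\,(n-i+1)-1]}(M)$. Simplifying the index sets modulo $n$ (using Convention~\ref{conv_pluc}), the denominator index is $[n-i-k+1,\,n-i]$ and the numerator index is $[n-i-k+2,\,n-i]\cup\{n-j+1\}$. Next, translate the $P_I(M) = P_I(S_t(N))$ into the $Q$-notation via \eqref{eq_Q_def}: since $Q^J_t(N) = P_{w_0(J)}(S_t(N))$, we have $P_I(S_t(N)) = Q^{w_0(I)}_t(N)$. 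Applying $w_0$ (which sends $m \mapsto n-m+1$) to the index sets above: $w_0([n-i-k+1,\,n-i]) = [i+1,\,i+k]$, and $w_0([n-i-k+2,\,n-i]\cup\{n-j+1\}) = [i+1,\,i+k-1]\cup\{j\}$. This is exactly the claimed shape of the $Q$-ratio. Finally, check that the coefficient $c_{n-j+1,\,n-i+1}$ from Definition~\ref{defn_g} matches $c'_{ij}$: the condition ``$n-i+1 \le k$'' becomes ``$i \ge n-k+1$'', i.e. ``$i > n-k$'', giving coefficient $1$; ``$n-i+1 > k$ and $n-j+1 \ge n-i+1$'' becomes ``$i \le n-k$ and $i \ge j$'', giving coefficient $t$; and ``$n-i+1 > k$ and $n-j+1 < n-i+1$'' becomes ``$i \le n-k$ and $i < j$'', giving coefficient $\lp$. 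These agree with the three cases defining $c'_{ij}$.

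The only subtlety — and the main thing to be careful about — is the index bookkeeping modulo $n$: one must confirm that the cyclic reductions of the Pl\"ucker index sets are handled consistently with Convention~\ref{conv_pluc}, and that $w_0$ interacts correctly with these reductions (note $w_0$ commutes with cyclic shifts only up to reversal, so one should apply $w_0$ to a representative and then reduce, or verify the cases where the interval ``wraps around''). A clean way to avoid errors is to work with the identity \eqref{eq_S_minors} instead: it directly states $\Delta_{I,J}(g(S(N|t))) = \Delta_{w_0(J),w_0(I)}(g(N|t))$, from which the single-entry statement $g(N|t)_{ij} = g(S(N|t))_{n-j+1,n-i+1}$ follows by taking $I = \{n-j+1\}$, $J = \{n-i+1\}$ and using $w_0(\{m\}) = \{n-m+1\}$. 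Since there are no genuine analytic or combinatorial difficulties here, I expect the proof to be short — essentially a two-line reduction to Definition~\ref{defn_g} and \eqref{eq_Q_def}, with the bulk of the write-up being the verification that the three coefficient cases transform correctly under $i \mapsto n-i+1$, $j \mapsto n-j+1$.
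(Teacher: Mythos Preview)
Your proposal is correct and follows essentially the same approach as the paper: apply Lemma~\ref{lem_S} to write $g(N|t)_{ij} = g(S(N|t))_{n-j+1,n-i+1}$, use Definition~\ref{defn_g} for the entry of $g(S(N|t))$, convert the resulting Pl\"ucker coordinates $P_I(S_t(N))$ to $Q^{w_0(I)}_t(N)$ via \eqref{eq_Q_def}, and finally verify $c_{n-j+1,n-i+1} = c'_{ij}$. The paper's proof is precisely this computation, written in about three lines.
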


\begin{proof}
By Lemma \ref{lem_S}, we have $g(N|t) = \fl \circ \, g \circ S(N|t)$, so
\begin{align*}
g(N|t)_{ij} = (g \circ S(N|t))_{n-j+1,n-i+1} &= c_{n-j+1,n-i+1} \dfrac{P_{[n-i-k+2,n-i] \cup \{n-j+1\}}(S_t(N))}{P_{[n-i-k+1,n-i]}(S_t(N))} \\
&= c_{n-j+1,n-i+1} \dfrac{Q_t^{[i+1,i+k-1] \cup \{j\}}(N)}{Q_t^{[i+1,i+k]}(N)}.
\end{align*}
Clearly $c_{n-j+1,n-i+1} = c'_{ij}$, so we are done.
\end{proof}

\subsubsection{The dual Grassmannian}
\label{sec_duality}

Given a subspace $N \subset \mathbb{C}^n$, let $N^\perp$ be the orthogonal complement of $N$ with respect to the non-degenerate bilinear form given by $\langle v_i, v_j \rangle = (-1)^{i+1} \delta_{i,j}$, where $v_1, \ldots, v_n$ is the standard basis. Note that if $N \in \Gr(k,n)$, then $N^\perp$ is in the ``dual Grassmannian'' $\Gr(n-k,n)$.

Let $T_{w_0}: \Gr(k,n) \rightarrow \Gr(k,n)$ be the automorphism induced by reversing the standard basis of $\bbC^n$. Explicitly, if $N'$ is an $n \times k$ matrix representative for a subspace $N$, then $T_{w_0}(N)$ is the subspace represented by $Q_{w_0} \cdot N'$, where $Q_{w_0}$ is the permutation matrix corresponding to the longest element of $S_n$. Note that $\Delta_{J,[k]}(T_{w_0}(N)) = (-1)^{k(k-1)/2} \Delta_{w_0(J),[k]}(N)$, so $P_J(T_{w_0}(N)) = P_{w_0(J)}(N)$ (as projective coordinates), where, as above, $w_0(J)$ is the subset obtained by replacing each $j \in J$ with $n-j+1$.

Define $\mu : \X{k} \rightarrow \X{n-k}$ by
\[
\mu(N|t) = T_{w_0}(N^\perp)|t.
\]
Slightly abusing notation, we also write $\mu$ for the map $N \mapsto T_{w_0}(N^\perp)$. For $J \subset [n]$, let $\ov{J}$ be the complement $[n] \setminus J$, and let $J^* = w_0(\ov{J})$. By \cite[Lem. 7.5]{F1}, we have
\begin{equation}
\label{eq_dual_Plucker}
P_J(\mu(N)) = P_{J^*}(N).
\end{equation}
Extend $\mu$ to a map $\X{k_1, \ldots, k_d} \rightarrow \X{n-k_1, \ldots, n-k_d}$ by acting component-wise.

\begin{defn}
\label{defn_D}
Define the {\em duality map} $D : \X{k_1, \ldots, k_d} \rightarrow \X{n-k_d, \ldots, n-k_1}$ by
\[
D = S \circ \mu.
\]
For $N \in \Gr(k,n)$ and $t \in \Cx$, let $D_t(N) = S_t(\mu(N))$. The map $D$ is an involution by \cite[Cor. 7.9(1)]{F1}.
\end{defn}

Define $\inv : \GL_n(\bbC(\lp)) \rightarrow \GL_n(\bbC(\lp))$ on a folded matrix $A$ by
\[
\inv(A)_{ij} = (-1)^{i+j} \det(A_\lp) (A^{-1}_\lp)_{ij} = \Delta_{[n] \setminus \{j\}, [n] \setminus \{i\}}(A_\lp),
\]
where $A_\lp = A|_{\lp = (-1)^n \lp}$. It is easy to see that $\inv$ is an anti-automorphism.

\begin{lem}
\label{lem_D}
Suppose $(N_1|t_1, \ldots, N_d|t_d) \in \X{k_1, \ldots, k_d}$. If $g(N_j|t_j)$ is defined for all $j$, then
\[
\beta \cdot g \circ D(N_1|t_1, \ldots, N_d|t_d) = \inv \circ \, g(N_1|t_1, \ldots, N_d|t_d),
\]
where $\beta = \prod_{j=1}^d (t_j + (-1)^{k_j+n} \lp)^{n-k_j-1}$.
\end{lem}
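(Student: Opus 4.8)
The plan is to reduce the statement to the single-factor case $d=1$ and then to verify that case by an explicit computation with Pl\"ucker coordinates. For the reduction, write $\mb N = (N_1|t_1,\dots,N_d|t_d)$ and $\beta_j = (t_j+(-1)^{k_j+n}\lp)^{n-k_j-1}$, so $\beta = \prod_{j=1}^d\beta_j$. By the product formula \eqref{eq_g_prod}, $g(\mb N) = g(N_1|t_1)\cdots g(N_d|t_d)$, and since $\inv$ is an anti-automorphism, $\inv\circ g(\mb N) = \inv(g(N_d|t_d))\cdots\inv(g(N_1|t_1))$. On the other side, $D = S\circ\mu$ with $\mu$ acting component-wise and $S$ reversing the order of the factors, so $D(\mb N) = (D(N_d|t_d),\dots,D(N_1|t_1))$ (where for a single factor $D(N|t) = S(\mu(N)|t)$); applying \eqref{eq_g_prod} again, $g\circ D(\mb N) = g(D(N_d|t_d))\cdots g(D(N_1|t_1))$. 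Since the scalars $\beta_j$ are central, $\beta\cdot g\circ D(\mb N) = \bigl(\beta_d\,g(D(N_d|t_d))\bigr)\cdots\bigl(\beta_1\,g(D(N_1|t_1))\bigr)$, so the general statement follows once we establish the $d=1$ case
\begin{equation}
\label{eq_D_one}
(t+(-1)^{k+n}\lp)^{n-k-1}\,g\bigl(S(\mu(N)|t)\bigr) = \inv\bigl(g(N|t)\bigr),\qquad N|t\in\X{k}.
\end{equation}

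For $d=1$, note that $\mu(N)|t\in\X{n-k}$, so Lemma \ref{lem_S} gives $g(S(\mu(N)|t)) = \fl(g(\mu(N)|t))$; thus \eqref{eq_D_one} reads $(t+(-1)^{k+n}\lp)^{n-k-1}\fl(g(\mu(N)|t)) = \inv(g(N|t))$, and applying the involutive anti-automorphism $\fl$ to both sides (it commutes with scaling by $\bbC(\lp)$) gives the equivalent form
\begin{equation}
\label{eq_D_one_flip}
(t+(-1)^{k+n}\lp)^{n-k-1}\,g(\mu(N)|t) = \fl\bigl(\inv(g(N|t))\bigr).
\end{equation}
Now I would compare the two sides of \eqref{eq_D_one_flip} entry by entry. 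On the left, Definition \ref{defn_g} with $n-k$ in the role of $k$, together with \eqref{eq_dual_Plucker}, expresses $g(\mu(N)|t)_{ij}$ as a ratio of two Pl\"ucker coordinates $P_{J^*}(N)$ times one of the scalars $1,t,\lp$. On the right, unwinding the definitions of $\fl$ and $\inv$,
\[
\fl\bigl(\inv(g(N|t))\bigr)_{ij} = \inv(g(N|t))_{n-j+1,\,n-i+1} = \Delta_{[n]\setminus\{n-j+1\},\;[n]\setminus\{n-i+1\}}\bigl(g(N|t)|_{\lp=(-1)^n\lp}\bigr),
\]
an $(n-1)\times(n-1)$ minor of $g(N|t)$, evaluated at $\lp=(-1)^n\lp$, whose row and column sets are the $w_0$-images of $[n]\setminus\{j\}$ and $[n]\setminus\{i\}$.

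The statement thus reduces to a formula for the $(n-1)\times(n-1)$ minors $\Delta_{[n]\setminus\{p\},\,[n]\setminus\{q\}}(g(N|t))$ in terms of the Pl\"ucker coordinates $P_J(N)$ and the scalars of Definition \ref{defn_g}, and I expect establishing this formula---with all of its signs---to be the main obstacle. One way to do it is via a planar-network representation of $g(N|t)$ and the Lindstr\"om lemma recalled in the Appendix: each $(n-1)$-minor becomes a weighted sum over families of $n-1$ vertex-disjoint paths, and the structure of the network should collapse this sum to a single monomial in the $P_J(N)$, up to sign. Alternatively, one can use the adjugate identity $g(N|t)\cdot\adj(g(N|t)) = (t+(-1)^k\lp)^{n-k}\,\Id$ coming from Proposition \ref{prop_g_props}\eqref{itm:det_g}: since $g(N|t)|_{\lp=(-1)^{k-1}t}$ has rank $k$ by Proposition \ref{prop_g_props}\eqref{itm:rank_k}, an elementary-divisors argument shows $\adj(g(N|t))$ vanishes to order exactly $n-k-1$ at $\lp=(-1)^{k-1}t$, and that the quotient of $\adj(g(N|t))$ by $(t+(-1)^k\lp)^{n-k-1}$ is a matrix whose columns at that point span $\ker\bigl(g(N|t)|_{\lp=(-1)^{k-1}t}\bigr)$; identifying this kernel, and the remaining $\lp$-dependence of the quotient, with the Pl\"ucker data of $\mu(N)|t$ via Proposition \ref{prop_g_props}\eqref{itm:pi_g} then yields \eqref{eq_D_one_flip}. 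In either approach, the genuinely delicate point is the bookkeeping of signs and of the cyclic index conventions of Convention \ref{conv_pluc} when the sets $[n]\setminus\{p\}$ and $[n]\setminus\{q\}$ wrap around modulo $n$.
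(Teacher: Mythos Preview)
Your reduction from general $d$ to $d=1$ is correct and is exactly the paper's argument: the paper's proof consists of the sentence ``The $d=1$ case is \cite[Prop.~7.8]{F1}; the general case follows from the $d=1$ case and the fact that $\inv$ is an anti-automorphism.'' Your observation that $D$ reverses the order of the factors (because $\mu$ is component-wise and $S$ reverses, cf.~\eqref{eq_S_prod}) is the other ingredient of this reduction, left implicit in the paper.

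Where you diverge is in attempting to re-prove the $d=1$ case. The paper does not do this here; it is simply quoted from \cite{F1}. Your rewriting of the $d=1$ identity via Lemma~\ref{lem_S} as an entrywise comparison between $g(\mu(N)|t)$ and $(n-1)\times(n-1)$ minors of $g(N|t)$ is sound in principle (though in your displayed formula the row and column sets should be $[n]\setminus\{n-i+1\}$ and $[n]\setminus\{n-j+1\}$ respectively---you have them swapped). From that point on, however, you only outline two possible strategies and explicitly flag the sign and cyclic-index bookkeeping as the ``main obstacle'' without carrying it out. So as it stands, the $d=1$ step is a plan, not a proof. In the context of this paper the intended move is simply to cite \cite[Prop.~7.8]{F1}; if you want a self-contained argument, either of your outlined approaches can be made to work, but the minor computation genuinely has to be done.
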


\begin{proof}
The $d = 1$ case is \cite[Prop. 7.8]{F1}. The general case follows from the $d=1$ case and the fact that $\inv$ is an anti-automorphism.
\end{proof}

\begin{cor}
\label{cor_D_minors}
Suppose $N|t \in \X{k}$. Let $A = g(N|t)$ and $A' = g(D(N|t))$, viewed as folded matrices. Then for $I,J \in {[n] \choose r}$, we have
\begin{equation*}
\Delta_{I,J}(A') = (t + (-1)^{n-k} \lp)^{r-(n-k)} \Delta_{\ov{J}, \ov{I}}(A|_{\lp = (-1)^n \lp}).
\end{equation*}
\end{cor}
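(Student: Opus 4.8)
The plan is to derive Corollary \ref{cor_D_minors} from Lemma \ref{lem_D} by the same kind of minor-bookkeeping argument used to prove Corollary \ref{cor_PR_minors} from Lemma \ref{lem_PR}. First I would record what Lemma \ref{lem_D} says in the case $d=1$: if $A = g(N|t)$ and $A' = g(D(N|t))$, then $\beta \cdot A' = \inv(A)$, where $\beta = (t + (-1)^{k+n}\lp)^{n-k-1}$ and $\inv(A)_{ij} = \Delta_{[n]\setminus\{j\},[n]\setminus\{i\}}(A_\lp)$ with $A_\lp = A|_{\lp = (-1)^n\lp}$. In other words, $\inv(A)$ is the classical adjugate (transpose of the cofactor matrix) of $A_\lp$, up to the standard sign pattern already absorbed into the formula $\Delta_{[n]\setminus\{j\},[n]\setminus\{i\}}$.

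The key computational step is then the Jacobi identity for minors of the adjugate (a.k.a. the ``complementary minor'' formula). Recall that for an invertible $n\times n$ matrix $B$, the $r\times r$ minors of $\adj(B)$ are expressed via
\[
\Delta_{I,J}(\adj(B)) = \det(B)^{r-1} \, \Delta_{\ov{J},\ov{I}}(B)
\]
up to a sign depending on $I,J$; with the cofactor sign convention built into $\inv$, the signs work out so that
\[
\Delta_{I,J}(\inv(A)) = \det(A_\lp)^{r-1} \, \Delta_{\ov{J}, \ov{I}}(A_\lp).
\]
Plugging in $\det(A_\lp) = (t + (-1)^{n-k}\lp)^{n-k}$ (this is Proposition \ref{prop_g_props}\eqref{itm:det_g} with $\lp$ replaced by $(-1)^n\lp$, noting $(-1)^k(-1)^n = (-1)^{n-k}$, so $t + (-1)^k\cdot(-1)^n\lp = t + (-1)^{n-k}\lp$), and using $\beta^r \cdot \Delta_{I,J}(A') = \Delta_{I,J}(\inv(A))$, I would get
\[
(t + (-1)^{n-k}\lp)^{r(n-k-1)} \, \Delta_{I,J}(A') = (t + (-1)^{n-k}\lp)^{(n-k)(r-1)} \, \Delta_{\ov{J},\ov{I}}(A_\lp).
\]
Dividing through, the exponent on the left-over factor is $(n-k)(r-1) - r(n-k-1) = r(n-k) - (n-k) - r(n-k) + r = r - (n-k)$, which gives exactly the claimed identity $\Delta_{I,J}(A') = (t + (-1)^{n-k}\lp)^{r - (n-k)} \Delta_{\ov{J},\ov{I}}(A|_{\lp = (-1)^n\lp})$.

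The main obstacle, and the step deserving the most care, is getting the sign conventions in the Jacobi/adjugate minor identity to line up precisely with the definition of $\inv$ and with the set-based minor notation $\Delta_{\ov{J},\ov{I}}$. The sign $(-1)^{i+j}\det(A_\lp)(A_\lp^{-1})_{ij}$ in the definition of $\inv$ is exactly the $(i,j)$ cofactor, and the classical statement ``the $r\times r$ minor of the cofactor matrix on rows $I$, columns $J$ equals $\det^{r-1}$ times the complementary minor on rows $\ov{J}$, columns $\ov{I}$'' holds on the nose with cofactors (no extra sign) — but I would want to verify this carefully, perhaps by checking a $2\times 2$ or $3\times 3$ example, since it is easy to be off by a sign like $(-1)^{\sigma(I) + \sigma(J)}$. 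Once that identity is pinned down, everything else is the exponent arithmetic above, which is routine.
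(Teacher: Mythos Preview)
Your proposal is correct and follows essentially the same approach as the paper: apply Lemma \ref{lem_D} in the case $d=1$, use Jacobi's complementary minor identity $\Delta_{I,J}(\inv(A)) = \det(A_\lp)^{r-1}\Delta_{\ov{J},\ov{I}}(A_\lp)$, plug in $\det(A_\lp) = (t+(-1)^{n-k}\lp)^{n-k}$ from Proposition \ref{prop_g_props}\eqref{itm:det_g}, and do the exponent arithmetic. The paper simply cites a reference for the Jacobi identity rather than verifying the sign convention, but your caution there is well placed.
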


\begin{proof}
First note that Jacobi's formula for complementary minors of inverse matrices (see, e.g., \cite{Jacobi}) implies that if $C \in \GL_n(\bbC(\lp))$ and $I,J \in {[n] \choose r}$, then
\begin{equation}
\label{eq_Jacobi}
\Delta_{I,J}(\inv(C)) = \det(C_\lp)^{r-1} \Delta_{\ov{J}, \ov{I}}(C_\lp).
\end{equation}

Set $\alpha = t + (-1)^{n-k} \lp$. By Lemma \ref{lem_D}, we have
\[
A' = \dfrac{1}{\alpha^{n-k-1}} \inv(A).
\]
By Proposition \ref{prop_g_props}\eqref{itm:det_g}, the determinant of $A_\lp$ is $\alpha^{n-k}$, so using \eqref{eq_Jacobi}, we have
\[
\Delta_{I,J}(A') = \dfrac{1}{\alpha^{(n-k-1)r}} \alpha^{(n-k)(r-1)} \Delta_{\ov{J}, \ov{I}}(A_\lp) = \alpha^{r-(n-k)} \Delta_{\ov{J}, \ov{I}}(A_\lp).
\qedhere
\]
\end{proof}

\section{Positivity and tropicalization}
\label{sec_trop}

\subsection{The Gelfand--Tsetlin parametrization of $\X{k}$}
\label{sec_GT_param}

Recall from \S \ref{sec_k_rect} that a $k$-rectangle is an array of $k(n-k)+1$ nonnegative integers satisfying certain inequalities; $k$-rectangles parametrize the set of rectangular SSYTs with $k$ rows. By replacing integers with nonzero complex numbers, we obtain a ``rational version'' of $k$-rectangles, as follows. Let
\[
\bT{k} = (\Cx)^{R_k} \times \Cx
\]
where $R_k$ is the indexing set defined by \eqref{eq_R_k}. Denote a point of $\bT{k}$ by $(X_{ij}, t)$, where $(i,j)$ runs over $R_k$. We call $(X_{ij},t)$ a {\em rational $k$-rectangle}. Set
\begin{equation}
\label{eq_x_ij}
x_{ij} = X_{ij}/X_{i,j-1}
\end{equation}
for $1 \leq i \leq k$ and $i \leq j \leq i+n-k$, where we use the convention $X_{i,i-1} = 1$ and $X_{i,i+n-k} = t$. The quantity $x_{ij}$ is the rational analogue of the number of $j$'s in the $i^{th}$ row of a tableau (cf. \eqref{eq_row_coords}). Note that there are no inequality conditions on rational $k$-rectangles.

We now recall from \cite{F1} a parametrization of the variety $\X{k} = \Y{k}{n}$ by the set of rational $(n-k)$-rectangles. Given $a,b \in [n]$ and $z_a, \ldots, z_b \in \Cx$, define
\begin{equation}
\label{eq_M}
M_{[a,b]}(z_a, \ldots, z_b) = \sum_{i \in [a,b]} z_i E_{ii} + \sum_{i \not \in [a,b]} E_{ii} + \sum_{i \in [a+1,b]} E_{i,i-1}
\end{equation}
where $E_{ij}$ is an $n \times n$ matrix unit. For example, if $n = 5$, then
\begin{equation}
\label{eq_chev_gens}
M_{[2,4]}(z_2,z_3,z_4) = \left(
\begin{array}{ccccc}
1 &  &  &  &  \\
 & z_2 &  &  &  \\
 & 1 & z_3 &  &  \\
 &  & 1 & z_4 &  \\
 &  &  &  & 1
\end{array}
\right).
\end{equation}

\begin{defn}
\label{defn_Phi}
\
\begin{enumerate}
\item
Define $\Phi_{n-k} : \bT{n-k} \rightarrow \GL_n(\bbC)$ by
\[
\Phi_{n-k}(X_{ij}, t) = \prod_{i = n-k}^1 M_{[i,i+k]}(x_{ii}, x_{i,i+1}, \ldots, x_{i,i+k}),
\]
where $x_{ij}$ is defined by \eqref{eq_x_ij}, and the terms in the product are arranged from left to right in decreasing order of $i$. We call $\Phi_{n-k}(X_{ij},t)$ a {\em tableau matrix}.

\item
Define $\Theta_k : \bT{n-k} \rightarrow \X{k}$ by $\Theta_k(X_{ij},t) = N|t$, where $N$ is the ``projection'' of the tableau matrix $\Phi_{n-k}(X_{ij},t)$ onto the subspace spanned by its first $k$ columns. This may be written compactly as
\[
\Theta_k(X_{ij},t) = \pi^k_t \circ \Phi_{n-k}(X_{ij}, t)
\]
where $\pi^k_t$ is defined by \eqref{eq_pi_k_z}. We call $\Theta_k$ the {\em Gelfand--Tsetlin parametrization} of $\X{k}$.
\end{enumerate}
\end{defn}

\begin{ex}
\label{ex_2_5}
Suppose $n=5$ and $k=2$. For $(X_{ij},t) \in \bT{3}$, we have
\begin{equation}
\label{eq_2_5}
\Phi_3(X_{ij}, t) = \left(
\begin{array}{ccccc}
x_{11} & 0 & 0 & 0 & 0 \\
x_{22} & x_{12}x_{22} & 0 & 0 & 0 \\
x_{33} & (x_{12} + x_{23})x_{33} & x_{13}x_{23}x_{33} & 0 & 0 \\
1 & x_{12} + x_{23} + x_{34} & x_{13}(x_{23} + x_{34}) & x_{24}x_{34} & 0 \\
0 & 1 & x_{13} & x_{24} & x_{35}
\end{array}
\right)
\end{equation}
where $x_{ij}$ is defined by \eqref{eq_x_ij}. We have $\Theta_2(X_{ij},t) = N|t$, where $N$ is spanned by the first two columns of this tableau matrix.
\end{ex}

\begin{lem}[{\cite[Cor. 4.15(1)]{F1}}]
\label{lem_Theta_k_is_positive}
If $N|t = \Theta_k(X_{ij},t)$, then each Pl\"{u}cker coordinate $P_J(N)$ is given (up to a common scalar) by a nonzero Laurent polynomial in the $X_{ij}$, with non-negative integer coefficients.
\end{lem}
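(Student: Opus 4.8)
The plan is to recognize $P_J(N)$ as a minor of the tableau matrix $\Phi_{n-k}(X_{ij},t)$ and to compute that minor via the Lindstr\"om Lemma, using the planar network assembled from the bidiagonal factors of $\Phi_{n-k}$. First I would unwind the definitions: since $\Phi_{n-k}(X_{ij},t)$ contains no $\lp$, the map $\pi^k_t$ of \eqref{eq_pi_k_z} simply sets $N$ to be the column span of the first $k$ columns of $\Phi_{n-k}(X_{ij},t)$, so that with this matrix representative one has the honest equality
\[
P_J(N) = \Delta_{J,[k]}\big(\Phi_{n-k}(X_{ij},t)\big), \qquad J \in {[n]\choose k}
\]
(the ``common scalar'' in the statement is exactly the projective indeterminacy of Pl\"ucker coordinates, which this choice of representative removes). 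By \eqref{eq_M}, $\Phi_{n-k}$ is a product of $n-k$ lower-triangular bidiagonal matrices $M_{[i,i+k]}(x_{ii},\dots,x_{i,i+k})$, and every entry of each factor is either $0$, or $1$, or one of the $x_{ij}=X_{ij}/X_{i,j-1}$ --- and these $x_{ij}$, under the conventions $X_{i,i-1}=1$ and $X_{i,i+n-k}=t$, are Laurent monomials in the $X_{ij}$ with coefficient $1$.

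Next I would pass to networks. Each bidiagonal factor $M_{[i,i+k]}$ is the path matrix of a standard planar network (horizontal wires weighted by $1$ or the $x_{ij}$, together with a single column of descending edges weighted by the $x_{ij}^{-1}$), so all of its edge weights are again Laurent monomials in the $X_{ij}$ with coefficient $1$. Concatenating these $n-k$ networks from left to right in the order prescribed by Definition \ref{defn_Phi} produces a planar network $\Gamma$ whose path matrix is $\Phi_{n-k}(X_{ij},t)$. Applying the Lindstr\"om Lemma (reviewed in the Appendix) to $\Gamma$ then expresses $\Delta_{J,[k]}(\Phi_{n-k})$ as a sum, over families of $k$ pairwise vertex-disjoint paths joining the boundary-vertex sets $[k]$ and $J$, of the products of the weights of the traversed edges. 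Each summand is a Laurent monomial in the $X_{ij}$ with coefficient $1$, so $P_J(N)$ is a Laurent polynomial in the $X_{ij}$ with non-negative integer coefficients, as claimed.

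The only real obstacle is to check that this Laurent polynomial is not identically zero, i.e.\ that for \emph{every} $J\in{[n]\choose k}$ at least one such path family exists. I would argue this directly: because $\Phi_{n-k}$ is lower triangular, the only constraint on a family joining $[k]$ to $J=\{j_1<\dots<j_k\}$ is $j_\ell\ge\ell$, which holds automatically, and the ``greedy'' (top-most) family of vertex-disjoint paths is then shown to exist by a short induction on $n-k$ that tracks how the band of descending edges in $M_{[i,i+k]}$ can reroute the current collection of paths. (Alternatively, one can avoid the combinatorics: $\Theta_k$ is dominant onto $\X{k}=\Y{k}{n}$, the Pl\"ucker coordinate $P_J$ is nonzero at a generic point of $\Gr(k,n)$, and a dense subset of $\bT{n-k}$ maps into that locus, so $\Delta_{J,[k]}(\Phi_{n-k})$ cannot be the zero polynomial.) I would write up the combinatorial version, since it simultaneously determines the Newton polytope --- equivalently, the tropicalization --- of $P_J$.
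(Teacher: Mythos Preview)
Your approach---realize $P_J(N)$ as the minor $\Delta_{J,[k]}$ of the tableau matrix and compute it with the Lindstr\"om Lemma on the network built from the bidiagonal factors---is correct and is exactly the argument behind the cited result; indeed, the paper's Appendix constructs precisely this network (called $\Gamma_{n-k,n}$ there) and proves the relevant minor computation as Lemma~\ref{lem_Phi_minors}, whose $J=[k]$ case is the present lemma.

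A few details need tightening. First, your network description is off: the matrices $M_{[a,b]}$ have $1$'s on the subdiagonal, so the standard network (the one in the Appendix) has unit-weight vertical edges and diagonal edges weighted $x_{ij}$---no weights $x_{ij}^{-1}$ appear. Second, you should note that every vertex-disjoint family in this network has identity permutation, so that the Lindstr\"om sum is genuinely unsigned; this follows from the relative positions of sources and sinks along the boundary, and the Appendix states it explicitly. Third, and most substantively, your claim that ``the $x_{ij}$ are Laurent monomials in the $X_{ij}$'' is not quite true: for $(X_{ij},t)\in\bT{n-k}$ the convention is $X_{i,i+k}=t$, so $x_{i,i+k}=t/X_{i,i+k-1}$ involves $t$. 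The lemma asserts a Laurent polynomial in the $X_{ij}$ alone, so you must check that no path reaching a sink in $[k]$ ever traverses an edge weighted $x_{i,i+k}$. This is easy in the paper's network (such an edge moves from column $k$ to column $k+1$, and there is no way back), but it does need to be said. Finally, your alternative nonzero-ness argument via dominance of $\Theta_k$ appeals to Lemma~\ref{lem_GT_pl}, which is stated after the present lemma; stick with the explicit greedy family (which is exactly what Lemma~\ref{lem_Phi_minors} does).
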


One consequence of Lemma \ref{lem_Theta_k_is_positive} is that if $N|t = \Theta_k(X_{ij},t)$, then $N$ does not depend on $t$. In light of this, we let $\ov{\Theta}_k : (\Cx)^{R_{n-k}} \rightarrow \Gr(k,n)$ denote the map $(X_{ij}) \mapsto N$, so that $\Theta_k = \ov{\Theta}_k \times \Id$.

\begin{remark}
The tableau matrix $\Phi_{n-k}(X_{ij},t)$ does depend on $t$, and is an interesting object of study in its own right. For example, the following result shows that the unipotent crystal map $g : \X{k} \rightarrow B^-$ is a ``deformation'' or ``cyclic extension'' of the map $\Phi_{n-k}$. See also Remark \ref{rmk_tab_mat_tab_prod}.
\end{remark}

\begin{lem}[{\cite[Prop. 6.11(4)]{F1}}]
\label{lem_g_0}
Suppose $(X_{ij},t) \in \bT{n-k}$. If $N|t = \Theta_k(X_{ij},t)$, then we have
\[
g(N|t)|_{\lp = 0} = \Phi_{n-k}(X_{ij},t).
\]
\end{lem}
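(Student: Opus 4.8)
The plan is to compare the two $n\times n$ matrices entry by entry, using Definition~\ref{defn_g} on the one side and the bidiagonal factorization $\Phi_{n-k}=\prod_i M_{[i,i+k]}$ on the other. Write $A=g(N|t)$ as a folded matrix and $\Phi=\Phi_{n-k}(X_{ij},t)$, and recall that by construction $N$ is the column span of the first $k$ columns of $\Phi$. By Definition~\ref{defn_g}, specializing $\lp=0$ annihilates exactly the entries $A_{ij}$ with $j>k$ and $i<j$, while every surviving entry has the form $c_{ij}\,P_{[j-k+1,j-1]\cup\{i\}}(N)/P_{[j-k,j-1]}(N)$ with $c_{ij}\in\{1,t\}$. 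Since $N$ is spanned by the first $k$ columns of $\Phi$, each $P_J(N)$ is proportional to the minor $\Delta_{J,[k]}(\Phi)$; and because the bottom-left $k\times k$ submatrix of $\Phi$ is upper uni-triangular (a short computation with the product, parallel to the fact recorded for $A$ around \eqref{eq_P_Q}), the proportionality constant is $1$, so $P_J(N)=\Delta_{J,[k]}(\Phi)$ exactly. This reduces the whole statement to an identity between minors of the explicit matrix $\Phi$.

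First I would settle the shape and the first $k$ columns. Using the cyclic conventions of Convention~\ref{conv_pluc}, the set $[j-k+1,j-1]$ is complementary mod $n$ to $[j,j+n-k]$, so $A_{ij}|_{\lp=0}$ vanishes unless $i$ lies in the band occupied by column $j$ of $\Phi$ (rows $[j,j+n-k]$ when $j\le k$, rows $[j,n]$ when $j>k$), and on the lowest band position $A_{j+n-k,j}|_{\lp=0}=P_{[j-k,j-1]}(N)/P_{[j-k,j-1]}(N)=1$ because $j+n-k\equiv j-k\pmod n$; this matches the $(n-k)$-th subdiagonal of $\Phi$. For columns $j\le k$ no $\lp$ is involved at all, and both $A$ and $\Phi$ present $N$ by an $n\times k$ matrix whose bottom $k\times k$ block is upper uni-triangular (hence invertible); since any two $n\times k$ matrices with the same column span and the same invertible bottom block are equal, the first $k$ columns of $A|_{\lp=0}$ already coincide with those of $\Phi$.

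The substantive step is the remaining block, $j>k$ and $i\ge j$, where I must show $t\,P_{[j-k+1,j-1]\cup\{i\}}(N)/P_{[j-k,j-1]}(N)=\Phi_{ij}$. Here I would represent the bidiagonal product $\Phi=\prod_{i=n-k}^{1}M_{[i,i+k]}$ by its planar network and apply the Lindstr\"{o}m Lemma from the Appendix to express $\Delta_{J,[k]}(\Phi)$ as the generating function of families of $k$ vertex-disjoint paths from the sources $\{1,\dots,k\}$ to the sink set $J$. Comparing $J=[j-k+1,j-1]\cup\{i\}$ with the denominator set $[j-k,j-1]$ — the two sets share the block $[j-k+1,j-1]$, and $i\ge j$ sits strictly below it — the planarity constraint forces every numerator path family to restrict to a denominator path family together with one extra path reaching sink $i$; after cancellation the ratio, multiplied by the prefactor $c_{ij}=t$ (which supplies the $t$ carried by column $j$ of $\Phi$), collapses to the single entry $\Phi_{ij}$. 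Together with the previous paragraph this identifies all $n$ columns and proves $A|_{\lp=0}=\Phi$.

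The main obstacle is the bookkeeping in this last step: setting up the planar network with the correct orientation and edge weights for $\prod_{i=n-k}^{1}M_{[i,i+k]}$ taken in decreasing order of $i$, verifying that the numerator path families factor through the denominator families exactly as claimed, and carrying the cyclic case ($j\le k$, where the index sets wrap modulo $n$) and the straight case ($j>k$) — along with the $c_{ij}\in\{1,t\}$ prefactors and the subdiagonal $1$'s — through the argument uniformly. The auxiliary claim that the bottom $k\times k$ block of $\Phi$ is upper uni-triangular is a small computation of the same flavour.
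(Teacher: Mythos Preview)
This lemma is quoted from \cite[Prop.~6.11(4)]{F1}; the present paper gives no proof, so there is nothing here to compare your argument against. Your reduction---writing $P_J(N)=\Delta_{J,[k]}(\Phi)$ via the upper uni-triangular bottom block and then comparing entries---is the natural direct approach and is correct in outline, but two steps need work.

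For the first $k$ columns, you claim both matrices have ``the same invertible bottom block,'' but you only check that each bottom $k\times k$ block is upper uni-triangular; two upper uni-triangular matrices need not coincide. The fix is easy: use the full staircase shape instead. Column $j$ (for $j\le k$) of each matrix lies in $N$, is supported on rows $[j,j+n-k]$, and has a $1$ in row $j+n-k$. Generically $N\cap\operatorname{span}(e_j,\dots,e_{j+n-k})$ is one-dimensional, so these constraints determine column $j$ uniquely and the two matrices agree on columns $1,\dots,k$.

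The substantive gap is the Lindstr\"om step for $j>k$. You assert that ``every numerator path family restricts to a denominator path family together with one extra path reaching sink $i$,'' but both the numerator $\Delta_{[j-k+1,j-1]\cup\{i\},[k]}(\Phi)$ and the denominator $\Delta_{[j-k,j-1],[k]}(\Phi)$ are sums over $k$-path families (not $k$ versus $k{-}1$), and $i$ indexes a source, not a sink, so the asserted bijection does not parse. Moreover, the factor $t$ does not sit ``in column $j$ of $\Phi$'': it enters only through the telescoping constraint $\prod_m x_{r,m}=t$ for each row $r$, and any correct argument must invoke this somewhere. One way to organize the computation is to note that, since $\Phi$ is lower triangular, the $(k{+}1)\times(k{+}1)$ minor $\Delta_{[j-k,j-1]\cup\{i\},\,[k]\cup\{j\}}(\Phi)$ expands along its last column as $\Phi_{ij}\cdot\Delta_{[j-k,j-1],[k]}(\Phi)$, reducing the desired identity to a single $(k{+}1)$-minor equation in the network $\Gamma_{n-k,n}$; but the honest path-counting that establishes that equation, including where the $t$ appears, still has to be written out.
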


We end this section by giving an explicit formula for $\ov{\Theta}_k^{-1}$. The formula makes use of a distinguished class of Pl\"{u}cker coordinates.

\begin{defn}
\label{defn_basic}
For $1 \leq i \leq n-k+1$ and $i-1 \leq j \leq i+k-1$, define the $k$-subset
\[
I_{i,j} = [i,j] \cup [n-k+j-i+2,n].
\]
We call a subset of this form a {\em basic $k$-subset} (or simply a {\em basic subset} if $k$ is understood), and we refer to $P_{I_{i,j}}$ as a {\em basic Pl\"{u}cker coordinate}. We call a subset of the form $w_0(I_{i,j})$ a {\em reflected basic $k$-subset}, and we call $P_{w_0(I_{i,j})}$ a {\em reflected basic Pl\"{u}cker coordinate}.
\end{defn}

Note that there is some redundancy in the definition of basic $k$-subsets: if $i = n-k+1$ or $j = i-1$, then $I_{i,j} = [n-k+1,n]$.

\begin{lem}[{\cite[Prop. 4.3]{F1}}]
\label{lem_GT_pl}
The map $\ov{\Theta}_k$ is an open embedding of $(\Cx)^{R_{n-k}}$ into $\Gr(k,n)$. The (rational) inverse is given by $N \mapsto X_{ij}$, where
\begin{equation}
\label{eq_GT_pl}
X_{ij} = \dfrac{P_{I_{i,j}}(N)}{P_{I_{i+1,j}}(N)}
\end{equation}
for $1 \leq i \leq n-k$ and $i \leq j \leq i+k-1$.
\end{lem}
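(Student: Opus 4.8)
The plan is to reduce the lemma to one explicit computation: a product formula for the \emph{basic} Pl\"ucker coordinates of $N = \ov{\Theta}_k(X_{ij})$. First normalize Pl\"ucker coordinates so that $P_{[n-k+1,n]}(N) = 1$; this is legitimate since the bottom-left $k \times k$ block of the tableau matrix $\Phi_{n-k}(X_{ij},t)$ is upper uni-triangular (a routine consequence of Definition \ref{defn_Phi}, visible in \eqref{eq_2_5}), so $P_{[n-k+1,n]}(N) \neq 0$. With this normalization, I claim that every basic Pl\"ucker coordinate $P_{I_{i,j}}(N)$ is a Laurent monomial in the coordinates $X_{ij}$, and, more precisely, that
\[
\dfrac{P_{I_{i,j}}(N)}{P_{I_{i+1,j}}(N)} = X_{ij} \qquad (1 \le i \le n-k, \; i \le j \le i+k-1).
\]
Granting this, the inverse formula \eqref{eq_GT_pl} is immediate; moreover, since the $P_{I_{i,j}}(N)$ are nonzero monomials, the first $k$ columns of $\Phi_{n-k}(X_{ij},t)$ are always linearly independent, so $\ov{\Theta}_k$ is a genuine morphism, defined on all of $(\Cx)^{R_{n-k}}$.

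To prove the monomial formula I would use the representation of matrices by planar networks and the Lindstr\"{o}m Lemma from the Appendix. Since $P_I(N) = \Delta_{I,[k]}(\Phi_{n-k}(X_{ij},t))$ and $\Phi_{n-k}$ is, by Definition \ref{defn_Phi}, the ordered product of the bidiagonal matrices $M_{[i,i+k]}(x_{ii}, \ldots, x_{i,i+k})$, it is the weighted path matrix of an explicit planar network whose edge weights are monomials in the $X_{ij}$. By the Lindstr\"{o}m Lemma, $P_{I_{i,j}}(N)$ equals the sum, over all families of $k$ vertex-disjoint paths from the sources $1, \ldots, k$ to the sinks indexed by $I_{i,j}$, of the product of the edge weights. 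The combinatorial core of the argument is to show that, because the basic subset $I_{i,j} = [i,j] \cup [n-k+j-i+2,n]$ consists of a front interval together with a block at the end, there is exactly one such family of vertex-disjoint paths; hence the sum collapses to a single monomial, which one reads off and matches against the claimed ratio. I expect this uniqueness statement — pinning down the unique non-intersecting path family for each basic subset, or, alternatively, tracking inductively how the basic Pl\"ucker coordinates transform under left multiplication by a single bidiagonal factor $M_{[a,b]}$ — to be the main obstacle; the rest is formal.

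It remains to upgrade the inverse formula to the assertion that $\ov{\Theta}_k$ is an open embedding. Because each $P_{I_{i,j}}(N)$ is a Laurent monomial in the $X_{ij}$, it is nonzero everywhere on $(\Cx)^{R_{n-k}}$, so the rational map $\Psi : \Gr(k,n) \rightarrow (\Cx)^{R_{n-k}}$, $N \mapsto \big( P_{I_{i,j}}(N)/P_{I_{i+1,j}}(N) \big)_{(i,j)}$, is regular along the image of $\ov{\Theta}_k$ and satisfies $\Psi \circ \ov{\Theta}_k = \mathrm{id}$ by the ratio identity. In particular $\ov{\Theta}_k$ is injective. Now $(\Cx)^{R_{n-k}}$ and $\Gr(k,n)$ are both irreducible of dimension $k(n-k)$ (here $|R_{n-k}| = (n-k)k = \dim \Gr(k,n)$ is used), so an injective morphism between them is automatically dominant and of degree $1$, hence birational; a birational morphism that is injective (in particular quasi-finite) onto the smooth — therefore normal — variety $\Gr(k,n)$ is an open immersion by Zariski's Main Theorem, with inverse the rational map $\Psi$ of \eqref{eq_GT_pl}. (If one wishes to avoid Zariski's Main Theorem, one can instead check directly that $\ov{\Theta}_k \circ \Psi = \mathrm{id}$ on the open locus where all basic Pl\"ucker coordinates are nonzero, using the monomial formula to recover the first $k$ columns of $\Phi_{n-k}$, hence the subspace $N$, from the coordinates $X_{ij}$.)
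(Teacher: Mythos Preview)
Your proposal is correct. The paper does not supply its own proof of this lemma; it simply cites \cite[Prop.~4.3]{F1}. Your approach---reducing to the claim that each basic Pl\"ucker coordinate $P_{I_{i,j}}(N)$ is a Laurent monomial in the $X_{ij}$, and proving that via the Lindstr\"om Lemma by showing there is a \emph{unique} vertex-disjoint family for each basic subset---is exactly the machinery the Appendix sets up (Lemmas \ref{lem_network_represents} and \ref{lem_Phi_minors}) and is almost certainly the argument in \cite{F1}. The uniqueness step you flag as ``the main obstacle'' is in fact routine once you note that the interval structure of $I_{i,j}$ forces the paths ending in $[n-k+j-i+2,n]$ to be purely diagonal, which then pins down the remaining paths.

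One small slip: in the paper's conventions, $P_{I_{i,j}}(N) = \Delta_{I_{i,j},[k]}(\Phi_{n-k})$ corresponds to families of paths from the \emph{sources} in $I_{i,j}$ to the \emph{sinks} in $[k]$, not the other way around (see the definition of $M(\Gamma)$ and Proposition \ref{prop_Lind} in the Appendix). This does not affect your argument, since the uniqueness of the family and its monomial weight are insensitive to the swap.
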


As a corollary, we obtain an important property of basic (and reflected basic) Pl\"ucker coordinates; this property is a crucial tool in the proof of the positivity of the geometric $R$-matrix in \S \ref{sec_posit}.

\begin{lem}
\label{lem_basic=basis}
Every Pl\"{u}cker coordinate on $\Gr(k,n)$ can be expressed as a Laurent polynomial in the basic (resp., reflected basic) Pl\"{u}cker coordinates, with non-negative integer coefficients.
\end{lem}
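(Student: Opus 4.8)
The goal is to show that an arbitrary Plücker coordinate $P_J(N)$ on $\Gr(k,n)$ is a Laurent polynomial with non-negative integer coefficients in the basic Plücker coordinates $P_{I_{i,j}}$ (the reflected case follows by applying the automorphism $T_{w_0}$, which sends basic subsets to reflected basic subsets and permutes Plücker coordinates). The natural strategy is to use the Gelfand--Tsetlin parametrization $\ov\Theta_k$ and Lemma \ref{lem_Theta_k_is_positive}: that lemma says every $P_J(N)$ is a non-negative-integer-coefficient Laurent polynomial in the rational coordinates $X_{ij}$, and Lemma \ref{lem_GT_pl} says each $X_{ij}$ is the \emph{ratio} $P_{I_{i,j}}(N)/P_{I_{i+1,j}}(N)$ of two basic Plücker coordinates. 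Substituting the latter into the former would express $P_J(N)$ in terms of basic coordinates, but a priori only as a \emph{subtraction-free rational function}, not a Laurent polynomial --- the denominators coming from the $X_{ij}$'s must be shown to cancel. So the real content is a denominator-clearing argument.

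**Key steps.** First I would fix the open subset of $\Gr(k,n)$ which is the image of $\ov\Theta_k$; since this is a dense open set and Plücker coordinates are determined by their restriction to it, it suffices to prove the identity there, and I can freely substitute $X_{ij} = P_{I_{i,j}}(N)/P_{I_{i+1,j}}(N)$. Second, I would track precisely which basic coordinates appear as denominators: by Lemma \ref{lem_GT_pl} the indices $(i,j)$ range over $1\le i\le n-k$, $i\le j\le i+k-1$, and the denominator of $X_{ij}$ is $P_{I_{i+1,j}}$. The point is that for $(i,j)$ in this range $I_{i+1,j}$ is again a basic subset, and moreover the \emph{only} basic subset that never appears as a denominator is $I_{1,0}=I_{n-k+1,\cdot}=[n-k+1,n]$ (the "bottom" one), which is exactly the basic coordinate we are allowed to divide by. Third --- the crucial step --- I would show that after substituting and clearing denominators, $P_J(N) \cdot (\text{monomial in basic coordinates})$ equals a \emph{polynomial} (non-negative integer coefficients) in the basic coordinates, and then argue that the monomial factor divides that polynomial in the Laurent-polynomial ring. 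The cleanest way to organize this: the ring $\bbC[X_{ij}^{\pm 1}]$ is the Laurent-polynomial ring in the $X_{ij}$, and the subalgebra generated by the basic coordinates $P_{I_{i,j}}$ together with the inverse $P_{[n-k+1,n]}^{-1}$ is literally a localization whose Laurent-monomial group is freely generated by the $P_{I_{i,j}}$ modulo $P_{[n-k+1,n]}$ --- i.e. the map sending $X_{ij}$ to $P_{I_{i,j}}/P_{I_{i+1,j}}$ extends to an \emph{isomorphism} onto that localized ring. Granting this, $P_J(N)$, being a non-negative Laurent polynomial in the $X_{ij}$, maps to a non-negative Laurent polynomial in the basic coordinates, which is what we want.

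**Main obstacle.** The hard part is making rigorous the claim that substitution $X_{ij}\mapsto P_{I_{i,j}}/P_{I_{i+1,j}}$ converts "non-negative Laurent polynomial in $X_{ij}$" into "non-negative Laurent polynomial in basic coordinates," i.e. that the apparent extra denominators (powers of the non-bottom basic coordinates) genuinely cancel. Abstractly, one wants that the collection of basic subsets $\{I_{i,j}\}$ indexes a set of Plücker coordinates which, after localizing at $P_{[n-k+1,n]}$, are \emph{algebraically independent} and generate the same field as the $X_{ij}$, with the change of variables being a \emph{monomial} (Laurent-monomial) transformation of determinant $\pm1$ on the exponent lattice. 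This is essentially the statement that the "initial cluster" of basic Plücker coordinates is a genuine cluster / toric chart, which is implicit in \cite{F1}; concretely it reduces to checking that the integer matrix recording which $I_{i',j'}$ appear in $X_{ij}$ (with signs $+1$ for numerator, $-1$ for denominator) is unimodular after deleting the row/column corresponding to $[n-k+1,n]$ --- this matrix is essentially a triangular/incidence matrix and the unimodularity is a finite combinatorial verification. Once that linear-algebra fact is in hand, the rest is formal: invert the monomial change of variables, observe it sends $\bbC_{\ge 0}[X_{ij}^{\pm1}]$ into $\bbC_{\ge0}[P_{I_{i,j}}^{\pm1}]$ because Laurent-monomial substitutions preserve non-negativity of coefficients, and apply Lemma \ref{lem_Theta_k_is_positive}. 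For the reflected-basic version, pull everything back along $\mu$ (or $T_{w_0}$): by the formula $P_J(T_{w_0}(N)) = P_{w_0(J)}(N)$ and $w_0(I_{i,j})$ being reflected basic, the two statements are equivalent.
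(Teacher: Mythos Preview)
Your approach is the same as the paper's: combine Lemma~\ref{lem_Theta_k_is_positive} (each $P_J$ is a non-negative integer Laurent polynomial in the $X_{ij}$) with Lemma~\ref{lem_GT_pl} (each $X_{ij}$ is a ratio of two basic Pl\"ucker coordinates), and then invoke the automorphism $P_J \mapsto P_{w_0(J)}$ for the reflected case. The paper's proof is accordingly one sentence.

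However, you have manufactured an obstacle that does not exist. The formula $X_{ij} = P_{I_{i,j}}/P_{I_{i+1,j}}$ expresses each $X_{ij}$ as a \emph{Laurent monomial} in the basic Pl\"ucker coordinates. Substituting Laurent monomials for the variables of a Laurent polynomial immediately produces a Laurent polynomial: each term $c \prod X_{ij}^{m_{ij}}$ becomes $c$ times a product of integer powers of basic Pl\"ucker coordinates, which is again a Laurent monomial, and a finite sum of such terms is by definition a Laurent polynomial. Non-negativity of the coefficients is trivially preserved. There is no ``denominator-clearing'' to perform, and no unimodularity of the exponent matrix is required---that would only matter if you also needed the \emph{inverse} substitution to land in a Laurent polynomial ring, which the lemma does not ask for. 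So your ``main obstacle'' and the entire third key step can be deleted; what remains is the paper's one-line argument.
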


\begin{proof}
For basic Pl\"ucker coordinates, this follows from combining Lemmas \ref{lem_Theta_k_is_positive} and \ref{lem_GT_pl}. The reflected version follows from the fact that $P_J \mapsto P_{w_0(J)}$ defines an automorphism of the homogeneous coordinate ring of the Grassmannian (this is the map induced by reversing the standard basis of the underlying $n$-dimensional vector space).
\end{proof}

\begin{remark}
\label{rmk_cluster_alg}
Lemma \ref{lem_basic=basis} is a special case of the (positive) Laurent phenomenon in the theory of cluster algebras. Indeed, the $k(n-k)+1$ basic (resp., reflected basic) Pl\"{u}cker coordinates are a cluster in the homogeneous coordinate ring of $\Gr(k,n)$ (see \cite[Fig. 18]{MarSco}).
\end{remark}


\subsection{Positivity}
\label{sec_posit_defn}

We say that a rational function $h \in \bbC(z_1, \ldots, z_d)$ is {\em positive} if it can be expressed as a ratio of two nonzero polynomials in $z_1, \ldots, z_d$ whose coefficients are positive integers. We call such an expression a {\em positive expression}. We say that $h$ is {\em non-negative} if it is either positive or identically zero. For example, $h = z_1^2 - z_1z_2 + z_2^2$ is positive because it has the positive expression $h = \dfrac{z_1^3 + z_2^3}{z_1 + z_2}$. (We remark that the term ``subtraction-free'' is often used in place of ``positive'' in the literature.)

We say that a rational map $h = (h_1, \ldots, h_{d_2}) : (\Cx)^{d_1} \rightarrow (\Cx)^{d_2}$ is a {\em positive map of tori} (or simply {\em positive}) if each $h_i$ is given by a positive element of $\bbC(z_1, \ldots, z_{d_1}$).

We now introduce a notion of positivity for rational maps between varieties more complicated than $(\Cx)^d$. Our definition is a stripped-down version of the definition in \cite{BKII}.

\begin{defn}
\label{defn_posit}
A {\em positive variety} is a pair $(X, \Theta_X)$, where $X$ is an irreducible complex algebraic variety, and $\Theta_X : (\Cx)^d \rightarrow X$ is a birational isomorphism. We say that $\Theta_X$ is a {\em parametrization} of $X$. When there is no danger of confusion, we refer to a positive variety by the name of its underlying variety.

Suppose $(X, \Theta_X)$ and $(Y, \Theta_Y)$ are positive varieties. A rational map $h: X \rightarrow Y$ is a {\em morphism of positive varieties} (or simply {\em positive}) if the rational map
\[
\Theta h := \Theta_Y^{-1} \circ f \circ \Theta_X : (\Cx)^{d_1} \rightarrow (\Cx)^{d_2}
\]
is a positive map of tori.
\end{defn}

\begin{remark}
If $h : X \rightarrow Y$ and $g : Y \rightarrow Z$ are rational maps, then the composition $g \circ h$ is undefined if the image of $h$ is disjoint from the domain of $g$. When we say that a composition of rational maps is positive, we implicitly guarantee that it is defined. For example, in the previous definition, $h$ is not positive if the image of $h$ is disjoint from the domain of $\Theta_Y^{-1}$. One nice feature of positive rational maps is that their composition is always defined, by the following result.
\end{remark}

\begin{lem}
\label{lem_posit_comp}
The composition of positive rational maps is positive.
\end{lem}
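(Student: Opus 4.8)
\textbf{Proof plan for Lemma \ref{lem_posit_comp}.}

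The statement to prove is that if $h : X \to Y$ and $g : Y \to Z$ are positive rational maps between positive varieties $(X,\Theta_X)$, $(Y,\Theta_Y)$, $(Z,\Theta_Z)$, then $g \circ h : X \to Z$ is a positive rational map. The plan is to reduce everything to the torus level, where positivity is a concrete algebraic condition, and then invoke two elementary closure facts: (i) the composition of positive maps of tori is a positive map of tori, and (ii) positive maps of tori are dominant, so compositions of them are always defined.

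First I would unwind the definitions. By Definition \ref{defn_posit}, $h$ positive means $\Theta h := \Theta_Y^{-1} \circ h \circ \Theta_X : (\Cx)^{d_1} \to (\Cx)^{d_2}$ is a positive map of tori, and $g$ positive means $\Theta g := \Theta_Z^{-1} \circ g \circ \Theta_Y : (\Cx)^{d_2} \to (\Cx)^{d_3}$ is a positive map of tori. What must be shown is that $\Theta(g \circ h) := \Theta_Z^{-1} \circ g \circ h \circ \Theta_X$ is a positive map of tori. Inserting $\Theta_Y \circ \Theta_Y^{-1}$ in the middle gives the formal identity $\Theta(g \circ h) = (\Theta g) \circ (\Theta h)$ as rational maps $(\Cx)^{d_1} \to (\Cx)^{d_3}$. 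So the lemma reduces to the claim that the composition of two positive maps of tori is again a positive map of tori, together with the assertion that this composition is actually defined (the parenthetical concern raised in the Remark preceding the lemma).

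For the algebraic core: if $\Theta h = (f_1, \ldots, f_{d_2})$ with each $f_a \in \bbC(z_1,\ldots,z_{d_1})$ positive, and $\Theta g = (G_1, \ldots, G_{d_3})$ with each $G_b \in \bbC(w_1, \ldots, w_{d_2})$ positive, then the $b$-th component of the composition is $G_b(f_1(z), \ldots, f_{d_2}(z))$. Here I would argue that substituting positive rational functions of $z$ for the variables $w_a$ in a positive rational function $G_b$ yields a positive rational function of $z$: writing $G_b = P/Q$ with $P,Q$ polynomials in the $w_a$ with positive integer coefficients, both $P(f_1,\ldots,f_{d_2})$ and $Q(f_1,\ldots,f_{d_2})$ are, after clearing denominators, ratios of polynomials in $z$ with positive integer coefficients (sums and products of such stay in the class), so their ratio is positive. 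This also shows $Q(f_1,\ldots,f_{d_2})$ is not identically zero, since a subtraction-free expression in which every monomial has a positive coefficient cannot vanish identically unless it is empty — hence the composition is genuinely a well-defined rational map, not the zero map and not undefined. I expect this substitution-closure argument to be the only real content; it is routine but is the step one must get right, and it is exactly where the hypothesis that coefficients are \emph{positive} (rather than merely that the functions are subtraction-free in some looser sense) is used. The rest — the identity $\Theta(g\circ h) = (\Theta g)\circ(\Theta h)$ and the translation back through Definition \ref{defn_posit} — is bookkeeping.
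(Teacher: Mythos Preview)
Your proposal is correct and follows essentially the same approach as the paper: reduce to the torus level via the identity $\Theta(g\circ h) = (\Theta g)\circ(\Theta h)$, then use that positive maps of tori are closed under composition. The only minor difference is in how definedness is argued---you verify algebraically that the substituted denominator $Q(f_1,\ldots,f_{d_2})$ cannot vanish identically, whereas the paper observes that $\Theta h$ carries $(\bbR_{>0})^{d_1}$ into $(\bbR_{>0})^{d_2}$, on which $\Theta g$ is everywhere defined.
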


\begin{proof}
Let $(X, \Theta_X), (Y, \Theta_Y), (Z, \Theta_Z)$ be positive varieties, and suppose $h : X \rightarrow Y$ and $g : Y \rightarrow Z$ are positive rational maps. This means that
\[
\Theta h : (\Cx)^{d_1} \rightarrow (\Cx)^{d_2} \quad\quad \text{ and } \quad\quad \Theta g : (\Cx)^{d_2} \rightarrow (\Cx)^{d_3}
\]
are positive maps of tori. It is clear that $\Theta h$, being positive, is defined on all positive real points $(\bbR_{>0})^{d_1}$, and it maps these points into $(\bbR_{>0})^{d_2}$; similarly, $\Theta g$ is defined on $(\bbR_{>0})^{d_2}$, so $\Theta g \circ \Theta h = \Theta (g \circ h)$ is defined. Clearly this map is also a positive map of tori, so $g \circ h$ is positive.
\end{proof}

If $(X, \Theta_X)$ and $(Y, \Theta_Y)$ are positive varieties, then $(X \times Y, \Theta_X \times \Theta_Y)$ is a positive variety, and if $Z$ is another positive variety, then a rational map $(h_1, h_2) : Z \rightarrow X \times Y$ is positive if and only if $h_1$ and $h_2$ are positive.

The most basic example of a positive variety is the \emph{$d$-dimensional torus} $((\Cx)^d, \Id)$. A more interesting example is the pair $(\Gr(k,n), \ov{\Theta}_k)$ (note that $\ov{\Theta}_k$ is a birational isomorphism by Lemma \ref{lem_GT_pl}). All positive varieties considered in this paper are products of tori and Grassmannians. We now prove several necessary and sufficient conditions for rational maps to and from these varieties to be positive.

Say that a rational function $h : \prod_j \Gr(k_j,n) \times (\Cx)^d \rightarrow \bbC$ is {\em Pl\"{u}cker-positive} if it can be expressed as a ratio $a/b$, where $a$ and $b$ are nonzero polynomials with positive integer coefficients in the Pl\"ucker coordinates of the various Grassmannians, and the coordinates $z_1, \ldots, z_d$ of $(\Cx)^d$. We call such an expression $a/b$ a {\em Pl\"{u}cker-positive expression}. For example, the rational function $h = \dfrac{P_{13}P_{24} - P_{12}P_{34}}{P_{12}P_{34}}$ is Pl\"{u}cker-positive because it can be expressed as $h = \dfrac{P_{14}P_{23}}{P_{12}P_{34}}$ by a three-term Pl\"{u}cker relation.

It is clear that Pl\"ucker-positivity is equivalent to positivity for rational functions on $(\Cx)^d$. In fact, the same is true for rational functions on $\prod_j \Gr(k_j,n) \times (\Cx)^d$.

\begin{lem}
\label{lem_from_Gr}
A rational function $h : \prod_j \Gr(k_j,n) \times (\Cx)^d \rightarrow \bbC$ is positive (i.e., $\Theta h := h \circ (\prod_j \ov{\Theta}_{k_j} \times \Id)$ is positive) if and only if it is Pl\"{u}cker-positive.
\end{lem}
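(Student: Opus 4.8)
The plan is to prove both directions. The easy direction is that Plücker-positivity implies positivity: if $h$ has a Plücker-positive expression $a/b$, then substituting $\ov{\Theta}_{k_j}$ for the Grassmannian factors replaces each Plücker coordinate $P_J$ by a nonzero Laurent polynomial in the torus coordinates $X_{ij}$ with non-negative integer coefficients, by Lemma \ref{lem_Theta_k_is_positive}. (Here one uses the standard fact that in the Plücker embedding the ratio $P_J/P_{J'}$ is a genuine rational function — the ambiguous overall scalar cancels — and Lemma \ref{lem_Theta_k_is_positive} gives each $P_J$ up to that common scalar.) Since $a$ and $b$ are polynomials with positive integer coefficients in the $P_J$'s and the $z_i$'s, the pullbacks of $a$ and $b$ are sums of monomials in the $X_{ij}$ and $z_i$ with positive integer coefficients, and the scalar ambiguity in the $P_J$'s cancels between numerator and denominator; hence $\Theta h$ is positive. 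One must also check that this expression is not identically zero, which follows because $\Theta_X$ is a birational isomorphism and $h$ is not identically zero on the image, so the pulled-back numerator cannot vanish identically.

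The substantive direction is that positivity implies Plücker-positivity. Suppose $\Theta h = h \circ (\prod_j \ov{\Theta}_{k_j} \times \Id)$ is a positive rational function in the coordinates $X^{(j)}_{ab}$ on $(\Cx)^{R_{n-k_j}}$ and the coordinates $z_1,\dots,z_d$. I would first invert the parametrization: by Lemma \ref{lem_GT_pl}, each $X^{(j)}_{ab}$ equals the ratio of basic Plücker coordinates $P_{I_{a,b}}(N_j)/P_{I_{a+1,b}}(N_j)$, which is manifestly Plücker-positive. Substituting these expressions into a positive expression $\Theta h = A/B$ (where $A,B$ are polynomials in the $X^{(j)}_{ab}, z_i$ with positive integer coefficients) produces a Plücker-positive expression for $h$, because Plücker-positive rational functions are closed under sums, products, and quotients, and a positive-coefficient polynomial in Plücker-positive functions is again Plücker-positive. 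The only subtlety is that after clearing the denominators introduced by the substitution $X^{(j)}_{ab} \mapsto P_{I_{a,b}}/P_{I_{a+1,b}}$, the resulting numerator and denominator are polynomials with positive integer coefficients in the basic Plücker coordinates themselves — but that is automatic, since clearing denominators in a positive-coefficient expression only multiplies through by monomials in the $P_{I_{a,b}}$'s, preserving positivity of coefficients.

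The main obstacle — and the reason this lemma is nontrivial rather than a triviality — is the interplay between the two senses of "positive": positivity over the torus $(\Cx)^d$ refers to an honest rational function in algebraically independent coordinates, whereas Plücker-positivity refers to an expression in the Plücker coordinates, which satisfy the Plücker relations. The forward direction (positivity $\Rightarrow$ Plücker-positivity) works precisely because the $X^{(j)}_{ab}$ form a transcendence basis realized by explicit ratios of (basic) Plücker coordinates, so one can pull any torus-level positive expression back to the Grassmannian; one does not need to worry about the Plücker relations obstructing positivity. Conversely, the reverse direction works because Lemma \ref{lem_Theta_k_is_positive} guarantees that the Plücker coordinates push forward positively. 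In both cases the content is the earlier positivity results for $\ov{\Theta}_k$ and its inverse; once those are in hand, the proof is a routine check that the class of Plücker-positive (resp.\ positive) functions is closed under the arithmetic operations, and I would keep that verification brief.
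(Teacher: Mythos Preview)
Your proposal is correct and follows essentially the same approach as the paper: use Lemma~\ref{lem_Theta_k_is_positive} to push Pl\"ucker-positive expressions forward to positive expressions in the $X_{ij}$, and use Lemma~\ref{lem_GT_pl} to pull positive torus expressions back to Pl\"ucker-positive ones via the explicit ratios $X_{ij} = P_{I_{i,j}}/P_{I_{i+1,j}}$. The paper's version is terser and omits your discussion of the scalar ambiguity and closure properties, but the logic is identical.
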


\begin{proof}
We assume that $h$ is a rational function on $\Gr(k,n)$ to simplify notation (the argument in the general case is the same). Let $(X_{ij})$ denote the coordinates on $(\Cx)^{R_{n-k}}$.

Suppose $h$ is Pl\"{u}cker-positive. By Lemma \ref{lem_Theta_k_is_positive}, each Pl\"{u}cker coordinate of the subspace $\ov{\Theta}_k(X_{ij})$ is given by a positive rational function in the $X_{ij}$. By choosing a Pl\"{u}cker-positive expression for $h$ and replacing the Pl\"{u}cker coordinates with these positive expressions in the $X_{ij}$, we obtain a positive expression for $\Theta h$, so $h$ is positive.

Now suppose $h$ is positive. Choose a positive expression for $\Theta h$ in terms of the $X_{ij}$, and replace each $X_{ij}$ with the ratio of Pl\"{u}cker coordinates in \eqref{eq_GT_pl}. This gives a Pl\"{u}cker-positive expression for $h \circ \ov{\Theta}_k \circ \ov{\Theta}_k^{-1} = h$.
\end{proof}

\begin{lem}
\label{lem_to_Gr}
Let $(X, \Theta_X)$ be a positive variety, and let $h : X \rightarrow \Gr(k,n)$ be a rational map. The following are equivalent:
\begin{enumerate}
\item $h$ is positive (i.e., $\ov{\Theta}_k^{-1} \circ h \circ \Theta_X$ is positive);
\item The rational functions $(P_J/P_I) \circ h : X \rightarrow \Cx$ are positive for all basic $k$-subsets $I,J$;
\item $(P_J/P_I) \circ h$ is positive for all reflected basic $k$-subsets $I,J$;
\item $(P_J/P_I) \circ h$ is positive for all $k$-subsets $I,J$.
\end{enumerate}
We say that a map satisfying these equivalent conditions is {\em Pl\"{u}cker-positive}.
\end{lem}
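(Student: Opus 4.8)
The plan is to establish the cyclic chain of implications $(4) \Rightarrow (3) \Rightarrow (1)$, $(4) \Rightarrow (2) \Rightarrow (1)$, and $(1) \Rightarrow (4)$, so that all four conditions are equivalent. The implications $(4) \Rightarrow (3)$ and $(4) \Rightarrow (2)$ are trivial, since basic and reflected basic $k$-subsets are particular $k$-subsets. For $(1) \Rightarrow (4)$: by definition $(1)$ says $\ov{\Theta}_k^{-1} \circ h \circ \Theta_X$ is a positive map of tori, i.e. each coordinate $X_{ij} \circ h$ (in the sense of \eqref{eq_GT_pl}) is positive in the coordinates of $X$ after composing with $\Theta_X$. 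Now take an arbitrary $k$-subset ratio $P_J/P_I$. By Lemma \ref{lem_basic=basis}, $P_J$ and $P_I$ are each Laurent polynomials with non-negative integer coefficients in the basic Pl\"ucker coordinates $P_{I_{i,j}}$, and by Lemma \ref{lem_GT_pl} the ratios $P_{I_{i,j}}/P_{I_{i+1,j}} = X_{ij}$ recover the torus coordinates; chaining these, $P_J/P_I$ is a ratio of non-negative-integer Laurent polynomials in the $X_{ij}$, hence positive, and composing with $h \circ \Theta_X$ preserves positivity by Lemma \ref{lem_posit_comp}. This gives $(4)$, and in particular $(1)$ implies $(2)$ and $(3)$ as well.

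The substantive implications are $(2) \Rightarrow (1)$ and $(3) \Rightarrow (1)$; I will do $(2) \Rightarrow (1)$ and note that $(3) \Rightarrow (1)$ is identical after applying the automorphism $P_J \mapsto P_{w_0(J)}$ of the homogeneous coordinate ring (as in the proof of Lemma \ref{lem_basic=basis}). Assume $(2)$: every ratio $(P_J/P_I) \circ h$ of basic Pl\"ucker coordinates is positive on $X$. By Lemma \ref{lem_GT_pl}, the inverse map $\ov{\Theta}_k^{-1}$ sends a subspace $N$ to the tuple $\big(P_{I_{i,j}}(N)/P_{I_{i+1,j}}(N)\big)_{1 \le i \le n-k,\ i \le j \le i+k-1}$. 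Each of these is a ratio of two basic Pl\"ucker coordinates (note $I_{i+1,j}$ is again a basic $k$-subset), so by hypothesis each coordinate of $\ov{\Theta}_k^{-1} \circ h \circ \Theta_X$ is positive in the coordinates of $X$ pulled back along $\Theta_X$. That is exactly the statement that $\ov{\Theta}_k^{-1} \circ h \circ \Theta_X$ is a positive map of tori, which is $(1)$.

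I expect the only real subtlety to be bookkeeping: checking that $I_{i+1,j}$ is genuinely a basic $k$-subset (which is immediate from Definition \ref{defn_basic}, taking $i \to i+1$ with the same $j$, noting the index range $1 \le i+1 \le n-k+1$ is satisfied), and confirming that the expression of an arbitrary Pl\"ucker coordinate in terms of basic ones in Lemma \ref{lem_basic=basis} is genuinely a Laurent polynomial (so that forming ratios stays within positive rational functions). There are no genuine geometric obstacles here: everything reduces to Lemma \ref{lem_Theta_k_is_positive}, Lemma \ref{lem_GT_pl}, Lemma \ref{lem_basic=basis}, and the fact (Lemma \ref{lem_posit_comp}) that positivity is preserved under composition. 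The last point also silently handles the domain-of-definition issue flagged in the Remark before Lemma \ref{lem_posit_comp}: positive maps are automatically composable.
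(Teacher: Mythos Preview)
Your proposal is correct and follows essentially the same approach as the paper. The paper organizes the argument slightly differently---it first observes that (2), (3), (4) are equivalent by Lemma~\ref{lem_basic=basis}, then proves $(1)\Leftrightarrow(2)$ using exactly your observation that each $X_{ij}=P_{I_{i,j}}/P_{I_{i+1,j}}$ is a ratio of basic Pl\"ucker coordinates (for $(2)\Rightarrow(1)$) and that products $\prod_{s=i}^j X_{sj}=P_{I_{i,j}}/P_{[n-k+1,n]}$ recover the basic Pl\"ucker ratios (for $(1)\Rightarrow(2)$)---but the substance is identical and rests on the same three lemmas you invoke.
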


\begin{proof}
Conditions (2)-(4) are equivalent by Lemma \ref{lem_basic=basis}. We now show the equivalence of (1) and (2). Suppose $(X'_{ij}) = \ov{\Theta}_k^{-1}(N)$. Lemma \ref{lem_GT_pl} shows that
\[
\prod_{s=i}^j X'_{sj} = \dfrac{P_{I_{i,j}}(N)}{P_{I_{j+1,j}}(N)} = \dfrac{P_{I_{i,j}}(N)}{P_{[n-k+1,n]}(N)},
\]
so positivity of $\ov{\Theta}_k^{-1} \circ h \circ \Theta_X$ implies positivity of $(P_{I_{i,j}}/P_{[n-k+1,n]}) \circ h \circ \Theta_X$ for all basic subsets $I_{i,j}$. Conversely, each $X'_{ij}$ is a ratio of basic Pl\"ucker coordinates of $N$ (again by Lemma \ref{lem_GT_pl}), so if $(P_J/P_I) \circ h \circ \Theta_X$ is positive for all basic subsets $I$ and $J$, then $\ov{\Theta}_k^{-1} \circ h \circ \Theta_X$ is positive.
\end{proof}

Lemmas \ref{lem_from_Gr} and \ref{lem_to_Gr} show that for the varieties we consider, Pl\"ucker-positivity is equivalent to positivity. Thus, we will use the terms ``Pl\"ucker-positive'' and ``positive'' interchangeably from now on.

\begin{lem}
\label{lem_all_maps_posit}
Each of the rational maps $\gamma, \vp_i, \ve_i, e_i, f, \PR, S, D$ on $\X{k}$ (or $\Cx \times \X{k}$ in the case of $e_i$) is positive.
\end{lem}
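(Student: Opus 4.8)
The plan is to verify positivity separately for each map in the list, making systematic use of the two reformulations of positivity established in Lemmas \ref{lem_from_Gr} and \ref{lem_to_Gr}: a rational function to $\bbC$ is positive iff it is Pl\"ucker-positive, and a rational map to a Grassmannian is positive iff all ratios $P_J/P_I$ of (reflected) basic Pl\"ucker coordinates pull back to positive functions. So for $\gamma, \vp_i, \ve_i$ (which land in tori) and for $f$ (which lands in $\bbC$), it suffices to inspect the explicit formulas recalled just after Proposition \ref{prop_is_unip_cryst} and in Definition \ref{defn_dec}: each is manifestly a ratio of monomials (or a sum of such ratios) in ordinary Pl\"ucker coordinates $P_J(N)$ and the torus coordinate $t$, hence Pl\"ucker-positive, hence positive by Lemma \ref{lem_from_Gr}. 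For $e_i : \Cx \times \X{k} \rightarrow \X{k}$, I would combine Lemma \ref{lem_to_Gr} with the action formula: $e_i^c(N|t) = N'|t$ where $N' = x_i(a)\cdot N$ for an appropriate scalar $a$ built positively from $c$ and $\vp_i(N|t)$ (for $i \ne 0$; the $i=0$ case carries an extra sign-twist by $(-1)^{k-1}/t$ which must be tracked). The Pl\"ucker coordinates of $x_i(a)\cdot N$ are $P_J(N') = P_J(N) + a\cdot P_{(J \setminus \{i+1\})\cup\{i\}}(N)$ when $i+1 \in J, i \notin J$, and $P_J(N') = P_J(N)$ otherwise — a positive expression, provided $a$ itself is positive in $c, P_\bullet(N), t$. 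The only subtlety is signs: I must check that the sign twists in the $e_0$ formula and in the definition of $x_0$ cancel so that $a$ comes out genuinely subtraction-free; this is a short computation using $\vp_0(N|t) = t^{-1} P_{\ldots}/P_{\ldots}$.

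For the three symmetries, I would argue as follows. For $\PR$: by \eqref{eq_PR_pluc}, the Pl\"ucker coordinates of $\PR_t(N)$ are $P_{J-1}(N)$ or $t\cdot P_{J-1}(N)$, so every ratio $P_J(\PR_t N)/P_I(\PR_t N)$ equals $t^{\pm 1 \text{ or } 0}\cdot P_{J-1}(N)/P_{I-1}(N)$, visibly positive; apply Lemma \ref{lem_to_Gr}(4). For $S$: the cleanest route uses \eqref{eq_Q_def} and \eqref{eq_P_Q}. The Pl\"ucker coordinates of $S_t(N)$ are, up to the relabeling $w_0$, the maximal minors of the last $k$ rows of the folded matrix $g(N|t)$, i.e. the quantities $Q^J_t(N) = \Delta_{[n-k+1,n],J}(g(N|t))\cdot Q^{[k]}_t(N)$. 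Each entry of $g(N|t)$ is a positive monomial in Pl\"ucker coordinates of $N$ and in $t, \lp$ (Definition \ref{defn_g}), so these minors are polynomials in those data — but they are \emph{not} obviously subtraction-free, since determinants have signs. The fix is the Lindstr\"om-Gessel-Viennot lemma: the relevant minors of $g(N|t)$ should be computed as weighted path sums in the planar network representing $g(N|t)$ (the Appendix is cited for exactly this), which exhibits them as honest subtraction-free expressions. This is the step I expect to be the main obstacle — one must produce (or cite from \cite{F1}) a planar network for $g(N|t)$ with non-negative edge weights and verify that the vertex sets corresponding to the basic subsets are non-crossing so that Lindstr\"om applies with all signs $+1$. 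Once $S$ is positive, $D = S\circ\mu$ is handled by Lemma \ref{lem_posit_comp} after checking $\mu$ is positive, which is immediate from \eqref{eq_dual_Plucker}: $P_J(\mu(N)) = P_{J^*}(N)$, a relabeling of coordinates.

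Finally I would assemble the pieces: positivity of $f$ is Definition \ref{defn_dec} read off directly; positivity of $\PR, S, \mu$ as above; positivity of $D = S\circ\mu$ by Lemma \ref{lem_posit_comp}; positivity of $\gamma, \vp_i, \ve_i$ by inspection; and positivity of $e_i$ from the $x_i(a)\cdot N$ Pl\"ucker formula together with the already-established positivity of $\vp_i$ (which supplies the scalar $a$). The sign-tracking in the $e_0$ and $S, D$ cases is where care is needed; everything else is bookkeeping. An efficient alternative for several of these maps is to invoke the "unipotent" descriptions — Lemmas \ref{lem_PR}, \ref{lem_S}, \ref{lem_D} identify $g\circ\PR, g\circ S, g\circ D$ with $\sh\circ g, \fl\circ g, \inv\circ g$ — but since positivity is a statement about the Grassmannian coordinates and not the loop-group matrix, these are convenient for computing minors rather than for bypassing the Lindstr\"om argument. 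I would therefore keep the proof organized around Lemma \ref{lem_to_Gr} throughout.
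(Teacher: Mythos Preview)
Your treatment of $\gamma, \vp_i, \ve_i, f, \PR, \mu$, and $D = S\circ\mu$ is correct and matches the paper's (very short) proof, which simply cites \cite{F1} for the two hard cases $e_i$ and $S$.

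There is, however, a genuine gap in your argument for $e_i$. You write that the Pl\"ucker formula for $x_i(a)\cdot N$ is positive ``provided $a$ itself is positive in $c, P_\bullet(N), t$,'' and then identify the only subtlety as a sign cancellation in the $i=0$ case. But $a = (c-1)/\vp_i(N|t)$ is \emph{never} a positive rational function of $c$: the numerator $c-1$ is not subtraction-free, regardless of what $\vp_i$ is. So the hypothesis you need simply fails, for every $i$. The positivity of $e_i$ is a real fact, but it does not follow from positivity of $a$; rather, for a general $J$ with $i\in J$, $i+1\notin J$, the expression
\[
P_J(N') \;=\; P_J(N) + (c-1)\,\frac{P_{[i-k+1,i]}(N)}{P_{[i-k+1,i-1]\cup\{i+1\}}(N)}\,P_{(J\setminus\{i\})\cup\{i+1\}}(N)
\]
must be rewritten using a three-term Pl\"ucker relation to absorb the $-1$. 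For instance, when $k=2$, $n=4$, $i=2$, $J=\{2,4\}$, one finds $P_{24}(N') = (cP_{12}P_{34} + P_{14}P_{23})/P_{13}$ after applying $P_{13}P_{24} = P_{12}P_{34} + P_{14}P_{23}$. Carrying this out uniformly (e.g.\ for all basic $J$) is the content of \cite[Lem.~5.6]{F1}, and is not the ``short computation'' you describe. (Incidentally, your Pl\"ucker formula has $i$ and $i+1$ interchanged: $x_i(a) = \Id + aE_{i,i+1}$ modifies row $i$, so the condition is $i\in J$, $i+1\notin J$.)

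For $S$, your strategy is workable but you are making it harder than necessary. You do not need a planar network for the full matrix $g(N|t)$: by \eqref{eq_P_Q} the relevant quantities are $\Delta_{[n-k+1,n],J}(g(N|t))$, and the last $k$ rows of $g(N|t)$ are independent of $\lp$, hence equal to the last $k$ rows of the tableau matrix $\Phi_{n-k}$ by Lemma~\ref{lem_g_0}. So Lemma~\ref{lem_Phi_minors} (the Lindstr\"om argument already in the Appendix) gives positivity of these minors directly.
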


\begin{proof}
The maps $\gamma, \vp_i, \ve_i, f$ are positive by the formulas used to define them, and $\PR$ is positive by \eqref{eq_PR_pluc}. The geometric crystal operator $e_i$ is positive by \cite[Lem. 5.6]{F1}. The geometric Sch\"{u}tzenberger involution $S$ is positive by \cite[Lem. 7.2]{F1}, and since $\mu$ is positive by \eqref{eq_dual_Plucker}, $D = S \circ \mu$ is positive by Lemma \ref{lem_posit_comp}.
\end{proof}


\subsection{Tropicalization of the geometric crystal structure}
\label{sec_trop_geom_cryst}

Tropicalization is a procedure for turning positive\footnote{For a more general notion of tropicalization that removes the positivity assumption, see \cite[\S 4]{BKII}.} rational maps $(\Cx)^{d_1} \rightarrow (\Cx)^{d_2}$ into piecewise-linear maps $\bbZ^{d_1} \rightarrow \bbZ^{d_2}$ by replacing the operations $+,\cdot,\div$ with the operations $\min, +, -$, and ignoring constants. More formally, if
\[
p = \sum c_{m_1, \ldots, m_d} z_1^{m_1} \cdots z_d^{m_d}
\]
is a nonzero polynomial in $z_1, \ldots, z_d$ with \underline{positive} integer coefficients, set
\[
\Trop(p) = \min_{(m_1, \ldots, m_d)} \{m_1 z_1 + \ldots + m_d z_d\}.
\]
Given a positive rational function $h \in \bbC(z_1, \ldots, z_d)$, define its {\em tropicalization} to be the piecewise-linear function from $\bbZ^d$ to $\bbZ$ given by
\[
\Trop(h) = \Trop(p) - \Trop(q),
\]
where $h = p/q$ is some expression of $h$ as a ratio of polynomials with positive integer coefficients (this definition doesn't depend on the choice of $p$ and $q$ by, e.g., \cite[Lem. 2.1.6]{BFZ}). For example,
\[
\Trop\left(\dfrac{z_1^2z_2 + z_3}{z_2^5 + 8z_1z_3 + 4}\right) = \min(2z_1 + z_2, z_3) - \min(5z_2, z_1+z_3,0).
\]

More generally, given a positive map of tori $h = (h_1, \ldots, h_{d_2}) : (\Cx)^{d_1} \rightarrow (\Cx)^{d_2}$, define $\Trop(h)$ to be the piecewise-linear map $(\Trop(h_1), \ldots, \Trop(h_{d_2})) : \bbZ^{d_1} \rightarrow \bbZ^{d_2}$. If $h,g : (\Cx)^{d_1} \rightarrow (\Cx)^{d_2}$ are positive, then we have
\[
\Trop(h + g) = \min(\Trop(h), \Trop(g)) \quad\quad \text{ and } \quad\quad \Trop(hg^{\pm 1}) = \Trop(h) \pm \Trop(g).
\]
Furthermore, tropicalization respects composition of positive maps.

\begin{defn}
\label{defn_trop}
Suppose $(X,\Theta_X)$ and $(Y, \Theta_Y)$ are positive varieties. If $h : X \rightarrow Y$ is a positive rational map, define its {\em tropicalization} $\wh{h}$ by
\[
\wh{h} = \Trop(\Theta h) = \Trop(\Theta_Y^{-1} \circ h \circ \Theta_X).
\]
\end{defn}

By Lemma \ref{lem_all_maps_posit}, the maps associated to the geometric crystal $\X{n-k} = \Y{n-k}{n}$ are positive, so we may tropicalize them and obtain piecewise-linear maps on $\tT{k}$ (see \S \ref{sec_k_rect}). It was shown in \cite{F1} that these piecewise-linear maps, when restricted to the set of $k$-rectangles inside $\tT{k}$, give formulas for the combinatorial crystal structure on $k$-row rectangular tableaux. Here we generalize to the case of products of the $\X{n-k}$.

For $\mb{k} = (k_1, \ldots, k_d) \in [n-1]^d$, let $n-\mb{k} = (n-k_1, \ldots, n-k_d)$. Recall that $\X{\mb{k}}$ denotes the product $\X{k_1} \times \cdots \times \X{k_d}$; define $\bT{\mb{k}}, \tT{\mb{k}}$ analogously. Note that $\X{n-\mb{k}}$ is a positive variety with respect to the parametrization $\Theta_{n-\mb{k}} = (\Theta_{n-k_1}, \ldots, \Theta_{n-k_d}) : \bT{\mb{k}} \rightarrow \X{n-\mb{k}}$.

Let $\gamma, \vp_i, \ve_i, e_i, f$ be the maps giving the decorated geometric crystal structure on $\X{n-\mb{k}}$. It follows from Lemma \ref{lem_all_maps_posit} and the formulas in Definition/Proposition \ref{defn_prop_prod_geom} that each of these maps is positive, so we may tropicalize them to obtain a piecewise-linear map $\wh{\gamma} : \tT{\mb{k}} \rightarrow \bbZ^n$, piecewise-linear functions $\wh{\vp}_i, \wh{\ve}_i, \wh{f} : \tT{\mb{k}} \rightarrow \bbZ$, and a piecewise-linear action $\wh{e}_i : \bbZ \times \tT{\mb{k}} \rightarrow \tT{\mb{k}}$.

\begin{thm}
\label{thm_recover_crystals}
Suppose $\mb{b} = (b_1, \ldots, b_d) \in \tT{\mb{k}}$. Then $\wh{f}(\mb{b}) \geq 0$ if and only if each $b_j$ is a $k_j$-rectangle. If $\wh{f}(\mb{b}) \geq 0$, then
\begin{enumerate}
\item $\wh{\gamma}(\mb{b}) = \tw{\gamma}(\mb{b})$;
\item $\wh{\vp}_i(\mb{b}) = -\tw{\vp}_i(\mb{b})$ and $\wh{\ve}_i(\mb{b}) = -\tw{\ve}_i(\mb{b})$;
\item $\tw{e}_i(\mb{b})$ is defined if and only if $\wh{f}(\wh{e}_i(1, \mb{b})) \geq 0$; in this case, $\wh{e}_i(1,\mb{b}) = \tw{e}_i(\mb{b})$.
\end{enumerate}
\end{thm}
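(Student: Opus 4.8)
The plan is to induct on $d$, deducing everything from the $d=1$ case, which is the content of \cite{F1} recalled just above: on $\tT{k}$ the function $\wh f$ is non-negative exactly on $k$-rectangles, on $k$-rectangles the tropicalizations $\wh\gamma,\wh\vp_i,\wh\ve_i,\wh e_i$ compute the combinatorial crystal structure, and $\wh f(\wh e_i(1,b))\ge 0$ detects whether $\tw e_i(b)$ is defined. For the inductive step write $\mb b=(b_1,\mb b')$ with $\mb b'=(b_2,\dots,b_d)$; since the product of decorated geometric crystals is associative (Definition/Proposition~\ref{defn_prop_prod_geom}) we may regard $\X{n-\mb k}=\X{n-k_1}\times\X{n-\mb k'}$, and since the tensor product of crystals is associative we identify the tableau tuple $\mb b$ with $b_1\otimes(b_2\otimes\cdots\otimes b_d)$. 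It then suffices to tropicalize the two-factor product formulas of Definition/Proposition~\ref{defn_prop_prod_geom} and compare them with the tensor-product formulas in the definition of $A\otimes B$, using the base case on $b_1$ and the inductive hypothesis on $\mb b'$.

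First, since $f(x,y)=f(x)+f(y)$, tropicalization of a sum gives $\wh f(\mb b)=\min\!\bigl(\wh f(b_1),\wh f(\mb b')\bigr)$, so $\wh f(\mb b)\ge 0$ iff $\wh f(b_1)\ge 0$ and $\wh f(\mb b')\ge 0$, i.e.\ iff every $b_j$ is a $k_j$-rectangle. Next $\gamma(x,y)=\gamma(x)\gamma(y)$ tropicalizes to $\wh\gamma(\mb b)=\wh\gamma(b_1)+\wh\gamma(\mb b')=\tw{\gamma}(b_1)+\tw{\gamma}(\mb b')=\tw{\gamma}(\mb b)$. Tropicalizing the product formula for $\vp_i$ and substituting $\wh\ve_i(b_1)=-\tw{\ve}_i(b_1)$, $\wh\vp_i(\mb b')=-\tw{\vp}_i(\mb b')$ gives
\[
\wh\vp_i(\mb b)=\wh\vp_i(b_1)+\min\!\bigl(0,\wh\vp_i(\mb b')-\wh\ve_i(b_1)\bigr)=-\Bigl(\tw{\vp}_i(b_1)+\max\!\bigl(0,\tw{\vp}_i(\mb b')-\tw{\ve}_i(b_1)\bigr)\Bigr)=-\tw{\vp}_i(\mb b),
\]
matching the combinatorial rule for $\tw{\vp}_i(a\otimes b)$; the computation for $\ve_i$ is symmetric.

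For the crystal operators I would tropicalize $e_i^c(x,y)=(e_i^{c_1}(x),e_i^{c_2}(y))$ and set the tropical variable for $c$ equal to $1$. Substituting $\wh\ve_i(b_1)=-\tw{\ve}_i(b_1)$ and $\wh\vp_i(\mb b')=-\tw{\vp}_i(\mb b')$ into the tropicalizations of $c_1$ and $c_2$, a short piecewise-linear computation shows that the resulting parameters equal $(1,0)$ if $\tw{\ve}_i(b_1)>\tw{\vp}_i(\mb b')$ and $(0,1)$ otherwise. Since the action is unital, $\wh e_i(0,\cdot)$ is the identity, while $\wh e_i(1,\cdot)$ is the combinatorial operator on the relevant factor (base case on $b_1$, inductive hypothesis on $\mb b'$), so this reproduces exactly $\tw e_i(b_1\otimes\mb b')$. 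Finally, applying $\wh f=\min(\cdot,\cdot)$ to $\wh e_i(1,\mb b)$ and using that the untouched factor remains a rectangle together with the ``$\wh f\ge 0$ detects definedness'' part of the $d=1$ case (resp.\ inductive hypothesis) applied to the modified factor, we obtain $\wh f(\wh e_i(1,\mb b))\ge 0$ iff $\tw e_i$ is defined on that factor, iff $\tw e_i(\mb b)$ is defined; and in that case $\wh e_i(1,\mb b)=\tw e_i(\mb b)$.

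The whole argument is routine tropical arithmetic once the base case is in hand. The only delicate point is the bookkeeping connecting definedness of $\tw e_i$ to non-negativity of $\wh f$: one must check that the auxiliary tropical parameters always lie in $\{0,1\}$ (so that the reduction $\wh e_i(0,\cdot)=\mathrm{id}$ is legitimate) and that the $d=1$ statement is invoked on the correct tensor factor at the correct parameter value.
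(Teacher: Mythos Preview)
Your proposal is correct and follows exactly the same strategy as the paper: the $d=1$ case is taken from \cite{F1}, and the general case is reduced to $d=1$ by tropicalizing the two-factor product formulas of Definition/Proposition~\ref{defn_prop_prod_geom} and comparing with the tensor product rule. The paper's own proof simply cites \cite[Prop.~6.7]{BKII} for this reduction, whereas you have written out the inductive argument explicitly; the piecewise-linear computations you sketch (for $\wh f$, $\wh\gamma$, $\wh\vp_i$, $\wh\ve_i$, and the parameters $(\wh c_1,\wh c_2)\in\{(1,0),(0,1)\}$) are precisely the content of that cited proposition.
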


\begin{proof}
The $d=1$ case is the combination of Prop. 5.3 and Thm. 5.7 in \cite{F1}. By \cite[Prop. 6.7]{BKII}, the general case follows from the $d = 1$ case.
\end{proof}

\begin{remark}
It was shown in \cite{F1} that the tropicalizations of the symmetries $\PR, S,$ and $D$, when restricted to the set of $k$-rectangles, are equal, respectively, to promotion, the Sch\"{u}tzenberger involution, and the map which replaces each column of a rectangular tableau with its complement in $[n]$, and reverses the order of the columns.
\end{remark}

\section{The geometric $R$-matrix}
\label{sec_geom_R}

\subsection{Definition of $R$}

Fix $\ell, k \in [n-1]$. Consider the unipotent crystals $(\X{\ell}, g)$ and $(\X{k},g)$ defined in \S \ref{sec_geom_unip_Gr}; recall that their product is the unipotent crystal $(\X{\ell} \times \X{k}, g)$, where $g(u,v) = g(u)g(v)$ for $(u,v) \in \X{\ell} \times \X{k}$. Recall the geometric Sch\"{u}tzenberger involution $S$ and the ``evaluation-projection'' $\pi^k_z$ from \S \ref{sec_schutz}. We encourage the reader to review Corollary \ref{cor_linear_alg}, which plays a crucial role in what follows.

Define a rational map $\Psi_{k,\ell} : \X{\ell} \times \X{k} \rightarrow \X{k}$ by
\[
\Psi_{k,\ell}(M|s, N|t) = \pi^k_t \circ \, g(M|s, N|t).
\]

\begin{defn}
\label{defn_R}
The {\em geometric $R$-matrix} is the rational map $R : \X{\ell} \times \X{k} \rightarrow \X{k} \times \X{\ell}$ defined by
\[
R = (\Psi_{k,\ell}, \; S \circ \Psi_{\ell,k} \circ S).
\]
\end{defn}

More explicitly, if $R(M|s, N|t) = (N'|t, M'|s)$, then by Corollary \ref{cor_linear_alg}\eqref{itm:preserves_dim} and Lemma \ref{lem_S}, we have
\begin{equation}
\label{eq_R_alt}
N' = g(M|s)|_{\lp = (-1)^{k-1}t} \cdot N \quad\quad \text{ and } \quad\quad S_s(M') = \fl (g(N|t))|_{\lp = (-1)^{\ell-1} s} \cdot S_s(M).
\end{equation}

\begin{remark}
The formulas \eqref{eq_R_alt} show that $N'$ is the image of $N$ under a linear map that depends on $M, s,$ and $t$, and $S_s(M')$ is the image of $S_s(M)$ under a linear map that depends on $N, s,$ and $t$. We would very much like to have a geometric interpretation of these linear maps.
\end{remark}

The two crucial results about $R$ are the following.

\begin{thm}
\label{thm_R_posit}
The geometric $R$-matrix is positive.
\end{thm}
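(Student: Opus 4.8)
The plan is to strip $R$ down to a positivity statement about the minors of the single matrix $g(M|s)$, and then to prove that statement by unfolding $g(M|s)$ into an $n$-periodic planar network and applying the Lindstr\"om Lemma.

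\emph{Reductions.} A rational map into a product of positive varieties is positive exactly when each of its two components is (\S\ref{sec_posit_defn}), so $R = (\Psi_{k,\ell},\, S\circ\Psi_{\ell,k}\circ S)$ is positive iff $\Psi_{k,\ell}$ and $S\circ\Psi_{\ell,k}\circ S$ are. The geometric Sch\"utzenberger involution $S$ --- including its product version, which also reverses the factors --- is positive by Lemma \ref{lem_all_maps_posit}, so by Lemma \ref{lem_posit_comp} the map $S\circ\Psi_{\ell,k}\circ S$ is positive provided $\Psi_{\ell,k}$ is. Hence it suffices to show that $\Psi_{k,\ell}:\X{\ell}\times\X{k}\to\X{k}$ is positive for every $k,\ell\in[n-1]$. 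Its $\Cx$-component is $(M|s,N|t)\mapsto t$, which is manifestly positive, and by Lemma \ref{lem_to_Gr} its Grassmannian component (write $\Psi_{k,\ell}(M|s,N|t)=N'|t$) is positive iff every ratio $P_J(N')/P_I(N')$ of Pl\"ucker coordinates of $N'$ is a Pl\"ucker-positive rational function of $M,N,s,t$.

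\emph{Computing $P_J(N')$.} By \eqref{eq_R_alt}, $N' = g(M|s)|_{\lp=(-1)^{k-1}t}\cdot N$, so $P_J(N')$ agrees, up to a $J$-independent scalar, with the minor $\Delta_{J,[k]}$ of the first $k$ columns of $\bigl(g(M|s)\,g(N|t)\bigr)|_{\lp=(-1)^{k-1}t}$. Expanding by the Cauchy--Binet formula and using \eqref{eq_P_Q} to evaluate the maximal minors of the ($\lp$-free) first $k$ columns of $g(N|t)$, one finds that, up to a nonzero scalar independent of $J$,
\[
P_J(N') \;=\; \sum_{K\in\binom{[n]}{k}} \Delta_{J,K}\bigl(g(M|s)\bigr)\big|_{\lp=(-1)^{k-1}t}\; P_K(N).
\]
The $P_K(N)$ are themselves positive Pl\"ucker coordinates, so the whole theorem comes down to the following claim: \textbf{for all $J,K\in\binom{[n]}{k}$, the rational function $\Delta_{J,K}(g(M|s))\big|_{\lp=(-1)^{k-1}t}$ is non-negative} (a positive Laurent expression in the Pl\"ucker coordinates of $M$ and in $s,t$, or identically zero). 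Granting this, $P_J(N')$ and $P_I(N')$ are both non-negative Pl\"ucker-positive expressions; moreover $N'$ is the image of $N$ under the linear map $g(M|s)|_{\lp=(-1)^{k-1}t}$, which is invertible for generic $M,s,t$ by Proposition \ref{prop_g_props}\eqref{itm:det_g}, so $\Psi_{k,\ell}$ is dominant onto $\Gr(k,n)$ and no $P_I(N')$ vanishes identically. Hence every ratio $P_J(N')/P_I(N')$ is Pl\"ucker-positive, and $\Psi_{k,\ell}$ is positive by Lemma \ref{lem_to_Gr}.

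\emph{The key claim and the main obstacle.} Each entry of $g(M|s)$ is a ratio of two single Pl\"ucker coordinates of $M$ times an element of $\{1,s,\lp\}$, so $\Delta_{J,K}(g(M|s))$ is a polynomial in $\lp$; the plan is to show that the coefficient of $\lp^m$ in it is $(-1)^{(k-1)m}$ times a non-negative function of the Pl\"ucker coordinates of $M$ and of $s$, so that substituting $\lp=(-1)^{k-1}t$ turns each term into $t^m$ times a non-negative function and proves the claim. To exhibit this sign pattern I would pass to the unfolded picture. The matrix $g(M|s)$ is a product of elementary bidiagonal matrices over $\bbC(\lp)$, so its unfolding is an $n$-periodic, $(n-\ell)$-shifted unipotent matrix $X$ --- with non-negative entries (Pl\"ucker-positive in $M$, times a power of $s$) --- that is represented by an $n$-periodic planar network with non-negative edge weights; hence, by the Lindstr\"om Lemma (Appendix), every minor of $X$ taken with increasing row and column indices is non-negative. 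Expanding $\Delta_{J,K}(g(M|s))$ in powers of $\lp$ rewrites the coefficient of $\lp^m$ as a signed sum of such minors of $X$, indexed by the ways of shifting an $m$-element subset of the columns $K$ by the period $n$, with signs arising only from reordering the shifted columns into increasing order. The main obstacle is the combinatorial bookkeeping: one must check that the shifted unipotent structure of $X$ forces most of these terms to vanish, and that the reordering sign on every surviving term equals $(-1)^{(k-1)m}$, so that the $\lp^m$-coefficient is $(-1)^{(k-1)m}$ times a sum of non-negative minors. This is the heart of the argument, to be carried out in \S\ref{sec_posit}.
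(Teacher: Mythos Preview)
Your reductions are exactly those of the paper: strip off the $S$'s, reduce to $\Psi_{k,\ell}$, expand $P_J(N')$ by Cauchy--Binet, and reduce to a sign/positivity statement about the $k\times k$ minors $\Delta_{J,K}(g(M|s))$ evaluated at $\lp=(-1)^{k-1}t$. Up to this point you and the paper agree.

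The gap is in your \textbf{key claim}. It is false when $k>\ell$. Take $n=4$, $\ell=1$, $k=3$, and compute
\[
\Delta_{\{1,2,3\},\{1,2,3\}}(g(M|s)) \;=\; \frac{P_3(M)}{P_4(M)}\,(s-\lp)^2.
\]
The coefficient of $\lp$ is $-2s\,P_3/P_4$, which is \emph{not} $(-1)^{(k-1)\cdot 1}=+1$ times a non-negative function; and after substituting $\lp=(-1)^{k-1}t=t$ you get $\tfrac{P_3}{P_4}(s-t)^2$, which vanishes on the positive locus $s=t$ and is therefore not a positive rational function in the paper's sense. So neither version of your claim holds. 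More generally, for $k>\ell$ the paper shows (Proposition~\ref{prop_posit_key}(2)) that $\Delta_{J,K}(g(M|s))$ carries a common factor $(s+(-1)^\ell\lp)^{k-\ell}$; after the substitution this becomes $(s+(-1)^{\ell+k-1}t)^{k-\ell}$, which for $k+\ell$ even is $(s-t)^{\text{even}}$ --- non-negative as a real number but not subtraction-free. The paper's fix is that this factor is \emph{the same} for every $J$, so it cancels in the ratio $P_J(N')/P_{J'}(N')$ and positivity of the ratio follows from positivity of the residual factors $f_{J,K}$. Your plan, which tries to make each $\Delta_{J,K}|_{\lp=(-1)^{k-1}t}$ individually non-negative, cannot succeed as stated; you must isolate and cancel this common factor first.

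A second, softer issue: even in the range $k\le\ell$ where your sign claim is plausible, the paper does not prove it for arbitrary $J,K$. It proves it only when one of $J,K$ has at most two cyclic intervals (see the Remark after Proposition~\ref{prop_posit_key}), which suffices because basic $k$-subsets have this form and Lemma~\ref{lem_to_Gr} only requires positivity of ratios indexed by basic subsets. The paper's mechanism is not a direct periodic-network Lindstr\"om argument; it combines Lindstr\"om for the $\lp$-independent minors (Lemma~\ref{lem_lp_ind_minors}) with the symmetries $\PR,S,D$ and the Laurent phenomenon for basic Pl\"ucker coordinates (Lemma~\ref{lem_basic=basis}) to bootstrap to the two-interval case. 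Your periodic-network approach would, if it worked, prove the stronger statement for all $J,K$ --- but the bookkeeping you flag as ``the main obstacle'' is genuinely nontrivial, the factorization of $g(M|s)$ into bidiagonals over $\bbC(\lp)$ is not established in the paper for general $\ell$, and the paper explicitly leaves the unrestricted statement open.
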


\begin{thm}
\label{thm_hard}
We have the identity $g \circ R = g$ of rational maps $\X{\ell} \times \X{k} \rightarrow B^-$. That is, if $R(u,v) = (v',u')$ and $g(v'), g(u')$ are defined, then
\[
g(u)g(v) = g(v')g(u').
\]
\end{thm}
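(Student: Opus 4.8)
Write $A = g(M|s)$, $B = g(N|t)$, $P = AB$, and $R(M|s,N|t) = (N'|t,\,M'|s)$. By Definition~\ref{defn_R} and \eqref{eq_R_alt}, $N'|t = \pi^k_t(P)$ and (using $\fl(P) = g(S(N|t))g(S(M|s))$ from Lemma~\ref{lem_S}) $M'|s = S_s(\pi^\ell_s(\fl(P)))$. We must show $P = g(N'|t)\,g(M'|s)$.

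\emph{Step 1: reduce to the existence of a single $g$-factorization.} The plan is to prove only that $P = g(\widetilde N|t)\,g(\widetilde M|s)$ for \emph{some} $\widetilde N \in \Gr(k,n)$, $\widetilde M \in \Gr(\ell,n)$. Once this is known, the uniqueness half of the analysis — Lemma~\ref{lem_recover_N_1} and Corollary~\ref{cor_N_i_unique}, which are proved before Theorem~\ref{thm_hard} and show that the solution of the matrix equation \eqref{eq_g_k_row}, if it exists, is unique and equals the candidate defining $R$ — immediately identifies $(\widetilde N|t,\widetilde M|s)$ with $(N'|t,M'|s)$. (Concretely: $\widetilde N|t = \pi^k_t(P) = N'|t$; and applying $\fl$ turns $P = g(N'|t)g(\widetilde M|s)$ into the $g$-factorization $\fl(P) = g(S(\widetilde M|s))\,g(S(N'|t))$, whence $S(\widetilde M|s) = \pi^\ell_s(\fl(P))$ and $\widetilde M = M'$.) So everything reduces to producing one factorization.

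\emph{Step 2: the candidate quotient lies in $B^-$.} Set $A'' := g(N'|t)$ and $C := (A'')^{-1}P \in \GL_n(\bbC(\lp))$. First I would verify $C \in B^-$. Its determinant is $(s + (-1)^\ell \lp)^{n-\ell}$ by Proposition~\ref{prop_g_props}\eqref{itm:det_g}, and the only possible poles of $(A'')^{-1}$ are along $\lp = \lp_0 := (-1)^{k-1}t$, where $\det A''$ vanishes to order $n-k$. Since $A''|_{\lp_0}$ has rank exactly $k$ by Proposition~\ref{prop_g_props}\eqref{itm:rank_k}, its elementary divisors at $\lp_0$ are $1$ with multiplicity $k$ and $(\lp - \lp_0)$ with multiplicity $n-k$; consequently left-division by $A''$ is regular exactly on matrices all of whose columns, evaluated at $\lp_0$, lie in the column span of $A''|_{\lp_0}$, which is $N'$. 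But $P|_{\lp_0} = (A\,g(N|t))|_{\lp_0}$ has column span $A|_{\lp_0}\cdot N = N'$ by Corollary~\ref{cor_linear_alg}\eqref{itm:preserves_dim} (applied to the generically invertible matrix $A = g(M|s)$), so $C$ is regular at $\lp_0$ and has polynomial entries. A short bookkeeping argument with shift degrees of unfolded matrices — $A''$ is $(n-k)$-shifted unipotent, $P$ is $(2n-k-\ell)$-shifted unipotent, and $P|_{\lp=0} = g(M|s)|_{\lp=0}\,g(N|t)|_{\lp=0} = \Phi_{n-\ell}(\cdots)\Phi_{n-k}(\cdots)$ is invertible by Lemma~\ref{lem_g_0} — then shows $C$ is $(n-\ell)$-shifted unipotent with invertible constant term, hence $C \in B^-$.

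\emph{Step 3: identify $C$ with $g(M''|s)$ — the main obstacle.} It remains to show $C$ is not merely an element of $B^-$ with the correct shift degree and determinant, but actually lies in the image of $g : \X{\ell} \to B^-$; this is the genuinely delicate part and is why the proof occupies an entire section. Two routes seem viable. One is to establish a rigidity property of $g$ (sharpening Proposition~\ref{prop_g_props} to a characterization of its image), set $M'' := \pi^\ell_s(C)$, and verify $g(M''|s) = C$ directly — e.g. by also running the mirror of Step~2 on $\fl(P) = g(S(N|t))g(S(M|s))$ and flipping back, which yields $P = E \cdot g(M'|s)$ for some regular $E \in B^-$; then $g(N'|t)\,C = E\,g(M'|s)$ reduces the claim to $g(N'|t)^{-1}E = \Id$, which one forces from its determinant ($=1$) and shift degree. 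The other route is induction on $\min(\ell,k)$: decompose $g(N|t)$ for general $k$ into a product of ``one-row-type'' loop-group factors via the Gelfand--Tsetlin parametrization, and reduce the matrix equation to iterated uses of the one-row case (Proposition~\ref{prop_geom_one_row}) together with elementary commutations. I expect the first route, executed through careful minor computations and the $\fl$-duality of Lemma~\ref{lem_S}, to be the cleaner one, and I expect Step~3 — rather than the regularity in Step~2, which follows formally from the corollaries already in hand — to be where essentially all of the work lies.
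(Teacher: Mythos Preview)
Your Step~1 has a circularity: Lemma~\ref{lem_recover_N_1} and Corollary~\ref{cor_N_i_unique} are proved \emph{after} Theorem~\ref{thm_hard}, and their proofs explicitly invoke Theorem~\ref{thm_hard} (see the line ``By repeated applications of Theorem~\ref{thm_hard}\ldots'' in the proof of Lemma~\ref{lem_recover_N_1}). Your parenthetical ``Concretely'' argument, however, uses only Corollary~\ref{cor_linear_alg}\eqref{itm:swallows}, which is independent, so this part is salvageable.

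The genuine gap is in Step~3, Route~1. You write that one ``forces $g(N'|t)^{-1}E = \Id$ from its determinant ($=1$) and shift degree,'' but this is false. Set $F := g(N'|t)^{-1}E = C\,g(M'|s)^{-1}$. Your regularity arguments (extended symmetrically) do show that $F$ has polynomial entries, $\det F = 1$, and $F \in B^-$; but these constraints do not come close to forcing $F = \Id$. The natural hope would be that $F$ is $0$-shifted unipotent (hence upper uni-triangular as well as lower triangular), but this fails already for $2 \times 2$ matrices: if $X,Y$ are both $1$-shifted unipotent then $XY^{-1}$ generically has a nonzero subdiagonal entry. There is no characterization of the image of $g : \X{\ell} \to B^-$ in the paper that would let you conclude $C = g(M''|s)$ from the structural properties you have assembled; producing such a characterization would itself be a theorem of comparable depth. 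Route~2 is only gestured at.

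The paper's approach is entirely different and bypasses the question of characterizing $\mathrm{im}(g)$. It proves the matrix identity $g(N'|t)g(M'|s) = P$ one row at a time: after reformulating the equation as $(t+(-1)^k\lp)\,g(M'|s) = h(N'|t)\,P$ (where $h$ is an explicit near-inverse of $g$), the cyclic symmetry under $\PR$ reduces everything to matching the last rows, which becomes the explicit minor identity of Proposition~\ref{prop_key}. That identity is then proved by introducing auxiliary matrices $g(u)^*, g(v)^*$ obtained via row and column operations that concentrate the $\lp$-dependence into scalar factors $\alpha_s, \alpha_t$, exploiting the resulting block-triangular structure of $A^* = g(u)^*g(v)^*$, and computing carefully (with separate arguments for $\ell \neq k$ and $\ell = k$). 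Your Step~2 regularity analysis does not appear in the paper; your Step~3 is precisely where the paper's work lies, but it proceeds by direct computation rather than any abstract rigidity argument.
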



Theorems \ref{thm_R_posit} and \ref{thm_hard} are proved in \S \ref{sec_posit} and \S \ref{sec_pf1}, respectively. Most of the important properties of $R$ are direct consequences of Theorem \ref{thm_hard}. Here is an example.

\begin{lem}
\label{lem_R^2=Id}
We have the identity $R^2 = \Id$ of rational maps from $\X{\ell} \times \X{k}$ to itself.
\end{lem}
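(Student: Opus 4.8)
The plan is to show that $R$ is an involution by using its defining characterization together with Theorem~\ref{thm_hard} and the uniqueness of the solution to the matrix equation \eqref{eq_g_k_row}. Write $R(M|s, N|t) = (N'|t, M'|s)$, and then $R(N'|t, M'|s) = (M''|s, N''|t)$; the goal is to prove $M'' = M$ and $N'' = N$.

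First I would unwind what the two applications of $R$ give at the level of the map $g$. By Theorem~\ref{thm_hard} applied to the first pair, $g(M|s)g(N|t) = g(N'|t)g(M'|s)$; applied to the second pair, $g(N'|t)g(M'|s) = g(M''|s)g(N''|t)$. Combining, we get
\[
g(M|s)g(N|t) = g(M''|s)g(N''|t).
\]
So $(M''|s, N''|t)$ is a solution of the matrix equation $g(M|s)g(N|t) = g(X|s)g(Y|t)$ in the same variety $\X{\ell} \times \X{k}$ as the original pair $(M|s, N|t)$. Next I would invoke the uniqueness statement — Lemma~\ref{lem_recover_N_1} and Corollary~\ref{cor_N_i_unique}, referenced in the introduction — which says that for sufficiently generic data there is a \emph{unique candidate} $(X|s, Y|t)$ for such a solution, and this candidate is precisely what defines $R$. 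Since both $(M|s, N|t)$ and $(M''|s, N''|t)$ satisfy the equation and lie over the same pair of loop parameters $(s,t)$, uniqueness forces $(M''|s, N''|t) = (M|s, N|t)$, i.e.\ $R^2 = \Id$.

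The one subtlety I would be careful about is the direction of the argument: $R$ is \emph{defined} (Definition~\ref{defn_R}) as the unique candidate solution, and Theorem~\ref{thm_hard} asserts that this candidate genuinely solves \eqref{eq_g_k_row}. So the chain above is legitimate only because Theorem~\ref{thm_hard} lets me assert $g(N'|t)g(M'|s)$ equals both $g(M|s)g(N|t)$ and $g(M''|s)g(N''|t)$; then the uniqueness half (which is a statement about candidates, independent of Theorem~\ref{thm_hard}) closes the loop. One should also check the genericity/definedness hypotheses propagate: if $R$ and $g\circ R$ are defined at $(M|s,N|t)$, then the intermediate point $(N'|t, M'|s)$ lies in the domain where the uniqueness statement applies, which follows because $R$ is a dominant rational map (being positive and birational onto its image by Theorem~\ref{thm_R_posit}) and the locus of generic points is open dense.

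I expect the main obstacle — really the only one — to be bookkeeping about where each map is defined, so that the equality of rational maps $R^2 = \Id$ is justified on a dense open set rather than merely pointwise on an unspecified subset; the algebraic content is entirely carried by Theorem~\ref{thm_hard} plus the prior uniqueness lemma, and no computation is needed.
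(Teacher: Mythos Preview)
Your approach via the uniqueness result (Corollary~\ref{cor_N_i_unique}) is valid but differs from the paper's, and you should note a logical-ordering subtlety: in the paper, Lemma~\ref{lem_recover_N_1} and Corollary~\ref{cor_N_i_unique} are proved \emph{after} Lemma~\ref{lem_R^2=Id}, and the setup for Lemma~\ref{lem_recover_N_1} explicitly cites involutivity of $R$ to assert that each $R_i$ is a bijection on $\mathbb{U}_{\mb{k}}$. This is not a genuine circularity --- the proof of Lemma~\ref{lem_recover_N_1} only needs that $R$ maps $\mathbb{U}_{\mb{k}}$ into $\mathbb{U}_{\sigma_i(\mb{k})}$, which follows from positivity (Theorem~\ref{thm_R_posit}) alone --- but you must justify the reordering if you go this way. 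The paper instead argues directly, avoiding the uniqueness machinery: by the definition of $R$ and Theorem~\ref{thm_hard}, the first component $M''|s$ of $R^2(M|s,N|t)$ equals $\pi^\ell_s\big(g(M|s)g(N|t)\big)$, and Corollary~\ref{cor_linear_alg}\eqref{itm:swallows} shows the first $\ell$ columns of $g(M|s)g(N|t)|_{\lp=(-1)^{\ell-1}s}$ already lie in $M$, whence $M''=M$; for $N''=N$ the paper uses that $R$ commutes with $S$, which is immediate from Definition~\ref{defn_R} (since $R=(\Psi_{k,\ell},\, S\circ\Psi_{\ell,k}\circ S)$ and $S$ swaps the two factors). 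The paper's route is shorter, uses only results already established at that point, and does not require restricting to the positive locus.
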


\begin{proof}
Suppose $(M|s, N|t) \in \X{\ell} \times \X{k}$, and
\[
(M|s, N|t) \; \xrightarrow{R} \; (N'|t, M'|s) \; \xrightarrow{R} \; (M''|s, N''|t).
\]
By Theorem \ref{thm_hard}, we have
\begin{equation}
\label{eq_M''}
M''|s = \pi^\ell_s(g(N'|t)g(M'|s)) = \pi^\ell_s(g(M|s)g(N|t)).
\end{equation}
Corollary \ref{cor_linear_alg}\eqref{itm:swallows} ensures that the first $\ell$ columns of $g(M|s)g(N|t)|_{\lp = (-1)^{\ell-1}s}$ are contained in the subspace $M$. On the other hand, \eqref{eq_M''} shows that these columns span the subspace $M''$, so we conclude that $M'' = M$.

Let $p_1, p_2$ be the projections of $\X{\ell} \times \X{k}$ onto the first and second factors, respectively. We have shown that $p_1 R^2 = p_1$. Clearly $p_2 = Sp_1S$ and $R$ commutes with $S$, so we also have
\[
p_2R^2 = Sp_1SR^2 = Sp_1R^2S = p_2.
\qedhere
\]
\end{proof}

Recall the notation $Q^J_t(N) = P_{w_0(J)}(S_t(N))$.

\begin{cor}
\label{cor_P_Q_R}
Suppose $M|s \in \X{\ell}, N|t \in \X{k},$ and $(N'|t, M'|s) = R(M|s, N|t)$. Let $B = g(M|s)g(N|t), B_s = B|_{\lp = (-1)^{\ell-1}s},$ and $B_t = B|_{\lp = (-1)^{k-1}t}$. For $k$-subsets $I$ and $\ell$-subsets $J$, we have
\begin{equation}
\label{eq_P_Q_N'_M'}
\dfrac{P_I(N')}{P_{[n-k+1,n]}(N')} = \dfrac{\Delta_{I,[k]}(B_t)}{\Delta_{[n-k+1,n],[k]}(B_t)}, \quad\quad \dfrac{Q_s^J(M')}{Q_s^{[\ell]}(M')} = \dfrac{\Delta_{[n-\ell+1,n], J}(B_s)}{\Delta_{[n-\ell+1,n],[\ell]}(B_s)},
\end{equation}
\begin{equation}
\label{eq_P_Q_M_N}
\dfrac{P_J(M)}{P_{[n-\ell+1,n]}(M)} = \dfrac{\Delta_{J,[\ell]}(B_s)}{\Delta_{[n-\ell+1,n],[\ell]}(B_s)}, \quad\quad \dfrac{Q_t^I(N)}{Q_t^{[k]}(N)} = \dfrac{\Delta_{[n-k+1,n], I}(B_t)}{\Delta_{[n-k+1,n],[k]}(B_t)}.
\end{equation}
\end{cor}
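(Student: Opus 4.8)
The plan is to prove the four identities by exhibiting, in each case, a matrix whose first $r$ columns represent the relevant $r$-dimensional subspace, and then reading off Pl\"ucker coordinates as maximal minors of that matrix. Two of the identities (those for $N'$ and $M'$) concern the \emph{output} of $R$ and follow from unwinding Definition \ref{defn_R}; the other two (those for $M$ and $N$) concern the \emph{input} and follow from Corollary \ref{cor_linear_alg}\eqref{itm:swallows} applied to $B = g(M|s)g(N|t)$. The only auxiliary tool is the elementary minor identity $\Delta_{I,J}(\fl(A)) = \Delta_{w_0(J),w_0(I)}(A)$ (the content behind \eqref{eq_S_minors}), together with the definition $Q^J_t(N) = P_{w_0(J)}(S_t(N))$.

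For the first identity: by Definition \ref{defn_R}, $N'|t = \Psi_{k,\ell}(M|s,N|t) = \pi^k_t(B)$, so by the definition of $\pi^k_t$ the subspace $N'$ is spanned by the first $k$ columns of $B_t$; hence $P_I(N') = \Delta_{I,[k]}(B_t)$ up to a common scalar, and dividing by the $I = [n-k+1,n]$ instance gives the claim. For the second identity I would first unwind the second component of $R$: using \eqref{eq_S_prod} we have $S(M|s,N|t) = (S(N|t),S(M|s))$, so $\Psi_{\ell,k}(S(M|s,N|t)) = \pi^\ell_s\bigl(g(S(N|t))g(S(M|s))\bigr)$; applying Lemma \ref{lem_S} ($g\circ S = \fl\circ g$) and the fact that $\fl$ is an anti-automorphism turns this into $\pi^\ell_s(\fl(B))$. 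Since $\fl$ commutes with evaluation of $\lp$, the subspace $S_s(M')$ (recall $M' = S_s(\tilde M)$ where $\tilde M|s$ is this output) is spanned by the first $\ell$ columns of $\fl(B_s)$, so $P_L(S_s(M')) = \Delta_{L,[\ell]}(\fl(B_s)) = \Delta_{[n-\ell+1,n],w_0(L)}(B_s)$ up to scalar; specializing $L = w_0(J)$, using $Q^J_s(M') = P_{w_0(J)}(S_s(M'))$, and normalizing by the $L = [\ell]$ case yields the second identity.

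For the third and fourth identities I would apply Corollary \ref{cor_linear_alg}\eqref{itm:swallows} to $B = g(M|s)g(N|t)$: taking the parameters $(\ell, M, s)$ in place of $(k,N,t)$ and $g(N|t)$ as the ambient matrix, the first $\ell$ columns of $B_s$ are contained in $M$, and since the normalizing minor $\Delta_{[n-\ell+1,n],[\ell]}(B_s)$ is not identically zero, these columns have full rank and hence span $M$, giving $P_J(M) = \Delta_{J,[\ell]}(B_s)$ up to scalar. For the fourth, flip: $\fl(B) = \fl(g(N|t))\fl(g(M|s)) = g(S(N|t))g(S(M|s))$ by Lemma \ref{lem_S}, so the same corollary shows the first $k$ columns of $\fl(B_t) = \fl(B)|_{\lp=(-1)^{k-1}t}$ lie in (and, by nonvanishing of $\Delta_{[n-k+1,n],[k]}(B_t)$, span) $S_t(N)$; then $P_L(S_t(N)) = \Delta_{L,[k]}(\fl(B_t)) = \Delta_{[n-k+1,n],w_0(L)}(B_t)$ up to scalar, and specializing $L = w_0(I)$ with $Q^I_t(N) = P_{w_0(I)}(S_t(N))$ gives the fourth identity.

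I expect the substance of this corollary to be routine once the right matrices are identified; the only places demanding care are the bookkeeping with $\fl$, $w_0$, and the definition of $Q^J_t$, and the point that ``contained in $M$ (resp.\ $S_t(N)$)'' upgrades to ``spans.'' The latter is automatic here because each such claim is asserted only where the relevant denominator $\Delta_{[n-\ell+1,n],[\ell]}(B_s)$ or $\Delta_{[n-k+1,n],[k]}(B_t)$ is nonzero, which forces the pertinent column block to have full rank. No genuinely new idea beyond Definition \ref{defn_R} and Corollary \ref{cor_linear_alg} is needed.
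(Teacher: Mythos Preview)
Your argument is correct. For \eqref{eq_P_Q_N'_M'} you do exactly what the paper does: unwind Definition~\ref{defn_R} and use Lemma~\ref{lem_S} (the $\fl$/$w_0$ bookkeeping you spell out is precisely the content behind the paper's one-line citation).

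For \eqref{eq_P_Q_M_N} you take a genuinely different route. The paper argues that $R^2 = \Id$ (Lemma~\ref{lem_R^2=Id}) together with $g\circ R = g$ (Theorem~\ref{thm_hard}) gives $g(N'|t)g(M'|s) = B$, and then applies the already-proved \eqref{eq_P_Q_N'_M'} with the roles of input and output swapped. You instead go directly: Corollary~\ref{cor_linear_alg}\eqref{itm:swallows} says the first $\ell$ columns of $B_s$ lie in $M$, and on the locus where the normalizing minor is nonzero they must span $M$; the flipped version handles $S_t(N)$. This is more elementary in that it bypasses Theorem~\ref{thm_hard} altogether (indeed, your observation is essentially the first half of the proof of Lemma~\ref{lem_R^2=Id}, extracted and used on its own). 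The paper's route, on the other hand, makes the non-vacuousness of the identities manifest: once $g\circ R = g$ is in hand, the first $\ell$ columns of $B_s$ are literally the first $\ell$ columns of $g(N'|t)g(M'|s)|_{\lp=(-1)^{\ell-1}s}$, which have full rank by Corollary~\ref{cor_linear_alg}\eqref{itm:preserves_dim} whenever $(-1)^{k+\ell}s \neq t$. Your version establishes the rational identity on the open set where the denominator is nonzero, which is all that is being asserted.
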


\begin{proof}
The equalities \eqref{eq_P_Q_N'_M'} follow from the definition of $R$ and Lemma \ref{lem_S}. The equalities \eqref{eq_P_Q_M_N} follow from Lemma \ref{lem_R^2=Id}, Theorem \ref{thm_hard}, and \eqref{eq_P_Q_N'_M'}.
\end{proof}


\subsection{Properties of $R$}
\label{sec_R_props}

Recall the notation for products introduced in \S \ref{sec_product_notation}. Suppose $\mb{k} = (k_1, \ldots, k_d) \in [n-1]^d$. For $i = 1, \ldots, d-1$, let $\sigma_i(\mb{k}) = (k_1, \ldots, k_{i+1},k_i, \ldots, k_d)$, and let
\[
R_i : \X{\mb{k}} \rightarrow \X{\sigma_i(\mb{k})}
\]
be the map which acts as the geometric $R$-matrix on factors $i$ and $i+1$, and as the identity on the other factors.

Say that a point $N|t \in \X{k}$ is {\em positive} if $t > 0$, and $P_J(N) > 0$ for all $J$. Let $\mathbb{U}_{\mb{k}}$ (and $\mathbb{U}_{k_1, \ldots, k_d}$) denote the subset of $\X{\mb{k}}$ consisting of $(N_1|t_1, \ldots, N_d|t_d)$ such that each $N_i|t_i$ is positive, and the $t_i$ are distinct. Note that $g$ is defined on $\mathbb{U}_{\mb{k}}$, and since the geometric $R$-matrix is positive and involutive, each $R_i$ is a bijection from $\mathbb{U}_{\mb{k}}$ to $\mathbb{U}_{\sigma_i(\mb{k})}$.

\begin{lem}
\label{lem_recover_N_1}
If $(N_1|t_1, \ldots, N_d|t_d) \in \mathbb{U}_{\mb{k}}$, then
\[
\pi_{t_1}^{k_1} \circ \, g(N_1|t_1, \ldots, N_d|t_d) = N_1|t_1.
\]
In other words, the first $k_1$ columns of the matrix $g(N_1|t_1, \ldots, N_d|t_d)|_{\lp = (-1)^{k_1-1}t_1}$ span the subspace $N_1$.
\end{lem}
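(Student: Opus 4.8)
The plan is to reduce the general statement to a statement about the single matrix $g(N_1|t_1)$ together with an invertibility claim, and then invoke Corollary \ref{cor_linear_alg}\eqref{itm:swallows}. First I would write $g(N_1|t_1, \ldots, N_d|t_d) = g(N_1|t_1) \cdot B$, where $B = g(N_2|t_2) \cdots g(N_d|t_d) \in M_n(\bbC[\lp,\lp^{-1}])$. Evaluating at $\lp = (-1)^{k_1-1}t_1$ and applying Corollary \ref{cor_linear_alg}\eqref{itm:swallows} immediately gives that \emph{all} columns of $(g(N_1|t_1) \cdot B)|_{\lp = (-1)^{k_1-1}t_1}$ lie in the subspace $N_1$. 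So the content of the lemma is not the containment but the \emph{equality} of spans: one must show that the first $k_1$ columns of $(g(N_1|t_1)\cdot B)|_{\lp=(-1)^{k_1-1}t_1}$ actually have rank $k_1$, hence span all of $N_1$ (which is $k_1$-dimensional).

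To get the rank statement I would argue as follows. By Proposition \ref{prop_g_props}\eqref{itm:det_g}, the determinant of $g(N_j|t_j)$ (as a folded matrix) is $(t_j + (-1)^{k_j}\lp)^{n-k_j}$, so $\det B = \prod_{j=2}^d (t_j + (-1)^{k_j}\lp)^{n-k_j}$. Evaluating at $\lp = (-1)^{k_1-1}t_1$, this vanishes only if some factor $t_j + (-1)^{k_j}(-1)^{k_1-1}t_1$ is zero; since all the $N_j|t_j$ are positive (so all $t_j > 0$) and the $t_j$ are distinct, I need to check that no such cancellation occurs — here the sign $(-1)^{k_j}(-1)^{k_1-1} = (-1)^{k_j+k_1-1}$ is negative precisely when $k_j + k_1$ is even, in which case the factor is $t_j - t_1$, which is nonzero because the $t_i$ are distinct; when $k_j+k_1$ is odd the factor is $t_j + t_1 > 0$. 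Hence $B|_{\lp = (-1)^{k_1-1}t_1}$ is invertible. Now combine with Proposition \ref{prop_g_props}\eqref{itm:rank_k}, which says $g(N_1|t_1)|_{\lp = (-1)^{k_1-1}t_1}$ has rank $k_1$: right-multiplication by the invertible matrix $B|_{\lp=(-1)^{k_1-1}t_1}$ preserves rank, so $(g(N_1|t_1)\cdot B)|_{\lp=(-1)^{k_1-1}t_1}$ still has rank exactly $k_1$, and its column span is $N_1$ (it is an invertible column-mixing of a matrix with column span $N_1$, by Proposition \ref{prop_g_props}\eqref{itm:pi_g}).

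Finally, I would conclude that the first $k_1$ columns already span $N_1$. One clean way: the bottom-left $k_1 \times k_1$ submatrix of $g(N_1|t_1)$ is upper uni-triangular (as noted around \eqref{eq_P_Q}), so for generic/positive data the first $k_1$ columns of $g(N_1|t_1)|_{\lp=(-1)^{k_1-1}t_1}$ are already linearly independent and span $N_1$; since $B|_{\lp=(-1)^{k_1-1}t_1}$ is invertible, the minor $\Delta_{[n-k_1+1,n],[k_1]}$ of the product, which is a polynomial in the $P_J(N_j)$'s and the $t_j$'s, is not identically zero on $\mathbb{U}_{\mb k}$ — indeed its leading behavior is controlled by the corresponding nonvanishing minor of $g(N_1|t_1)$. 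Thus $\pi_{t_1}^{k_1}$ is defined at this point and returns $N_1|t_1$. The main obstacle I anticipate is the bookkeeping in the previous paragraph: verifying that the relevant $k_1 \times k_1$ minor of the product is nonzero (equivalently, that the first $k_1$ columns, not just all $n$ columns, span $N_1$), and being careful that positivity plus distinctness of the $t_i$ genuinely rules out the degeneracies. Everything else is a direct application of Proposition \ref{prop_g_props} and Corollary \ref{cor_linear_alg}.
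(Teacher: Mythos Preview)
Your first two paragraphs are fine and match the paper: the containment in $N_1$ follows from Corollary \ref{cor_linear_alg}\eqref{itm:swallows}, and your invertibility analysis for $B|_{\lp=(-1)^{k_1-1}t_1}$ via Proposition \ref{prop_g_props}\eqref{itm:det_g} and the sign/distinctness check is exactly right.

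The gap is in the last step. Right-multiplication by an invertible matrix preserves the \emph{total} rank of $g(N_1|t_1)|_{\lp=(-1)^{k_1-1}t_1}\cdot B|_{\lp=(-1)^{k_1-1}t_1}$, but it does \emph{not} preserve the property ``the first $k_1$ columns are linearly independent'': the first $k_1$ columns of the product are $g(N_1|t_1)|_{\ldots}$ applied to the first $k_1$ columns of $B|_{\ldots}$, and those columns could lie partly in the $(n-k_1)$-dimensional kernel of $g(N_1|t_1)|_{\ldots}$. Your attempt to patch this by looking at $\Delta_{[n-k_1+1,n],[k_1]}$ of the product does not go through: the ``leading behavior'' claim is not an argument, and in any case ``not identically zero on $\mathbb{U}_{\mb k}$'' is insufficient, since the lemma is asserted at \emph{every} point of $\mathbb{U}_{\mb k}$ (this pointwise validity is exactly what is used in Corollary \ref{cor_N_i_unique} and in the proof of Theorem \ref{thm_master}). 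Note also that the first $k_1$ columns of $B=g(N_2|t_2)\cdots g(N_d|t_d)$ generally depend on $\lp$, so after evaluating at $(-1)^{k_1-1}t_1$ you cannot appeal to any simple positivity of that minor.

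The paper resolves this by a different factorization. Using Theorem \ref{thm_hard} (and the positivity of $R$, which keeps you inside $\mathbb{U}$), one rewrites
\[
g(N_1|t_1,\ldots,N_d|t_d)=g(N_2'|t_2,\ldots,N_d'|t_d)\cdot g(N_1'|t_1)
\]
with the $t_1$-factor on the \emph{right}. Now the left factor is invertible at $\lp=(-1)^{k_1-1}t_1$ (same determinant computation you did), and Corollary \ref{cor_linear_alg}\eqref{itm:preserves_dim}, which is stated precisely for an invertible matrix on the left of $g(N|t)$, gives directly that the first $k_1$ columns of the product have full rank. So the missing idea is to invoke Theorem \ref{thm_hard} to flip the order of the factors before applying Corollary \ref{cor_linear_alg}\eqref{itm:preserves_dim}.
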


\begin{proof}
Let $B = g(N_1|t_1, \ldots, N_d|t_d)$, and $B_{t_1} = B|_{\lp = (-1)^{k_1-1}t_1}$. By Corollary \ref{cor_linear_alg}\eqref{itm:swallows}, the first $k_1$ columns of $B_{t_1}$ are contained in the subspace $N_1$. Thus, it suffices to show that the first $k_1$ columns of $B_{t_1}$ have full rank whenever $(N_1|t_1, \ldots, N_d|t_d) \in \mathbb{U}_{\mb{k}}$.

Let
\[
(N_2'|t_2, \ldots, N_d'|t_d, N_1'|t_1) = R_{d-1} \circ \cdots \circ R_1(N_1|t_1, \ldots, N_d|t_d).
\]
By repeated applications of Theorem \ref{thm_hard}, we have $B = g(N_2'|t_2, \ldots, N_d'|t_d, N_1'|t_1)$. Since the absolute values of the $t_i$ are distinct, the matrix $g(N_2'|t_2, \ldots, N_d'|t_d)|_{\lp = (-1)^{k_1-1}t_1}$ is invertible by Proposition \ref{prop_g_props}{\eqref{itm:det_g}}, so the first $k_1$ columns of $B_{t_1}$ have full rank by Corollary \ref{cor_linear_alg}\eqref{itm:preserves_dim}.
\end{proof}


\begin{cor}
\label{cor_N_i_unique}
Suppose $(M_1|t_1, \ldots, M_d|t_d), (N_1|t_1, \ldots, N_d|t_d) \in \mathbb{U}_{\mb{k}}$. If
\begin{equation}
\label{eq_N_i_unique}
g(M_1|t_1, \ldots, M_d|t_d) = g(N_1|t_1, \ldots, N_d|t_d),
\end{equation}
then $M_i = N_i$ for all $i$.
\end{cor}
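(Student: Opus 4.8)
The plan is to induct on the number of factors $d$, peeling off the leftmost factor at each stage. The base case $d=1$ is immediate: if $g(M_1|t_1) = g(N_1|t_1)$ as elements of $B^-$, then by Proposition~\ref{prop_g_props}\eqref{itm:pi_g} the span of the first $k_1$ columns of this common matrix is simultaneously $M_1$ and $N_1$, so $M_1 = N_1$ (and the $\Cx$-coordinates agree trivially).

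For the inductive step, assume the statement for tuples of length $d-1$ and suppose \eqref{eq_N_i_unique} holds for length $d$. First I would recover the first factor using Lemma~\ref{lem_recover_N_1}: applied to $(M_1|t_1, \ldots, M_d|t_d)$ and to $(N_1|t_1, \ldots, N_d|t_d)$ (both of which lie in $\mathbb{U}_{\mb{k}}$ by hypothesis), it gives
\[
\pi_{t_1}^{k_1} \circ g(M_1|t_1, \ldots, M_d|t_d) = M_1|t_1, \qquad \pi_{t_1}^{k_1} \circ g(N_1|t_1, \ldots, N_d|t_d) = N_1|t_1.
\]
Since the two matrices $g(M_1|t_1, \ldots, M_d|t_d)$ and $g(N_1|t_1, \ldots, N_d|t_d)$ coincide by \eqref{eq_N_i_unique}, so do the left-hand sides, whence $M_1 = N_1$ and in particular $g(M_1|t_1) = g(N_1|t_1)$.

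Next I would cancel this common leftmost factor. By \eqref{eq_g_prod},
\[
g(M_1|t_1) \cdot g(M_2|t_2, \ldots, M_d|t_d) = g(N_1|t_1) \cdot g(N_2|t_2, \ldots, N_d|t_d)
\]
inside $\GL_n(\bbC(\lp))$, and $g(M_1|t_1) = g(N_1|t_1)$ is invertible over $\bbC(\lp)$ because its determinant is the nonzero Laurent polynomial $(t_1 + (-1)^{k_1}\lp)^{n-k_1}$ (Proposition~\ref{prop_g_props}\eqref{itm:det_g}). Left-multiplying by its inverse gives
\[
g(M_2|t_2, \ldots, M_d|t_d) = g(N_2|t_2, \ldots, N_d|t_d).
\]
The truncated tuples lie in $\mathbb{U}_{(k_2, \ldots, k_d)}$ — positivity of each factor is inherited, and the coordinates $t_2, \ldots, t_d$ are still pairwise distinct — so the inductive hypothesis yields $M_i = N_i$ for $i = 2, \ldots, d$. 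Combined with $M_1 = N_1$, this closes the induction.

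Since Lemma~\ref{lem_recover_N_1}, the determinant formula, and the inductive hypothesis are all available, there is no genuine obstacle; the only points that need a moment's care are checking that the truncated tuples stay in $\mathbb{U}$ (so that Lemma~\ref{lem_recover_N_1} and the induction apply to them) and that the cancellation of the first factor is legitimate, which is exactly where the invertibility of $g(N_1|t_1)$ over the loop field $\bbC(\lp)$ enters.
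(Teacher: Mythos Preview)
Your proof is correct and follows the same approach as the paper: use Lemma~\ref{lem_recover_N_1} to identify the first factor, then invoke the invertibility of $g(M_1|t_1)$ (via Proposition~\ref{prop_g_props}\eqref{itm:det_g}) to cancel and induct on $d$. Your version is slightly more detailed in spelling out the base case and verifying that the truncated tuples remain in $\mathbb{U}_{(k_2,\ldots,k_d)}$, but the argument is identical in substance.
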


\begin{proof}
Lemma \ref{lem_recover_N_1} shows that $M_1 = N_1$. By Proposition \ref{prop_g_props}\eqref{itm:det_g}, the matrix $g(M_1|t_1)$ is invertible (in the ring $M_n(\bbC(\lp))$), so we may multiply both sides of \eqref{eq_N_i_unique} by $g(M_1|t_1)^{-1}$ and reduce to a smaller value of $d$.
\end{proof}

\begin{remark}
Corollary \ref{cor_N_i_unique} does not hold for arbitrary points in $\X{\mb{k}}$, even in the case $n = 2, \mb{k} = (1,1)$.
\end{remark}

\begin{thm}
\label{thm_master}
\
\begin{enumerate}
\item
\label{itm:R=isomorphism}
$R : \X{k_1} \times \X{k_2} \rightarrow \X{k_2} \times \X{k_1}$ is an isomorphism of geometric crystals.

\item
\label{itm:R_commutes_symms}
$R : \X{k_1} \times \X{k_2} \rightarrow \X{k_2} \times \X{k_1}$ commutes with the symmetries $\PR, S,$ and $D$.

\item
\label{itm:YB}
$R$ satisfies the Yang--Baxter relation. That is, we have the equality
\begin{equation}
\label{eq_YB}
R_1R_2R_1 = R_2R_1R_2
\end{equation}
of rational maps $\X{k_1,k_2,k_3} \rightarrow \X{k_3,k_2,k_1}$.

\end{enumerate}
\end{thm}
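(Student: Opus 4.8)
The plan is to deduce all three parts from Theorem \ref{thm_hard} (the identity $g \circ R = g$) together with the uniqueness statement of Corollary \ref{cor_N_i_unique}, using Theorem \ref{thm_R_posit} only to guarantee that the rational maps in question restrict to the subsets $\mathbb{U}_{\mb{k}}$ on which Corollary \ref{cor_N_i_unique} applies. The uniform device, previewed around \eqref{eq_g_e_R}, is this: to prove an equality $F_1 = F_2$ of rational maps $\X{\mb{k}} \rightarrow \X{\mb{k}'}$, it suffices to check that $F_1, F_2$ are positive, that each carries $\mathbb{U}_{\mb{k}}$ into $\mathbb{U}_{\mb{k}'}$, and that $g \circ F_1 = g \circ F_2$; then Corollary \ref{cor_N_i_unique} forces $F_1 = F_2$ on the dense set $\mathbb{U}_{\mb{k}}$, hence as rational maps. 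Since $R$, $\PR$, $S$, $D$, and (for $c > 0$) the $e_i^c$ are all positive by Lemma \ref{lem_all_maps_posit} and act on the $t$-coordinates only by permuting or fixing them, the verification of these hypotheses is routine in each case; the only substantive inputs are Theorems \ref{thm_hard} and \ref{thm_R_posit}, proved in later sections.

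For part \eqref{itm:R=isomorphism}, Lemma \ref{lem_R^2=Id} shows $R$ is birational with $R^{-1} = R$, so it remains to check that $R$ is a morphism of geometric crystals (Definition \ref{defn_morphism}). For $\rho \in \{\gamma, \vp_i, \ve_i\}$ the identity $\rho = \rho \circ g$ from \eqref{eq_commutes_with_g}, combined with Theorem \ref{thm_hard}, gives $\rho \circ R = \rho \circ g \circ R = \rho \circ g = \rho$. For the crystal operators, \eqref{eq_commutes_with_g} gives $g \circ e_i = e_i \circ g$ on both $\X{k_1} \times \X{k_2}$ and $\X{k_2} \times \X{k_1}$, so together with Theorem \ref{thm_hard} we obtain $g \circ (R\, e_i) = e_i \circ g = g \circ (e_i\, R)$; since $e_i^c$ preserves $\mathbb{U}_{\mb{k}}$ for $c > 0$, Corollary \ref{cor_N_i_unique} yields $R\, e_i = e_i\, R$. (Because $f = \chi \circ g$ by \eqref{eq_prod_dec}, the first computation also gives $f \circ R = f$, so $R$ is in fact an isomorphism of \emph{decorated} geometric crystals.)

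For part \eqref{itm:R_commutes_symms}, the maps $\PR$, $S$, $D$ preserve the relevant sets $\mathbb{U}_{\mb{k}}$, so it suffices to verify the three identities after composing with $g$. Using $g \circ \PR = \sh \circ g$ (Lemma \ref{lem_PR}) and $g \circ R = g$ gives $g \circ (R\,\PR) = g \circ \PR = \sh \circ g = \sh \circ g \circ R = g \circ (\PR\, R)$, whence $R\,\PR = \PR\, R$. The same argument with $g \circ S = \fl \circ g$ (Lemma \ref{lem_S}) gives $R\,S = S\,R$, and with $\beta \cdot (g \circ D) = \inv \circ g$ (Lemma \ref{lem_D}) gives $R\,D = D\,R$; in the last case one notes that $\beta = \prod_j (t_j + (-1)^{k_j + n}\lp)^{n - k_j - 1}$ depends only on the multiset of pairs $(k_j, t_j)$, which $R$ merely permutes, so $\beta$ is literally the same on both sides and cancels.

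For part \eqref{itm:YB}, regard $R_1 R_2 R_1$ and $R_2 R_1 R_2$ as rational maps $\X{k_1, k_2, k_3} \rightarrow \X{k_3, k_2, k_1}$. From Theorem \ref{thm_hard} and $g(x_1, x_2, x_3) = g(x_1)g(x_2)g(x_3)$ we get $g \circ R_i = g$ for $i = 1, 2$, hence $g \circ (R_1 R_2 R_1) = g = g \circ (R_2 R_1 R_2)$. As recorded before Lemma \ref{lem_recover_N_1}, each $R_i$ restricts to a bijection $\mathbb{U}_{\mb{k}} \to \mathbb{U}_{\sigma_i(\mb{k})}$, so both composites carry $\mathbb{U}_{(k_1,k_2,k_3)}$ into $\mathbb{U}_{(k_3,k_2,k_1)}$; Corollary \ref{cor_N_i_unique} then shows they agree on this dense set, and therefore as rational maps. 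The one point requiring care throughout is the verification that every composite to which we apply Corollary \ref{cor_N_i_unique} really lands in the appropriate $\mathbb{U}_{\mb{k}}$ — equivalently, keeping track of how $R$, $\PR$, $S$, $D$ permute the $t$-coordinates — but given Theorems \ref{thm_hard} and \ref{thm_R_posit} this is straightforward, so the genuine difficulty lies entirely in those two results.
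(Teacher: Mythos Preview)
Your proposal is correct and follows essentially the same approach as the paper: deduce each identity by composing with $g$, invoke Theorem \ref{thm_hard} (and Lemmas \ref{lem_PR}, \ref{lem_S}, \ref{lem_D} for the symmetries), then apply Corollary \ref{cor_N_i_unique} on the dense positive locus $\mathbb{U}_{\mb{k}}$ using positivity. Your explicit observation that $\beta$ depends only on the multiset of pairs $(k_j,t_j)$ in the $D$ case is a detail the paper leaves implicit, and your parenthetical remark that $f\circ R = f$ (hence $R$ is an isomorphism of \emph{decorated} geometric crystals) anticipates a fact the paper uses later in the proof of Theorem \ref{thm_recover_comb_R}.
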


\begin{proof}
First we prove \eqref{itm:R=isomorphism}. By Lemma \ref{lem_R^2=Id}, $R$ is invertible, with inverse $R : \X{k_2} \times \X{k_1} \rightarrow \X{k_1} \times \X{k_2}$. Let $\rho$ be one of the maps $\gamma, \vp_i,$ or $\ve_i$. By \eqref{eq_commutes_with_g} and Theorem \ref{thm_hard}, we have
\[
\rho R = \rho g R = \rho g = \rho.
\]

It remains to show that $R$ commutes with $e_i$. Again by \eqref{eq_commutes_with_g} and Theorem \ref{thm_hard}, we have
\begin{equation}
\label{eq_g_R_e_again}
g R e_i = g e_i = e_i g = e_i g R = g e_i R.
\end{equation}
Suppose $\mb{x} = (N_1|t_1,N_2|t_2) \in \mathbb{U}_{k_1,k_2}$, and $c > 0$. Let
\begin{align*}
\mb{x}' &= (N_2'|t_2, N_1'|t_1) = Re_i^c(\mb{x}), \\
\mb{x}'' &= (N_2''|t_2, N_1''|t_1) = e_i^c R(\mb{x}).
\end{align*}
Since $R$ and $e_i$ are positive maps, we have $\mb{x}', \mb{x}'' \in \mathbb{U}_{k_2,k_1}$, and by \eqref{eq_g_R_e_again}, we have $g(\mb{x}') = g(\mb{x}'')$. Thus, $Re_i^c(\mb{x}) = e_i^cR(\mb{x})$ by Corollary \ref{cor_N_i_unique}. Since the set of points
\[
\{(c, \mb{x}) \, | \, c > 0 \text{ and } \mb{x} \in \mathbb{U}_{k_1,k_2}\}
\]
is dense in $\Cx \times \X{k_1,k_2}$, we conclude that $R e_i = e_i R$.

The proof of \eqref{itm:R_commutes_symms} is formally the same as the proof of the identity $Re_i = e_iR$, with Lemmas \ref{lem_PR}, \ref{lem_S}, and \ref{lem_D} playing the role of \eqref{eq_commutes_with_g} (we also use the positivity of the three symmetries).

The proof of \eqref{itm:YB} is similar. Suppose $\mb{x} = (N_1|t_1, N_2|t_2, N_3|t_3) \in \mathbb{U}_{k_1,k_2,k_3}$, and set
\begin{align*}
\mb{x}' &= (N_3'|t_3, N_2'|t_2, N_1'|t_1) = R_1R_2R_1(\mb{x}), \\
\mb{x}'' &= (N_3''|t_3, N_2''|t_2, N_1''|t_1) = R_2R_1R_2(\mb{x}).
\end{align*}
Theorem \ref{thm_hard} implies that $g(\mb{x}') = g(\mb{x}) = g(\mb{x}'')$, and since $\mb{x}', \mb{x}'' \in \mathbb{U}_{k_3,k_2,k_1}$, we have $\mb{x}' = \mb{x}''$ by Corollary \ref{cor_N_i_unique}. Since $\mathbb{U}_{\mb{k}}$ is dense in $\X{\mb{k}}$, this proves \eqref{eq_YB}.
\end{proof}

\begin{remark}
\label{rmk_many_factors}
Lemma \ref{lem_R^2=Id} and Theorem \ref{thm_master}\eqref{itm:YB} show that the geometric $R$-matrix satisfies the relations of the simple transpositions in the symmetric group. That is, if we repeatedly apply geometric $R$-matrices to consecutive factors in a product $\X{k_1, \ldots, k_d}$, the result depends only on the final permutation of factors. This means that for any permutation $\sigma \in S_d$, there is a well-defined map $R_\sigma : \X{\mb{k}} \rightarrow \X{\sigma(\mb{k})}$.

There is an efficient way to pick off the first and last factors of the image of a point in $\mathbb{U}_{\mb{k}}$ under $R_{\sigma}$. Indeed, if $\mb{x} = (x_1, \ldots, x_d) = (N_1|t_1, \ldots, N_d|t_d) \in \mathbb{U}_{\mb{k}}$ and $R_\sigma(\mb{x}) = (x'_{\sigma(1)}, \ldots, x'_{\sigma(d)})$, then we have
\[
x'_{\sigma(1)} = \pi^{k_{\sigma(1)}}_{t_{\sigma(1)}} \circ g(\mb{x}), \quad\quad x'_{\sigma(d)} = S \circ \pi^{k_{\sigma(d)}}_{t_{\sigma(d)}} \circ \fl \circ \, g(\mb{x})
\]
by Theorem \ref{thm_hard}, Lemma \ref{lem_recover_N_1}, Lemma \ref{lem_S}, and the fact that $S$ commutes with $R$.
\end{remark}


\subsection{Tropicalization of $R$}
\label{sec_trop_R}

By Theorem \ref{thm_hard}, the map $\Theta R : \bT{k_1} \times \bT{k_2} \rightarrow \bT{k_2} \times \bT{k_1}$ is positive, so we may define
\[
\wh{R} = \Trop(\Theta R) : \tT{k_1} \times \tT{k_2} \rightarrow \tT{k_2} \times \tT{k_1}.
\]

\begin{thm}
\label{thm_recover_comb_R}
If $a$ is a $k_1$-rectangle and $b$ is a $k_2$-rectangle, then $\wh{R}(a \otimes b) = \tw{R}(a \otimes b)$, where $\tw{R}$ is the combinatorial $R$-matrix.
\end{thm}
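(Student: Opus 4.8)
The plan is to restrict $\wh{R}$ to pairs of rectangular tableaux and check that it has exactly the properties that single out the combinatorial $R$-matrix in Proposition~\ref{prop_comb_R}. First I would push the relevant geometric identities down to the tropical level. Since $R$ is positive (Theorem~\ref{thm_R_posit}) and tropicalization is compatible with composition and with the product of decorated geometric crystals, every positive identity satisfied by $R$ descends to a piecewise-linear identity for $\wh{R}$. From Theorem~\ref{thm_master}\eqref{itm:R=isomorphism} together with the definition of a morphism of geometric crystals we obtain $\wh{\rho}\circ\wh{R}=\wh{\rho}$ for $\wh{\rho}=\wh{\gamma},\wh{\vp}_i,\wh{\ve}_i$ and $\wh{R}\circ\wh{e}_i=\wh{e}_i\circ\wh{R}$; from Lemma~\ref{lem_R^2=Id} we obtain $\wh{R}^2=\Id$, so $\wh{R}$ is an involutive bijection of the ambient lattices; and from Theorem~\ref{thm_hard} together with $f=\chi\circ g$ (Definition~\ref{defn_dec}) we get $f\circ R=\chi\circ g\circ R=\chi\circ g=f$, hence $\wh{f}\circ\wh{R}=\wh{f}$. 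I would also note that the geometric $R$-matrix acts as $N|t\mapsto N'|t$ and $M|s\mapsto M'|s$, so it leaves both torus parameters untouched; under the parametrization $\Theta$ these tropicalize to the coordinates of $\tT{k_1}\times\tT{k_2}$ recording the row lengths, so $\wh{R}$ preserves $L_1$ and $L_2$.

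Next I would use Theorem~\ref{thm_recover_crystals} to upgrade this to a statement about the combinatorial crystal. By that theorem the locus $\{\wh{f}\ge 0\}$ in $\tT{k_1}\times\tT{k_2}$ consists exactly of the pairs (a $k_1$-rectangle, a $k_2$-rectangle), and likewise in $\tT{k_2}\times\tT{k_1}$; since $\wh{R}$ is an involutive bijection preserving $\wh{f}$ and the row lengths, it restricts to a bijection $\wh{R}:B^{k_1,L_1}\otimes B^{k_2,L_2}\to B^{k_2,L_2}\otimes B^{k_1,L_1}$ for all $L_1,L_2\ge 0$. To see it commutes with the classical crystal operators, let $\mb{b}$ be a rectangle pair: Theorem~\ref{thm_recover_crystals}(3) detects definedness of $\tw{e}_i(\mb{b})$ via the sign of $\wh{f}(\wh{e}_i(1,\mb{b}))$ and identifies $\wh{e}_i(1,\mb{b})$ with $\tw{e}_i(\mb{b})$ when defined. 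Applying $\wh{R}$ and using $\wh{R}\circ\wh{e}_i(1,-)=\wh{e}_i(1,-)\circ\wh{R}$ and $\wh{f}\circ\wh{R}=\wh{f}$ transports definedness from $\mb{b}$ to $\wh{R}(\mb{b})$ and gives $\wh{R}\circ\tw{e}_i=\tw{e}_i\circ\wh{R}$ on rectangle pairs for $i=1,\dots,n-1$ (the same argument with $i=0$ shows $\wh{R}$ is actually an isomorphism of affine crystals, but this is not needed).

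Finally I would conclude by the uniqueness argument behind Proposition~\ref{prop_comb_R}(2). Writing $\wh{R}(T\otimes U)=U'\otimes T'$, Shimozono's \cite[Lem.~3.8]{Shim} shows that any bijection $B(\lambda)\times B(\mu)\to B(\mu)\times B(\lambda)$ commuting with $\tw{e}_1,\dots,\tw{e}_{n-1}$ sends $(T,U)$ to a pair with $T*U=U'*T'$; since $\lambda=(L_1^{k_1})$ and $\mu=(L_2^{k_2})$ are rectangles, Proposition~\ref{prop_comb_R}(1) makes this pair unique, and Proposition~\ref{prop_comb_R}(2) identifies it with $\tw{R}(T\otimes U)$. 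Hence $\wh{R}=\tw{R}$ on $B^{k_1,L_1}\otimes B^{k_2,L_2}$.

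Essentially all of the real content has been absorbed into Theorems~\ref{thm_R_posit}, \ref{thm_hard}, and~\ref{thm_master}, so I do not expect a serious obstacle; the one delicate point is the bookkeeping of tropicalization — making sure it genuinely commutes with the product of geometric crystals and with the decoration, and that the criterion $\wh{f}\ge 0$ is invoked on the \emph{product} crystal, so that $\wh{R}$ is correctly seen to land in, and surject onto, the set of tableaux of the prescribed shapes.
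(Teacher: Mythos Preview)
Your proposal is correct and follows essentially the same approach as the paper: use $f\circ R=f$ (from Theorem~\ref{thm_hard} and \eqref{eq_prod_dec}) together with Theorem~\ref{thm_recover_crystals} and Theorem~\ref{thm_master}\eqref{itm:R=isomorphism} to show that $\wh{R}$ restricts to a crystal isomorphism between the relevant tensor products of KR crystals, then invoke uniqueness. The only minor divergence is in the last step: the paper appeals directly to the uniqueness of the affine crystal isomorphism $B^{k_1,L_1}\otimes B^{k_2,L_2}\to B^{k_2,L_2}\otimes B^{k_1,L_1}$ (stated in \S\ref{sec_combinatorial_R}), whereas you invoke Shimozono's \cite[Lem.~3.8]{Shim} and Proposition~\ref{prop_comb_R}, which only requires commutation with the classical operators $\tw{e}_1,\dots,\tw{e}_{n-1}$---a slightly weaker hypothesis that your argument indeed verifies.
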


\begin{proof}
By \eqref{eq_prod_dec} and Theorem \ref{thm_hard}, we have $fR = f$, where $f$ is the decoration on $\X{n-k_1} \times \X{n-k_2}$. This, together with Theorems \ref{thm_recover_crystals} and \ref{thm_master}\eqref{itm:R=isomorphism}, implies that for any $L_1, L_2 \geq 0$, $\wh{R}$ restricts to an affine crystal isomorphism $B^{k_1, L_1} \otimes B^{k_2, L_2} \rightarrow B^{k_2, L_2} \otimes B^{k_1, L_1}$. The combinatorial $R$-matrix is the unique such isomorphism.
\end{proof}

In \S \ref{sec_1_2}, we illustrate Theorem \ref{thm_recover_comb_R} with a small example.

\begin{remark}
Theorem \ref{thm_recover_comb_R} allows us to deduce the Yang--Baxter relation for the combinatorial $R$-matrix from the Yang--Baxter relation for the geometric $R$-matrix, thereby giving a new proof of the former.
\end{remark}

\begin{remark}
\label{rmk_tab_mat_tab_prod}
We have used the crystal-theoretic characterization of the combinatorial $R$-matrix to prove that $R$ tropicalizes to $\tw{R}$. Here we outline an alternative proof based on the combinatorial characterization of $\tw{R}$ in terms of the tableau product (Proposition \ref{prop_comb_R}). The idea is that
\[
\textit{the product of tableau matrices tropicalizes to the product of tableaux,}
\]
where the tableau matrix $\Phi_k(X_{ij},t)$ is the $n \times n$ matrix defined in \S \ref{sec_GT_param}. To be a bit more precise, let $a$ be a $k_1$-rectangle corresponding to the tableau $T$, and $b$ a $k_2$-rectangle corresponding to the tableau $U$. Let $(C_{ij})$ be the Gelfand--Tsetlin pattern corresponding to the tableau $T * U$. Theorem 3.9 in \cite{F2abs} (an extended abstract of this paper) states that the product of tableau matrices $\Phi_{k_1}(x) \Phi_{k_2}(y)$ uniquely determines positive rational functions $Z_{ij}(x,y)$ which tropicalize to formulas for $C_{ij}$ in terms of the entries of $a$ and $b$. (In fact, the $Z_{ij}$ are ratios of left-justified minors of the product matrix.) If $\Theta R(x,y) = (y',x')$, then by Theorem \ref{thm_hard} and Lemma \ref{lem_g_0} we have
\[
\Phi_{k_1}(x)\Phi_{k_2}(y) = \Phi_{k_2}(y')\Phi_{k_1}(x'),
\]
so the rectangular tableaux $U'$ and $T'$ obtained by tropicalizing $y'$ and $x'$ satisfy $U' * T' = T * U$.

The special case of \cite[Thm. 3.9]{F2abs} with $k_1 = k_2 = 1$ was proved by Noumi and Yamada in their work on a geometric lift of the RSK correspondence \cite[\S 2.1-2]{NouYam}. The general case can be proved by iterating the one-row case. The technical details take up a lot of space, however, and since we do not yet have an application for this result, we have chosen to omit the proof.
\end{remark}

\section{The geometric coenergy function}
\label{sec_coenergy}

Recall that a $\bbZ$-valued function on a tensor product of two Kirillov--Reshetikhin crystals is a coenergy function if it is invariant under the crystal operators $\te_1, \ldots, \te_{n-1}$, and it interacts with $\te_0$ in a prescribed way (Definition \ref{defn_comb_coenergy}). In this section, we ``lift'' the combinatorial definition to define a notion of geometric coenergy function. We show that a certain minor of the product matrix $g(M|s)g(N|t)$ defines a geometric coenergy function on $\X{k_1} \times \X{k_2}$, and that this function tropicalizes to the coenergy function $\tw{E}$ defined in \S \ref{sec_comb_coenergy}.

\begin{defn}
\label{defn_geom_coenergy}
A rational function $H : \X{k_1} \times \X{k_2} \rightarrow \bbC$ is a {\em geometric coenergy function} if $H \circ e_i^c = H$ for $i = 1, \ldots, n-1$, and $H$ interacts with $e_0^c$ as follows: if $(u,v) \in \X{k_1} \times \X{k_2}$ and $R(u,v) = (v',u')$, then
\begin{equation}
\label{eq_geom_coenergy}
H(e_0^c(u,v)) = H(u,v)\left(\dfrac{\ve_0(u) + c^{-1}\vp_0(v)}{\ve_0(u) + \vp_0(v)}\right)\left(\dfrac{c\ve_0(v') + \vp_0(u')}{\ve_0(v') + \vp_0(u')}\right).
\end{equation}
\end{defn}

\begin{lem}
\label{lem_trop_coenergy}
If $H$ is a positive geometric coenergy function on $\X{k_1} \times \X{k_2}$, then the piecewise-linear function $\wh{H} := \Trop(\Theta H)$, when restricted to $B^{n-k_1, L_1} \otimes B^{n-k_2,L_2} \subset \tT{n-k_1} \times \tT{n-k_2}$, is a coenergy function.
\end{lem}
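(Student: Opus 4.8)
The plan is to tropicalize the two conditions defining a geometric coenergy function (Definition~\ref{defn_geom_coenergy}) and to show that, once restricted to rectangles, they become the two conditions defining a combinatorial coenergy function (Definition~\ref{defn_comb_coenergy}). Since $H$ is positive by hypothesis and the maps $e_i$ on $\X{k_1}\times\X{k_2}$, the functions $\ve_i,\vp_i$ (on each factor), and the map $R$ are all positive (Lemma~\ref{lem_all_maps_posit}, Definition/Proposition~\ref{defn_prop_prod_geom}, Theorem~\ref{thm_R_posit}), both sides of each defining identity of Definition~\ref{defn_geom_coenergy} are positive rational functions and may be tropicalized; as $\Trop$ respects composition of positive maps, the tropicalizations of $H\circ e_i^c$, of $\ve_i$ and $\vp_i$ composed with $R$, etc., are the expected composites of $\wh H$, $\wh e_i$, $\wh\ve_i$, $\wh\vp_i$, and $\wh R$. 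By Theorem~\ref{thm_recover_crystals}, a point $\mb b$ of $\tT{n-k_1}\times\tT{n-k_2}$ lies in $B^{n-k_1,L_1}\otimes B^{n-k_2,L_2}$ (for suitable $L_1,L_2$) exactly when $\wh f(\mb b)\geq 0$; we work with such $\mb b$ throughout.

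First I would establish invariance under $e_1,\dots,e_{n-1}$. Tropicalizing $H\circ e_i^c=H$ gives $\wh H(\wh e_i(\ell,\mb b))=\wh H(\mb b)$ for all $\ell\in\bbZ$. Setting $\ell=1$ and using Theorem~\ref{thm_recover_crystals}(3): whenever $\tw e_i(\mb b)$ is defined we have $\wh e_i(1,\mb b)=\tw e_i(\mb b)$, and this point again lies in $B^{n-k_1,L_1}\otimes B^{n-k_2,L_2}$ (each $B^{k,L}$ is an affine crystal, hence its tensor product is closed under every $\tw e_i$). Therefore $\wh H\circ\tw e_i=\wh H$ on $B^{n-k_1,L_1}\otimes B^{n-k_2,L_2}$ for $i=1,\dots,n-1$, which is the first condition of Definition~\ref{defn_comb_coenergy}.

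Next I would tropicalize \eqref{eq_geom_coenergy}. Writing $u,v$ for the two coordinate-blocks of $\mb b$ and $(v',u')$ for those of $\wh R(\mb b)$, and noting that each of the two correction factors in \eqref{eq_geom_coenergy} is a ratio of sums of positive functions, the tropicalized identity at parameter $\ell=1$ reads
\[
\begin{aligned}
\wh H(\wh e_0(1,\mb b)) = \wh H(\mb b) &+ \big[\min(\wh\ve_0(u),\wh\vp_0(v)-1)-\min(\wh\ve_0(u),\wh\vp_0(v))\big] \\
&+ \big[\min(\wh\ve_0(v')+1,\wh\vp_0(u'))-\min(\wh\ve_0(v'),\wh\vp_0(u'))\big].
\end{aligned}
\]
Restricting $\mb b=a\otimes b$ to lie in $B^{n-k_1,L_1}\otimes B^{n-k_2,L_2}$ with $\tw e_0(\mb b)$ defined, Theorem~\ref{thm_recover_crystals} gives $\wh e_0(1,\mb b)=\tw e_0(a\otimes b)$ together with $\wh\ve_0=-\tep_0$ and $\wh\vp_0=-\tph_0$ on each factor, while Theorem~\ref{thm_recover_comb_R} identifies the tropicalization of $R$ with the combinatorial $R$-matrix, so $(v',u')$ becomes $\tw R(a\otimes b)=b'\otimes a'$. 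Then a short case analysis on the signs of $\tep_0(a)-\tph_0(b)$ and $\tep_0(b')-\tph_0(a')$ — using that these are integers, so for instance $\tep_0(a)>\tph_0(b)$ forces $\tep_0(a)\geq\tph_0(b)+1$ — shows the first bracket equals $0$ if $\tep_0(a)>\tph_0(b)$ and $-1$ otherwise, and the second equals $1$ if $\tep_0(b')>\tph_0(a')$ and $0$ otherwise. Their sum is precisely the case-defined correction in \eqref{eq_coenergy}, so $\wh H$ restricted to $B^{n-k_1,L_1}\otimes B^{n-k_2,L_2}$ satisfies the second condition of Definition~\ref{defn_comb_coenergy} as well, and hence is a coenergy function.

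The one step that requires genuine care is the last: lining up the geometric data $u,v,v',u'$ in \eqref{eq_geom_coenergy} with the combinatorial data $a,b,b',a'$ through the tropicalization of $R$ (Theorem~\ref{thm_recover_comb_R}), keeping track of the sign flips $\wh\ve_0=-\tep_0$, $\wh\vp_0=-\tph_0$ and of the specialization $\ell=1$, and then running the case analysis. Notably, and unlike the proofs of the other properties of the geometric $R$-matrix, no uniqueness or density argument is needed here, because we are merely tropicalizing identities that already hold.
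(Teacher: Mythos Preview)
Your proof is correct and follows essentially the same approach as the paper: tropicalize the two defining conditions of a geometric coenergy function, invoke Theorems~\ref{thm_recover_crystals} and~\ref{thm_recover_comb_R} to translate $\wh e_i,\wh\ve_0,\wh\vp_0,\wh R$ into $\tw e_i,-\tep_0,-\tph_0,\tw R$, and carry out the integer case analysis on the two brackets. The paper's write-up merely converts to $\max$ a step earlier via $\max(c,d)=-\min(-c,-d)$, which is a cosmetic difference.
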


\begin{proof}
Clearly $\wh{H} \circ \, \tw{e}_i = \wh{H}$ for $i = 1, \ldots, n-1$. If $a \otimes b \in B^{n-k_1,L_1} \otimes B^{n-k_2,L_2}$ and $\tw{R}(a \otimes b) = (b' \otimes a')$, then by tropicalizing \eqref{eq_geom_coenergy} and using Theorems \ref{thm_recover_crystals} and \ref{thm_recover_comb_R} (plus the identity $\max(c,d) = -\min(-c,-d)$), we obtain
\begin{align*}
\wh{H}(\tw{e}_0(a \otimes b)) &= \wh{H}(a \otimes b) + \max(\tep_0(a), \tph_0(b)) - \max(\tep_0(a), \tph_0(b) + 1) \\
&\quad\quad\quad\quad\quad \: + \max(\tep_0(b'), \tph_0(a')) - \max(\tep_0(b') - 1, \tph_0(a')) \\
&= \wh{H}(a \otimes b) + \begin{cases}
0 & \text{ if } \tep_0(a) > \tph_0(b) \\
-1 & \text{ if } \tep_0(a) \leq \tph_0(b) \\
\end{cases}
+ \begin{cases}
1 & \text{ if } \tep_0(b') > \tph_0(a') \\
0 & \text{ if } \tep_0(b') \leq \tph_0(a')
\end{cases}.
\end{align*}
This shows that $\wh{H}$ satisfies \eqref{eq_coenergy}, as needed.
\end{proof}

\begin{defn}
Define $E : \X{k_1} \times \X{k_2} \rightarrow \bbC$ by
\[
E(u,v) = \Delta_{[n-k+1,n], [k]}(g(u)g(v)),
\]
where $k = \min(k_1,k_2)$.
\end{defn}

Note that for $(u,v) \in \X{k_1} \times \X{k_2}$, the last $k_1$ rows of $g(u)$ and the first $k_2$ columns of $g(v)$ are independent of $\lp$, so by the Cauchy--Binet formula, $E$ is indeed complex-valued. In fact, the Cauchy--Binet formula gives a simple expression for $E$ in terms of Pl\"{u}cker coordinates. Recall the notation $Q^J_s(M) := P_{w_0(J)}(S_s(M))$.

\begin{lem}
\label{lem_E_formula}
If $(M|s, N|t) \in \X{k_1} \times \X{k_2}$, then
\begin{equation}
\label{eq_E_formula}
E(M|s, N|t) = \begin{cases}
\ds \sum_{I \in {[k_1-k_2+1,n] \choose k_2}} \dfrac{Q^{I'}_s(M)}{Q^{[k_1]}_s(M)} \dfrac{P_I(N)}{P_{[n-k_2+1,n]}(N)} & \text{ if } k_1 \geq k_2 \\
\ds \sum_{I \in {[n-k_2+k_1] \choose k_1}} \dfrac{Q^I_s(M)}{Q^{[k_1]}_s(M)} \dfrac{P_{I''}(N)}{P_{[n-k_2+1,n]}(N)} & \text{ if } k_1 \leq k_2
\end{cases}
\end{equation}
where $I' = [k_1-k_2] \cup I$, and $I'' = I \cup [n-k_2+k_1+1,n]$.
\end{lem}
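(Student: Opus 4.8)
The plan is to expand $E(M|s,N|t)=\Delta_{[n-k+1,n],[k]}(g(M|s)g(N|t))$ (with $k=\min(k_1,k_2)$) by the Cauchy--Binet formula,
\[
E(M|s,N|t)=\sum_{K\in\binom{[n]}{k}}\Delta_{[n-k+1,n],K}(g(M|s))\cdot\Delta_{K,[k]}(g(N|t)),
\]
and then to identify each factor as a ratio of Pl\"ucker coordinates. Every summand is independent of $\lp$, since it involves only the last $k\le k_1$ rows of $g(M|s)$ and the first $k\le k_2$ columns of $g(N|t)$, which are $\lp$-free; so the whole proof reduces to evaluating two families of minors of a matrix $g(N|t)$: those using the first $p$ columns, and those using the last $p$ rows, for $p$ at most the relevant Grassmann dimension.

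The key step I would isolate is the following refinement of \eqref{eq_P_Q}. Let $N\in\Gr(m,n)$ and $X=g(N|t)$, so that $X$ is $(n-m)$-shifted unipotent with bottom-left $m\times m$ block upper uni-triangular. Then for $p\le m$: \emph{(a)} for a $p$-subset $K\subseteq[n-m+p]$ one has $\Delta_{K,[p]}(X)=P_{K\cup[n-m+p+1,n]}(N)/P_{[n-m+1,n]}(N)$, and $\Delta_{K,[p]}(X)=0$ whenever $K\not\subseteq[n-m+p]$; \emph{(b)} for a $p$-subset $K\subseteq[m-p+1,n]$ one has $\Delta_{[n-p+1,n],K}(X)=Q^{[m-p]\cup K}_t(N)/Q^{[m]}_t(N)$, and this minor vanishes whenever $K\not\subseteq[m-p+1,n]$. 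To prove (a) I would observe that, by $(n-m)$-shifted unipotence together with upper-uni-triangularity, the first $p$ columns of $X$ carry a staircase of $1$'s (column $j$ has its last nonzero entry, a $1$, in row $n-m+j$, and all entries below row $n-m+p$ vanish there); then cofactor-expanding $\Delta_{K\cup[n-m+p+1,n],[m]}(X)$ repeatedly along its last row --- at each stage a shifted standard basis vector sitting in the corner position, hence contributing sign $+1$ --- peels off the rows $[n-m+p+1,n]$ together with the columns $p+1,\dots,m$, leaving exactly $\Delta_{K,[p]}(X)$. Combining this with \eqref{eq_P_Q} gives (a); part (b) follows either by the mirror-image argument (now expanding along the first column) or by transporting (a) through the flip identity $g(S(N|t))=\fl(g(N|t))$ of Lemma \ref{lem_S}, using that $\fl$ sends $\Delta_{I,J}$ to $\Delta_{w_0(J),w_0(I)}$ and that $P_J(S_t(N))=Q^{w_0(J)}_t(N)$.

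Granting this lemma, the two cases of \eqref{eq_E_formula} fall out by direct substitution. When $k_1\ge k_2$ (so $k=k_2$): apply (a) with $(m,p)=(k_2,k_2)$ to get $\Delta_{K,[k_2]}(g(N|t))=P_K(N)/P_{[n-k_2+1,n]}(N)$, and (b) with $(m,p)=(k_1,k_2)$ to get $\Delta_{[n-k_2+1,n],K}(g(M|s))=Q^{[k_1-k_2]\cup K}_s(M)/Q^{[k_1]}_s(M)$, which is supported on $K\subseteq[k_1-k_2+1,n]$; feeding these into Cauchy--Binet and renaming $I=K$, $I'=[k_1-k_2]\cup I$ gives the first line. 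When $k_1\le k_2$ (so $k=k_1$): apply (b) with $(m,p)=(k_1,k_1)$ to get $\Delta_{[n-k_1+1,n],K}(g(M|s))=Q^K_s(M)/Q^{[k_1]}_s(M)$, and (a) with $(m,p)=(k_2,k_1)$ to get $\Delta_{K,[k_1]}(g(N|t))=P_{K\cup[n-k_2+k_1+1,n]}(N)/P_{[n-k_2+1,n]}(N)$, supported on $K\subseteq[n-k_2+k_1]$; this yields the second line with $I=K$, $I''=I\cup[n-k_2+k_1+1,n]$. (Alternatively, the second case is a formal consequence of the first via the identity $E(M|s,N|t)=E(S(N|t),S(M|s))$, which follows from Lemma \ref{lem_S} and $\fl$ being an anti-automorphism.)

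The step I expect to demand the most care is the cofactor-expansion argument proving the structural lemma --- specifically, checking that at every stage one is expanding along a row (or column) equal to a standard basis vector in the corner position, so that the accumulated sign is $+1$ and the surviving minor is exactly the claimed smaller one. Once that bookkeeping is in place, the rest is a routine substitution using \eqref{eq_P_Q} and the shifted-unipotent structure of $g(N|t)$ recalled in \S\ref{sec_geom_unip_Gr}.
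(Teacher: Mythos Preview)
Your proposal is correct and follows essentially the same route as the paper: apply Cauchy--Binet to $\Delta_{[n-k+1,n],[k]}(g(M|s)g(N|t))$, then use the upper uni-triangularity of the bottom-left $k_i\times k_i$ block of $g(\cdot)$ to reduce the partial minors $\Delta_{[n-p+1,n],K}$ and $\Delta_{K,[p]}$ to the full $k_i\times k_i$ minors covered by \eqref{eq_P_Q}. The paper does this in one line (asserting, for $k_1\ge k_2$, that $\Delta_{[n-k_2+1,n],I}(g(M|s))=\Delta_{[n-k_1+1,n],[k_1-k_2]\cup I}(g(M|s))$ when $I\subset[k_1-k_2+1,n]$ and $0$ otherwise, then invoking \eqref{eq_P_Q}), whereas you package the same reduction as a separate structural lemma with an explicit cofactor expansion; the content is identical.
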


\begin{proof}
We assume $k_1 \geq k_2$ (the case $k_1 \leq k_2$ is similar). By Cauchy--Binet, we have
\[
E(M|s, N|t) = \sum_{|I| = k_2} \Delta_{[n-k_2+1,n], I}(g(M|s)) \Delta_{I, [k_2]}(g(N|t)).
\]
The bottom-left $k_1 \times k_1$ submatrix of $g(M|s)$ is upper uni-triangular, so
\[
\Delta_{[n-k_2+1,n], I}(g(M|s)) = \begin{cases}
\Delta_{[n-k_1+1,n], [k_1-k_2] \cup I}(g(M|s)) & \text{ if } I \subset [k_1-k_2+1,n] \\
0 & \text{ otherwise.}
\end{cases}
\]
This, together with \eqref{eq_P_Q}, proves the $k_1 \geq k_2$ case of \eqref{eq_E_formula}.
\end{proof}

\begin{prop}
\label{prop_E_is_geom_coenergy}
$E : \X{k_1} \times \X{k_2} \rightarrow \bbC$ is a geometric coenergy function.
\end{prop}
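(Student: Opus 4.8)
The plan is to check the two defining properties from Definition \ref{defn_geom_coenergy}: invariance of $E$ under $e_i^c$ for $i = 1, \dots, n-1$, and the $e_0^c$-relation \eqref{eq_geom_coenergy}. Throughout, write $B = g(u)g(v)$ and $k = \min(k_1,k_2)$, so that $E(u,v) = \Delta_{[n-k+1,n],[k]}(B)$. The common starting point is that $g$ intertwines the crystal operators, so by \eqref{eq_commutes_with_g} and \eqref{eq_U_action_B-},
\[
g\bigl(e_i^c(u,v)\bigr) = e_i^c(B) = \wh{x}_i\!\left(\tfrac{c-1}{\vp_i(u,v)}\right)\cdot B\cdot \wh{x}_i\!\left(\tfrac{c^{-1}-1}{\ve_i(u,v)}\right).
\]
For $i\in[n-1]$ we have $\wh{x}_i(a)=\Id+aE_{i,i+1}$, so left multiplication by $\wh{x}_i(a)$ adds a multiple of row $i+1$ to row $i$ and right multiplication by $\wh{x}_i(b)$ adds a multiple of column $i$ to column $i+1$. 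Neither changes $\Delta_{[n-k+1,n],[k]}$: for the row operation, if $i\le n-k$ the modified row lies outside $[n-k+1,n]$, while if $i\ge n-k+1$ both rows $i,i+1$ lie in $[n-k+1,n]$ (as $i\le n-1$) and the determinant is unchanged; the column operation is symmetric, using the column set $[k]$. This yields the first property.

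For the $e_0^c$-relation we use $\wh{x}_0(a)=\Id+a\lp^{-1}E_{n1}$ and set $a=\tfrac{c-1}{\vp_0(u,v)}$, $b=\tfrac{c^{-1}-1}{\ve_0(u,v)}$, so that left multiplication by $\wh{x}_0(a)$ adds $a\lp^{-1}$ times row $1$ to row $n$ and right multiplication by $\wh{x}_0(b)$ adds $b\lp^{-1}$ times column $n$ to column $1$. Expanding $\Delta_{[n-k+1,n],[k]}(\wh{x}_0(a)\,B\,\wh{x}_0(b))$ multilinearly in the affected row and column gives
\begin{align*}
E\bigl(e_0^c(u,v)\bigr) &= E(u,v) + (-1)^{k-1}\lp^{-1}\Bigl( a\,\Delta_{\{1\}\cup[n-k+1,n-1],\,[k]}(B) + b\,\Delta_{[n-k+1,n],\,[2,k]\cup\{n\}}(B) \Bigr) \\
&\quad + \lp^{-2}ab\,\Delta_{\{1\}\cup[n-k+1,n-1],\,[2,k]\cup\{n\}}(B).
\end{align*}
Since the left side is independent of $\lp$, I would then specialize $\lp$ to $(-1)^{k-1}t$ (after reducing to $k=k_2$, see below) and identify the three minors of $B_t := B|_{\lp=(-1)^{k-1}t}$: by Cauchy--Binet applied to $g(u)g(v)$, the uni-triangularity of the corner submatrices of $g(u)$ and $g(v)$, \eqref{eq_P_Q}, and the $R$-matrix relations of Corollary \ref{cor_P_Q_R}, each of these minors can be written as $E(u,v)$ times a ratio of the functions $\vp_0,\ve_0$ evaluated at $u,v$ and at their $R$-images $u',v'$. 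Substituting the product formulas for $\vp_0(u,v),\ve_0(u,v)$ from Definition/Proposition \ref{defn_prop_prod_geom}, and recognizing $\tfrac{\ve_0(u)+c^{-1}\vp_0(v)}{\ve_0(u)+\vp_0(v)}$ as $c_2^{-1}$ (the second factor-multiplier in $e_0^c(u,v)=(e_0^{c_1}(u),e_0^{c_2}(v))$) and $\tfrac{c\ve_0(v')+\vp_0(u')}{\ve_0(v')+\vp_0(u')}$ as the first factor-multiplier $c_1'$ in $e_0^c(v',u')$, where $R(u,v)=(v',u')$, the right side should collapse to $E(u,v)\,c_2^{-1}c_1'$, which is exactly the right-hand side of \eqref{eq_geom_coenergy}.

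The reduction to $k_1\ge k_2$ comes from the identity $E\circ S = E$: by Lemma \ref{lem_S}, the fact that $\fl$ is an anti-automorphism satisfying $\Delta_{I,J}(\fl(A))=\Delta_{w_0(J),w_0(I)}(A)$, and $w_0([k])=[n-k+1,n]$, $w_0([n-k+1,n])=[k]$, one has $E(S(v),S(u)) = \Delta_{[n-k+1,n],[k]}(\fl(B)) = \Delta_{[n-k+1,n],[k]}(B) = E(u,v)$; since $S$ commutes with $R$ (Theorem \ref{thm_master}) and with the crystal data recalled from \cite{F1}, the $e_0$-relation for $\X{k_1}\times\X{k_2}$ with $k_1<k_2$ follows from the one for $\X{k_2}\times\X{k_1}$. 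I expect the main obstacle to be the identification step above — carefully tracking the signs and powers of $\lp$ produced by the multilinear expansion and by the uni-triangular reductions, and matching the resulting Pl\"ucker expressions against the image data of the geometric $R$-matrix in Corollary \ref{cor_P_Q_R}; by comparison, the invariance under $e_1,\dots,e_{n-1}$ and the overall structure of the argument are routine.
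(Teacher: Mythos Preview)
Your overall strategy matches the paper's: use $g\circ e_i^c = e_i^c\circ g$ and \eqref{eq_U_action_B-} to reduce to row/column operations on $B$, dispose of $i\in[n-1]$ by elementary determinant arguments, and for $i=0$ expand multilinearly in the affected row and column, then specialize $\lp$. Your expansion and sign bookkeeping are correct.

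The gap is the cross term $\lp^{-2}ab\,\Delta_{\{1\}\cup[n-k+1,n-1],\,[2,k]\cup\{n\}}(B_t)$. Corollary \ref{cor_P_Q_R} only identifies minors of the form $\Delta_{I,[k]}(B_t)$ or $\Delta_{[n-k+1,n],J}(B_t)$; it says nothing about a minor in which \emph{both} the row and column sets have been modified. What the paper uses here is that $B_t$ has rank $k$ (by Proposition \ref{prop_g_props}\eqref{itm:rank_k} and Corollary \ref{cor_linear_alg}\eqref{itm:preserves_dim}), so any two linearly independent sets of $k$ columns span the same subspace, giving
\[
\frac{\Delta_{\{1\}\cup[n-k+1,n-1],[2,k]\cup\{n\}}(B_t)}{\Delta_{[n-k+1,n],[2,k]\cup\{n\}}(B_t)}
=\frac{\Delta_{\{1\}\cup[n-k+1,n-1],[k]}(B_t)}{\Delta_{[n-k+1,n],[k]}(B_t)}.
\]
This identity is exactly what makes the four-term expansion factor as $E(u,v)(1+z_1)(1+z_2)$, matching the factored form of the right-hand side of \eqref{eq_geom_coenergy}. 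Without it, Cauchy--Binet and uni-triangularity alone do not let you express the cross minor as $E(u,v)$ times the required product of $\vp_0,\ve_0$ ratios. This is a genuine conceptual step, not bookkeeping.

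A minor point on your reduction to $k_1\ge k_2$ via $E\circ S=E$: the identity $E\circ S=E$ is fine, but $S$ does not ``commute with the crystal data''---rather $\ve_i=\vp_{n-i}\circ S$, so $S$ intertwines $e_0^c$ with $e_0^{c^{-1}}$ and swaps $\vp_0\leftrightarrow\ve_0$. The reduction can be made to work after chasing these intertwinings through \eqref{eq_geom_coenergy}, but the paper avoids this entirely: when $k=k_1$ it simply substitutes $\lp=(-1)^{k_1-1}s$ instead of $(-1)^{k_2-1}t$ and runs the identical argument, using the $B_s$ formulas in Corollary \ref{cor_P_Q_R} in place of the $B_t$ formulas.
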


\begin{proof}
Suppose $u = M|s \in \X{k_1}$ and $v = N|t \in \X{k_2}$. Set $B = g(u)g(v)$, $R(u,v) = (v',u')$, and $k = \min(k_1, k_2)$. Since $e_i^c$ commutes with the unipotent crystal map $g$, we have
\[
E(e_i^c(u,v)) = \Delta_{[n-k+1,n],[k]}(e_i^c(B)).
\]
By \eqref{eq_U_action_B-}, the folded matrix $e_i^c(B)$ is obtained from the folded matrix $B$ by adding a multiple of row $i+1$ to row $i$, and a multiple of column $i$ to column $i+1$ (mod $n$). If $i \in [n-1]$, then these row and column operations do not change the determinant of a bottom-left justified submatrix, so $E$ is invariant under $e_i^c$.

Now consider $e_0^c$. By \eqref{eq_U_action_B-}, we have
\begin{equation}
\label{eq_E_0}
E(e_0^c(u,v)) = \Delta_{[n-k+1,n],[k]}\left(x_0\left(\lp^{-1}\dfrac{c-1}{\vp_0(u,v)}\right) \cdot B \cdot x_0\left(\lp^{-1}\dfrac{c^{-1}-1}{\ve_0(u,v)}\right)\right)
\end{equation}
where $x_0(z)$ is the $n \times n$ matrix with $1$'s on the diagonal and $z$ in position $(n,1)$. Suppose $k = k_2$. The left-hand side of \eqref{eq_E_0} does not depend on $\lp$, so we may substitute $\lp = (-1)^{k-1} t$ into the right-hand side and obtain
\[
E(e_0^c(u,v)) = \Delta_{[n-k+1,n],[k]}\left(x_0\left(\dfrac{(-1)^{k-1}}{t}\dfrac{c-1}{\vp_0(u,v)}\right) \cdot B_t \cdot x_0\left(\dfrac{(-1)^{k-1}}{t}\dfrac{c^{-1}-1}{\ve_0(u,v)}\right)\right),
\]
where $B_t = B|_{\lp = (-1)^{k-1}t}$. By multi-linearity of the determinant (or by Cauchy--Binet), we have
\begin{multline}
\label{eq_four_minors}
E(e_0^c(u,v)) = \Delta_{[n-k+1,n],[k]}(B_t) + \dfrac{1}{t}\dfrac{c-1}{\vp_0(u,v)}\Delta_{\{1\} \cup [n-k+1,n-1], [k]}(B_t) \\
+ \dfrac{1}{t}\dfrac{c^{-1}-1}{\ve_0(u,v)} \Delta_{[n-k+1,n],[2,k] \cup \{n\}}(B_t) + \dfrac{1}{t^2}\dfrac{(c-1)(c^{-1}-1)}{\vp_0(u,v) \ve_0(u,v)} \Delta_{\{1\} \cup [n-k+1,n-1], [2,k] \cup \{n\}}(B_t).
\end{multline}

Restrict to the open set\footnote{We are trying to prove the identity of rational maps \eqref{eq_geom_coenergy}, so we may restrict to open subsets.} where $(-1)^{k_1+k_2}s \neq t$, so that $g(M|s)|_{\lp = (-1)^{k-1}t}$ is invertible by Proposition \ref{prop_g_props}\eqref{itm:det_g}, and $B_t$ has rank $k$ by Corollary \ref{cor_linear_alg}\eqref{itm:preserves_dim}. In a rank $k$ matrix, any set of $k$ columns which are linearly independent span the same subspace, so we have
\begin{equation}
\label{eq_fourth_minor_ratio}
\dfrac{\Delta_{\{1\} \cup [n-k+1,n-1],[2,k] \cup \{n\}}(B_t)}{\Delta_{[n-k+1,n],[2,k] \cup \{n\}}(B_t)} = \dfrac{\Delta_{\{1\} \cup [n-k+1,n-1],[k]}(B_t)}{\Delta_{[n-k+1,n],[k]}(B_t)}
\end{equation}
on the open set where both denominators are nonzero. Using \eqref{eq_fourth_minor_ratio} and the fact that $E(u,v) = \Delta_{[n-k+1,n], [k]}(B_t)$, we may rewrite \eqref{eq_four_minors} as
\begin{equation}
\label{eq_E_factored}
E(e_0^c(u,v)) = E(u,v)(1 + z_1)(1 + z_2)
\end{equation}
where
\begin{equation}
\label{eq_z_1_z_2}
z_1 = \dfrac{1}{t}\dfrac{c-1}{\vp_0(u,v)} \dfrac{\Delta_{\{1\} \cup [n-k+1,n-1], [k]}(B_t)}{\Delta_{[n-k+1,n], [k]}(B_t)}, \quad\quad z_2 = \dfrac{1}{t}\dfrac{c^{-1}-1}{\ve_0(u,v)} \dfrac{\Delta_{[n-k+1,n], [2,k] \cup \{n\}}(B_t)}{\Delta_{[n-k+1,n], [k]}(B_t)}.
\end{equation}

Now we compute
\begin{align*}
\vp_0(u,v) = \vp_0(v',u') = \vp_0(v')\dfrac{\ve_0(v') + \vp_0(u')}{\ve_0(v')} = \dfrac{1}{t}\dfrac{\Delta_{[n-k+1,n-1] \cup \{1\}, [k]}(B_t)}{\Delta_{[n-k+1,n], [k]}(B_t)}\dfrac{\ve_0(v') + \vp_0(u')}{\ve_0(v')},
\end{align*}
where the first equality comes from Theorem \ref{thm_master}\eqref{itm:R=isomorphism}, the second equality comes from Definition/Proposition \ref{defn_prop_prod_geom}, and the final equality is the formula for $\vp_0$ on $\X{k}$, together with \eqref{eq_P_Q_N'_M'}. Similarly, using Definition/Proposition \ref{defn_prop_prod_geom}, the identity $\ve_i = \vp_{n-i} \circ \, S$ (this is \cite[Cor. 7.4(3)]{F1}), the formula for $\vp_0$ on $\X{k}$, and \eqref{eq_P_Q_M_N}, we compute
\begin{align*}
\ve_0(u,v) = \ve_0(v) \dfrac{\ve_0(u) + \vp_0(v)}{\vp_0(v)} &= \vp_0(S(v)) \dfrac{\ve_0(u) + \vp_0(v)}{\vp_0(v)} \\
&= \dfrac{1}{t} \dfrac{\Delta_{[n-k+1,n], [2,k] \cup \{n\}}(B_t)}{\Delta_{[n-k+1,n], [k]}(B_t)} \dfrac{\ve_0(u) + \vp_0(v)}{\vp_0(v)}.
\end{align*}
Substituting these expressions into \eqref{eq_z_1_z_2}, we get
\begin{equation}
\label{eq_z_1_z_2_final}
z_1 = \dfrac{(c-1) \ve_0(v')}{\ve_0(v') + \vp_0(u')}, \quad\quad z_2 = \dfrac{(c^{-1}-1)\vp_0(v)}{\ve_0(u) + \vp_0(v)},
\end{equation}
and then \eqref{eq_geom_coenergy} is obtained by substituting \eqref{eq_z_1_z_2_final} into \eqref{eq_E_factored}.

The $k = k_1$ case is dealt with similarly, using the substitution $\lp = (-1)^{k_1-1} s$ instead of $\lp = (-1)^{k_2-1}t$.
\end{proof}

The function $E$ is positive by Lemma \ref{lem_E_formula} and the positivity of $S$, so we may define $\wh{E} = \Trop(\Theta E) : \tT{n-k_1} \times \tT{n-k_2} \rightarrow \bbZ$. Recall the coenergy function $\tw{E}$ introduced in \S \ref{sec_comb_coenergy}.

\begin{thm}
\label{thm_recover_E}
The restriction of $\wh{E}$ to $B^{n-k_1, L_1} \otimes B^{n-k_2, L_2}$ is equal to $\tw{E}$.
\end{thm}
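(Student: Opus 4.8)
The plan is to identify $\wh E$ with $\tw E$ via the uniqueness of the coenergy function, and then pin down the additive constant by evaluating at a classical highest weight element.

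Since $E$ is positive (Lemma~\ref{lem_E_formula} together with the positivity of $S$), the piecewise-linear function $\wh E = \Trop(\Theta E)$ is defined. By Proposition~\ref{prop_E_is_geom_coenergy}, $E$ is a geometric coenergy function, so Lemma~\ref{lem_trop_coenergy} says that the restriction of $\wh E$ to $B^{n-k_1,L_1} \otimes B^{n-k_2,L_2}$ is a coenergy function. The coenergy function on this crystal is unique up to a global additive constant (Proposition~\ref{prop_coenergy}(1)), and $\tw E$ is one such function (Proposition~\ref{prop_coenergy}(2)); hence $\wh E = \tw E + c$ on $B^{n-k_1,L_1} \otimes B^{n-k_2,L_2}$ for some constant $c = c(n,k_1,k_2,L_1,L_2)$.

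It remains to show $c = 0$. Let $T_0$ be the classical highest weight element of $B^{n-k_1,L_1}$ and let $U \in B^{n-k_2,L_2}$ be arbitrary. By \eqref{eq_E_highest_wt} we have $\tw E(T_0 \otimes U) = 0$, so $c = \wh E(T_0 \otimes U)$, and it suffices to show the right-hand side vanishes. I would compute it by tropicalizing the Pl\"ucker formula of Lemma~\ref{lem_E_formula}. The crucial simplification is that the geometric Sch\"utzenberger involution tropicalizes to evacuation (the remark following Theorem~\ref{thm_recover_crystals}): thus, if $M \in \Gr(k_1,n)$ corresponds to $T_0$, then $S_s(M)$ tropically corresponds to the classical \emph{lowest} weight tableau, and by Lemma~\ref{lem_GT_pl} all of its basic --- hence, by Lemma~\ref{lem_basic=basis}, all of its --- Pl\"ucker coordinates have equal tropicalization. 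Consequently the ``$M$-factor'' $\widehat{Q^{I'}_s(M)} - \widehat{Q^{[k_1]}_s(M)}$ in each summand of $\Trop(\Theta E)$ is $0$, leaving $\wh E(T_0 \otimes U) = \min_I \widehat{(P_I/P_{[n-k_2+1,n]})}(N)$ over the relevant $k_2$-subsets $I$, where $N$ corresponds to $U$ (the case $k_1 < k_2$ is entirely analogous). The index $I = [n-k_2+1,n]$ contributes $0$, so it remains to see that no index contributes a negative value, i.e.\ that $P_{[n-k_2+1,n]}$ has minimal tropicalization among the Pl\"ucker coordinates of $N$; this follows from Lemma~\ref{lem_GT_pl} together with the non-negativity of the entries of a Gelfand--Tsetlin pattern, pushed from basic Pl\"ucker coordinates to all of them via Lemma~\ref{lem_basic=basis}.

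The step I expect to be the main obstacle is this last one: verifying that $P_{[n-k_2+1,n]}$ is tropically minimal at every point of $\Gr(k_2,n)$ arising from a rectangular tableau. It can be sidestepped in (at least) two ways. Since $\wh E = \tw E + c$, the function $\wh E(T_0 \otimes \,\cdot\,)$ is constant on the crystal, so it is enough to carry out the computation at one convenient $U$ (the classical highest weight element of $B^{n-k_2,L_2}$, or the degenerate case $L_2 = 0$). Alternatively, one can use that $E = \Delta_{[n-k+1,n],[k]}(\Phi_{n-k_1}(X,s)\Phi_{n-k_2}(Y,t))$ with $k = \min(k_1,k_2)$ --- the rows and columns entering this minor being $\lp$-free and agreeing with the tableau matrices by Lemma~\ref{lem_g_0} --- and invoke the principle that products of tableau matrices tropicalize to products of tableaux (Remark~\ref{rmk_tab_mat_tab_prod}) to read $\wh E(T_0 \otimes U)$ off the shape of $T_0 * U$.
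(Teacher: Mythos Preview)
Your overall strategy---reduce to a single evaluation via the uniqueness of coenergy (Proposition~\ref{prop_coenergy}(1)) and then compute at $T_0\otimes U$---is exactly the paper's. The difference is in how the evaluation is carried out.

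The paper does not go through evacuation or Lemma~\ref{lem_basic=basis}. Instead it uses \eqref{eq_P_Q} and Lemma~\ref{lem_g_0} to rewrite
\[
\Theta E(x,y)=\sum_I q_I(x)\,p_I(y),\qquad q_I=\Delta_{[n-k_1+1,n],I}(\Phi_{n-k_1}),\ \ p_I=\Delta_{I\cup[n-k_2+k_1+1,n],[k_2]}(\Phi_{n-k_2}),
\]
and then appeals to the planar-network description of the tableau matrices (Lemma~\ref{lem_Phi_minors} and Lemma~\ref{lem_highest_wt_zero}): the $q_I$ vanish tropically at $a_0$ because every relevant edge weight in $\Gamma_{n-k_1,n}(a_0)$ is zero, and the $p_I$ are genuine \emph{polynomials} (not Laurent polynomials) in the $x_{ij}$ with non-negative coefficients, so $\wh p_I(b)\ge 0$ for any tableau $b$, with equality when $I=[n-k_2+1,n-k_2+k_1]$.

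Your route through evacuation for the $Q$-part is correct and pleasant, but the $P$-part argument via Lemma~\ref{lem_basic=basis} does not close: that lemma only gives a positive \emph{Laurent} polynomial expression for $P_J$ in the basic Pl\"ucker coordinates, and a degree-one Laurent monomial $\prod P_{I}^{m_I}$ with some $m_I<0$ need not satisfy $\sum m_I\,\widehat{P_I}\ge \widehat{P_{[n-k_2+1,n]}}$ just because each $\widehat{P_I}\ge \widehat{P_{[n-k_2+1,n]}}$. So ``tropical minimality of $P_{[n-k_2+1,n]}$'' is not proved by your argument as written; the clean proof is precisely the paper's network observation that $P_J/P_{[n-k_2+1,n]}=\Delta_{J,[k_2]}(\Phi_{n-k_2})$ is a non-negative polynomial in the $x_{ij}$.

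On your workarounds: the $L_2=0$ shortcut is invalid, since the constant $c$ may depend on $L_2$ and must be shown to vanish for each fixed $L_2$. Remark~\ref{rmk_tab_mat_tab_prod} is explicitly left unproved in the paper, so (b) cannot be cited. Picking a single convenient $U$ is fine in principle, but you still need $\min_I\wh p_I(U)=0$, which again comes down to $\wh p_I\ge 0$; at a specific $U$ (say highest or lowest weight) this can be checked by hand, but the uniform statement via Lemma~\ref{lem_Phi_minors} is both cleaner and what the paper actually uses.
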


\begin{proof}
By Proposition \ref{prop_E_is_geom_coenergy} and Lemma \ref{lem_trop_coenergy}, $\wh{E}$ is a coenergy function on $B^{n-k_1,L_1} \otimes B^{n-k_2, L_2}$. By Proposition \ref{prop_coenergy}(1), the coenergy function on such crystals is unique up to a global additive constant, so it suffices to show that $\wh{E}$ and $\tw{E}$ agree on a single element of $B^{n-k_1,L_1} \otimes B^{n-k_2,L_2}$.

Assume $k_1 \leq k_2$ (the other case is basically the same). If $(x,y) \in \bT{n-k_1} \times \bT{n-k_2}$, then by Lemma \ref{lem_E_formula} and \eqref{eq_P_Q}, we have
\begin{equation*}
\Theta E(x,y) = \sum_{I \in {[n-k_2+k_1] \choose k_1}} q_I(x) p_I(y),
\end{equation*}
where $q_I(x) = \Delta_{[n-k_1+1,n], I}(g(\Theta_{k_1}(x)))$ and $p_I(y) = \Delta_{I \cup [n-k_2+k_1+1,n], [k_2]}(g(\Theta_{k_2}(y)))$. Since the last $k_1$ rows of $g(\Theta_{k_1}(x))$ and the first $k_2$ columns of $g(\Theta_{k_2}(y))$ are independent of $\lp$, Lemma \ref{lem_g_0} implies that
\[
q_I(x) = \Delta_{[n-k_1+1,n], I}(\Phi_{n-k_1}(x)) \quad\quad \text{ and } \quad\quad p_I(x) = \Delta_{I \cup [n-k_2+k_1+1,n], [k_2]}(\Phi_{n-k_2}(y)).
\]
The rational functions $q_I$ and $p_I$ are positive by Lemma \ref{lem_Phi_minors}; letting $\wh{q}_I, \wh{p}_I$ denote their respective tropicalizations, we have
\begin{equation}
\label{eq_trop_E_formula}
\wh{E}(a \otimes b) = \min_{I \in {[n-k_2+k_1] \choose k_1}} (\wh{q}_I(a) + \wh{p}_I(b)).
\end{equation}

The bottom $k_2 \times k_2$ submatrix of $\Phi_{n-k_2}(y)$ is upper uni-triangular, so $p_{[n-k_2+1,n-k_2+k_1]}(y)$ is identically equal to $1$, and thus
\begin{equation}
\label{eq_p_I=0}
\wh{p}_{[n-k_2+1,n-k_2+k_1]}(b) = 0
\end{equation}
for all $b$. Let $a_0$ be the classical highest weight element of $B^{n-k_1, L_1}$. By Lemma \ref{lem_highest_wt_zero}, we have $\wh{q}_I(a_0) = 0$ for all $I \in {[n] \choose k_1}$. Together with \eqref{eq_trop_E_formula} and \eqref{eq_p_I=0}, this implies that $\wh{E}(a_0 \otimes b) = 0$ for all $b \in B^{n-k_2, L_2}$. By \eqref{eq_E_highest_wt}, $\tw{E}(a_0 \otimes b) = 0$ for all $b \in B^{n-k_2, L_2}$, so we are done.
\end{proof}

\section{One-row tableaux}
\label{sec_one}

In this section, we give a more explicit description of the geometric $R$-matrix and geometric coenergy function on $\bT{\ell} \times \bT{k}$ in the case $\ell = 1$. When $k = 1$ as well, we recover the one-row birational $R$-matrix discussed in the Introduction. In \S \ref{sec_1_2}, we verify that the tropicalizations $\wh{R}$ and $\wh{E}$ agree with the combinatorial maps $\tw{R}$ and $\tw{E}$ in a small example.

\subsection{Formulas for $\Theta R$ and $\Theta E$ in the case $\ell = 1$}

Let $X = (X_{11}, X_{12}, \ldots, X_{1,n-1},s) \in \bb{T}_1$ be a rational $1$-rectangle, and define $x_1, \ldots, x_n$ by $x_j = X_{1j}/X_{1,j-1}$ (where $X_{10} := 1$ and $X_{1n} := s$). Let $Y = (Y_{ij}, t) \in \bT{k}$ be a rational $k$-rectangle. Suppose
\[
\Theta R(X, Y) = ((Y'_{ij}, t), (X'_{1j},s)),
\]
and define $x'_j$ from $(X'_{1j}, s)$ as above. We will work through the various definitions from earlier sections to obtain formulas for $Y'_{ij}$, $x'_j$, and $\Theta E(X,Y)$ in terms of the inputs $x_j, Y_{ij},$ and $t$.

Set $N|t = \Theta_{n-k}(Y)$, $N'|t = \Theta_{n-k}(Y')$, $A = g(\Theta_{n-1}(X))g(\Theta_{n-k}(Y))$, and $A_t = A|_{\lp = (-1)^{n-k-1}t}$. For $I \in {[n] \choose n-k}$, define
\begin{equation*}
\tau_I = \tau_I(X,Y) = \Delta_{I,[n-k]}(A_t) \dfrac{P_{[k+1,n]}(N)}{P_I(N)}.
\end{equation*}
By \eqref{eq_P_Q_N'_M'} and Lemma \ref{lem_GT_pl} (applied to both $Y_{ij}$ and $Y'_{ij}$), we have
\begin{equation}
\label{eq_formula_Y'_ij}
Y'_{ij} = Y_{ij} \dfrac{\tau_{[i,j] \cup [k+j-i+2,n]}}{\tau_{[i+1,j] \cup [k+j-i+1,n]}},
\end{equation}
so we are led to the study of the quantities $\tau_I$. By the Cauchy--Binet formula and Proposition \ref{prop_g_props}\eqref{itm:pi_g},
\begin{equation}
\label{eq_minors_A_t}
\tau_I = \sum_J \Delta_{I,J}(C|_{\lp = (-1)^{n-k-1}t}) \dfrac{P_J(N)}{P_I(N)},
\end{equation}
where $C = g(\Theta_{n-1}(X))$. In \cite[Lem. 4.13]{F1}, we showed how to express $P_J(N)$ in terms of the $Y_{ij}$ by summing over a tableau-style object, so we regard these Pl\"{u}cker coordinates as well-understood. It remains to explicitly compute the minors of the matrix $C$. Unraveling the definitions, one sees that $C$ is the matrix \eqref{eq_g_one_row}; that is, it has $x_1, \ldots, x_n$ on the main diagonal, 1's just beneath the main diagonal, $\lp$ in the top-right corner, and zeroes elsewhere.

\begin{lem}
\label{lem_one_row_minors}
Let $C = g(\Theta_{n-1}(X))$, and let $I = \{i_1 < \ldots < i_r\}$ be an $r$-subset of $[n]$, with $r \leq n-1$. For $\epsilon = (\epsilon_1, \ldots, \epsilon_r) \in \{0,1\}^r$, let
\[
I - \epsilon = \{i_1 - \epsilon_1, \ldots, i_r - \epsilon_r\} \subset [n],
\]
where if $i_1 = \epsilon_1 = 1$, we take $i_1 - \epsilon_1 = n$. If $I - \epsilon$ has $r$ elements, then we have
\begin{equation}
\label{eq_I_I-epsilon}
\Delta_{I,I-\epsilon}(C) = \begin{cases}
\ds (-1)^{r-1} \lp \prod_{s \, | \, \epsilon_s = 0} x_{i_s} & \text{ if } i_1 = \epsilon_1 = 1 \\
\ds \prod_{s \, | \, \epsilon_s = 0} x_{i_s} & \text{ otherwise}.
\end{cases}
\end{equation}
If $J \in {[n] \choose r}$ is not of the form $I -\epsilon$ for $\epsilon \in \{0,1\}^r$, then $\Delta_{I,J}(C) = 0$.
\end{lem}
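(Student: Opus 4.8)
The plan is to compute the minor $\Delta_{I,J}(C)$ directly from the structure of the matrix $C$ in \eqref{eq_g_one_row}. Recall that $C = g(\Theta_{n-1}(X))$ has entries $C_{ii} = x_i$ on the main diagonal, $C_{i,i-1} = 1$ just below it, $C_{1,n} = \lp$ in the top-right corner, and zeroes elsewhere. The key observation is that $C$ is a ``near-bidiagonal'' matrix: in column $j$, the only nonzero entries lie in rows $j$ and $j+1$ (reading indices mod $n$, so column $n$ has nonzero entries in rows $n$ and $1$). Consequently, when we expand $\det(C_{I,J})$ by the Leibniz formula, a permutation $w : J \to I$ contributes a nonzero term only if $w(j) \in \{j, j+1\}$ (mod $n$) for every $j \in J$. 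This immediately forces $I$ to be obtained from $J$ by shifting some subset of the elements up by $1$; equivalently, $J = I - \epsilon$ for some $\epsilon \in \{0,1\}^r$, and the final sentence of the lemma (vanishing when $J$ is not of this form) follows. It also shows that for a given pair $(I, J = I-\epsilon)$ there is a \emph{unique} surviving permutation, so $\Delta_{I,J}(C)$ is a single monomial (up to sign).

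Next I would identify that monomial and its sign. The unique permutation $w$ sends $i_s - \epsilon_s \mapsto i_s$; when $\epsilon_s = 0$ it picks up the diagonal entry $C_{i_s, i_s} = x_{i_s}$, and when $\epsilon_s = 1$ it picks up the subdiagonal entry $C_{i_s, i_s - 1} = 1$ (or the corner entry $C_{1,n} = \lp$ if $i_s = 1$, $\epsilon_s = 1$, i.e. in the wrap-around case $i_1 = \epsilon_1 = 1$). This accounts for the factor $\prod_{s : \epsilon_s = 0} x_{i_s}$ and, in the wrap-around case, the extra factor of $\lp$. For the sign, note that $w$ is the permutation of the index set taking $J$ to $I$: if we do not wrap around (the element $n$ is not ``moved to $1$''), then $w$ restricted to each maximal run of consecutive shifts is a cyclic shift within an interval, but because both $I$ and $J$ are written in increasing order and $I - \epsilon$ must also have $r$ distinct elements, the net effect is that $w$ is the identity as a bijection of ordered lists, giving sign $+1$. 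In the wrap-around case $i_1 = \epsilon_1 = 1$, the element in position $1$ of $J$ (namely $n = 1 - 1$) gets sent to the element in position $1$ of $I$ (namely $1$), but $n$ is the \emph{largest} element of $J$ while $1$ is the smallest element of $I$; this is a cyclic rotation of the $r$-element index set by one position, which is an $r$-cycle and hence has sign $(-1)^{r-1}$. This yields exactly \eqref{eq_I_I-epsilon}.

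The main obstacle, and the only place requiring care, is the sign bookkeeping in the wrap-around case: one must be precise about how the index $n$ in $J = I - \epsilon$ relates positionally to the index $1$ in $I$, and confirm that no other element of $J$ is affected by the mod-$n$ reduction (which holds because $I - \epsilon$ is assumed to have $r \le n-1$ distinct elements, so at most one ``descent past $n$'' can occur, and it occurs precisely when $i_1 = \epsilon_1 = 1$). I would handle this by writing $J = I - \epsilon$ explicitly as an ordered list in the two cases and computing the sign of the resulting permutation as a product of transpositions, or equivalently by tracking the number of inversions introduced. Everything else — the vanishing statement and the monomial — is a routine consequence of the sparsity pattern of $C$.
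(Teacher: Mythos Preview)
Your approach is correct and takes a genuinely different route from the paper's. The paper represents $C$ by a planar network whose underlying undirected graph is a $2n$-cycle (sources $1,\ldots,n$ alternating with sinks $1',\ldots,n'$) and invokes the Lindstr\"om Lemma; you instead expand the determinant directly via Leibniz, exploiting that each column of $C$ has at most two nonzero entries. Your method is more elementary in that it avoids the network formalism entirely, and your sign computation (identity on positions in the non-wrap case, a single $r$-cycle in the wrap case) recovers exactly what the paper gets from the Lindstr\"om sign.

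One point deserves more care than you give it: uniqueness of the surviving permutation. You write ``it also shows that \ldots there is a \emph{unique} surviving permutation,'' but this does not follow immediately from the column-support observation, and it genuinely fails when $r=n$ (both the identity and the full cyclic shift $j\mapsto j+1$ survive, which is why $\det C$ has two terms). The paper's argument here is clean: the bipartite graph of allowed matchings is a subgraph of the $2n$-cycle, so for $r\le n-1$ it is a proper subgraph, hence a forest, and a forest has at most one perfect matching. In your framework you can argue directly: if two bijections $w\ne w'$ from $J$ to $I$ both satisfy $w(j),w'(j)\in\{j,j+1\}$, choose $j$ with $w(j)=j$ and $w'(j)=j+1$; then $j,j+1\in I$, and since $w$ must also hit $j+1$ one gets $j+1\in J$ with $w(j+1)=j+1$, forcing $w'(j+1)=j+2\in I$; iterating gives $[n]\subset I$, contradicting $r\le n-1$. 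So the gap is easily closed, but uniqueness --- not only the sign --- is where the hypothesis $r\le n-1$ is used.
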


Note that by expanding along the last column of $C$, we have $\det(C) = (-1)^{n+1} \lp + \prod_{j=1}^n x_j$, so the restriction $r \leq n-1$ is necessary.

\begin{figure}
\begin{center}
\begin{tikzpicture}[scale=0.6]

\draw[thick,->] (0,2) -- (2,3);
\draw[thick] (2,3) -- (4,4);
\draw[thick,->] (4,2) -- (6,3);
\draw[thick] (6,3) -- (8,4);
\draw[thick,->] (8,2) -- (10,3);
\draw[thick] (10,3) -- (12,4);
\draw[thick,->] (15,2) -- (17,3);
\draw[thick] (17,3) -- (19,4);
\draw[thick,->] (19,2) -- (21,3);
\draw[thick] (21,3) -- (23,4);

\draw[thick,->] (4,2) -- (4,3);
\draw[thick] (4,3) -- (4,4);
\draw[thick,->] (8,2) -- (8,3);
\draw[thick] (8,3) -- (8,4);
\draw[thick,->] (19,2) -- (19,3);
\draw[thick] (19,3) -- (19,4);

\draw (1.8,3.4) node {$x_1$};
\draw (5.8,3.4) node {$x_2$};
\draw (9.8,3.4) node {$x_3$};
\draw (16.8,3.4) node {$x_{n-1}$};
\draw (20.8,3.4) node {$x_n$};

\draw (13.5,3) node {$\cdots$};

\filldraw (0,2) circle [radius=.05] node[below] {$1$};
\filldraw (4,4) circle [radius=.05] node[above] {$1'$};
\filldraw (4,2) circle [radius=.05] node[below] {$2$};
\filldraw (8,4) circle [radius=.05] node[above] {$2'$};
\filldraw (8,2) circle [radius=.05] node[below] {$3$};
\filldraw (12,4) circle [radius=.05] node[above] {$3'$};
\filldraw (15,2) circle [radius=.05] node[below] {$n-1$};
\filldraw (19,4) circle [radius=.05] node[above] {$(n-1)'$};
\filldraw (19,2) circle [radius=.05] node[below] {$n$};
\filldraw (23,4) circle [radius=.05] node[below] {$n'$};

\draw[thick,->] plot [smooth] coordinates {(0,2) (3,5) (11.5,5.3)};
\draw[thick] plot [smooth] coordinates {(11.5,5.3) (21,5) (23,4)};
\draw (11.5,6) node{$\lp$};

\end{tikzpicture}
\end{center}

\caption{A network representation of the matrix $g(\Theta_{n-1}(X))$. Vertical edges have weight 1.}
\label{fig_one_row_network}
\end{figure}

\begin{proof}

We use planar networks and the Lindstr\"{o}m Lemma, as discussed in the Appendix. Observe that $C$ is the matrix associated to the planar network in Figure \ref{fig_one_row_network}. In this network, there are two edges coming out of each source $i$: an edge to sink $i'$, and an edge to sink $(i-1)'$ (mod $n$). Thus, if there is a vertex-disjoint family of paths from the sources in $I$ to the sinks in $J$, then $J = I - \epsilon$ for some $\epsilon \in \{0,1\}^r$; if $J$ is not of this form, then $\Delta_{I,J}(C)$ is zero by the Lindstr\"{o}m Lemma.

We claim that for any $r$-subset $J$, there is at most one vertex-disjoint family of paths from $I$ to $J$. To see this, note that the underlying (undirected) graph of the network is a cycle of length $2n$, and a vertex-disjoint family of paths from $I$ to $J$ is a perfect matching in the subgraph induced by the vertices in $I$ and $J$. Since $r \leq n-1$, the subgraph induced by the vertices in $I$ and $J$ is a forest, and there is at most one perfect matching in any forest.

Suppose $I - \epsilon$ has $r$ elements. In this case, let $p_s$ be the path connecting $i_s$ and $(i_s - \epsilon_s)'$, for $s = 1, \ldots, r$. The family of paths $(p_s)$ is clearly vertex-disjoint, and it has weight
\[
\begin{cases}
\ds \lp \prod_{s \, | \, \epsilon_s = 0} x_{i_s} & \text{ if } i_1 = \epsilon_1 = 1 \\
\ds \prod_{s \, | \, \epsilon_s = 0} x_{i_s} & \text{ otherwise}.
\end{cases}
\]
The permutation associated to this family has sign $(-1)^{r-1}$ if $i_1 = \epsilon_1 = 1$, and is the identity otherwise, so \eqref{eq_I_I-epsilon} follows from the Lindstr\"{o}m Lemma.
\end{proof}

In light of Lemma \ref{lem_one_row_minors}, \eqref{eq_minors_A_t} becomes
\begin{equation}
\label{eq_explicit_minors_A_t}
\tau_I = \sum_{\epsilon} t^{\delta_{i_1,1}\delta_{\epsilon_1,1}} \cdot \prod_{s \, | \, \epsilon_s = 0} x_{i_s} \cdot \dfrac{P_{I-\epsilon}(N)}{P_I(N)},
\end{equation}
where the sum is over $\epsilon \in \{0,1\}^{n-k}$ such that $I - \epsilon$ has $n-k$ elements. For example, if $n = 7$ and $k = 4$, then writing $P_J$ for $P_J(N)$, we have
\[
\tau_{145} = \dfrac{x_1x_4x_5 P_{145} + x_1x_5 P_{135} + x_1 P_{134} + tx_4x_5 P_{457} + tx_5 P_{357} + t P_{347}}{P_{145}}.
\]
Combining \eqref{eq_formula_Y'_ij} and \eqref{eq_explicit_minors_A_t}, we have a reasonably explicit formula for the $Y'_{ij}$.

Now we turn to the $x'_j$. For $j \in \Zn$, define
\begin{equation*}
\kappa_j = \kappa_j(X,Y) = \tau_{[j+k,j+n-1]}(X,Y).
\end{equation*}

\begin{prop}
\label{prop_formula_x'}
We have
\[
x'_j = x_j \dfrac{\kappa_j}{\kappa_{j+1}}.
\]
Furthermore, we have the formula
\begin{equation}
\label{eq_kappa_j}
\kappa_j = \sum_{s = 0}^{n-k} x_{j+k+s}x_{j+k+s+1} \cdots x_{j+n-1} t^{a_{j,s,k}} \dfrac{P_{[j+k-1,j+n-1] \setminus \{j+k+s-1\}}(N)}{P_{[j+k,j+n-1]}(N)}
\end{equation}
where
\[
a_{j,s,k} = \begin{cases}
1 & \text{ if } n+2-s \leq j+k \leq n+1 \\
0 & \text{ otherwise}.
\end{cases}
\]
Each subscript of $\kappa$ and $x$ is interpreted mod $n$.
\end{prop}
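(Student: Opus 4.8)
The plan is to treat the two assertions separately. The multiplicative formula $x'_j = x_j\,\kappa_j/\kappa_{j+1}$ follows from a soft argument using that the geometric $R$-matrix preserves the map $\gamma$, while the explicit expression for $\kappa_j$ is obtained by substituting $I = [j+k,j+n-1]$ into the already-derived formula \eqref{eq_explicit_minors_A_t} for $\tau_I$ and carrying out the resulting (cyclic) combinatorics.

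For the first formula, I would use that $C = g(\Theta_{n-1}(X))$ is the one-row matrix \eqref{eq_g_one_row}, whose unfolded diagonal is the periodic sequence $x_1,\dots,x_n$; by Theorem \ref{thm_induces} this means $\gamma_j(\Theta_{n-1}(X)) = x_j$, and the same unraveling gives $\gamma_j(M'|s) = x'_j$. Since $R$ is a morphism of geometric crystals (Theorem \ref{thm_master}\eqref{itm:R=isomorphism}) and $\gamma$ on a product satisfies $\gamma(u,v) = \gamma(u)\gamma(v)$ (Definition/Proposition \ref{defn_prop_prod_geom}), applying $\gamma$ componentwise to $R(M|s,N|t) = (N'|t, M'|s)$ yields $\gamma_j(M|s)\gamma_j(N|t) = \gamma_j(N'|t)\gamma_j(M'|s)$ for every $j\in\Zn$, i.e. $x'_j = x_j\,\gamma_j(N|t)/\gamma_j(N'|t)$. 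To rewrite the last ratio in terms of the $\tau$'s, I would note that by definition of the geometric $R$-matrix $N'$ is spanned by the first $n-k$ columns of $A_t$, so $\Delta_{I,[n-k]}(A_t) = P_I(N')$ for that representative and hence $\tau_I/\tau_{I'} = \bigl(P_I(N')/P_{I'}(N')\bigr)\bigl(P_{I'}(N)/P_I(N)\bigr)$ for all $(n-k)$-subsets $I,I'$. Inserting the Pl\"ucker formula for $\gamma_j$ on $\X{n-k}$ from \S\ref{sec_geom_unip_Gr} (the common prefactor for $N$ and $N'$ cancels) and taking $I = [j-(n-k)+1,j]$, $I' = [j-(n-k),j-1]$ gives $\gamma_j(N|t)/\gamma_j(N'|t) = \tau_{[j-(n-k),j-1]}/\tau_{[j-(n-k)+1,j]}$; reducing all subscripts mod $n$ via $[j-(n-k),j-1]\equiv[j+k,j+n-1]$ and $[j-(n-k)+1,j]\equiv[(j+1)+k,(j+1)+n-1]$ identifies this with $\kappa_j/\kappa_{j+1}$.

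For the expression \eqref{eq_kappa_j}, I would substitute $I = [j+k,j+n-1]$, reduced mod $n$ to a subset of $[n]$, into \eqref{eq_explicit_minors_A_t}. Since $I$ is a cyclically consecutive block of $n-k$ residues, the analysis of minors of $C$ in the proof of Lemma \ref{lem_one_row_minors} shows that the $\epsilon\in\{0,1\}^{n-k}$ for which $I-\epsilon$ again has $n-k$ elements are precisely those whose shifted positions form a cyclic initial segment $\{j+k,\dots,j+k+s-1\}$ of $I$, with $0\le s\le n-k$; for such $\epsilon$ one gets $I-\epsilon = [j+k-1,j+n-1]\setminus\{j+k+s-1\}$ and $\prod_{s'\,:\,\epsilon_{s'}=0}x_{i_{s'}} = x_{j+k+s}x_{j+k+s+1}\cdots x_{j+n-1}$, matching the Pl\"ucker index and monomial in \eqref{eq_kappa_j}. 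It remains to check that the exponent $\delta_{i_1,1}\delta_{\epsilon_1,1}$ of $t$ equals $a_{j,s,k}$: this factor of $t$ is present exactly when $1$ is the least element of $I$ in $[n]$ (equivalently $1\in I$) and $1$ lies in the shifted segment, i.e. when the corresponding network path uses the ``wrap-around'' edge. Rewriting ``$1\in[j+k,j+n-1]\bmod n$'' and ``$1\in\{j+k,\dots,j+k+s-1\}\bmod n$'' as inequalities in $j+k$ (with $j$ in its standard residue range) turns them into $j+k\le n+1$ and $s\ge n+2-(j+k)$, that is $n+2-s\le j+k\le n+1$; and when no $s\in[0,n-k]$ meets this bound---in particular when $j=1$---one indeed has $1\notin I$, so the exponent is $0$ as required.

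I expect the second step to be where the real work lies, and within it the delicate point is the accounting of the powers of $t$: one must track the wrap-around convention of Lemma \ref{lem_one_row_minors} (that $i_1-\epsilon_1 = n$ when $i_1=\epsilon_1=1$) through the passage from the cyclic interval $[j+k,j+n-1]$ to its representative subset of $[n]$. Once the admissible $\epsilon$ have been matched with cyclic initial segments the monomials and Pl\"ucker indices are immediate, so the $t$-exponent is the only place where the cyclic-versus-linear distinction genuinely bites.
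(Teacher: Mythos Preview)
Your proposal is correct and follows essentially the paper's approach. For the first assertion the paper equates the constant terms of the diagonal entries in the matrix identity $g(\Theta_{n-1}(X))g(N|t)=g(N'|t)g(\Theta_{n-1}(X'))$ (Theorem~\ref{thm_hard}), whereas you phrase the same computation as ``$R$ preserves $\gamma$'' via Theorem~\ref{thm_master}\eqref{itm:R=isomorphism}; since $\gamma$ is by definition the diagonal of $g$, these are the same argument in different clothing. For the second assertion both you and the paper substitute $I=[j+k,j+n-1]$ into \eqref{eq_explicit_minors_A_t} and read off the $n-k+1$ admissible $\epsilon$'s; your identification of these with cyclic initial segments and your bookkeeping of the $t$-exponent via the wrap-around convention of Lemma~\ref{lem_one_row_minors} are exactly the verification the paper leaves to the reader.
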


\begin{proof}
By Theorem \ref{thm_hard}, we have the matrix equation
\begin{equation}
\label{g_equation}
g(\Theta_{n-1}(X))g(N|t) = g(N'|t)g(\Theta_{n-1}(X')).
\end{equation}
As mentioned above, the diagonal entries of $g(\Theta_{n-1}(X))$ are $x_1, \ldots, x_n$, so by equating the constant coefficients of the diagonal entries of both sides of \eqref{g_equation} and using the definition of $g(N|t)$ (plus Convention \ref{conv_pluc}), we obtain
\begin{equation*}
x_j \dfrac{P_{[j+k+1,j+n]}(N)}{P_{[j+k,j+n-1]}(N)} = x'_j \dfrac{P_{[j+k+1,j+n]}(N')}{P_{[j+k,j+n-1]}(N')} = x'_j \dfrac{\Delta_{[j+k+1,j+n],[n-k]}(A_t)}{\Delta_{[j+k,j+n-1],[n-k]}(A_t)}.
\end{equation*}
This shows that $x'_j = x_j \dfrac{\kappa_j}{\kappa_{j+1}}$.

By \eqref{eq_explicit_minors_A_t}, we have
\begin{equation*}
\kappa_j = \sum_{\epsilon} t^{\delta_{i_1,1}\delta_{\epsilon_1,1}} \cdot \prod_{s \, | \, \epsilon_s = 0} x_{i_s} \dfrac{P_{[j+k,j+n-1] - \epsilon}(N)}{P_{[j+k,j+n-1]}(N)}
\end{equation*}
(to compute $[j+k,j+n-1] - \epsilon$, first identify $[j+k,j+n-1]$ with a subset $\{i_1 < \ldots < i_{n-k}\}$ of $[n]$ by reducing mod $n$, and then subtract $\epsilon_s$ from the $s^{th}$ smallest element of this subset). There are $n-k+1$ choices of $\epsilon$ such that $[j+k,j+n-1] - \epsilon$ has $n-k$ elements, and one may easily verify that each of these choices gives a term from the right-hand side of \eqref{eq_kappa_j}.
\end{proof}

\begin{cor}
\label{cor_E=kappa_1}
The geometric coenergy function $\Theta E : \bT{1} \times \bT{k} \rightarrow \bbC$ is given by
\[
\Theta E(X,Y) = \kappa_1(X,Y) = \sum_{s = 0}^{n-k} x_{k+s+1}x_{k+s+2} \cdots x_n Y_{k,k+s-1},
\]
where $Y_{k,k-1} := 1$.
\end{cor}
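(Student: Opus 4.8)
The plan is to prove the two assertions separately. The identity $\Theta E(X,Y)=\kappa_1(X,Y)$ is essentially a matter of unwinding definitions, and the explicit formula for $\kappa_1$ comes from specializing the general formula \eqref{eq_kappa_j} to $j=1$ and then rewriting the Pl\"ucker coordinates of $N$ in terms of the coordinates $Y_{ij}$ via Lemma~\ref{lem_GT_pl}.

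For the first assertion I would argue as follows. Here the geometric coenergy function $E$ lives on $\X{n-1}\times\X{n-k}$, and since $k\ge 1$ we have $\min(n-1,n-k)=n-k$, so by the definition of $E$,
\[
\Theta E(X,Y)=E(\Theta_{n-1}(X),\Theta_{n-k}(Y))=\Delta_{[k+1,n],[n-k]}(A),
\]
with $A=g(\Theta_{n-1}(X))g(\Theta_{n-k}(Y))$. As observed right after the definition of $E$, the last $n-1$ rows of $g(\Theta_{n-1}(X))$ and the first $n-k$ columns of $g(\Theta_{n-k}(Y))$ are independent of $\lp$; a Cauchy--Binet expansion of the bottom-left justified minor $\Delta_{[k+1,n],[n-k]}$ therefore shows this minor is independent of $\lp$, so it equals $\Delta_{[k+1,n],[n-k]}(A_t)$. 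On the other hand, taking $I=[k+1,n]$ in the definition $\tau_I=\Delta_{I,[n-k]}(A_t)\,P_{[k+1,n]}(N)/P_I(N)$ and recalling $\kappa_1=\tau_{[k+1,n]}$ gives $\kappa_1=\Delta_{[k+1,n],[n-k]}(A_t)$ directly, so the two agree.

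For the explicit formula I would set $j=1$ in \eqref{eq_kappa_j}. The exponent $a_{1,s,k}$ vanishes for every $s$ with $0\le s\le n-k$: indeed $a_{1,s,k}=1$ would require $s\ge n+1-k$, which is impossible since $s\le n-k<n+1-k$. This leaves
\[
\kappa_1=\sum_{s=0}^{n-k}x_{k+s+1}x_{k+s+2}\cdots x_n\cdot\frac{P_{[k,n]\setminus\{k+s\}}(N)}{P_{[k+1,n]}(N)},
\]
so it remains to identify $P_{[k,n]\setminus\{k+s\}}(N)/P_{[k+1,n]}(N)$ with $Y_{k,k+s-1}$. Since $N=\ov{\Theta}_{n-k}(Y)$, Lemma~\ref{lem_GT_pl}, applied with $k$ replaced by $n-k$, gives $Y_{ij}=P_{I_{i,j}}(N)/P_{I_{i+1,j}}(N)$ for the basic $(n-k)$-subsets $I_{i,j}=[i,j]\cup[k+j-i+2,n]$. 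One checks that for $i=k$ and $j=k+s-1$ (with $1\le s\le n-k$) these evaluate to $I_{k,k+s-1}=[k,n]\setminus\{k+s\}$ and $I_{k+1,k+s-1}=[k+1,n]$, so $Y_{k,k+s-1}=P_{[k,n]\setminus\{k+s\}}(N)/P_{[k+1,n]}(N)$; and the $s=0$ term of the sum is the constant $1=Y_{k,k-1}$ by the stated convention. Substituting these identities gives the claimed formula.

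I do not expect a genuine obstacle here. The steps most prone to error are the index bookkeeping in translating between the $\X{k}$-side data (the coordinates $Y_{ij}$ and the basic subsets) and the matrix computations performed on $\X{n-1}\times\X{n-k}$, together with the routine verifications that $a_{1,s,k}\equiv 0$ on the summation range and that the basic subsets $I_{k,j}$ and $I_{k+1,j}$ take the asserted values.
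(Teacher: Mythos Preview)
Your proposal is correct and follows exactly the approach of the paper's own proof: identify $\Theta E(X,Y)$ with the $\lp$-independent minor $\Delta_{[k+1,n],[n-k]}(A)=\kappa_1$, then specialize \eqref{eq_kappa_j} at $j=1$ and invoke Lemma~\ref{lem_GT_pl} to rewrite the Pl\"ucker ratios as $Y_{k,k+s-1}$. Your version simply spells out the index bookkeeping that the paper leaves implicit.
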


\begin{proof}
By definition, $\Theta E(X,Y) = \Delta_{[k+1,n],[n-k]}(A)$, and since this minor is independent of $\lp$, it is equal to $\kappa_1(X,Y)$. The explicit formula for $\kappa_1(X,Y)$ follows from \eqref{eq_kappa_j} and Lemma \ref{lem_GT_pl}.
\end{proof}

\subsection{The case $\ell = k = 1$}

Now we specialize further to the case $k = 1$. Let $Y =  (Y_{11}, \ldots, Y_{1,n-1}, t)$, and define $y_j = Y_{1j}/Y_{1,j-1}$, where $Y_{10} := 1$ and $Y_{1n} := t$. As above, let $N|t = \Theta_{n-1}(Y)$. By Proposition \ref{prop_g_props}\eqref{itm:pi_g} and Lemma \ref{lem_one_row_minors}, we have
\[
\dfrac{P_{[n] \setminus \{a\}}(N)}{P_{[n] \setminus \{b\}}(N)} = \dfrac{\Delta_{[n] \setminus \{a\},[n-1]}(g(\Theta_{n-1}(Y))}{\Delta_{[n] \setminus \{b\}, [n-1]}(g(\Theta_{n-1}(Y))} = \dfrac{y_1 \cdots y_{a-1}}{y_1 \cdots y_{b-1}} = \begin{cases}
y_b \cdots y_{a-1} & \text{ if } b \leq a \\
(y_a \cdots y_{b-1})^{-1} & \text{ if } a \leq b
\end{cases}
\]
for $a, b \in [n]$. Setting $k = 1$ in \eqref{eq_kappa_j} and using $t = y_1 \cdots y_n$, we obtain
\begin{align*}
\kappa_j &= \sum_{s = 0}^{n-j} x_{j+s+1}x_{j+s+2} \cdots x_{j+n-1} \dfrac{P_{[n] \setminus \{j+s\}}(N)}{P_{[n] \setminus \{j\}}(N)} + t \sum_{s = n-j+1}^{n-1} x_{j+s+1} x_{j+s+2} \cdots x_{j+n-1} \dfrac{P_{[n] \setminus \{j+s-n\}}(N)}{P_{[n] \setminus \{j\}}(N)} \\
&= \sum_{s=0}^{n-1} y_j y_{j+1} \cdots y_{j+s-1} x_{j+s+1} x_{j+s+2} \cdots x_{j+n-1},
\end{align*}
where as above, each subscript of $x$ and $y$ is interpreted mod $n$.

\begin{prop}
\label{prop_one_one_case}
The map $\Theta R : ((x_1, \ldots, x_n), (y_1, \ldots, y_n)) \mapsto ((y'_1, \ldots, y'_n), (x'_1, \ldots, x'_n))$ is given by
\[
y'_j = y_j \dfrac{\kappa_{j+1}}{\kappa_j}, \quad\quad x'_j = x_j \dfrac{\kappa_j}{\kappa_{j+1}}, \quad\quad \kappa_j = \sum_{s=0}^{n-1} y_j y_{j+1} \cdots y_{j+s-1} x_{j+s+1} x_{j+s+2} \cdots x_{j+n-1},
\]
where subscripts are interpreted mod $n$.
\end{prop}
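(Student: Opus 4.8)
The plan is to collect ingredients that are, for the most part, already on the table. The discussion immediately preceding the statement specializes \eqref{eq_kappa_j} to $k=1$ and uses $t = y_1\cdots y_n$ to put $\kappa_j$ into the asserted closed form $\kappa_j = \sum_{s=0}^{n-1} y_j\cdots y_{j+s-1}\,x_{j+s+1}\cdots x_{j+n-1}$, while Proposition \ref{prop_formula_x'} with $k=1$ already gives $x'_j = x_j\,\kappa_j/\kappa_{j+1}$. So the only remaining point is the formula $y'_j = y_j\,\kappa_{j+1}/\kappa_j$ for the primed $y$-coordinates.

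For this I would specialize the general formula \eqref{eq_formula_Y'_ij} to $k=1$, where the only row is $i=1$; it becomes $Y'_{1j} = Y_{1j}\,\tau_{[1,j]\cup[j+2,n]}/\tau_{[2,j]\cup[j+1,n]}$. Reducing index sets modulo $n$ identifies $[1,j]\cup[j+2,n] = [n]\setminus\{j+1\}$ and $[2,j]\cup[j+1,n] = [n]\setminus\{1\}$. On the other hand $\kappa_m = \tau_{[m+1,m+n-1]}$, and $[m+1,m+n-1]$ reduces mod $n$ to $[n]\setminus\{m\}$; hence $\tau_{[1,j]\cup[j+2,n]} = \kappa_{j+1}$ and $\tau_{[2,j]\cup[j+1,n]} = \kappa_1$, so that $Y'_{1j} = Y_{1j}\,\kappa_{j+1}/\kappa_1$ for $1\le j\le n-1$. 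Finally I would recover $y'_j = Y'_{1j}/Y'_{1,j-1}$ using the conventions $Y'_{10}=1$, $Y'_{1n}=t$: for $2\le j\le n-1$ the common factor $\kappa_1$ cancels and $y'_j = (Y_{1j}/Y_{1,j-1})(\kappa_{j+1}/\kappa_j) = y_j\,\kappa_{j+1}/\kappa_j$; the case $j=1$ follows since $Y_{10}=Y'_{10}=1$; and the case $j=n$ follows since $Y'_{1n}=t=Y_{1n}$, giving $y'_n = t\kappa_1/(Y_{1,n-1}\kappa_n) = y_n\,\kappa_{n+1}/\kappa_n$ with indices read mod $n$.

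The only real obstacle is keeping the bookkeeping honest: matching each $\tau$-index with the complement $[n]\setminus\{m\}$, and tracking the cyclic (mod $n$) convention together with the boundary conventions $Y_{10}=1$, $Y_{1n}=t$ and their primed versions. A useful consistency check is $\prod_j y'_j = \prod_j y_j \cdot \prod_j(\kappa_{j+1}/\kappa_j) = \prod_j y_j = t$, since the $\kappa$-ratios telescope cyclically, in agreement with $Y'_{1n}=t$ (and likewise $\prod_j x'_j = s$). Comparing the resulting $\kappa_j$ with the one in Proposition \ref{prop_geom_one_row} shows it is literally the same expression, so the $\ell=k=1$ geometric $R$-matrix reproduces the one-row birational $R$-matrix, as claimed. (One could alternatively extract $y'_j$ directly from the matrix equation of Theorem \ref{thm_hard} by comparing subdiagonal entries, but the route through \eqref{eq_formula_Y'_ij} is shorter.)
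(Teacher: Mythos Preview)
Your argument is correct. For $x'_j$ and the closed form of $\kappa_j$ you follow the paper exactly, and your derivation of $y'_j$ via the specialization of \eqref{eq_formula_Y'_ij} to $k=1$ is valid: the identifications $\tau_{[n]\setminus\{j+1\}}=\kappa_{j+1}$ and $\tau_{[n]\setminus\{1\}}=\kappa_1$ are right, and the boundary cases $j=1,n$ are handled correctly.

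The paper, however, takes a quicker route for $y'_j$: it simply repeats the diagonal-entry comparison from the proof of Proposition~\ref{prop_formula_x'}. Since in the $k=1$ case both $g(\Theta_{n-1}(X))$ and $g(N|t)$ have the form \eqref{eq_g_one_row}, equating the constant terms of the diagonal entries of \eqref{g_equation} yields $x_j y_j = y'_j x'_j$ directly, whence $y'_j = y_j\,\kappa_{j+1}/\kappa_j$. This is the approach you mention parenthetically at the end (though it is the \emph{diagonal} entries, not subdiagonal), and contrary to your remark it is in fact the shorter of the two: it avoids the index-set bookkeeping and the separate treatment of $j=1$ and $j=n$. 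Your route through \eqref{eq_formula_Y'_ij} has the virtue of staying entirely within the framework already set up in \S\ref{sec_one}, without invoking the matrix equation again.
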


\begin{proof}
By the preceding discussion, we have $x'_j = x_j \dfrac{\kappa_j}{\kappa_{j+1}}$. Arguing as in the proof of Proposition \ref{prop_formula_x'}, we have $x_jy_j = y'_j x'_j$, so $y'_j = y_j \dfrac{\kappa_{j+1}}{\kappa_j}$.
\end{proof}

Thus, in the one-row by one-row case, our geometric $R$-matrix agrees with the map described in Proposition \ref{prop_geom_one_row}.

\subsection{The case $n=4, \ell = 1, k= 2$}
\label{sec_1_2}

Suppose $X = (X_{11}, X_{12}, X_{13}, s) \in \bT{1}$, $Y = (Y_{11}, Y_{12}, Y_{22}, Y_{23}, t) \in \bT{2}$, and $(Y',X') = \Theta R(X,Y)$. Define
\[
x_1 = X_{11} \quad\quad x_2 = X_{12}/X_{11} \quad\quad x_3 = X_{13}/X_{12} \quad\quad x_4 = s/X_{13},
\]
and define $x'_j$ analogously. Define also
\[
\begin{array}{ccccc}
y_{11} = Y_{11} && y_{12} = Y_{12}/Y_{11} && y_{13} = t/Y_{13} \\
y_{22} = Y_{22} && y_{23} = Y_{23}/Y_{22} && y_{14} = t/Y_{23}.
\end{array}
\]
Note that $t = y_{11}y_{12}y_{13} = y_{22} y_{23} y_{24}$. Let $N|t = \Theta_2(Y) \in \Y{2}{4}$. Using the definition of $\Theta_k$, one computes that $N$ is the column span of the matrix
\[
\left(
\begin{array}{cc}
y_{11} & 0 \\
y_{22} & y_{12}y_{22} \\
1 & y_{12} + y_{23} \\
0 & 1
\end{array}
\right).
\]
Set $P_J = P_J(N)$. By \eqref{eq_formula_Y'_ij}, Proposition \ref{prop_formula_x'}, and \eqref{eq_explicit_minors_A_t}, we have
\begin{equation}
\label{eq_Y'_2_4}
Y'_{11} = Y_{11} \dfrac{\tau_{14}}{\tau_{34}} \quad\quad Y'_{12} = Y_{12} \dfrac{\tau_{12}}{\tau_{24}} \quad\quad Y'_{22} = Y_{22} \dfrac{\tau_{24}}{\tau_{34}} \quad\quad Y'_{23} = Y_{23} \dfrac{\tau_{23}}{\tau_{34}},
\end{equation}
\begin{equation}
\label{eq_x'_2_4}
x'_1 = x_1 \dfrac{\kappa_1}{\kappa_2} \quad\quad x'_2 = x_2 \dfrac{\kappa_2}{\kappa_3} \quad\quad x'_3 = x_3 \dfrac{\kappa_3}{\kappa_4} \quad\quad x'_4 = x_4 \dfrac{\kappa_4}{\kappa_1},
\end{equation}
where
\begin{align*}
\kappa_1 = \tau_{34} &= \dfrac{x_3x_4 P_{34} + x_4P_{24} + P_{23}}{P_{34}} = x_3x_4 + x_4 y_{22} + y_{22}y_{23} \\
\kappa_2 = \tau_{14} &= \dfrac{x_1x_4 P_{14} + x_1 P_{13} + t P_{34}}{P_{14}} = x_1x_4 + x_1(y_{12} + y_{23}) + y_{12}y_{13} \\
\kappa_3 = \tau_{12} &= \dfrac{x_1x_2 P_{12} + t x_2 P_{24} + t P_{14}}{P_{12}} = x_1x_2 + x_2 y_{13} + \dfrac{y_{11}y_{13}}{y_{22}} \\
\kappa_4 = \tau_{23} &= \dfrac{x_2x_3 P_{23} + x_3 P_{13} + P_{12}}{P_{23}} = x_2x_3 + x_3\dfrac{y_{11}}{y_{22}y_{23}}(y_{12} + y_{23}) + \dfrac{y_{11}y_{12}}{y_{23}} \\
\tau_{24} &= \dfrac{x_2x_4 P_{24} + x_2P_{23} + x_4P_{14} + P_{13}}{P_{24}} = x_2x_4 + x_2 y_{23} + \dfrac{y_{11}}{y_{22}} (x_4 + y_{12} + y_{23}).
\end{align*}

By tropicalizing these formulas, one obtains piecewise-linear formulas for the combinatorial $R$-matrix on $B^1 \otimes B^2$. Specifically, let $A = (A_{11}, A_{12}, A_{13}, L_1)$ be a 1-rectangle, let $B = (B_{11}, B_{12}, B_{22}, B_{23}, L_2)$ be a 2-rectangle, and let $B' \otimes A' = \wh{R}(A \otimes B)$. Define
\[
a_1 = A_{11} \quad\quad a_2 = A_{12} - A_{11} \quad\quad a_3 = A_{13} - A_{12} \quad\quad a_4 = L_1 - A_{13},
\]
\[
\begin{array}{ccccc}
b_{11} = B_{11} && b_{12} = B_{12} - B_{11} && b_{13} = L_2 - B_{12} \\
b_{22} = B_{22} && b_{23} = B_{23} - B_{22} && b_{24} = L_2 - B_{23}, \\
\end{array}
\]
so that $a_j$ is the number of $j$'s in the one-row tableau corresponding to $A$, and $b_{ij}$ is the number of $j$'s in the $i^{th}$ row of the two-row tableau corresponding to $B$. Define $a'_j, b'_{ij}$ analogously.

For $I \in {[4] \choose 2}$, let $\tw{\tau}_I$ be the tropicalization of $\tau_I$, where $x_j, y_{ij}$ is replaced with $a_j, b_{ij}$ in the tropicalization. Let $\tw{\kappa}_j = \tw{\tau}_{\{j+2,j+3\}}$. For example, we have
\begin{align*}
\tw{\kappa}_1 = \tw{\tau}_{34} &= \min(a_3 + a_4, a_4 + b_{22}, b_{22} + b_{23}), \\
\tw{\tau}_{24} &= \min(a_2 + a_4, a_2 + b_{23}, b_{11} - b_{22} + \min(a_4, b_{12}, b_{23})).
\end{align*}
By tropicalizing \eqref{eq_Y'_2_4} and \eqref{eq_x'_2_4}, we have
\[
B'_{11} = B_{11} + \tw{\tau}_{14} - \tw{\tau}_{34} \quad\quad B'_{12} = B_{12} + \tw{\tau}_{12} - \tw{\tau}_{24} \quad\quad B'_{22} = B_{22} + \tw{\tau}_{24} - \tw{\tau}_{34} \quad\quad B'_{23} = B_{23} + \tw{\tau}_{23} - \tw{\tau}_{34},
\]
\[
a'_1 = a_1 + \tw{\kappa}_1 - \tw{\kappa}_2 \quad\quad a'_2 = a_2 + \tw{\kappa}_2 - \tw{\kappa}_3 \quad\quad a'_3 = a_3 + \tw{\kappa}_3 - \tw{\kappa}_4 \quad\quad a'_4 = a_4 + \tw{\kappa}_4 - \tw{\kappa}_1.
\]

\begin{ex}
Let $A,B$ correspond to the tableaux $T,U$ from Example \ref{ex_comb_R}. We have
\[
\begin{array}{cccc}
a_1 & a_2 & a_3 & a_4
\end{array}
=
\begin{array}{cccc}
2 & 0 & 4 & 1
\end{array}
\quad\quad \text{ and } \quad\quad
\begin{array}{ccc}
b_{11} & b_{12} & b_{13} \\
b_{22} & b_{23} & b_{24}
\end{array}
=
\begin{array}{ccc}
3 & 1 & 1 \\
2 & 0 & 3
\end{array}.
\]
We compute
\begin{align*}
\tw{\kappa}_1 = \tw{\tau}_{34} &= \min(5,3,2) = 2 \\
\tw{\kappa}_2 = \tw{\tau}_{14} &= \min(3,2+\min(1,0),2) = 2 \\
\tw{\kappa}_3 = \tw{\tau}_{12} &= \min(2,1,2) = 1 \\
\tw{\kappa}_4 = \tw{\tau}_{23} &= \min(4, 5+\min(1,0), 4) = 4 \\
\tw{\tau}_{24} &= \min(1,0,1+\min(1,1,0)) = 0,
\end{align*}
so
\[
\begin{array}{llr}
B'_{11} &= 3 + 2 - 2 &= 3 \\
B'_{12} &= (3+1) + 1 - 0 &= 5 \\
B'_{22} &= 2 + 0 - 2 &= 0 \\
B'_{23} &= (2+0) + 4 - 2 &= 4
\end{array}
\quad\quad \text{ and } \quad\quad
\begin{array}{ll}
a'_1 &= 2 + 2 - 2 = 2 \\
a'_2 &= 0 + 2 - 1 = 1 \\
a'_3 &= 4 + 1 - 4 = 1 \\
a'_4 &= 1 + 4 - 2 = 3.
\end{array}
\]

\smallskip

\noindent The rectangles $B'$ and $A'$ correspond to the tableaux $U'$ and $T'$ from Example \ref{ex_comb_R}, so we have verified that $\wh{R} = \tw{R}$ in this case. Also, by Corollary \ref{cor_E=kappa_1}, we have $\wh{E}(A \otimes B) = \tw{\kappa}_1(A \otimes B) = 2$, which agrees with the coenergy of $T \otimes U$ computed in Example \ref{ex_comb_coenergy}.
\end{ex}

\section{Proof of the positivity of the geometric $R$-matrix}
\label{sec_posit}

In this section we prove Theorem \ref{thm_R_posit}, which states that the geometric $R$-matrix is positive. We start by reducing this theorem to a statement about the positivity of certain minors of the folded matrix $g(N|t)$.

\subsection{Reduction to Proposition \ref{prop_posit_key}}
\label{sec_reduction_to_posit_key}

Recall the notions of positive varieties and positive rational maps from \S \ref{sec_posit_defn}. Let $X$ be a positive variety, $\lp$ an indeterminate, and $f : X \rightarrow \bbC[\lp]$ a rational map, i.e., a map of the form
\[
f = f_0 + f_1 \lp + \ldots + f_d \lp^d,
\]
where $f_i : X \rightarrow \bbC$ are rational functions.
For an integer $r$, we say that $f$ is {\em $r$-non-negative} if for each $i$, the rational function $(-1)^{(r-1)i} f_i$ is non-negative, and we say that $f$ is {\em $r$-positive} if $f$ is $r$-non-negative and not identically zero. For example, for any positive variety $X$, the constant function $f = 1 - \lp + \lp^2$ is $r$-positive for even $r$, but not for odd $r$.

We will need the following observation, whose proof is immediate.

\begin{lem}
\label{lem_r_pos_to_pos}
If $f : X \rightarrow \bbC[\lp]$ is $r$-non-negative (resp., $r$-positive), then the rational function $\ov{f} : X \times \Cx \rightarrow \bbC$ defined by $\ov{f}(x,z) = f(x)|_{\lp = (-1)^{r-1}z}$ is non-negative (resp., positive).
\end{lem}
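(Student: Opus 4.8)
The plan is to simply unwind the definitions of ``$r$-non-negative,'' ``non-negative rational function,'' and the product parametrization, and observe that the substitution $\lp \mapsto (-1)^{r-1}z$ converts the hypothesis into a positive-coefficient expression essentially verbatim. Write $f = f_0 + f_1\lp + \cdots + f_d\lp^d$ with each $f_i : X \to \bbC$ rational. Substituting $\lp = (-1)^{r-1}z$ gives
\[
\ov{f}(x,z) = \sum_{i=0}^d f_i(x)\,(-1)^{(r-1)i} z^i = \sum_{i=0}^d \bigl[(-1)^{(r-1)i} f_i(x)\bigr]\, z^i,
\]
so the coefficient of $z^i$ in $\ov{f}$ is exactly the rational function $(-1)^{(r-1)i} f_i$, which is non-negative by the hypothesis that $f$ is $r$-non-negative.

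It then remains to note that a rational function which is a ``polynomial in $z$ with non-negative coefficients'' is non-negative as a map on the positive variety $X \times \Cx$ (with parametrization $\Theta_X \times \Id$). Indeed, composing with the parametrization, each coefficient $(-1)^{(r-1)i} f_i \circ \Theta_X$ is either identically zero or a ratio $a_i/b_i$ of polynomials with positive integer coefficients; clearing denominators over the common denominator $\prod_i b_i$ (itself a polynomial with positive integer coefficients) expresses $\ov{f}\circ(\Theta_X\times\Id)$ as a ratio of polynomials with non-negative integer coefficients in the variables of $(\Cx)^{d_1}$ and $z$. Hence $\ov{f}$ is non-negative. If moreover $f$ is $r$-positive, some $f_i$ is not identically zero, so at least one $z^i$-term survives and $\ov{f}$ is not identically zero, hence positive. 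There is no real obstacle here; the only thing to be careful about is invoking the correct notion of positivity for maps to $\bbC$ on a product positive variety, which is already recorded in the discussion following Definition \ref{defn_posit}.
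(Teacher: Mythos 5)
Your proof is correct and is precisely the ``immediate'' verification the paper intends (the paper omits the proof entirely, stating only that it is immediate): the substitution $\lp = (-1)^{r-1}z$ turns the $i^{th}$ coefficient into $(-1)^{(r-1)i}f_i$, which is non-negative by hypothesis, and clearing denominators over a common positive-coefficient denominator finishes the argument. The handling of the positive case (at least one nonzero term survives, and no cancellation can occur among positive-coefficient numerators) is also right.
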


For two $r$-subsets $I,J \subset [n]$, let $\Delta_{I,J} : \X{k} \rightarrow \bbC[\lp]$ denote the rational map which sends $N|t$ to the minor $\Delta_{I,J}(g(N|t))$. Say that a subset of $[n]$ is a {\em cyclic interval} if its elements are consecutive mod $n$. Define a {\em cyclic interval of a subset $I \subset [n]$} to be a maximal collection of elements of $I$ which form a cyclic interval.

\begin{prop}
\label{prop_posit_key}
Let $I, J \subset [n]$ be two subsets of size $r$, at least one of which has no more than two cyclic intervals, and let $\Delta_{I,J} : \X{k} \rightarrow \bbC[\lp]$ be the rational map just defined. Then
\begin{enumerate}
\item
if $r \leq k$, $\Delta_{I,J}$ is $r$-non-negative;
\item
if $r > k$, $\Delta_{I,J}$ is equal to $(t + (-1)^k \lp)^{r-k} f_{I,J}$, where $f_{I,J}$ is an $r$-non-negative map $\X{k} \rightarrow \bbC[\lp]$.
\end{enumerate}
\end{prop}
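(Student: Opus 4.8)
The matrix $g(N|t)$ is, up to the loop parameter decoration $c_{ij}$, built out of ratios of Plücker coordinates of $N$, and the entire variety $\X{k}$ is parametrized (positively, via $\ov\Theta_k$) by a rational $(n-k)$-rectangle. My plan is to express the minor $\Delta_{I,J}(g(N|t))$ as a Laurent polynomial in $\lp$ whose coefficients are, up to explicit signs, manifestly Plücker-positive rational functions of $N$ and $t$, and then invoke Lemma~\ref{lem_basic=basis}/Lemma~\ref{lem_from_Gr} to conclude non-negativity. The natural tool is the planar-network (Lindström Lemma) representation of $g(N|t)$, exactly as in the one-row computation carried out in Lemma~\ref{lem_one_row_minors} and Figure~\ref{fig_one_row_network}. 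First I would build an explicit planar network $\Gamma$ whose path matrix is $g(N|t)$: since $g(N|t) = \fl\circ g\circ S(N|t)$ and, by Lemma~\ref{lem_g_0}, $g(N|t)|_{\lp=0}$ is a product of the bidiagonal ``Chevalley'' matrices $M_{[i,i+k]}$, the network is a concatenation of elementary chips, together with ``wrap-around'' edges carrying the parameter $\lp$ (coming from the entries with $c_{ij}=\lp$) and edges carrying $t$ (from $c_{ij}=t$). Each source-to-sink path then contributes a monomial whose $\lp$-degree counts how many wrap-around edges it uses and whose coefficient is a monomial in the $X_{ij}$ (equivalently, a ratio of Plücker coordinates of $N$) times a power of $t$.

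The second step is a sign-bookkeeping argument. By the Lindström Lemma, $\Delta_{I,J}(g(N|t)) = \sum_{\pi}\operatorname{sgn}(\pi)\operatorname{wt}(\text{vertex-disjoint path family realizing }\pi)$, so I must show that for each fixed number $i$ of wrap-around edges used, every contributing family has the \emph{same} sign $(-1)^{(r-1)i}$ (up to an overall constant depending only on $I,J$), so that the coefficient $f_i$ is non-negative after multiplying by $(-1)^{(r-1)i}$. This is where the hypothesis that one of $I,J$ has at most two cyclic intervals is used: with at most two cyclic blocks, the combinatorics of how disjoint paths can ``cross'' the wrap-around locus is rigid enough that the crossing permutation attached to a family using $i$ wrap edges is forced (it is an $i$-fold cyclic rotation of the relevant block of strands, contributing sign $(-1)^{(r-1)i}$, analogous to the $i_1=\epsilon_1=1$ case of Lemma~\ref{lem_one_row_minors}). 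The restriction to two cyclic intervals is exactly what rules out more complicated sign cancellations among families with the same $\lp$-degree. I would phrase this via the cylindric/affine-permutation picture: unfold $g(N|t)$ to the $n$-periodic matrix (as in \S\ref{sec_unfold}), work with the honest planar network on the cylinder with no wrap-around, track the $\lp$-degree as a winding number, and then the sign is determined by a single affine inversion count.

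For part~(2), when $r>k$: by Proposition~\ref{prop_g_props}\eqref{itm:det_g}, $\det g(N|t)=(t+(-1)^k\lp)^{n-k}$, and more generally $g(N|t)|_{\lp=(-1)^{k-1}t}$ has rank exactly $k$ by Proposition~\ref{prop_g_props}\eqref{itm:rank_k}; hence every $r\times r$ minor with $r>k$ vanishes at $\lp = (-1)^{k-1}t$, i.e.\ is divisible by $(t+(-1)^k\lp)$. To get divisibility by the full power $(t+(-1)^k\lp)^{r-k}$, I would argue either by a Cauchy--Binet factorization $g(N|t) = C\cdot D$ where $D$ realizes $N$ (Proposition~\ref{prop_g_props}\eqref{itm:pi_g}) and $C$ absorbs the loop dependence, so that $\Delta_{I,J}(g(N|t))=\sum_{|K|=r}\Delta_{I,K}(C)\Delta_{K,J}(D)$ with $\Delta_{K,J}(D)=0$ unless $K$ lies in the first $k$ coordinates (forcing the rank drop to produce the needed vanishing order), or by a Jacobi/complementary-minor identity combined with Corollary~\ref{cor_D_minors} and the duality map $D$, which exchanges $k\leftrightarrow n-k$ and will convert a large-$r$ minor on $\X{k}$ into a small-$r$ minor on $\X{n-k}$, reducing part~(2) to part~(1). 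After dividing out $(t+(-1)^k\lp)^{r-k}$, the resulting $f_{I,J}$ has the claimed $r$-non-negativity by the same network/sign analysis as in part~(1).

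\textbf{Main obstacle.} The delicate point is the uniform sign control in step~two: proving that \emph{all} vertex-disjoint path families contributing to a fixed coefficient $f_i$ carry the sign $(-1)^{(r-1)i}$ (equivalently, that no $-$ signs survive once we peel off the predicted power of $(-1)$). This is precisely the place where ``at most two cyclic intervals'' is indispensable, and where one must be careful about which of $I$ or $J$ satisfies the hypothesis (the network is not symmetric in rows and columns, so one may need to dualize via $S$, using Lemma~\ref{lem_S} and \eqref{eq_S_minors}, to move the hypothesis from $J$ to $I$ or vice versa). I expect the bulk of the real work — and the part most likely to need a careful case analysis on the relative position of the cyclic intervals of $I$ and $J$ — to live here.
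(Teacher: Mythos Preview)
Your reduction of part~(2) to part~(1) via the duality map $D$ and Corollary~\ref{cor_D_minors} is exactly what the paper does, and your use of $S$ (via \eqref{eq_S_minors}) to move the two-cyclic-interval hypothesis between $I$ and $J$ is also the paper's first move. So the outer structure of your plan matches.

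Where your proposal diverges is in the core of part~(1). You want to build a single cylindrical network for the full matrix $g(N|t)$, with wrap-around edges carrying $\lp$, and then control signs of vertex-disjoint families by their winding number. The paper does \emph{not} do this, and there is a real gap in your plan: you only know a planar network for $g(N|t)|_{\lp=0}=\Phi_{n-k}$ (Lemma~\ref{lem_g_0} and Lemma~\ref{lem_network_represents}), not for $g(N|t)$ itself. For general $k$ the $\lp$-dependent entries of $g(N|t)$ are ratios of Pl\"ucker coordinates, not monomials in the $x_{ij}$, and it is not at all clear that adding a few wrap-around edges to the network of $\Phi_{n-k}$ reproduces these entries (this works in the one-row case of Lemma~\ref{lem_one_row_minors} only because the matrix is nearly bidiagonal). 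Without that network, your sign analysis has nothing to stand on, and you yourself identify the sign bookkeeping as the main obstacle without a concrete mechanism.

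The paper's route avoids this entirely. Rather than a global cylindrical network, it uses the symmetry $\PR$ (Corollary~\ref{cor_PR_minors}) to cyclically shift $(I,J)$ until the relevant submatrix is genuinely $\lp$-independent, where the ordinary planar network for $\Phi_{n-k}$ and Lemma~\ref{lem_Phi_minors} apply directly. When $J$ has two cyclic intervals not fitting in a single window of size $k$, the zero-row structure of $A_{[n],J}$ (encoded in a combinatorial condition $C_{a,b}$) forces the minor either to vanish, or to factor as a product of two smaller $\lp$-independent minors (block-triangularity), or to be a cyclic shift of a $\lp$-independent minor. The remaining $I$'s are then handled by expressing the minor as a positive Laurent polynomial in the already-proved ones via Lemma~\ref{lem_basic=basis}. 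The two-cyclic-interval hypothesis enters not through a sign argument on a cylinder, but because reflected basic subsets $J_{a,b}$ are exactly the column sets for which this zero-row/block-triangular analysis is tractable.
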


\begin{remark}
We expect that Proposition \ref{prop_posit_key} holds without the restriction on $I$ and $J$. We need this restriction in our proof because we do not know the correct analogue of Definition \ref{defn_C_a_b} and Proposition \ref{prop_r_pos} for a subset $J$ with more than two cyclic intervals.
\end{remark}

Before proving Proposition \ref{prop_posit_key}, we explain how it implies Theorem \ref{thm_R_posit}. Since the geometric Sch\"{u}tzenberger involution $S$ is positive, it suffices to show that the map $\Psi_{k,\ell} : \X{\ell} \times \X{k} \rightarrow \X{k}$ is positive. Suppose $(M|s, N|t) \in \X{\ell} \times \X{k}$, and let $N'|t = \Psi_{k,\ell}(M|s, N|t)$, $A = g(M|s)$, and $A_t = A_{\lp = (-1)^{k-1}t}$. Fix a $k$-subset $I$. By \eqref{eq_R_alt} and the Cauchy--Binet formula, we have
\begin{equation*}
P_I(N') = \sum_J \Delta_{I,J}(A_t) P_J(N).
\end{equation*}
If $I$ has at most two cyclic intervals, then by Proposition \ref{prop_posit_key} and Lemma \ref{lem_r_pos_to_pos}, there are non-negative rational functions $f_{I,J} : \X{\ell} \times \Cx \rightarrow \bbC$ such that
\begin{equation*}
\Delta_{I,J}(A_t) = (s + (-1)^{\ell + k -1} t)^{\max(0,k-\ell)} f_{I,J}(M|s, t).
\end{equation*}
Furthermore, by Proposition \ref{prop_g_props}\eqref{itm:det_g}, we have $\det(A_t) = (s + (-1)^{\ell+k-1}t)^{n-\ell}$, so $A_t$ is invertible for $(M|s, t)$ in an open subset of $\X{\ell} \times \Cx$. This means that at least one of the rational functions $f_{I,J}$ is not identically zero.

If $I$ and $I'$ are $k$-subsets with at most two cyclic intervals, then on an open subset of $\X{\ell} \times \X{k}$, we have
\[
\dfrac{P_I(N')}{P_{I'}(N')} = \dfrac{\ds \sum f_{I,J}(M|s, t) P_J(N)}{\ds \sum f_{I',J}(M|s,t) P_J(N)},
\]
where $f_{I,J}$ (resp., $f_{I',J}$) are non-negative rational functions which are not all zero. In particular, this is true when $I$ and $I'$ are basic $k$-subsets (Definition \ref{defn_basic}), so $\Psi_{k,\ell}$ is positive by Lemma \ref{lem_to_Gr}.


\subsection{Proof of Proposition \ref{prop_posit_key}}
\label{sec_pf_posit_key}

To prove Proposition \ref{prop_posit_key}, we first use the Lindstr\"om Lemma to show that all minors of $g(N|t)$ which do not depend on the indeterminate $\lp$ are non-negative. Then we carefully analyze the structure of the matrix $g(N|t)$ to show that every minor in a sufficiently large class (essentially, the minors indexed by basic or reflected basic subsets) is either a product of two $\lp$-independent minors, or a cyclic shift of a $\lp$-independent minor.

\subsubsection{Non-negativity of minors that do not depend on $\lp$}
\label{sec_lp_ind_minors}

Let $A$ be the folded matrix $g(N|t)$, where $N|t \in \X{k}$. Here we view $A$ as an array of $n^2$ rational maps $A_{ij} : \X{k} \rightarrow \bbC[\lp]$. By the definition of $g$, the maps $A_{ij}$ split up into three categories:

\begin{equation}
\label{eq_three_categories}
A_{ij} \text{ is } \begin{cases}
\text{ a nonzero map to } \bbC & \text{ if } i-n+k \leq j \leq i \\
\text{ a nonzero map to } \bbC \cdot \lp & \text{ if } j \geq i+k \\
0 & \text{ if } j < i-n+k \text{ or } i < j < i+k.
\end{cases}
\end{equation}

\noindent In the second case, we say that $A_{ij}$ {\em depends on $\lp$}; otherwise we say that $A_{ij}$ is {\em independent of $\lp$}. Given subsets $I, J \subset [n]$, say that the submatrix $A_{I,J}$ is {\em independent of $\lp$} if $A_{ij}$ is independent of $\lp$ for all $i \in I, j \in J$. If $A_{I,J}$ is independent of $\lp$, then $\Delta_{I,J}$ is a rational function $\X{k} \rightarrow \bbC$, so $r$-positivity of $\Delta_{I,J}$ is the same thing as (ordinary) positivity of $\Delta_{I,J}$.

\begin{lem}
\label{lem_lp_ind_minors}
Let $I = \{i_1 < \cdots < i_r\}$ and $J = \{j_1 < \cdots < j_r\}$ be two $r$-subsets of $[n]$, with $r \leq k$. If the submatrix $A_{I,J}$ is independent of $\lp$, then the rational map $\Delta_{I,J}$ is positive (equivalently, $r$-positive) if
\begin{equation}
\label{eq_lp_ind_minors}
i_s - n + k \leq j_s \leq i_s \quad\quad \text{ for } \; s = 1, \ldots, r, 
\end{equation}
and zero otherwise.
\end{lem}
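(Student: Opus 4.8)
The plan is to represent the folded matrix $A = g(N|t)$ by a planar network and apply the Lindström Lemma (as reviewed in the Appendix), exactly as was done for the one-row case in Lemma \ref{lem_one_row_minors}. First I would observe from \eqref{eq_three_categories} that the $\lp$-independent part of $A$ is the ``lower-banded'' portion, occupying diagonals $0$ through $n-k$ below the main diagonal, together with the main diagonal itself. In the unfolded picture, this part of $A$ has entries $A_{I_{i,j},\cdot}$ given by ratios of cyclic Plücker coordinates with the coefficient $c_{ij}\in\{1,t\}$, and it admits a planar network representation: sources $1,\dots,n$ on the bottom, sinks $1',\dots,n'$ on the top, with each source $i$ connected by paths to sinks $(i)',(i-1)',\dots,(i-n+k)'$. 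I would describe this network concretely, reading off the edge weights from the formula $A_{ij}=c_{ij}P_{[j-k+1,j-1]\cup\{i\}}(N)/P_{[j-k,j-1]}(N)$, so that the weight of a path from source $i$ to sink $j'$ is this ratio (times the appropriate power of $t$ coming from the $c_{ij}$). Since the Plücker coordinates are themselves positive (up to a common scalar) when $N$ lies in the image of $\ov\Theta_k$, by Lemma \ref{lem_Theta_k_is_positive}, all edge weights are positive rational functions; and the power-of-$t$ factors are positive as well.

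The key combinatorial point, analogous to the forest argument in the proof of Lemma \ref{lem_one_row_minors}, is that for the minor $\Delta_{I,J}$ with $A_{I,J}$ independent of $\lp$, the Lindström Lemma expresses $\Delta_{I,J}(A)$ as a signed sum over families of vertex-disjoint paths from the sources in $I$ to the sinks in $J$. Each such family, if it exists, consists of paths each of which moves a source index ``weakly to the left'' (from $i_s$ to some $j_s\le i_s$, no more than $n-k$ steps); vertex-disjointness forces the paths to be nested/non-crossing, which pins down the sink set $J$ relative to $I$. I expect that the vertex-disjoint family exists if and only if $i_s-n+k\le j_s\le i_s$ for every $s$ (matching the $s$-th source with the $s$-th sink), and that when it exists it is unique and the associated permutation is the identity (so the sign is $+1$). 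This gives positivity of $\Delta_{I,J}$ in the stated range and $\Delta_{I,J}=0$ otherwise. One must also confirm that no family using ``wrap-around'' paths (through the $\lp$-corner) can contribute — but since we have assumed $A_{I,J}$ is $\lp$-independent, every entry $A_{is}$ with $i\in I,s\in J$ lies in the lower band, so no path in any contributing family can use a $\lp$-weighted edge; restricting the network to the relevant sources and sinks leaves only the banded part.

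The main obstacle I anticipate is establishing the uniqueness of the vertex-disjoint path family and the triviality of its sign in the cyclic (wrap-around) setting. In the one-row case the underlying undirected graph was a single $2n$-cycle and one could invoke ``a forest has at most one perfect matching''; here the banded network is thicker (width $n-k+1$), so the analogous statement requires a genuinely planar, rather than purely cyclic, argument. I would handle this by cutting the cycle: because $r\le k$, at least one sink index and one source index are ``missed,'' so the induced subnetwork on $I\cup J$ is genuinely a planar (non-cyclic) strip, and the standard non-crossing argument for planar networks applies, giving both uniqueness of the family and sign $+1$. Writing this reduction carefully — identifying which source/sink is unused and verifying the induced subnetwork is planar rather than cyclic — is the step that needs the most care; everything else (the explicit network, the edge-weight bookkeeping, and the positivity of the weights via Lemma \ref{lem_Theta_k_is_positive}) is routine.
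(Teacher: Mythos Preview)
Your overall strategy---represent the $\lp$-independent part of $A$ by a planar network and apply the Lindstr\"om Lemma---is exactly right in spirit, but there is a genuine gap: your expectation that the vertex-disjoint family, when it exists, is \emph{unique} is false for $n-k>1$. The network you are describing is ``thick'' (each source reaches $n-k+1$ sinks), and in general there are many vertex-disjoint families from $I$ to $J$; see the example in the Appendix, where $\Delta_{34,23}$ on $\Gamma_{2,5}$ has three contributing families. Positivity of $\Delta_{I,J}$ does not come from uniqueness; it comes from the fact that in a planar strip network with sources on one side and sinks on the other, every vertex-disjoint family has identity permutation, so all terms in the Lindstr\"om sum have sign $+1$. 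Your ``cut the cycle'' reduction is the right move to get into a planar strip, but after that you should drop the uniqueness claim and instead invoke the standard non-crossing argument to conclude that all contributing families have trivial sign.

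A second, more structural issue is that you never specify the internal vertices and edges of your network---you only say what the total path weight from $i$ to $j'$ should be. Without internal structure, ``vertex-disjoint'' carries no content and Lindstr\"om reduces to the Leibniz expansion. The paper handles this cleanly by observing (Lemma~\ref{lem_g_0}) that $g(N|t)|_{\lp=0}=\Phi_{n-k}(X_{ij},t)$, so the $\lp$-independent submatrix $A_{I,J}$ coincides with the corresponding submatrix of the tableau matrix $\Phi_{n-k}$. Since $\Phi_{n-k}$ is a product of the simple matrices $M_{[a,b]}$, it has the concrete network representation $\Gamma_{n-k,n}$ of Lemma~\ref{lem_network_represents}, and Lemma~\ref{lem_Phi_minors} then gives exactly the statement you want (positivity under \eqref{eq_lp_ind_minors}, zero otherwise) via Lindstr\"om. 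This is shorter than building a new network for $A$ with Pl\"ucker-ratio weights, and it makes the edge weights the elementary coordinates $x_{ij}$ rather than ratios of Pl\"ucker coordinates.
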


\begin{proof}
Let $\Phi^{n-k}_{I,J} : \bT{n-k} \rightarrow \bbC$ denote the map $(X_{ij},t) \mapsto \Delta_{I,J}(\Phi_{n-k}(X_{ij},t))$. Since $A_{I,J}$ is independent of $\lp$, Lemma \ref{lem_g_0} implies that
\begin{equation*}
\Phi^{n-k}_{I,J} = \Delta_{I,J} \circ \Theta_k.
\end{equation*}
By Lemma \ref{lem_Phi_minors} (which is an application of the Lindstr\"om Lemma), $\Phi^{n-k}_{I,J}$ is positive if \eqref{eq_lp_ind_minors} holds, and zero otherwise, so the same is true of $\Delta_{I,J}$ (since by definition, $\Delta_{I,J}$ is positive if and only if $\Delta_{I,J} \circ \Theta_k$ is positive).
\end{proof}


\subsubsection{Exploiting the symmetries}
\label{sec_reductions}

Before turning to the heart of the proof, we use the positivity of the symmetries $\PR$, $S$, and $D$ (Lemma \ref{lem_all_maps_posit}) to make several reductions to Proposition \ref{prop_posit_key}.

Suppose $I$ and $J$ are $r$-subsets, and consider the rational map $\Delta_{I,J} : \X{k} \rightarrow \bbC[\lp]$. By \eqref{eq_S_minors}, we have
\[
\Delta_{I,J} \circ S = \Delta_{w_0(J), w_0(I)},
\]
so since $S$ is positive, $r$-non-negativity (resp., $r$-positivity) of $\Delta_{I,J}$ is equivalent to that of $\Delta_{w_0(J), w_0(I)}$. This allows us to reduce to the case where $J$ has at most two cyclic intervals.

Corollary \ref{cor_D_minors} allows us to reduce to the case $r \leq k$, as follows. Assume Proposition \ref{prop_posit_key} holds for $r \leq k$, and fix $r > k$. For $I, J \in {[n] \choose r}$, Corollary \ref{cor_D_minors} gives the identity
\[
\Delta_{I,J} = (t + (-1)^{k} \lp)^{r-k} (\Delta_{\ov{J},\ov{I}}|_{\lp = (-1)^n \lp} \circ D)
\]
of rational maps $\X{k} \rightarrow \bbC[\lp]$. Suppose $I$ or $J$ (equivalently, $\ov{I}$ or $\ov{J}$) has at most two cyclic intervals. By our assumption, the rational map $\Delta_{\ov{J}, \ov{I}} : \X{n-k} \rightarrow \bbC[\lp]$ is $(n-r)$-non-negative, so $\Delta_{\ov{J}, \ov{I}}|_{\lp = (-1)^n \lp}$ is $r$-non-negative. Since $D$ is positive, Proposition \ref{prop_posit_key} holds for $\Delta_{I,J}$.

Corollary \ref{cor_PR_minors} shows that
\[
\Delta_{I,J} \circ \PR = \begin{cases}
\Delta_{I-1,J-1} & \text{ if } 1 \in I \cap J \text{ or } 1 \not \in I \cup J \\
(-1)^{r-1} \lp \cdot \Delta_{I-1,J-1} & \text{ if } 1 \in I \setminus J \\
(-1)^{r-1} \lp^{-1} \cdot \Delta_{I-1,J-1} & \text{ if } 1 \in J \setminus I.
\end{cases}
\]
This, together with the positivity of $\PR$ and $\PR^{-1}$, implies the following result.

\begin{lem}
\label{lem_PR_reduction}
$\Delta_{I,J}$ is $r$-non-negative (resp., $r$-positive) if and only if $\Delta_{I-1, J-1}$ is $r$-non-negative (resp., $r$-positive).
\end{lem}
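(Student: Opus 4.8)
The plan is to read off the relation between the coefficients of the polynomial-valued maps $\Delta_{I,J}$ and $\Delta_{I-1,J-1}$ from Corollary \ref{cor_PR_minors}, and to observe that the sign $(-1)^{r-1}$ appearing there is precisely the factor needed to absorb the index shift built into the definition of $r$-non-negativity. First I would write $\Delta_{I,J} = \sum_i f_i \lp^i$ and $\Delta_{I-1,J-1} = \sum_i g_i \lp^i$, where $f_i, g_i : \X{k} \rightarrow \bbC$ are rational functions. Since $\PR$ acts on $\X{k}$ without touching the loop parameter, we have $\Delta_{I,J} \circ \PR = \sum_i (f_i \circ \PR)\lp^i$, so Corollary \ref{cor_PR_minors} yields: $f_i \circ \PR = g_i$ for all $i$ when $1 \in I\cap J$ or $1\notin I\cup J$; $f_i\circ\PR = (-1)^{r-1}g_{i-1}$ when $1\in I\setminus J$; and $f_i\circ\PR = (-1)^{r-1}g_{i+1}$ when $1\in J\setminus I$, with the convention that a coefficient outside the support of the polynomial is zero.

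Next I would invoke the positivity of $\PR$ from Lemma \ref{lem_all_maps_posit}, together with the fact that $\PR$ has finite order $n$, so that $\PR^{-1} = \PR^{n-1}$ is also positive by Lemma \ref{lem_posit_comp}. Consequently precomposition with $\PR$ carries non-negative (resp.\ positive, resp.\ identically-zero) rational functions to rational functions of the same type, and conversely. In particular $(-1)^{(r-1)i} f_i$ is non-negative for every $i$ if and only if $(-1)^{(r-1)i}(f_i\circ\PR)$ is non-negative for every $i$.

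It then remains to match this last condition against the definition of $r$-non-negativity of $\Delta_{I-1,J-1}$. In the case $1 \in I\cap J$ or $1\notin I\cup J$ this is immediate. In the case $1\in I\setminus J$, substituting $f_i\circ\PR = (-1)^{r-1}g_{i-1}$ rewrites $(-1)^{(r-1)i}(f_i\circ\PR)$ as $(-1)^{(r-1)(i-1)}g_{i-1}$, and as $i$ ranges over $\bbZ$ this says exactly that $(-1)^{(r-1)j}g_j$ is non-negative for all $j$, i.e.\ that $\Delta_{I-1,J-1}$ is $r$-non-negative; the case $1\in J\setminus I$ is symmetric, with $\lp^{-1}$ in place of $\lp$ and the shift in the other direction. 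For the parenthetical ``$r$-positive'' statement I would additionally note that $\Delta_{I,J} \equiv 0$ if and only if $\Delta_{I-1,J-1}\equiv 0$, which is again clear from the displayed relations and the invertibility of $\PR$.

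I do not anticipate a genuine obstacle: the entire content is the sign bookkeeping, and the only point that needs a little care is the handling of the ``boundary'' coefficient that is forced to vanish in the two shifted cases, so that the biconditional comes out cleanly in both directions.
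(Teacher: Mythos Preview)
Your proposal is correct and follows the same route as the paper: the paper simply states that the displayed relation $\Delta_{I,J}\circ\PR = (\text{power of }(-1)^{r-1}\lp)\cdot\Delta_{I-1,J-1}$, combined with the positivity of $\PR$ and $\PR^{-1}$, yields the lemma. Your write-up spells out the coefficient-by-coefficient sign bookkeeping and the boundary-coefficient check that the paper leaves implicit, but the argument is identical.
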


Recall that a subset is \emph{reflected basic} if it is an interval of $[n]$, or it consists of two disjoint intervals of $[n]$, one of which contains 1 (Definition \ref{defn_basic}). Every subset with at most two cyclic intervals is a cyclic shift of a reflected basic subset, so combining the observations above, we see that it suffices to prove Proposition \ref{prop_posit_key} in the case where $r \leq k$, and $J$ is a reflected basic subset.


\subsubsection{Reflected basic subsets and zero rows}
\label{sec_zero_rows}

Following Convention \ref{conv_pluc}, we interpret an interval $[c,d] \subset \bbZ$ as a cyclic interval of $[n]$ by reducing each element of $[c,d]$ modulo $n$. As usual, $[c,d]$ is the empty set if $c > d$. For example, if $n \geq 6$, then $[-2,3]$ and $[n-2,n+3]$ both represent the cyclic interval $[1,3] \cup [n-2,n]$, but $[n-2,3]$ is the empty set.

Given a subset $J \subset [n]$, let $Z(J)$ be the rows of the submatrix $A_{[n],J}$ which are identically zero (set $Z(\emptyset) = \emptyset$). We call $Z(J)$ the {\em zero rows} of the columns $J$. It follows from \eqref{eq_three_categories} that the $j^{th}$ column of $A$ has zeroes in rows $[j-k+1,j-1]$. This implies that if $s \geq 1$ and $c \in \bbZ$, then
\begin{equation}
\label{eq_Z_cyc_int}
Z([c,c+s-1]) = [c-k+s,c-1].
\end{equation}

\begin{defn}
\label{defn_C_a_b}
Fix $r \leq k$, $a \in [0,r]$, and $b \in [0,n-r]$, and consider the reflected basic $r$-subset
\[
J_{a,b} = [1,a] \cup [a+b+1,r+b].
\]
(Every reflected basic $r$-subset is of this form.) Let $Z_1$ be the zero rows of columns $[1,a]$, and let $Z_2$ be the zero rows of columns $[a+b+1,r+b]$. We say that a subset $I \in {[n] \choose r}$ {\em satisfies condition $C^r_{a,b}$} (or $C_{a,b}$ if $r$ is understood) if
\[
I \cap Z(J_{a,b}) = \emptyset, \quad\quad |I \cap Z_1| \leq r-a, \quad\quad \text{ and } \quad\quad |I \cap Z_2| \leq a.
\]
\end{defn}

Note that if $r = k$, then $|Z_1| = k-a$ and $|Z_2| = a$ by \eqref{eq_Z_cyc_int}, and $Z(J_{a,b})$ is empty because each row of $A$ has only $k-1$ zeroes. Thus, condition $C^k_{a,b}$ always holds. Note also that by \eqref{eq_Z_cyc_int}, we have
\begin{equation}
\label{eq_Z_i}
Z_1 = \begin{cases}
\emptyset & \text{ if } a = 0 \\
[n+a-k+1,n] & \text{ if } a > 0
\end{cases}
\quad\quad
Z_2 = \begin{cases}
[r+b-k+1,a+b] & \text{ if } a < r \\
\emptyset & \text{ if } a = r
\end{cases}
\end{equation}
and thus
\begin{equation}
\label{eq_Z_intersect}
Z(J_{a,b}) =
\begin{cases}
[r+b-k+1,b] & \text{ if } a = 0 \\
[n+a-k+1,a+b] \cup [n+r+b-k+1,n] & \text{ if } a \in [1,r-1] \\
[n+r-k+1,n] & \text{ if } a = r.
\end{cases}
\end{equation}

\begin{prop}
\label{prop_r_pos}
Fix $r \leq k$. Let $J_{a,b}$ be a reflected basic $r$-subset, and let $Z_1 = Z([1,a]), Z_2 = Z([a+b+1,r+b])$ be the zero rows of the two intervals of $J_{a,b}$. Then for $I \in {[n] \choose r}$, the rational map $\Delta_{I,J_{a,b}}$ is $r$-positive if $I$ satisfies condition $C_{a,b}$, and zero otherwise.
\end{prop}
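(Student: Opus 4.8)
The plan is to prove Proposition \ref{prop_r_pos} by induction on $b$, using the column operations implicit in the structure of the matrix $A = g(N|t)$ and the reductions already established in \S\ref{sec_reductions}.

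\textbf{Base case $b=0$.} Here $J_{a,0} = [1,r]$ is a genuine initial interval. The submatrix $A_{[n],[r]}$ is independent of $\lp$ (since every entry $A_{ij}$ with $j \leq k$ lies in the first case of \eqref{eq_three_categories}, as $r \leq k$), so $\Delta_{I,J_{a,0}}$ is an ordinary rational function and $r$-positivity is plain positivity. I would invoke Lemma \ref{lem_lp_ind_minors}: $\Delta_{I,[r]}$ is positive exactly when $i_s - n + k \leq i_s \leq $ \dots wait, precisely when $i_s - n + k \leq s \leq i_s$ for all $s$, i.e. $s \leq i_s \leq s + n - k$. One then checks that this staircase condition on $I = \{i_1 < \dots < i_r\}$ is equivalent to condition $C^r_{a,0}$. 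By \eqref{eq_Z_intersect}, $Z(J_{a,0}) = [r-k+1,0] = [n-k+r+1,n]$ (using $a=0$), and $Z_1 = \emptyset$, $Z_2 = [r-k+1,0]$ as well; disjointness of $I$ from the top $k-r$ rows translates to $i_r \leq n-k+r$, which combined with the constraint that $I$ avoids nothing else forces the staircase bounds. This bookkeeping is routine but needs care with the cyclic indexing conventions of Convention \ref{conv_pluc}.

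\textbf{Inductive step.} Assume the result for $b-1$ and all valid $a$. Given $J_{a,b} = [1,a] \cup [a+b+1, r+b]$, I want to relate $\Delta_{I, J_{a,b}}$ to minors with column set $J_{a,b-1} = [1,a] \cup [a+b, r+b-1]$, i.e. shift the ``floating block'' one step to the left. The key observation is that for a column index $j$ with $j > k$ (so the block has reached the $\lp$-region), the structure of $g(N|t)$ from Definition \ref{defn_g} gives a \emph{three-term Plücker-type relation} among consecutive columns, reflecting the fact that each $A_{ij}$ for $j>k$ is $c_{ij}$ times a ratio $P_{[j-k+1,j-1]\cup\{i\}}/P_{[j-k,j-1]}$. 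Concretely, I expect a relation of the shape
\[
A_{\cdot,\, j} = (\text{nonneg. combination of } A_{\cdot,\,j-1} \text{ and a cyclically shifted column}),
\]
or more usefully, a cofactor/Laplace-type expansion of $\Delta_{I, J_{a,b}}$ along the rightmost column of the block that writes it, up to a power of $(t+(-1)^k\lp)$ or a monomial in $t$, as a sum of products $\Delta_{I', J_{a,b-1}}(\text{stuff})\cdot(\text{positive})$. The cleanest route is probably: pass through the $D$-symmetry (Corollary \ref{cor_D_minors}) or the $\PR$-symmetry (Lemma \ref{lem_PR_reduction}) to rotate $J_{a,b}$ so that the block abuts column $1$, reducing to understanding how a single leftward column-shift affects the minor; then a direct Lindström-Lemma computation on the network for $g(N|t)$ (analogous to Lemma \ref{lem_one_row_minors}, Lemma \ref{lem_lp_ind_minors}) handles that shift. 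At each shift the zero-row set changes by \eqref{eq_Z_cyc_int}/\eqref{eq_Z_intersect}, and condition $C_{a,b}$ is precisely engineered to track which terms in the expansion survive (nonzero) versus vanish.

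\textbf{Main obstacle.} The hard part will be the inductive step: pinning down the exact column relation in $g(N|t)$ that lets one decrement $b$, and verifying that it is \emph{$r$-positive} (i.e. the $\lp^i$-coefficients alternate in sign in the prescribed way) rather than merely nonzero. The sign $(-1)^{(r-1)i}$ bookkeeping interacts delicately with the cyclic wrap-around (the $\lp$ in the top-right corner of the one-row building block, and more generally the $c_{ij}=\lp$ entries), so I expect the bulk of the work to be a careful Lindström-Lemma argument on the planar network for $g(N|t)$ — choosing the right family of vertex-disjoint paths realizing each monomial, and reading off both the $\lp$-degree and the sign of the associated permutation — together with a combinatorial lemma showing that condition $C^r_{a,b}$ is equivalent to the existence of such a path family. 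The reductions in \S\ref{sec_reductions} (reducing to $r \leq k$ and to $J$ reflected basic) are what make this finite and tractable, and Lemma \ref{lem_lp_ind_minors} supplies the positivity input for the $\lp$-independent pieces; everything else is assembling these with the recursion on $b$.
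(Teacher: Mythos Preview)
Your proposed induction on $b$ is not the paper's approach, and as written it has a genuine gap in the inductive step. You assert the existence of a ``three-term Pl\"ucker-type relation among consecutive columns'' of $g(N|t)$ that would let you decrement $b$, but you neither state such a relation precisely nor verify that its coefficients are $r$-positive; you yourself flag this as the ``main obstacle,'' and nothing in the paper suggests such a column recursion exists in a form that controls the $\lp$-sign pattern. Without this, the induction does not go through.

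The paper's argument is structurally quite different and avoids any recursion on $b$. It proceeds as follows: (i) if $I$ fails $C_{a,b}$, a direct rank/zero-row count kills the minor; (ii) if $J_{a,b}$ fits in a cyclic interval of size $k$, a $\PR$-shift (Lemma \ref{lem_PR_reduction}) moves it into the first $k$ columns, reducing to the $\lp$-independent case (Lemma \ref{lem_r_pos}); (iii) otherwise $Z(J_{a,b})=\emptyset$, and for those $I$ containing enough of $Z_1$ (or, after a shift, of $Z_2$) the submatrix $A_{I,J_{a,b}}$ is block lower-triangular, so $\Delta_{I,J_{a,b}}$ \emph{factors} as a product of two $\lp$-independent minors, each positive by Lemma \ref{lem_r_pos}(2). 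The crucial final step is that the $I$'s handled in (iii) include all \emph{basic} $r$-subsets of $S=[n]\setminus(Y_1\cup Y_2)$, and then Lemma \ref{lem_basic=basis} (the positive Laurent phenomenon for Pl\"ucker coordinates) expresses every remaining $\Delta_{I,J_{a,b}}$ as a positive Laurent polynomial in these already-established $r$-positive minors, which are moreover monomials in $\lp$. This cluster-algebraic bootstrap is the idea you are missing; it replaces your hoped-for column recursion entirely.
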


Thanks to the reductions in \S \ref{sec_reductions}, this result implies Proposition \ref{prop_posit_key}. The proof of Proposition \ref{prop_r_pos} is rather technical. The idea is to use Lemma \ref{lem_lp_ind_minors} and the cyclic shifting map to show that a large class of the minors $\Delta_{I,J_{a,b}}$ are $r$-positive, and then to show that all other minors of the form $\Delta_{I,J_{a,b}}$ are either zero, or can be expressed as positive Laurent polynomials in the minors that are known to be $r$-positive. We carry out the first step with Lemma \ref{lem_r_pos}, and the second step in \S \ref{sec_posit_proof_complete}.

\begin{lem}
\label{lem_r_pos}
Fix $r \leq k$.
\begin{enumerate}
\item The submatrix $A_{I,J_{a,b}}$ is independent of $\lp$ if and only if $I \subset [r+b-k+1,n]$, or $a = r$.
\item If $A_{I,J_{a,b}}$ is independent of $\lp$, then $\Delta_{I,J_{a,b}}$ is positive (equivalently, $r$-positive) if $I$ satisfies condition $C_{a,b}$, and zero otherwise.
\item If there is some $c$ such that $J_{a,b} - c = J_{a',b'}$ and the submatrix $A_{I-c,J_{a',b'}}$ is independent of $\lp$, then Proposition \ref{prop_r_pos} holds for $\Delta_{I,J_{a,b}}$. Here $S - c$ is the subset obtained from $S$ by subtracting $c$ from each element, and interpreting the result mod $n$.
\end{enumerate}
\end{lem}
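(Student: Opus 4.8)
The plan is to deduce all three parts from the block structure of the folded matrix $A = g(N|t)$ recorded in \eqref{eq_three_categories}, combined with the already-established positivity of $\lp$-independent minors (Lemma \ref{lem_lp_ind_minors}) and the behaviour of minors under the cyclic shift $\PR$ (Corollary \ref{cor_PR_minors}, Lemma \ref{lem_PR_reduction}). For part (1), recall from \eqref{eq_three_categories} that an entry $A_{ij}$ involves $\lp$ exactly when $j \ge i + k$. Of the columns of $J_{a,b} = [1,a] \cup [a+b+1, r+b]$, those in $[1,a]$ are $\le k$ and so $\lp$-free; thus $\lp$ can enter $A_{I,J_{a,b}}$ only through the columns in $[a+b+1, r+b]$, the largest being $r+b$, and a row $i \in I$ hits an $\lp$-bearing entry in one of those columns precisely when $i \le r+b-k$ (when $a < r$ the relevant column range is automatically nonempty). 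Hence $A_{I,J_{a,b}}$ is $\lp$-independent if and only if $I \subseteq [r+b-k+1, n]$, the one exception being $a = r$, where $J_{a,b} = [1,r] \subseteq [1,k]$ and the submatrix is $\lp$-free for every $I$. This is exactly statement (1).

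For part (2), assume $A_{I,J_{a,b}}$ is $\lp$-independent. Since $r \le k$, Lemma \ref{lem_lp_ind_minors} applies directly: writing $I = \{i_1 < \cdots < i_r\}$ and $J_{a,b} = \{j_1 < \cdots < j_r\}$ (so $j_s = s$ for $s \le a$ and $j_s = s+b$ for $s > a$), the map $\Delta_{I,J_{a,b}}$ is positive when $i_s - n + k \le j_s \le i_s$ for all $s$, and identically zero otherwise. What remains is to show that the conjunction of these $2r$ inequalities — many of which are automatic or redundant — is precisely condition $C_{a,b}$ of Definition \ref{defn_C_a_b}. I would carry this out by translating the ``binding'' inequalities into statements about the sizes of $I \cap Z_1$, $I \cap Z_2$, and $I \cap Z(J_{a,b})$, using the explicit formulas \eqref{eq_Z_i} for $Z_1 = Z([1,a])$ and $Z_2 = Z([a+b+1,r+b])$, and \eqref{eq_Z_intersect} for $Z(J_{a,b})$; for instance, $i_a \le a+n-k$ is equivalent to $|I \cap Z_1| \le r-a$, and $i_{a+1} \ge a+b+1$ is equivalent, given $\lp$-independence, to $|I \cap Z_2| \le a$, and each of these forces the remaining corresponding inequalities. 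I expect this bookkeeping to be the main obstacle: the descriptions of $Z_1, Z_2, Z(J_{a,b})$ branch according as $a$ is $0$, strictly between $0$ and $r$, or equal to $r$, and the cyclic-interval conventions interact with whether $r+b \le k$, so one must be careful with the degenerate and wrap-around cases.

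For part (3), suppose $J_{a,b} - c = J_{a',b'}$ and $A_{I-c, J_{a',b'}}$ is $\lp$-independent. Applying Lemma \ref{lem_PR_reduction} (and the explicit formula of Corollary \ref{cor_PR_minors}) $c$ times, the three properties ``$r$-non-negative'', ``$r$-positive'', ``identically zero'' hold for $\Delta_{I,J_{a,b}}$ precisely when they hold for $\Delta_{I-c, J_{a',b'}}$. By part (2) applied to $\Delta_{I-c, J_{a',b'}}$, the latter is $r$-positive if $I-c$ satisfies $C_{a',b'}$ and identically zero otherwise. Finally, shifting all column indices by $c$ shifts the zero rows by $c$ (so $Z(J-c) = Z(J) - c$) and carries the two constituent intervals $[1,a], [a+b+1,r+b]$ of $J_{a,b}$ onto the constituent intervals of $J_{a',b'}$; hence $I$ satisfies $C_{a,b}$ if and only if $I-c$ satisfies $C_{a',b'}$. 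Chaining these equivalences yields the conclusion of Proposition \ref{prop_r_pos} for $\Delta_{I,J_{a,b}}$, which is what part (3) asserts.
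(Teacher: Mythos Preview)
Your proposal is correct and follows essentially the same approach as the paper's proof: part (1) via the criterion $j_r - i_1 < k$ from \eqref{eq_three_categories}, part (2) by specializing Lemma \ref{lem_lp_ind_minors} and reducing the resulting inequalities (the paper isolates the same three binding inequalities $i_a \le n+a-k$, $i_{a+1} \ge a+b+1$, $i_r \le r+b+n-k$ that you identify) to condition $C_{a,b}$ via \eqref{eq_Z_i} and \eqref{eq_Z_intersect}, and part (3) by Lemma \ref{lem_PR_reduction} together with the cyclic symmetry of the zero rows. The paper likewise leaves the detailed case-check in part (2) as ``straightforward,'' so your acknowledgement of the bookkeeping burden there is in line with the original.
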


\begin{proof}
Let $I = \{i_1 < \ldots < i_r\}$ and $J_{a,b} = \{j_1 < \ldots < j_r\}$. By \eqref{eq_three_categories}, $A_{ij}$ is independent of $\lp$ if and only if $j-i < k$, so $A_{I,J^{a,b}}$ is independent of $\lp$ if and only if
\begin{equation}
\label{eq_j_r_i_1}
j_r - i_1 < k.
\end{equation}
If $a = r$, then $J_{a,b} = [r]$, so \eqref{eq_j_r_i_1} holds for every $I$. If $a \neq r$, then $j_r = r+b$, so \eqref{eq_j_r_i_1} holds if and only if $i_1 > r+b-k$. This proves (1).

Now suppose $A_{I,J_{a,b}}$ is independent of $\lp$.
Specializing Lemma \ref{lem_lp_ind_minors} to the case $J = J_{a,b}$, we see that $\Delta_{I,J_{a,b}}$ is positive if $I$ satisfies
\begin{equation}
\label{eq_i_s_bloated}
i_s \in \begin{cases}
[s,s+n-k] & \text{ if } s = 1, \ldots, a \\
[s+b,s+b+n-k] & \text{ if } s = a+1, \ldots, r,
\end{cases}
\end{equation}
and zero otherwise. So to prove (2), we must show that:

\textit{Given the assumption $I \subset [r+b-k+1,n]$ or $a = r$, $I$ satisfies condition $C_{a,b}$ if and only if $I$ satisfies \eqref{eq_i_s_bloated}.}

To see this, first suppose $a \in [r-1]$. In this case, \eqref{eq_i_s_bloated} is equivalent to the three inequalities
\begin{equation}
\label{eq_i_s}
i_a \leq n+a-k, \quad\quad i_{a+1} \geq a+b+1, \quad\quad i_r \leq r+b+n-k.
\end{equation}
Using \eqref{eq_Z_i}, \eqref{eq_Z_intersect} and considering separately the cases $r+b \geq k$ and $r+b < k$, it is straightforward to check that for $I \subset [r+b-k+1,n]$, \eqref{eq_i_s} is equivalent to condition $C_{a,b}$. If $a = r$, \eqref{eq_i_s_bloated} is equivalent to the first inequality of \eqref{eq_i_s}; if $a = 0$, \eqref{eq_i_s_bloated} is equivalent to the last two inequalities of \eqref{eq_i_s}. The verification of the claim in these ``degenerate'' cases is similar.

For (3), suppose $A_{I',J_{a',b'}}$ is independent of $\lp$, where $I' = I-c$ and $J_{a',b'} = J_{a,b}-c$. The cyclic symmetry of the locations of zeroes in the matrix $A$ implies that $I'$ satisfies condition $C_{a',b'}$ if and only if $I$ satisfies condition $C_{a,b}$. Thus, Proposition \ref{prop_r_pos} holds for $\Delta_{I,J_{a,b}}$ by Lemma \ref{lem_PR_reduction} and part (2).
\end{proof}


\subsubsection{Conclusion of the proof}
\label{sec_posit_proof_complete}

We now complete the proof of Proposition \ref{prop_posit_key} (and thus the proof of Theorem \ref{thm_R_posit}) by proving Proposition \ref{prop_r_pos}.

Suppose $I$ does not satisfy condition $C_{a,b}$. If $I \cap Z(J_{a,b}) \neq \emptyset$, then the submatrix $A_{I,J_{a,b}}$ has a row of zeroes, so its determinant vanishes. If $|I \cap Z_1| > r-a$, then the first $a$ columns of $A_{I,J_{a,b}}$ have at least $r-a+1$ zero rows, so again the determinant vanishes. The case $|I \cap Z_2| > a$ is similar.

Now suppose $I$ satisfies condition $C_{a,b}$. If $J_{a,b}$ is contained in a cyclic interval of size $k$, then there is some $c$ so that $J_{a,b} - c = J_{a',b'} \subset [k]$. The first $k$ columns of $A$ are independent of $\lp$, so $\Delta_{I,J_{a,b}}$ is $r$-positive by Lemma \ref{lem_r_pos}(3).

Assume that $J_{a,b}$ is not contained in a cyclic interval of size $k$. The zeroes in each row of $A$ are located in $k-1$ cyclically consecutive columns, so in this case, the submatrix $A_{[n], J_{a,b}}$ does not have a row of zeroes. In other words, $Z(J_{a,b}) = \emptyset$, so the first part of condition $C_{a,b}$ is automatically satisfied. Also, $Z_1$ and $Z_2$ are both non-empty, and $|Z_1| = k-a$, $|Z_2| = k-r+a$, so $I$ satisfies condition $C_{a,b}$ if and only if
\[
|Z_i \setminus I| \geq k-r \quad \text{ for } i = 1,2.
\]
Thus, we need to show that $\Delta_{I,J_{a,b}}$ is $r$-positive whenever
\begin{equation}
\label{eq_Y_I}
Y_1 \in {Z_1 \choose k-r}, \quad\quad Y_2 \in {Z_2 \choose k-r}, \quad \text{ and } \quad I \in {[n] \setminus (Y_1 \cup Y_2) \choose r}.
\end{equation}

Let $Y_1, Y_2,$ and $I$ be as in \eqref{eq_Y_I}. Suppose first that $I \supset Z_1 \setminus Y_1$. In this case, since $Z_1 = [n+a-k+1,n]$, the lower left $(r-a) \times a$ submatrix of $A_{I,J_{a,b}}$ consists entirely of zeroes, so we have
\begin{equation*}
\Delta_{I,J_{a,b}}(A) = \Delta_{I', [1,a]}(A) \Delta_{I'', [a+b+1, r+b]}(A),
\end{equation*}
where $I' = I \setminus (Z_1 \setminus Y_1)$ and $I'' = Z_1 \setminus Y_1$. The first $k$ columns and the last $k$ rows of $A$ are independent of $\lp$, so the submatrices $A_{I', [1,a]}$ and $A_{I'', [a+b+1,r+b]}$ are independent of $\lp$. The $a$-subset $I'$ is disjoint from $Z_1$, so $I'$ satisfies condition $C^a_{a,0}$; similarly, since $Z_1 \cap Z_2 = Z(J_{a,b}) = \emptyset$, the $(r-a)$-subset $I''$ is disjoint from $Z_2$, so $I''$ satisfies condition $C^{r-a}_{0,a+b}$. Thus, $\Delta_{I', [1,a]}$ and $\Delta_{I'', [a+b+1, r+b]}$ are positive rational functions by Lemma \ref{lem_r_pos}(2), so $\Delta_{I,J_{a,b}}$ is a positive (hence $r$-positive) rational function.

If $I \supset Z_2 \setminus Y_2$, set $c = a+b$, so that $J_{a,b} - c = J_{r-a,n-r-b}$. Let $I' = I - c$, $Z'_1 = Z_2 - c$, and $Y'_1 = Y_2 - c$. Clearly $Z'_1$ consists of the zero rows of columns $[1,r-a]$, and $I' \supset Z'_1 \setminus Y'_1$, so $\Delta_{I',J_{r-a,n-r-b}}$ is $r$-positive by the previous paragraph, and $\Delta_{I,J_{a,b}}$ is $r$-positive by Lemma \ref{lem_PR_reduction}.

It remains to consider the case where $I \not \supset Z_i \setminus Y_i$ for $i = 1, 2$. This case is subtler, and we proceed indirectly. Set $S = [n] \setminus (Y_1 \cup Y_2)$. Call a subset of $S$ an {\em $S$-interval} if it is of the form $S \cap [c,d]$ (with $1 \leq c \leq d \leq n$), and it is a maximal subset of $S$ with this property. Let $I$ be a basic $r$-subset of $S$; this means that $I$ consists of one or two $S$-intervals, and if there are two $S$-intervals, one of them contains the largest element of $S$. If $I \subset [r+b-k+1,n]$, then $\Delta_{I,J_{a,b}}$ is $r$-positive by Lemma \ref{lem_r_pos}, so suppose $I \not \subset [r+b-k+1,n]$. We claim that
\begin{equation}
\label{eq_I_I'}
I \subset [1,a+b] \cup [n+a-k+1, n].
\end{equation}

To prove \eqref{eq_I_I'}, recall that $Z_1 = [n+a-k+1,n]$, and $Z_2 = [r+b-k+1,a+b]$. Since $I \not \subset [r+b-k+1,n]$, we must have $r+b-k+1 \geq 2$, so $[r+b-k+1,a+b]$ is an actual interval of $[n]$ (i.e., it doesn't ``wrap around''), and
\[
S = [1,r+b-k] \cup (Z_2 \setminus Y_2) \cup [a+b+1,n+a-k] \cup (Z_1 \setminus Y_1).
\]
By assumption, $I$ is a basic $r$-subset of $S$ which intersects $[1,r+b-k]$ and does not contain all of $Z_2 \setminus Y_2$ or $Z_1 \setminus Y_1$. There are two possibilities: either $I$ is a single $S$-interval contained in $[1, r+b-k] \cup Z_2 \setminus Y_2$, or $I = I_1 \cup I_2$, where $I_1$ is an $S$-interval contained in $[1,r+b-k] \cup Z_2 \setminus Y_2$, and $I_2$ is an $S$-interval contained in $Z_1 \setminus Y_1$. In either case, $I \cap [a+b+1,n+a-k] = \emptyset$, so \eqref{eq_I_I'} holds.

Now let $I' = I - (a+b)$, and note that $J_{a,b} - (a+b) = J_{r-a,n-r-b}$. By \eqref{eq_I_I'}, we have
\[
I' \subset [n-b-k+1,n] = [r+(n-r-b)-k+1,n],
\]
so $\Delta_{I,J_{a,b}}$ is $r$-positive by Lemma \ref{lem_r_pos}.

Now let $J$ be an arbitrary $r$-subset of $S$. Lemma \ref{lem_basic=basis} says that $\Delta_{J,J_{a,b}}$ can be expressed as a positive Laurent polynomial in the minors $\Delta_{J',J_{a,b}}$ with $J'$ a basic subset of $S$. We have shown that these $\Delta_{J',J_{a,b}}$ are $r$-positive; furthermore, it is clear from the proof that each of these $\Delta_{J',J_{a,b}}$ is a monomial with respect to $\lp$. It follows that $\Delta_{J,J_{a,b}}$ is $r$-positive (although not necessarily a monomial).

We conclude that $\Delta_{I,J_{a,b}}$ is $r$-positive whenever $I$ satisfies \eqref{eq_Y_I}, completing the proof.

\section{Proof of the identity $g \circ R = g$}
\label{sec_pf1}

In this section we prove Theorem \ref{thm_hard}. Suppose $u = M|s \in \X{\ell}$ and $v = N|t \in \X{k}$. Let $A = g(u)g(v)$ (viewed as a folded matrix), and let $A_t = A|_{\lp = (-1)^{k-1}t}$, and $A_s = A|_{\lp = (-1)^{\ell-1}s}$. Define $v' = N'|t \in \X{k}$ and $u' = M'|s \in \X{\ell}$ by $R(u,v) = (v',u')$. We must show that
\begin{equation}
\label{eq_hard_to_prove}
g(u)g(v) = g(v')g(u').
\end{equation}

For $I \in {[n] \choose k}$, let $P'_I = P_I(N')$, and for $J \in {[n] \choose \ell}$, let $Q'_J = Q^J_s(M') := P_{w_0(J)}(S_s(M'))$. By \eqref{eq_P_Q_N'_M'}, we have
\begin{equation}
\label{eq_P'_Q'}
\dfrac{P'_I}{P'_{[n-k+1,n]}} = \dfrac{\Delta_{I, [k]}(A_t)}{\Delta_{[n-k+1,n], [k]}(A_t)} \quad\quad \text{ and } \quad\quad \dfrac{Q'_J}{Q'_{[\ell]}} = \dfrac{\Delta_{[n-\ell+1,n],J}(A_s)}{\Delta_{[n-\ell+1,n], [\ell]}(A_s)}.
\end{equation}

The key to the proof of Theorem \ref{thm_hard} is the following identity.

\begin{prop}
\label{prop_key}
For $r = 1, \ldots, n$, we have
\begin{equation}
\label{eq_key}
(t + (-1)^k \lp) \dfrac{Q'_{[\ell-1] \cup \{r\}}}{Q'_{[\ell]}} = \sum_{a = 1}^n (-1)^{n+a} \dfrac{P'_{[n-k,n] \setminus \{a\}}}{P'_{[n-k+1,n]}} A_{ar}.
\end{equation}
\end{prop}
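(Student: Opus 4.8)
The plan is to interpret both sides of \eqref{eq_key} as entries of a single matrix product, using the fact that $g(v')$ and $g(u')$ have distinguished submatrices controlled by Plücker coordinates of $N'$ and $M'$, together with the identities \eqref{eq_P'_Q'}. First I would note that by Theorem~\ref{thm_hard} (to be proved here — but the logical structure is that Proposition~\ref{prop_key} is the engine that \emph{drives} that proof, so strictly I should phrase everything in terms of the defining formula for $R$ and \eqref{eq_P'_Q'}, which do not presuppose \eqref{eq_hard_to_prove}), we want to show $A = g(u)g(v)$ agrees with $g(v')g(u')$. The left-hand side of \eqref{eq_key}, up to the scalar $Q'_{[\ell]}$ in the denominator and the factor $Q'_{[n-\ell+1,n]}$-type normalizations, should be recognizable as an entry of the folded matrix $g(u')$: indeed Lemma~\ref{lem_g_in_Q} expresses $g(M'|s)_{ij}$ in terms of ratios $Q^{[i+1,i+k-1]\cup\{j\}}_s/Q^{[i+1,i+k]}_s$, and the factor $(t+(-1)^k\lp)$ matches the $c'_{ij}$ coefficient structure once one accounts for the determinant formula of Proposition~\ref{prop_g_props}\eqref{itm:det_g}. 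Simultaneously, the sum on the right-hand side, with the signed cofactor-type coefficients $(-1)^{n+a}P'_{[n-k,n]\setminus\{a\}}/P'_{[n-k+1,n]}$, is exactly a Laplace/cofactor expansion that picks out a single row of $g(v')^{-1}A = g(u')$ (or, dually, expresses that $A$'s $r$-th column lies in the span dictated by $N'$). So the strategy is: identify the RHS as the $r$-th entry of a specific row of the product $g(v')^{-1} g(u) g(v)$, identify the LHS as the same entry of $g(u')$, and reduce \eqref{eq_key} to a known relation.

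Concretely, the key steps in order: (1) Use \eqref{eq_P'_Q'} to rewrite $P'_{[n-k,n]\setminus\{a\}}/P'_{[n-k+1,n]}$ as a ratio of $k\times k$ minors $\Delta_{[n-k,n]\setminus\{a\},[k]}(A_t)/\Delta_{[n-k+1,n],[k]}(A_t)$, and similarly rewrite $Q'_{[\ell-1]\cup\{r\}}/Q'_{[\ell]}$ via the $S$-twisted minors of $A_s$. (2) Recognize the alternating sum $\sum_a (-1)^{n+a}\Delta_{[n-k,n]\setminus\{a\},[k]}(A_t)\,A_{ar}$ as the cofactor expansion of a $(k+1)\times(k+1)$ determinant: it equals $\pm\Delta_{[n-k,n],[k]\cup\{r'\}}$ of an augmented matrix where the extra column is the $r$-th column of $A$. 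Since $A_t$ has rank exactly $k$ (Corollary~\ref{cor_linear_alg}\eqref{itm:preserves_dim}), certain of these $(k+1)\times(k+1)$ minors vanish, which is what lets the $\lp$-dependence collapse into the single factor $(t+(-1)^k\lp)$. (3) On the other side, apply Lemma~\ref{lem_D} / Corollary~\ref{cor_D_minors} or the Jacobi complementary-minor identity \eqref{eq_Jacobi} to convert the $\ell$-minors of $A_s$ governing $Q'$ into complementary $(n-\ell)$-minors, matching the determinant factor $(t+(-1)^k\lp)^{\text{something}}$ from Proposition~\ref{prop_g_props}\eqref{itm:det_g}. (4) Cauchy--Binet: expand $\Delta_{\bullet,\bullet}(A) = \Delta_{\bullet,\bullet}(g(u)g(v))$ as a sum over intermediate index sets, and check term-by-term that the two resulting expressions agree. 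I would organize the bookkeeping by separating the contributions according to whether the intermediate index set lies in $[1,k]$ (the $\lp$-independent block of $g(v)$) or meets $[k+1,n]$ (the $\lp\cdot(\text{stuff})$ block), which is the mechanism producing the binomial $(t+(-1)^k\lp)$.

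The main obstacle I expect is step~(2)--(4): controlling exactly how the loop parameter $\lp$ enters. Both sides of \eqref{eq_key} are \emph{polynomial} in $\lp$ of degree one (the LHS visibly so, the RHS because $A_{ar}$ is either constant or a multiple of $\lp$ and the $P'$-ratios are independent of $\lp$), so it suffices to match the constant term and the coefficient of $\lp$ separately — but verifying that the RHS really is linear in $\lp$, i.e.\ that all higher-degree contributions cancel, is precisely where the rank-$k$ degeneracy of $A_t$ (equivalently, the vanishing of oversized minors of the first $k$ columns of $A$) must be invoked, and getting the signs right in the cofactor expansions modulo $n$ (with Convention~\ref{conv_pluc} in force) is delicate. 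A secondary obstacle is that \eqref{eq_key} as stated has no restriction on $r$ running over all of $[n]$, so one cannot simply quote a three-term Plücker relation; instead I anticipate needing to treat the ``wrap-around'' cases $r\le n-k$ versus $r>n-k$ separately, exactly as the $c_{ij}$ in Definition~\ref{defn_g} split into cases, and to handle the degenerate case $\ell=k$ (where $t=s$ is excluded and $g(u')$ has no $\lp$-dependence in the relevant entries) as a limiting consistency check. Once \eqref{eq_key} is established for all $r$, the passage to the full matrix identity \eqref{eq_hard_to_prove} should be formal: \eqref{eq_key} says that a particular row of $g(v')^{-1}A$ coincides with a row of $g(u')$, and then the symmetry under $S$ (Lemma~\ref{lem_S}, and the fact that $R$ commutes with $S$) together with Corollary~\ref{cor_N_i_unique} promotes this single-row identity to the equality of the two matrices.
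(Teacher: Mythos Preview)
Your high-level strategy matches the paper's: recognize that both sides of \eqref{eq_key} are linear in $\lp$, then verify the two claims (i) $p_r((-1)^{k-1}t)=0$ (constant term) via a cofactor expansion of a $(k+1)\times(k+1)$ minor of the rank-$k$ matrix $A_t$, and (ii) the coefficient of $\lp$ equals $(-1)^k Q'_{[\ell-1]\cup\{r\}}/Q'_{[\ell]}$. One small correction: linearity of the right-hand side in $\lp$ does \emph{not} require rank degeneracy. It follows immediately from the $(2n-\ell-k)$-shifted unipotent structure of the unfolding $X$ of $A$, which forces $X_{2n+a,r}=0$ for all $a\ge n-k$, so $A_{ar}$ has no $\lp^2$ term in the surviving range of $a$. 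The rank-$k$ degeneracy is used only for claim (i).

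The real gap is in claim (ii), your steps (3)--(4). Proposing to ``Cauchy--Binet expand $\Delta_{\bullet,\bullet}(g(u)g(v))$ over intermediate index sets and check term by term,'' together with a Jacobi/complementary-minor conversion, is too vague to succeed: the number of terms is large, the sign bookkeeping is unpleasant, and there is no apparent reason the two sides should match termwise. The paper's key idea, which your plan is missing, is to introduce row- and column-reduced matrices $g(u)^*$, $g(v)^*$ (obtained by adding multiples of the $\lp$-independent last $\ell$ rows of $g(u)$, resp.\ first $k$ columns of $g(v)$, to kill the constant terms elsewhere) so that $A^*:=g(u)^*g(v)^*$ has an explicit four-block structure in which three of the blocks are triangular or nearly so. Crucially, $\Delta_{I,J}(A^*)=\Delta_{I,J}(A)$ whenever $(I,J)$ is a ``good pair'' (\eqref{eq_good_pair}), and the minors one needs are all of this type. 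With this in hand the case $\ell>k$ reduces to a direct triangular expansion, and the case $\ell=k$ (which you correctly anticipate as special) is handled by computing one auxiliary coefficient in two ways. Without the $A^*$ construction I do not see how your plan closes.

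Finally, your proposed passage from \eqref{eq_key} to \eqref{eq_hard_to_prove} has a circularity: Corollary~\ref{cor_N_i_unique} depends on Lemma~\ref{lem_recover_N_1}, whose proof already invokes Theorem~\ref{thm_hard}. The paper instead uses the cyclic symmetry $\PR$: one checks directly from the definition of $R$ (via $\sh\circ g=g\circ\PR$ and $\sh\circ h=h\circ\PR$) that equality of the last row of $h(v')A$ with that of $(t+(-1)^k\lp)g(u')$ propagates to all rows, and then left-multiplies by $g(v')$ using the identity $g(v')h(v')=(t+(-1)^k\lp)\cdot\Id$. The $S$-symmetry you invoke is fine (it follows from the definition of $R$), but it is used inside the proof of Proposition~\ref{prop_key} to reduce $\ell<k$ to $\ell>k$, not to promote one row to the full matrix identity.
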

(Note that by Convention \ref{conv_pluc}, the terms on the right-hand side of \eqref{eq_key} with $a < n-k$ are zero, and the left-hand side is zero when $r \leq \ell-1$.)

Before proving this identity, we use it to deduce Theorem \ref{thm_hard}. Consider the folded matrices
\[
B = (t+(-1)^k \lp) \cdot g(u') \quad\quad \text{ and } \quad\quad C = h(v')g(u)g(v),
\]
where $h(v')$ is defined by
\[
h(v')_{ij} = (-1)^{i+j} c''_{ij}(k,t) \dfrac{P'_{[i-k,i] \setminus \{j\}}}{P'_{[i-k+1,i]}}, \quad\quad\quad
c''_{ij}(k,t) =
\begin{cases}
1 & \text{ if } i > k \\
t & \text{ if } i \leq k \text{ and } i \geq j \\
(-1)^n \lp & \text{ if } i \leq k \text { and } i < j.
\end{cases}
\]
The right-hand side of \eqref{eq_key} is equal to $C_{nr}$, and by Lemma \ref{lem_g_in_Q}, the left-hand side of \eqref{eq_key} is equal to $B_{nr}$; thus, Proposition \ref{prop_key} says that $B_{nr} = C_{nr}$ for all $r$. Recall the cyclic shift map $\PR$ and the shift automorphism $\sh$ from \S \ref{sec_cyclic_symm}. Since $\sh \circ \, g = g \circ \PR$ (Lemma \ref{lem_PR}), $\sh \circ \, h = h \circ \PR$ (this is straightforward to check), and $\PR$ commutes with $R$ (Theorem \ref{thm_master}\eqref{itm:R_commutes_symms}), the equality of the last rows of $B$ and $C$ implies that $B = C$.

By \cite[Lem. 7.7]{F1}, the matrix $h(v')$ satisfies
\[
g(v')h(v') = (t+(-1)^k \lp) \cdot Id,
\]
so left-multiplying $B$ and $C$ by $g(v')$ gives the desired equality \eqref{eq_hard_to_prove}.

\begin{proof}[Proof of Proposition \ref{prop_key}]
Let
\[
p_r(\lp) = \sum_{a = n-k}^n (-1)^{n+a} \dfrac{P'_{[n-k,n] \setminus \{a\}}}{P'_{[n-k+1,n]}} A_{ar}
\]
be the right-hand side of \eqref{eq_key}. Let $X$ be the unfolding of $A$. The entries of the matrices $g(u)$ and $g(v)$ are at most linear in $\lp$, so the entries of $A$ are at most quadratic in $\lp$, and
\[
A_{ij} = X_{ij} + \lp X_{n+i,j} + \lp^2 X_{2n+i,j}.
\]
Recall from \S \ref{sec_geom_unip_Gr} that an unfolded matrix $X$ is {\em $m$-shifted unipotent} if $X_{ij} = 0$ when $i-j > m$, and $X_{ij} = 1$ when $i-j=m$. The matrices $g(u)$ and $g(v)$ are $(n-\ell)$- and $(n-k)$-shifted unipotent, respectively, so their product is $(2n-\ell-k)$-shifted unipotent. This implies, in particular, that if $a \geq n-k$, then $A_{ar}$ is either constant or linear in $\lp$, so $p_r(\lp)$ is a polynomial of degree at most one. Proposition \ref{prop_key} is therefore an immediate consequence of the following two claims:
\begin{enumerate}
\item $(-1)^{k-1}t$ is a root of $p_r(\lp)$;
\item the coefficient of $\lp$ in $p_r(\lp)$ is $ \ds (-1)^k \dfrac{Q'_{[\ell-1] \cup \{r\}}}{Q'_{[\ell]}}$.
\end{enumerate}

To prove the first claim, let $D = (A_t)_{[n-k,n], <1, 2, \ldots, k, r>}$ denote the (generalized) submatrix of $A_t$ consisting of the last $k+1$ rows, and columns $1, \ldots, k, r$, in that order. If $r \in [k]$, then clearly $\det(D) = 0$; if $r \not \in [k]$, then $\det(D)$ is still zero because $g(v)|_{\lp = (-1)^{k-1}t}$ has rank $k$ by Proposition \ref{prop_g_props}\eqref{itm:rank_k}. On the other hand, expanding the determinant along column $r$ gives $\det(D) = p_r((-1)^{k-1}t)$, so (1) follows.

It remains to prove the second claim. Since the coefficient of $\lp$ in $A_{ar}$ is $X_{n+a,r}$, claim (2) can be rephrased as the identity
\begin{equation}
\label{eq_claim_2}
\dfrac{Q'_{[\ell-1] \cup \{r\}}}{Q'_{[\ell]}} = \sum_{a = n-k}^n (-1)^{k+n+a} \dfrac{P'_{[n-k,n] \setminus \{a\}}}{P'_{[n-k+1,n]}} X_{n+a,r}.
\end{equation}
If $r \leq \ell-1$, then $X_{n+a,r} = 0$ for $a \geq n-k$ (since $X$ is $(2n-\ell-k)$-shifted unipotent), so \eqref{eq_claim_2} holds trivially in this case.

To prove \eqref{eq_claim_2} for $r \geq \ell$, we start by massaging the (folded) matrices $g(u)$ and $g(v)$ into a more convenient form. By Proposition \ref{prop_g_props}\eqref{itm:rank_k}, the matrix $g(u)|_{\lp = (-1)^{\ell-1}s}$ has rank $\ell$. This means that we may add linear combinations of the last $\ell$ rows of $g(u)$ (which are linearly independent, and do not depend on $\lp$) to the first $n-\ell$ rows to obtain the matrix $g(u)^*$, where
\[
g(u)^*_{ij} = \begin{cases}
g(u)_{ij} & \text{ if } i \geq n-\ell+1 \\
(\lp + (-1)^\ell s) g(u)_{ij} & \text{ if } j-i \geq \ell \\
0 & \text{ otherwise}.
\end{cases}
\]
Similarly, we may add linear combinations of the first $k$ columns of $g(v)$ to the last $n-k$ columns to obtain the matrix $g(v)^*$, where
\[
g(v)^*_{ij} = \begin{cases}
g(v)_{ij} & \text{ if } j \leq k \\
(\lp + (-1)^k t) g(v)_{ij} & \text{ if } j-i \geq k \\
0 & \text{ otherwise}.
\end{cases}
\]
Define $A^* = g(u)^* g(v)^*$. See Figure \ref{fig_*_matrices} for an example of the matrices $g(u)^*$, $g(v)^*$, and $A^*$.

\begin{figure}
\begin{equation*}
\left(
\begin{array}{cccc}
* & 0 & \lp & \lp * \\
* & * & 0 & \lp \\
1 & * & * & 0 \\
0 & 1 & * & *
\end{array}
\right)
\left(
\begin{array}{cccc}
* & 0 & \lp & \lp * \\
* & * & 0 & \lp \\
1 & * & * & 0 \\
0 & 1 & * & *
\end{array}
\right)
=
\left(
\begin{array}{cccc}
X_{11} + \lp & \lp X_{52} & \lp X_{53} & \lp X_{54} \\
X_{21} & X_{22} + \lp & \lp X_{63} & \lp X_{64} \\
X_{31} & X_{32} & X_{33} + \lp & \lp X_{74} \\
X_{41} & X_{42} & X_{43} & X_{44} + \lp
\end{array}
\right)
\end{equation*}

\begin{equation*}
\left(
\begin{array}{cccc}
0 & 0 & \lp + s & (\lp +s)* \\
0 & 0 & 0 & \lp + s \\
1 & * & * & 0 \\
0 & 1 & * & *
\end{array}
\right)
\left(
\begin{array}{cccc}
* & 0 & \lp + t & (\lp+t) * \\
* & * & 0 & \lp +t \\
1 & * & 0 & 0 \\
0 & 1 & 0 & 0
\end{array}
\right)
=
\left(
\begin{array}{cc|cc}
\lp+s & (\lp+s) X_{52} & 0 & 0 \\
0 & \lp+s & 0 & 0 \\ \hline
X_{31} & X_{32} & \lp+t & (\lp+t) X_{74} \\
X_{41} & X_{42} & 0 & \lp+t
\end{array}
\right)
\end{equation*}

\caption{Suppose $n = 4, \ell = k = 2$, and $u = M|s, v = N|t \in \X{2}$. The first line shows the product $g(u)g(v) = A$, where the $*$'s are ratios of Pl\"{u}cker coordinates of $M$ or $N$, possibly scaled by $s$ or $t$, and $X$ is the unfolding of $A$. The second line shows the product $g(u)^*g(v)^* = A^*$, with the blocks of $A^*$ indicated.}
\label{fig_*_matrices}
\end{figure}

Given two subsets $I,J \subset [n]$ of the same cardinality, say that $(I,J)$ is a {\em good pair} if $I$ contains or is contained in the interval $[n-\ell+1,n]$, and $J$ contains or is contained in the interval $[1,k]$. The construction of $g(u)^*$ and $g(v)^*$, together with the Cauchy--Binet formula, implies that
\begin{equation}
\label{eq_good_pair}
\Delta_{I,J}(A^*) = \Delta_{I,J}(A) \quad \text{ if } (I,J) \text{ is a good pair.}
\end{equation}

Set
\[
\alpha_s = \lp + (-1)^\ell s, \quad\quad\quad \alpha_t = \lp + (-1)^k t.
\]
The reader may easily verify that the entries of $A^*$ are given by
\begin{equation}
\label{eq_A*}
(A^*)_{ij} = \begin{cases}
X_{ij} & \text{ if } i \geq n-\ell+1 \text{ and } j \leq k \\
\alpha_s X_{n+i,j} & \text{ if } i \leq n-\ell \text{ and } j \leq k \\
\alpha_t X_{n+i,j} & \text{ if }i \geq n-\ell+1 \text{ and } j \geq k+1 \\
\alpha_s \alpha_t X_{2n+i,j} & \text{ if }i \leq n-\ell \text{ and } j \geq k+1.
\end{cases}
\end{equation}
In other words, $A^*$ has the block form

\begin{center}
\begin{tikzpicture}[scale=1.2]

\draw (2,0) -- (2,4);
\draw (0,2) -- (4,2);
\draw (0,0) rectangle (4,4);
\draw (1,3) node[scale=2]{$\alpha_s E$};
\draw (1,1) node[scale=2]{$G$};
\draw (3,3) node[scale=2]{$\alpha_s \alpha_t F$};
\draw (3,1) node[scale=2]{$\alpha_t H$};
\draw[|-|] (-.7,3.8) --node[left]{$n-\ell$} (-0.7,2.2);
\draw[|-|] (-0.7,1.8) --node[left]{$\ell$} (-0.7,0.2);
\draw[|-|] (0.2,-0.7) --node[below]{$k$} (1.8,-0.7);
\draw[|-|] (2.2,-0.7) --node[below]{$n-k$} (3.8,-0.7);

\end{tikzpicture}
\end{center}

\noindent where $G$ is filled with the constant terms of the corresponding entries of $A$, $E$ and $H$ with the coefficients of $\lp$ in the corresponding entries of $A$, and $F$ with the coefficients of $\lp^2$ in the corresponding entries of $A$ (examples are shown in Figures \ref{fig_*_matrices} and \ref{fig_ell>k}). Furthermore, the matrices $E,F,G,H$ satisfy:
\begin{itemize}
\item $H$ and $\fl(E)$ (see \S \ref{sec_schutz}) are upper uni-triangular;
\item all entries in the first column and last row of $F$ are zero.
\end{itemize}
Taken together, these properties imply that row $n-\ell$ (resp., column $k+1$) of $A^*$ has only one nonzero entry, namely, $(A^*)_{n-\ell,k} = \alpha_s$ (resp., $(A^*)_{n-\ell+1,k+1} = \alpha_t$).

The argument now splits into two cases.

\smallskip
\noindent \textbf{Case 1: $\ell \neq k$.}
First note that the argument used to deduce Theorem \ref{thm_hard} from Proposition \ref{prop_key} can be reversed to deduce the latter from the former, so these two results are in fact equivalent. Thanks to Lemma \ref{lem_S} and the fact that the geometric Sch\"{u}tzenberger involution commutes with the geometric $R$-matrix, \eqref{eq_hard_to_prove} holds for $(u,v) \in \X{\ell} \times \X{k}$ if and only if it holds for $(S(v), S(u)) \in \X{k} \times \X{\ell}$. Thus, the $\ell,k$ case of Theorem \ref{thm_hard} is equivalent to the $k,\ell$ case, so the same is true of Proposition \ref{prop_key}, and we may assume $\ell > k$ here.

\begin{figure}

\begin{equation*}
\left(
\begin{array}{cc|ccccc}
\alpha_s X_{81} & \alpha_s X_{82} & 0 & 0 & 0 & 0 & \alpha_s \alpha_t \\
\alpha_s & \alpha_s X_{92} & 0 & 0 & 0 & 0 & 0 \\
0 & \alpha_s & 0 & 0 & 0 & 0 & 0 \\ \hline
X_{41} & X_{42} & \alpha_t & \alpha_t X_{11,4} & \alpha_t X_{11,5} & \alpha_t X_{11,6} & \alpha_t X_{11,7} \\
X_{51} & X_{52} & 0 & \alpha_t & \alpha_t X_{12,5} & \alpha_t X_{12,6} & \alpha_t X_{12,7} \\
X_{61} & X_{62} & 0 & 0 & \alpha_t & \alpha_t X_{13,6} & \alpha_t X_{13,7} \\
X_{71} & X_{72} & 0 & 0 & 0 & \alpha_t & \alpha_t X_{14,7}
\end{array}
\right)
\end{equation*}

\caption{The matrix $A^*$ in the case $n = 7, \ell = 4, k = 2$, with blocks indicated.}
\label{fig_ell>k}

\end{figure}

Using \eqref{eq_good_pair}, \eqref{eq_A*}, and the fact that the matrix $H$ in the block form of $A^*$ is upper uni-triangular (it may be helpful to refer to Figure \ref{fig_ell>k}), we compute
\begin{align*}
\Delta_{[n-\ell+1,n], [\ell-1] \cup \{r\}}(A) &= \Delta_{[n-\ell+1,n], [\ell-1] \cup \{r\}}(A^*) \\
&= (-1)^{k(\ell-k)} \alpha_t^{\ell-k-1} \sum_{a = n-k}^n (-1)^{a-n+k} A^*_{ar} \Delta_{[n-k,n] \setminus \{a\}, [k]}(A^*) \\
&= (-1)^{k(\ell-k)} \alpha_t^{\ell-k} \sum_{a = n-k}^n (-1)^{a-n+k} X_{n+a,r} \Delta_{[n-k,n] \setminus \{a\}, [k]}(A).
\end{align*}
In particular, we have
\[
\Delta_{[n-\ell+1,n], [\ell]}(A) = (-1)^{k(\ell-k)} \alpha_t^{\ell-k} \Delta_{[n-k+1,n], [k]}(A),
\]
and thus
\begin{equation}
\label{eq_Deltas}
\frac{\Delta_{[n-\ell+1,n], [\ell-1] \cup \{r\}}(A)}{\Delta_{[n-\ell+1,n], [\ell]}(A)} = \sum_{a = n-k}^n (-1)^{a-n+k} X_{n+a,r} \frac{\Delta_{[n-k,n] \setminus \{a\}, [k]}(A)}{\Delta_{[n-k+1,n], [k]}(A)}.
\end{equation}

By \eqref{eq_P'_Q'}, the two sides of \eqref{eq_claim_2} are obtained by evaluating the two sides of \eqref{eq_Deltas} at $\lp = (-1)^{\ell-1}s$ and $\lp = (-1)^{k-1}t$, respectively. Note, however, that the entries of the submatrix $A_{[n-\ell+1,n],[k]}$ do not depend on the value of $\lp$, so since $k < \ell$, the right-hand side of \eqref{eq_Deltas} is independent of $\lp$. This means that the left-hand side is also independent of $\lp$, and \eqref{eq_claim_2} follows.

\smallskip
\noindent \textbf{Case 2: $\ell = k$.}

Since $X_{n+a,k}$ is 1 when $a = n-k$ and 0 when $a \geq n-k$, \eqref{eq_claim_2} clearly holds when $r = k$. Fix $r \geq k+1$. Let $z$ denote the coefficient of $\lp$ in $\Delta_{[n-k,n],[k] \cup \{r\}}(A)$. We will deduce \eqref{eq_claim_2} by computing $z$ in two ways. On the one hand, we use \eqref{eq_good_pair}, \eqref{eq_A*}, and the fact that the only nonzero entry in the $(n-k)^{th}$ row of $A^*$ is $A^*_{n-k,k} = \alpha_s$ to compute
\begin{align*}
\Delta_{[n-k,n],[k] \cup \{r\}}(A) &= \Delta_{[n-k,n],[k] \cup \{r\}}(A^*) \\
&= \alpha_s \sum_{a = n-k+1}^n (-1)^{a-n+k-1} A^*_{ar} \Delta_{[n-k+1,n] \backslash \{a\}, [k-1]}(A^*) \\
&= \alpha_s \alpha_t \sum_{a = n-k+1}^n (-1)^{a-n+k-1} X_{n+a,r} \Delta_{[n-k+1,n] \backslash \{a\}, [k-1]}(A),
\end{align*}
which shows that
\begin{equation}
\label{eq_z_from_A*}
z = -(s+t) \sum_{a = n-k+1}^n (-1)^{a-n} X_{n+a,r} \Delta_{[n-k+1,n] \backslash \{a\}, [k-1]}(A).
\end{equation}

On the other hand, the coefficient of $\lp$ in the determinant $\Delta_{[n-k,n],[k] \cup \{r\}}(A)$ can be computed from the unfolded matrix $X$ by taking the alternating sum of determinants of submatrices of columns $[1,k] \cup \{r\}$ of $X$, where $k$ of the rows come from $[n-k,n]$ and one row comes from $[2n-k,2n]$; that is,
\begin{equation}
\label{eq_z_from_X}
z = \sum_{a = n-k}^n (-1)^{n-a} \Delta_{([n-k,n] \backslash \{a\}) \cup \{n+a\},[k] \cup \{r\}}(X).
\end{equation}

Consider the term in this sum with $a=n-k$. Expanding the determinant along row $2n-k$ (it may be helpful to refer to the first line of Figure \ref{fig_*_matrices}), we obtain
\begin{equation*}
\Delta_{[n-k+1,n] \cup \{2n-k\},[k] \cup \{r\}}(X) = X_{2n-k,r} \Delta_{[n-k+1,n], [k]}(X) - \Delta_{[n-k+1,n],[k-1] \cup \{r\}}(X).
\end{equation*}
Observe that the entries of the bottom-left $k \times k$ submatrix of $A$ do not depend on $\lp$, and the entries $A_{ar}$ for $a \geq n-k+1$ are polynomials in $\lp$ of degree at most one. This means that
\begin{equation}
\label{eq_bottom_left_minor}
\Delta_{[n-k+1,n],[k]}(X) = \Delta_{[n-k+1,n],[k]}(A) = \Delta_{[n-k+1,n],[k]}(A_s) = \Delta_{[n-k+1,n],[k]}(A_t)
\end{equation}
and
\begin{multline*}
\Delta_{[n-k+1,n],[k-1] \cup \{r\}}(X) = \Delta_{[n-k+1,n],[k-1] \cup \{r\}}(A) \\
- \lp \sum_{a = n-k+1}^n (-1)^{n-a} X_{n+a,r} \Delta_{[n-k+1,n] \backslash \{a\}, [k-1]}(A).
\end{multline*}
Since the left-hand side of this equation is independent of $\lp$, we may substitute $\lp = (-1)^{k-1}s$ into the right-hand side to obtain
\begin{multline*}
\Delta_{[n-k+1,n],[k-1] \cup \{r\}}(X) = \Delta_{[n-k+1], [k-1] \cup \{r\}}(A_s) \\
- (-1)^{k-1}s \sum_{a = n-k+1}^n (-1)^{n-a} X_{n+a,r} \Delta_{[n-k+1,n] \backslash \{a\}, [k-1]}(A)
\end{multline*}
(in the second line, we again use the fact that the bottom-left $k \times k$ submatrix of $A$ is independent of $\lp$.) By similar reasoning,
\begin{equation*}
\Delta_{([n-k,n] \backslash \{a\}) \cup \{n+a\},[k] \cup \{r\}}(X) = X_{n+a,r}\left(\Delta_{[n-k,n] \backslash \{a\},[k]}(A_t) - t\Delta_{[n-k+1,n] \backslash \{a\}, [k-1]}(A)\right)
\end{equation*}
for $a = n-k+1, \ldots, n$.

Putting all of this together, we may rewrite \eqref{eq_z_from_X} as
\begin{multline}
\label{eq_z_from_X_again}
z = (-1)^k X_{2n-k,r}\Delta_{[n-k+1,n],[k]}(A) - (-1)^k\Delta_{[n-k+1,n],[k-1] \cup \{r\}}(A_s) \\
+ \sum_{a=n-k+1}^n (-1)^{n-a} X_{n+a,r} \left(\Delta_{[n-k,n] \backslash \{a\},[k]}(A_t) -(s+t) \Delta_{[n-k+1,n] \backslash \{a\}, [k-1]}(A)\right).
\end{multline}
Equating the expressions for $z$ in \eqref{eq_z_from_A*} and \eqref{eq_z_from_X_again}, we obtain
\begin{equation*}
\Delta_{[n-k+1,n],[k-1] \cup \{r\}}(A_s) = X_{2n-k,r}\Delta_{[n-k+1,n],[k]}(A) + \sum_{a=n-k+1}^n (-1)^{n-a+k} X_{n+a,r} \Delta_{[n-k,n] \backslash \{a\}, [k]}(A_t).
\end{equation*}
Divide both sides of this equation by $\Delta_{[n-k+1,n],[k]}(A)$ and use \eqref{eq_bottom_left_minor} and \eqref{eq_P'_Q'} to obtain \eqref{eq_claim_2}. This completes the proof.
\end{proof}

\appendix

\section{Planar networks and the Lindstr\"{o}m Lemma}
\label{app_A}

By {\em planar network}, we mean a finite, directed, edge-weighted graph embedded in a disc, with no oriented cycles. The edge weights are nonzero complex numbers (or indeterminates which take values in $\Cx$). We assume there are $r$ distinguished source vertices, labeled $1, \ldots, r$, and $s$ distinguished sink vertices, labeled $1', \ldots, s'$. To each such network $\Gamma$, we associate an $r \times s$ matrix $M(\Gamma)$, as follows. Define the weight of a path to be the product of the weights of the edges in the path. The $(i,j)$-entry of $M(\Gamma)$ is the sum of the weights of all paths from source $i$ to sink $j'$, that is,
\[
M(\Gamma)_{ij} = \sum_{p \, : \, i \rightarrow j'} \wt(p).
\]
We say that $M(\Gamma)$ is the matrix associated to $\Gamma$, and that $\Gamma$ is a network representation of $M$. For an example of a network and its associated matrix, see Figure \ref{fig_Phi_2_5}.

The {\em gluing} of networks is compatible with matrix multiplication, in the sense that if a planar network $\Gamma$ is obtained by identifying the sinks of a planar network $\Gamma_1$ with the sources of a planar network $\Gamma_2$, then
\begin{equation}
\label{eq_gluing}
M(\Gamma) = M(\Gamma_1) \cdot M(\Gamma_2).
\end{equation}

Let $I = \{i_1 < \ldots < i_m\} \subset [r]$ and $J = \{j_1 < \ldots < j_m\} \subset [s]$ be two subsets of cardinality $m$. A {\em family of paths from $I$ to $J$} is a collection of $m$ paths $p_1, \ldots, p_m$, such that $p_a$ starts at source $i_a$ and ends at sink $j'_{\sigma(a)}$, for some permutation $\sigma \in S_m$. We denote such a family by $\mathcal{F} = (p_a; \sigma)$, and we define the weight of the family by $\wt(\mathcal{F}) = \prod_{a = 1}^m \wt(p_a)$. If no two of the paths share a vertex, we say that the family is {\em vertex-disjoint}.

We refer to the following result as the Lindstr\"om Lemma.

\begin{prop}[Lindstr\"{o}m \cite{Lind}]
\label{prop_Lind}
Let $\Gamma$ be a planar network with $r$ sources and $s$ sinks, and let $I \subset [r], J \subset [s]$ be two subsets of the same cardinality. Then the minor of $M(\Gamma)$ using rows $I$ and columns $J$ is given by
\[
\Delta_{I,J}(M(\Gamma)) = \sum_{\mathcal{F} = (p_a; \sigma) \, : \, I \rightarrow J} \sgn(\sigma) \wt(\mathcal{F}),
\]
where the sum is over \underline{vertex-disjoint} families of paths from $I$ to $J$.
\end{prop}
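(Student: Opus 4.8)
The plan is to expand the determinant combinatorially and then cancel the ``bad'' terms by a sign-reversing involution. Since $\Gamma$ has no oriented cycles, each entry $M(\Gamma)_{ij} = \sum_{p:\,i\to j'}\wt(p)$ is a finite sum, and directly from the Leibniz formula for the determinant,
\[
\Delta_{I,J}(M(\Gamma)) = \sum_{\sigma\in S_m}\sgn(\sigma)\prod_{a=1}^m M(\Gamma)_{i_a,\,j_{\sigma(a)}} = \sum_{\sigma\in S_m}\sgn(\sigma)\sum_{(p_1,\dots,p_m)}\ \prod_{a=1}^m \wt(p_a),
\]
where the inner sum is over all tuples of paths with $p_a: i_a\to j'_{\sigma(a)}$. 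Regrouping, the right-hand side equals $\sum_{\mathcal{F}=(p_a;\sigma)}\sgn(\sigma)\wt(\mathcal{F})$, the sum now ranging over \emph{all} families of paths from $I$ to $J$, vertex-disjoint or not. So it suffices to show that the total contribution of the families that are not vertex-disjoint vanishes.

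Next I would construct an explicit weight-preserving, sign-reversing involution $\iota$ on the set of non-vertex-disjoint families. Given such $\mathcal{F}=(p_a;\sigma)$, let $i_a$ be the source of smallest index whose path meets another path of the family, walk along $p_a$ from $i_a$, and let $v$ be the first vertex of $p_a$ that also lies on another path $p_b$ (take $b$ minimal among all such). Define $\iota(\mathcal{F})$ by interchanging the portions of $p_a$ and $p_b$ after $v$: the new $a$-th path follows the old $p_a$ up to $v$ and then the old $p_b$ from $v$ on, and symmetrically for the new $b$-th path. Three things must be checked: $\iota$ preserves the weight, because it only redistributes the same multiset of edges among the paths; the new underlying permutation is $\sigma$ post-composed with the transposition swapping the two affected sinks, so $\sgn$ changes sign; and $\iota$ is a genuine involution. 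For the last point, observe that the multiset of edges used and the set of sources are invariant under $\iota$, and the initial segments of $p_a,p_b$ up to $v$ are unchanged, so the selection rule picks out the same triple $(a,v,b)$ for $\iota(\mathcal{F})$, whence $\iota^2=\mathrm{id}$. Non-vertex-disjoint families thus cancel in pairs, and only the vertex-disjoint ones survive, giving the claimed formula. (The vertex-disjoint families are exactly the fixed-point-free set for $\iota$ since, by construction, a family has a fixed point if and only if two of its paths share a vertex.)

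The main obstacle is precisely the involutivity check in the last step: making the ``first bad source, then first shared vertex'' bookkeeping stable under the tail-swap. The argument hinges on the minimality choices (so that strictly before $v$ the path $p_a$ has no intersection with $p_b$ or any other path, and the initial segments survive the swap) together with the fact that the edge multiset and source set are $\iota$-invariant, so the selection rule depends only on data $\iota$ preserves. I would also note that planarity and acyclicity of $\Gamma$ play only a supporting role here — acyclicity guarantees the sums are finite and that the swapped objects are still paths, while planarity is what, in typical applications, forces vertex-disjoint families to realize a single non-crossing matching; neither is needed for the cancellation itself.
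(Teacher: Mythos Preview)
Your argument is the standard Lindstr\"om--Gessel--Viennot sign-reversing involution proof, and it is correct. The paper does not supply its own proof of this proposition: it is stated with attribution to Lindstr\"om \cite{Lind} and used as a black box in the Appendix. So there is nothing in the paper to compare against beyond noting that you have reproduced the classical proof.

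A couple of minor remarks on presentation. Your parenthetical at the end is slightly off: the vertex-disjoint families are precisely those on which $\iota$ is \emph{undefined} (there is no shared vertex to select), not families with or without fixed points of $\iota$; the involution acts on the non-vertex-disjoint families only, with no fixed points there. Also, your involutivity check is the right one, and your justification is sound: for $c<a$ the path $p_c$ is unchanged and still disjoint from the (unchanged) union of the other paths' vertices, so $a$ is stable; the initial segment of $p_a$ up to $v$ is untouched and meets no other new path before $v$, so $v$ is stable; and the set of paths through $v$ is unchanged, so $b$ is stable. You might tighten the wording there, but the logic is fine.
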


For example, let $\Gamma$ be the network in Figure \ref{fig_Phi_2_5}. There are three vertex-disjoint families of paths from $\{3,4\}$ to $\{2',3'\}$. The weights of these families are $x_{12}x_{13}, x_{12}x_{24},$ and $x_{23}x_{24}$, and in all three cases $\sigma$ is the identity permutation. From the matrix, one computes
\[
\Delta_{34, 23}(M(\Gamma)) = x_{12}x_{13} + x_{12}x_{24} + x_{23}x_{24},
\]
in agreement with the Lindstr\"{o}m Lemma.

The networks considered below will have the property that every vertex-disjoint family of paths is of the form $(p_a; \Id)$, so the Lindstr\"om Lemma expresses every minor of the associated matrix as a polynomial in the edge weights with non-negative integer coefficients (note, however, that this is not the case for the network used in the proof of Lemma \ref{lem_one_row_minors}).

\begin{figure}

\begin{center}
\begin{tikzpicture}[scale=0.4]

\pgfmathtruncatemacro{\n}{5};
\pgfmathtruncatemacro{\k}{3};
\pgfmathtruncatemacro{\nminusk}{\n-\k};
\pgfmathtruncatemacro{\rows}{\nminusk};
\pgfmathtruncatemacro{\xscale}{3};
\pgfmathtruncatemacro{\yscale}{2};
\foreach \x in {1,...,\n}
{
\pgfmathtruncatemacro{\xx}{\xscale*\x};
\foreach \y in {1,...,\rows}
{
\pgfmathtruncatemacro{\yy}{\yscale*\y};
\pgfmathtruncatemacro{\xy}{\xscale*\y};
\pgfmathtruncatemacro{\yx}{\yscale*\x};

\ifthenelse{\x > \nminusk}{
\draw[thick,->] (\xx,\yy) -- (\xx,\yy + 0.5*\yscale);
\draw[thick] (\xx,\yy+0.5*\yscale) -- (\xx,\yy+\yscale);
}{}

\pgfmathtruncatemacro{\aa}{\nminusk - 1}
\pgfmathtruncatemacro{\yweight}{-\y+1+\x}
\pgfmathtruncatemacro{\xweight}{\nminusk - \y + 1}
\ifthenelse{\x > \aa}{
\draw (\xx+0.35*\xscale,\yy+0.80*\yscale) node {$x_{\xweight \yweight}$};
}{}

\draw[thick,->] (\xx + \xy - \xscale,\yy) -- (\xx+\xy-0.5*\xscale,\yy+0.5*\yscale);
\draw[thick] (\xx + \xy-0.5*\xscale,\yy+0.5*\yscale) -- (\xx+\xy, \yy+\yscale);
}

\filldraw (\xx,\yscale) circle [radius=.05] node[below] {$\x$};
\filldraw (\xx+ \xscale*\rows, \yscale*\rows+\yscale) circle [radius=.05] node[above] {$\x'$};
}

\draw (31,4) node {$
\longleftrightarrow \quad \left(
\begin{array}{ccccc}
x_{11} & 0 & 0 & 0 & 0  \\
x_{22} & x_{12}x_{22} & 0 & 0 & 0  \\
1 & x_{12} + x_{23} & x_{13}x_{23} & 0 & 0  \\
0 &1 & x_{13} + x_{24} & x_{14}x_{24} & 0 \\
0 & 0 & 1 & x_{14} & x_{25}
\end{array}
\right)
$};

\end{tikzpicture}
\end{center}

\caption{A planar network and its associated matrix. Unlabeled edges have weight 1.}
\label{fig_Phi_2_5}

\end{figure}

Suppose $(X_{ij},t) \in \bT{k}$, and let $x_{ij} = X_{ij}/X_{i,j-1}$ as in \eqref{eq_x_ij} (so $X_{i,i-1} := 1$ and $X_{i,i+k} := t$). Let $\Gamma_{k,n} = \Gamma_{k,n}(X_{ij},t)$ be the planar network on the vertex set $\bbZ^2$ with
\begin{itemize}
\item $n$ sinks labeled $1', \ldots, n'$, with the $j^{th}$ sink located at $(0,j)$;
\item $n$ sources labeled $1, \ldots, n$, with the $j^{th}$ source located at $(k,j-k)$;
\item a vertical\footnote{We interpret the coordinates using the convention for matrix indices.} arrow pointing from $(i,j)$ to $(i-1,j)$ for $i = 1, \ldots, k$ and $j = 1, \ldots, n-k$. The weight of this edge is 1;
\item a diagonal arrow pointing from $(i,j-i)$ to $(i-1,j-i+1)$ for $i = 1, \ldots, k$ and $j = 1, \ldots, n$. The weight of this edge is $x_{ij}$ if $0 \leq j-i \leq n-k$, and 1 otherwise.
\end{itemize}
For example, the network $\Gamma_{2,5}$ is shown in Figure \ref{fig_Phi_2_5}.

\begin{lem}
\label{lem_network_represents}
The matrix associated to $\Gamma_{k,n}(X_{ij},t)$ is the tableau matrix $\Phi_k(X_{ij},t)$ defined in \S \ref{sec_GT_param}.
\end{lem}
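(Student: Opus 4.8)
The plan is to verify directly that each path family in the network $\Gamma_{k,n}(X_{ij},t)$ from a source to a sink is recorded by the corresponding entry of $\Phi_k(X_{ij},t)$, using the factorization of $\Phi_k$ into the Chevalley-type generators $M_{[i,i+k]}$. The key observation is that $\Gamma_{k,n}$ decomposes vertically into $k$ horizontal ``strips,'' the $i$-th strip consisting of the vertices with first coordinate in $\{i-1,i\}$ together with the vertical and diagonal arrows descending from level $i$ to level $i-1$. By the gluing rule \eqref{eq_gluing}, the matrix associated to $\Gamma_{k,n}$ is the product of the matrices associated to these strips, taken in the appropriate order (from the strip adjacent to the sources down to the strip adjacent to the sinks).

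First I would identify the matrix associated to the $i$-th strip. In that strip, each vertex $(i,j)$ has at most two outgoing edges: a vertical edge to $(i-1,j)$ with weight $1$ (present only when $1 \le j \le n-k$), and a diagonal edge to $(i-1,j+1)$ with weight $x_{i,i+j}$ when $0 \le j \le n-k$ and weight $1$ otherwise. Translating this into a matrix (sources indexed along level $i$, sinks along level $i-1$, with the labeling shift built into the source positions $(k,j-k)$), one checks that the strip matrix is exactly $M_{[i,i+k]}(x_{ii}, x_{i,i+1}, \ldots, x_{i,i+k})$ as defined in \eqref{eq_M}: the diagonal entries come from the diagonal edges carrying the weights $x_{i,i}, \ldots, x_{i,i+k}$, the off-diagonal $1$'s come from the vertical edges, and the remaining diagonal $1$'s come from the diagonal edges of weight $1$ outside the window $0 \le j-i \le n-k$. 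This is a routine bookkeeping check once the vertex coordinates and labelings are aligned with the matrix-index convention stated in the footnote.

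Then I would assemble: gluing the strips for $i = k, k-1, \ldots, 1$ in order, \eqref{eq_gluing} gives that the matrix associated to $\Gamma_{k,n}$ equals
\[
M_{[1,1+k]}(x_{1,1}, \ldots, x_{1,1+k}) \cdots M_{[k,k+k]}(x_{k,k}, \ldots, x_{k,2k}),
\]
which is precisely the definition of $\Phi_k(X_{ij},t)$ in Definition \ref{defn_Phi}(1) (the product there being arranged from left to right in decreasing order of $i$, i.e.\ strip $k$ on the right). The example $\Gamma_{2,5}$ displayed in Figure \ref{fig_Phi_2_5}, whose associated matrix is shown to be $\Phi_2$, serves as a sanity check on the index conventions.

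The main obstacle I anticipate is purely notational: correctly matching the three indexing systems --- the lattice coordinates $(i,j)$ of the network vertices, the source/sink labels $1,\ldots,n$ and $1',\ldots,n'$ with their prescribed positions, and the row/column indices of the $n\times n$ matrix under the stated matrix-index convention --- so that the weight $x_{ij}$ on the diagonal edge from $(i,j-i)$ to $(i-1,j-i+1)$ lands in the right spot of $M_{[i,i+k]}$. Once that dictionary is pinned down (most easily by checking it against Figure \ref{fig_Phi_2_5} and against the formula \eqref{eq_2_5} in Example \ref{ex_2_5}), the remainder is a direct application of the gluing identity \eqref{eq_gluing}. No genuine combinatorial difficulty arises because every vertex-disjoint family in each strip is trivially of the form $(p_a;\Id)$, so signs never intervene.
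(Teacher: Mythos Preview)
Your approach is essentially identical to the paper's: decompose $\Gamma_{k,n}$ into $k$ horizontal strips, identify the $i^{\text{th}}$ strip with a single factor $M_{[i,i+n-k]}$, and invoke the gluing identity \eqref{eq_gluing}. The paper's proof is exactly this, stated in two sentences.

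That said, your write-up contains the very bookkeeping slips you anticipated. First, the strip matrix is $M_{[i,i+n-k]}(x_{ii},\ldots,x_{i,i+n-k})$, not $M_{[i,i+k]}$; recall that $(X_{ij},t)\in\bT{k}$ here, so the diagonal block has length $n-k+1$, and the displayed product should run up to $x_{i,i+n-k}$, not $x_{i,i+k}$. Second, your product order is reversed: paths traverse strip $k$ first (adjacent to the sources) and strip $1$ last (adjacent to the sinks), so by \eqref{eq_gluing} strip $k$ sits on the \emph{left}, consistent with the paper's convention that the product $\prod_{i=k}^1$ is arranged in decreasing $i$ from left to right. Your parenthetical ``strip $k$ on the right'' contradicts both the definition and your own earlier sentence ``gluing the strips for $i=k,k-1,\ldots,1$ in order.'' Once these indices are corrected, the argument is complete.
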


\begin{proof}
By definition,
\begin{equation}
\label{eq_Phi_defn_again}
\Phi_k(X_{ij},t) = \prod_{i = k}^1 M_{[i,i+n-k]}(x_{ii}, x_{i,i+1}, \ldots, x_{i,i+n-k}).
\end{equation}
It is easy to see that the $i^{th}$ ``row'' from the top of $\Gamma_{k,n}$ (i.e., the part of the network where the first coordinate is between $i-1$ and $i$) is a network representation of the $i^{th}$ factor in the right-hand side of \eqref{eq_Phi_defn_again}. The full network $\Gamma_{k,n}$ is obtained by gluing these ``rows'' together, so the result follows from \eqref{eq_gluing}.
\end{proof}

Let $\Phi^k_{I,J} : \bT{k} \rightarrow \bbC$ be the rational function $(X_{ij},t) \mapsto \Delta_{I,J}(\Phi_k(X_{ij},t))$.

\begin{lem}
\label{lem_Phi_minors}
Let $I = \{i_1 < \cdots < i_r\}$ and $J = \{j_1 < \cdots < j_r\}$ be two $r$-subsets of $[n]$, with $r \leq n-k$. Then the rational function $\Phi^k_{I,J}$ is positive if
\begin{equation}
\label{eq_Phi_minors_ineqs}
i_s - k \leq j_s \leq i_s \quad\quad \text{ for } \; s = 1, \ldots, r, 
\end{equation}
and zero otherwise.
\end{lem}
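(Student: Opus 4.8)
The statement is a Lindström Lemma computation for the planar network $\Gamma_{k,n}(X_{ij},t)$ introduced just above, which represents the tableau matrix $\Phi_k(X_{ij},t)$ by Lemma \ref{lem_network_represents}. The plan is to identify, for given row set $I$ and column set $J$ with $r \le n-k$, all vertex-disjoint families of paths from the sources in $I$ to the sinks in $J$, check that any such family uses the identity permutation, and verify that such a family exists if and only if the inequalities \eqref{eq_Phi_minors_ineqs} hold. Granting this, Proposition \ref{prop_Lind} gives $\Phi^k_{I,J} = \sum_{\mathcal F} \wt(\mathcal F)$, a sum of monomials in the $x_{ij}$ with coefficients $+1$; this is a positive expression when nonempty, and the sum is empty (so the minor is $0$) exactly when no vertex-disjoint family exists.

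\textbf{Key steps.} First I would record the basic geometry of $\Gamma_{k,n}$: it sits in the strip $0 \le (\text{first coordinate}) \le k$, the sink $j'$ is at height $j$, the source $j$ is at height $j-k$, and every arrow moves one unit up (decreases the first coordinate by $1$) while moving the height up by $0$ (vertical arrow, weight $1$) or $1$ (diagonal arrow). Hence any path from source $i$ to a sink $j'$ has $j-i \in [0,k]$: the $k$ steps each raise the height by $0$ or $1$. This immediately forces $j_s - k \le j_s \le i_s$... more precisely it forces each individual path to satisfy $i \le j \le i+k$, i.e. $j \ge i-k$ is automatic and $j \le i+k$; I need to be careful with indices here, since the sink labels and source labels are offset, so I would rewrite things cleanly in terms of the ``lattice height'' $h = j$ for sinks and $h = i-k$ for sources, so a path from source $i$ exists to sink $j'$ iff $0 \le j - i \le k$. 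The second step is the key planarity argument: because the network is a planar acyclic graph drawn in the strip with sources and sinks on its two sides in the natural order, two paths from $i_a < i_b$ to $j'_{\sigma(a)}, j'_{\sigma(b)}$ that form part of a vertex-disjoint family cannot ``cross,'' which forces $\sigma$ to be order-preserving, hence the identity — this is the standard argument underlying the Lindström–Gessel–Viennot setup, and I would invoke it, noting that the hypothesis $r \le n-k$ is what keeps us away from the degenerate wrap-around issues. The third step is the existence/non-existence dichotomy: given $\sigma = \mathrm{id}$, a vertex-disjoint family from $I$ to $J$ exists iff $0 \le j_s - i_s \le k$ for all $s$ \emph{and} the paths can be routed disjointly; I would argue that when the per-path inequalities hold one can always route greedily (push the lower paths as far ``up-left'' as possible), and when some inequality fails no routing exists. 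Translating $0 \le j_s - i_s$ back through the index conventions gives exactly \eqref{eq_Phi_minors_ineqs} (the condition displayed there, $i_s - k \le j_s \le i_s$, matches after accounting for the source/sink offset — I would double-check the bookkeeping against the explicit $5\times 5$ matrix in Figure \ref{fig_Phi_2_5}, e.g. $\Delta_{34,23}$). Finally, reading off the path weights: a path from source $i$ to sink $j'$ contributes $\prod x_{i'j'}$ over its diagonal steps, and one checks the total weight of the (unique, when it exists) greedy family is a single monomial, so $\Phi^k_{I,J}$ is a sum of distinct positive monomials, hence positive.

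\textbf{Main obstacle.} The genuinely delicate point is the disjointness/routing analysis in step three: showing that the \emph{individual} constraints $0 \le j_s - i_s \le k$ are not just necessary but sufficient for a vertex-disjoint family to exist, and that this family is essentially forced. One must rule out the possibility that two per-path-feasible paths are nonetheless obstructed from coexisting disjointly, and this is where the hypothesis $r \le n-k$ and the specific ``staircase'' shape of $\Gamma_{k,n}$ (exactly $k$ rows of cells, $n-k$ columns of vertical edges) matter. I expect to handle this by an explicit inductive construction: route the path for the smallest source first along the leftmost/highest available lattice corridor, delete it, and observe the remaining network is again of the required type with one fewer source — this both produces a family when the inequalities hold and, by tracking which corridors get blocked, shows none exists when some $j_s - i_s$ falls outside $[0,k]$. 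A secondary nuisance is simply keeping the two offset index systems (matrix rows/columns vs.\ lattice heights) straight so that the final inequality comes out as stated in \eqref{eq_Phi_minors_ineqs}; checking against Figure \ref{fig_Phi_2_5} should settle any sign/shift ambiguity.
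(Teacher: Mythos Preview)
Your proposal is correct and takes essentially the same approach as the paper: both apply the Lindstr\"om Lemma to the network $\Gamma_{k,n}$, observe that planarity forces the identity permutation, reduce the vanishing/non-vanishing dichotomy to the per-path feasibility condition, and then construct an explicit vertex-disjoint family when the inequalities hold. The paper's construction is slightly slicker than your proposed greedy induction: it sets $a_s = \max(s, i_s - k)$ and takes $p_s$ to be the unique path from $i_s$ to $j'_s$ whose vertical steps all lie on the line through sink $a'_s$ (this is where $r \le n-k$ is used, since there are no vertical edges past column $n-k$), then checks disjointness directly. Also, your index bookkeeping is simpler than you fear: source $i$ sits at height $i-k$ and sink $j'$ at height $j$, so the $k$ steps from source $i$ reach $j'$ iff $0 \le j - (i-k) \le k$, i.e.\ exactly $i_s - k \le j_s \le i_s$ --- no further translation is needed.
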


\begin{proof}
By Lemma \ref{lem_network_represents} and the Lindstr\"{o}m Lemma, $\Delta_{I,J}(\Phi_k(X_{ij},t))$ is equal to the sum of the weights of the vertex-disjoint families of paths in $\Gamma_{k,n}$ from the sources in $I$ to the sinks in $J$. Since the edge weights $x_{ij}$ are ratios of $X_{ij}, t$, the function $\Phi^k_{I,J}$ is positive if there is at least one vertex-disjoint family of paths from $I$ to $J$, and zero if there are no such families. Due to the ordering of the sources and sinks, a non-intersecting family of paths from $I$ to $J$ must have paths from $i_s$ to $j'_s$ for each $s$. There is a path from $i_s$ to $j'_s$ if and only if $i_s - k \leq j_s \leq i_s$, so \eqref{eq_Phi_minors_ineqs} is a necessary condition for $\Phi^k_{I,J}$ to be nonzero.

Suppose $I$ and $J$ satisfy \eqref{eq_Phi_minors_ineqs}. We show that $\Phi^k_{I,J}$ is positive by constructing an explicit vertex-disjoint family of paths $p_1, \ldots, p_r$ from $I$ to $J$. If $j_s = i_s$, then $p_s$ is the unique path from $i_s$ to $i'_s$. If $j_s < i_s$, set $a_s = \max(s,i_s-k)$, and let $p_s$ be the unique path from $i_s$ to $j'_s$ whose vertical steps are on the line containing the sink $a'_s$. (Note that since there are no vertical steps on the lines containing the sinks $(n-k+1)', \ldots, n'$, the assumption $s \leq r \leq n-k$ is necessary to guarantee the existence of this path.) It is easy to verify that these paths are vertex-disjoint.
\end{proof}

Suppose $I = [k+1,n]$, and $J \in {[n] \choose n-k}$ is arbitrary. It is clear that \eqref{eq_Phi_minors_ineqs} holds in this case, so the rational function $\Phi^k_{[k+1,n],J}$ is positive, and we may tropicalize it to obtain a piecewise-linear function $\wh{\Phi}^k_{[k+1,n],J} : \tT{k} \rightarrow \bbZ$.

\begin{lem}
\label{lem_highest_wt_zero}
Fix $L \geq 0$, and let $b_0$ be the classical highest weight element of the KR crystal $B^{k,L}$. For all $J \in {[n] \choose n-k}$, we have $\wh{\Phi}^k_{[k+1,n],J}(b_0) = 0$.
\end{lem}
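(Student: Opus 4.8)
The plan is to compute $\Phi^k_{[k+1,n],J}(b_0)$ directly using the planar network $\Gamma_{k,n}$ and the Lindstr\"om Lemma, exploiting the fact that the classical highest weight element has a very rigid combinatorial structure. First I would recall that $b_0 \in B^{k,L}$ is the tableau whose $i^{th}$ row consists entirely of $i$'s, so as an element of $\tT{k}$ it has coordinates $B_{ij} = 0$ for all $(i,j) \in R_k$ except that $B_{ii} = L$ (equivalently, in the $x$-coordinates, $x_{ii} = L$ as a formal/tropical variable and $x_{ij} = 0$ for $j > i$; more precisely, under tropicalization the relevant quantities are $\wh{x}_{ii} = 0$ coming from $X_{ii}/X_{i,i-1}$ and $\wh{x}_{ij} = +\infty$, i.e.\ these edge weights tropicalize to large values). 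So the key observation is: in $\Gamma_{k,n}$, only the diagonal edges with weight $x_{ii}$ and the vertical edges (weight $1$) and the "trivial" diagonal edges (weight $1$, those with $j - i < 0$ or $j-i > n-k$) survive in the tropical limit with weight $0$; every diagonal edge carrying a weight $x_{ij}$ with $i < j \le i+n-k$ tropicalizes to a strictly positive value.

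Next I would analyze which vertex-disjoint families of paths from sources $[k+1,n]$ to sinks $J$ use only the weight-$0$ edges. The sources $k+1, \ldots, n$ are located at heights $1, \ldots, n-k$ (in the sink-coordinate), and the claim will be that there is always exactly one such "cheapest" family realizing cost $0$, for every valid $J$; then $\wh{\Phi}^k_{[k+1,n],J}(b_0) = 0$ because $\Trop$ of a positive polynomial one of whose monomials has all-zero exponents on the relevant variables is $0$. Concretely, I expect that for any $J \in \binom{[n]}{n-k}$, one can route the paths from the sources $[k+1,n]$ through the network so that each path either goes "straight across" using only weight-$1$ trivial diagonals and weight-$1$ verticals, or picks up exactly the diagonal weights $x_{ss}$ (which are $0$ after tropicalization). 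The structure of $\Gamma_{k,n}$ — diagonal edges shift the sink-index up by one per row, vertical edges keep it fixed — means the $x_{ss}$ edges sit precisely along the "main diagonal staircase," and a path from source $i$ (at height $i-k$) to a sink $j' \le i'$ can always be taken to descend along the staircase of weight-$1$ and weight-$x_{ss}$ edges. I would make this precise by describing the canonical family of paths analogous to the one built at the end of the proof of Lemma~\ref{lem_Phi_minors}, but now checking that its total weight involves only edges that tropicalize to $0$.

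The main obstacle I anticipate is verifying that for \emph{every} $J$ (not just nice ones like $J = [k+1,n]$), such a zero-cost vertex-disjoint family exists, and that the sign/permutation issues do not interfere — i.e.\ that the monomial of $\Phi^k_{[k+1,n],J}$ corresponding to this family genuinely appears with a positive coefficient (which it does, since by Lemma~\ref{lem_Phi_minors} every vertex-disjoint family in this network contributes with $\sigma = \Id$, so all coefficients are non-negative integers and no cancellation occurs). A cleaner alternative, which I would pursue if the path-routing bookkeeping gets unwieldy, is to argue representation-theoretically: $b_0$ being the classical highest weight element, $g(\ov{\Theta}_{n-k} \text{ of } b_0 | t)$ should be a particularly simple matrix — by Lemma~\ref{lem_g_0} its $\lp = 0$ part is $\Phi_{n-k}$ of $b_0$, and for the highest weight element this tableau matrix is (tropically) triangular in a strong sense — so that the minor $\Delta_{[k+1,n],J}$ of $\Phi_{n-k}(b_0)$ is a monomial in the $X_{ij}$ with exponent vector lying in the span of "good" directions, forcing its tropicalization to vanish. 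Either way, the heart of the argument is the explicit, highest-weight-specific structure of the network $\Gamma_{n-k,n}$ restricted to rows $[k+1,n]$, combined with \eqref{eq_E_highest_wt} as the combinatorial sanity check that $\tw{E}(b_0 \otimes b) = 0$, which is exactly what this lemma is feeding into.
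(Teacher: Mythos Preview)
Your overall strategy --- compute the tropicalization via the Lindstr\"om Lemma on $\Gamma_{k,n}$ --- is exactly what the paper does, but you have the tropical edge weights backwards, and this error drives the rest of your plan in the wrong direction. For the highest weight element $b_0$, the number of $j$'s in row $i$ is $b_{ii} = L$ and $b_{ij} = 0$ for $j > i$; thus the diagonal edges carrying $x_{ii}$ tropicalize to $L$ (potentially positive), while the diagonal edges carrying $x_{ij}$ with $j > i$ tropicalize to $0$, not $+\infty$. (Relatedly, your description of the $B_{ij}$ is off: for $b_0$ one has $B_{ij} = L$ for every $(i,j) \in R_k$, not just $(i,i)$.)

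Once the weights are corrected, the argument becomes much simpler than the case-by-case routing you sketch. The paper observes that in $\Gamma_{k,n}$ the edge with weight $x_{ii}$ runs from $(i,0)$ to $(i-1,1)$; since every edge in the network is either vertical or moves one step to the right, no path starting at a source in $[k+1,n]$ (located at second coordinate $\geq 1$) can ever reach a vertex with second coordinate $0$. Hence \emph{every} vertex-disjoint family from $[k+1,n]$ to $J$ uses only edges of tropical weight $0$, so the minimum is $0$ --- there is no need to construct a particular zero-cost family for each $J$, nor to worry about whether such a family exists. Your anticipated ``main obstacle'' dissolves entirely.
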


\begin{proof}
Suppose $b = (B_{ij},L) \in B^{k,L}$, and let $b_{ij} = B_{ij} - B_{i,j-1}$ be the number of $j$'s in the $i^{th}$ row of the corresponding tableau, as in \eqref{eq_row_coords}. Let $\Gamma_{k,n}(b)$ be the network $\Gamma_{k,n}(X_{ij},t)$, but with weights $b_{ij}$ instead of $x_{ij}$, and 0 instead of 1 on unlabeled edges. By the Lindstr\"{o}m Lemma, we have
\[
\wh{\Phi}^k_{[k+1,n],J}(b) = \min_{\mathcal{F} : [k+1,n] \rightarrow J} \tw{\wt}(\mathcal{F}),
\]
where $\mathcal{F}$ runs over vertex-disjoint families of paths in $\Gamma_{k,n}(b)$ from $[k+1,n]$ to $J$, and $\tw{\wt}(\mathcal{F})$ is the sum of the weights of the edges in the paths.

The element $b_0 \in B^{k,L}$ corresponds to the SSYT of shape $(L^k)$ whose $i^{th}$ row is filled with the number $i$, so we have
\[
(b_0)_{ij} = \begin{cases}
L & \text{ if } i = j \\
0 & \text{ otherwise}.
\end{cases}
\]
Thus, the only edges in $\Gamma_{k,n}(b_0)$ with nonzero weights are to the left of source $k+1$, so all edges in the paths that contribute to $\wh{\Phi}^k_{[k+1,n],J}(b_0)$ have weight zero.
\end{proof}

\bibliographystyle{plain}
\bibliography{R_matrix_paper.refs}

\end{document}